\setlist{itemsep=4pt}
\title{Multiprojective Seshadri stratifications for Schubert varieties and standard monomial theory}
\author{Henrik Müller}
\address{Department Mathematik/Informatik, Universität zu Köln, Weyertal 86-90, 50931 Cologne, Germany}
\email{\href{henrikmueller.math@gmail.com}{henrikmueller.math@gmail.com}}
\date{}
\theoremstyle{plain}
\newtheorem{theorem}{Theorem}[section]
\numberwithin{theorem}{section}
\newtheorem{lemma}[theorem]{Lemma}
\newtheorem{corollary}[theorem]{Corollary}
\newtheorem{proposition}[theorem]{Proposition}
\newtheorem{conjecture}[theorem]{Conjecture}
\theoremstyle{definition}
\newtheorem{definition}[theorem]{Definition}
\newtheorem{example}[theorem]{Example}
\newtheorem{remark}[theorem]{Remark}
\begin{document}

\setstretch{1.07}

\newcommand{\Abk}[1][Abk]{#1.\xspace}
\newcommand{\ie}{\Abk[i.\,e]}
\newcommand{\wwlog}{\Abk[w.\,l.\,o.\,g]}
\newcommand{\wrt}{\Abk[w.\,r.\,t]}
\newcommand{\WWlog}{\Abk[W.\,l.\,o.\,g]}
\newcommand{\loccit}{\textit{loc.\,cit.}}

\newcommand{\B}{\mathbb{B}}
\renewcommand{\F}{\mathbb{F}}
\renewcommand{\K}{\mathbb{K}}
\renewcommand{\A}{\mathbb{A}}
\renewcommand{\N}{\mathbb{N}}
\newcommand{\Z}{\mathbb{Z}}
\newcommand{\PP}{\mathbb{P}}
\newcommand{\Q}{\mathbb{Q}}
\newcommand{\R}{\mathbb{R}}

\newcommand{\set}[1]{\{#1\}}
\newcommand{\Set}[1]{\left\{#1\right\}}

\newcommand{\id}{\mathrm{id}}
\newcommand{\Spec}{\operatorname{Spec}}
\newcommand{\Proj}{\operatorname{Proj}}
\newcommand{\Multiproj}{\operatorname{Multiproj}}
\newcommand{\supp}{\operatorname{supp}}
\newcommand{\wt}{\operatorname{wt}}
\newcommand{\ch}{\operatorname{char}}
\newcommand{\longhookrightarrow}{\lhook\joinrel\longrightarrow}
\newcommand{\DCP}[1]{D_{{#1}}(\underline\lambda, \tau)}
\newcommand{\Wlambda}{W(\underline\lambda, \tau)}
\newcommand{\ulWlambda}{\underline W(\underline\lambda, \tau)}
\newcommand{\ulW}{\underline W}
\newcommand{\nice}[1]{$#1$-standard}
\newcommand{\up}[1]{{#1}^\vartriangle}
\newcommand{\down}[1]{{#1}^\triangledown}
\newcommand{\LST}{\mathbb B(\underline\lambda, \mathcal I)_{\tau, \underline d}}

\newlist{abbrv}{itemize}{1}
\setlist[abbrv,1]{label=,labelwidth=1in,align=parleft,leftmargin=!,noitemsep}

\newcommand{\whatisthis}{article}

\numberwithin{equation}{section}

\setcounter{secnumdepth}{3}


\begin{abstract}
    Using the language of Seshadri stratifications we develop a geometrical interpretation of Lakshmibai-Seshadri-tableaux and their associated standard monomial bases. These tableaux are a generalization of Young-tableaux and De-Concini-tableaux to all Dynkin types. More precisely, we construct filtrations of multihomogeneous coordinate rings of Schubert varieties, such that the subquotients are one-dimensional and indexed by standard tableaux.
\end{abstract}

\maketitle
\thispagestyle{empty}

\section{Introduction}

In the 1940s, Hodge described a basis of the homogeneous coordinate ring of a Grassmann variety $\mathrm{Gr}(d,n)$ via certain products of Pl\"ucker coordinates (\cite{hodge1943some}, \cite{hodge1994methods}). The basis vectors correspond to semistandard Young-tableaux with exactly $d$ rows and entries in $\set{1, \dots, n}$. This is the first example of what is known as a \textit{standard monomial theory} (SMT). However there still exists no clear definition what a standard monomial theory really is, this term rather refers to specific examples, which usually come from the representation theory of semisimple algebraic groups or Lie algebras. Given an algebra generated by a finite set $S$, the set of all monomials in $S$ generate this algebra as a vector space. One tries to extract a basis from this generating set via combinatorial methods. The basis vectors are then called \textit{standard} and every monomial in $S$ not belonging to this basis is called \textit{non-standard}.

In their series of papers (\cite{lakshmibaiGP1}, \cite{lakshmibaiGP2}, \cite{lakshmibaiGP3}, \cite{lakshmibaiGP4}, \cite{lakshmibaiGP5}, ...) Lakshmibai, Musili and Seshadri generalized the work of Hodge to Schubert varieties in classical Dynkin types. They found a standard monomial basis of the multihomogeneous coordinate ring of a Schubert variety $X$ with respect to the embedding
\begin{align*}
    X \hookrightarrow \prod_{i=1}^m \PP(V(\omega_i)),
\end{align*}
where $V(\omega_i)$ are fundamental representations of the algebraic group. This basis is indexed by sequences of Weyl group cosets, which can be lifted to a weakly decreasing sequence in the Weyl group, called \textit{defining chain}. The path model of Littelmann -- more specifically the path model of LS-paths -- developed in \cite{littelmann1994littlewood} and \cite{littelmann1995paths} provided a suitable language for this index set, such that the SMT of Lakshmibai, Musili and Seshadri could be generalized to arbitrary Dynkin types (\cite{littelmann1995plactic}). To each LS-path one associates a function called \textit{path vector}, which Littelmann constructed in~\cite{littelmann1998contracting} using quantum Frobenius splitting. Standard monomials in these path vectors are indexed by sequences of LS-paths which admit a weakly decreasing lift to the Weyl group. This leads to the notion of what we call an \textit{LS-tableau} (see Section~\ref{sec:LS-tableaux}), a generalization of Young-tableaux. 

Since the discovery of this combinatorially defined standard monomial basis, it has attracted much attention and a large amount of citations and applications. As the multihomogeneous coordinate ring is an algebraic-geometric object, it is a natural question whether the SMT can also be derived using geometric methods. This leads to the main theorem of this \whatisthis{}.

\vskip 10pt
\noindent
\textbf{Theorem} (Proposition~\ref{prop:bijection_YT_fan} and Theorem~\ref{thm:standard_monomial_basis}). There exists a quasi-valuation $\mathcal V$ with at most one-dimensional leaves on the multihomogeneous coordinate ring of a Schubert variety, such that the elements in the image of $\mathcal V$ correspond to certain standard LS-tableaux.
\vskip 10pt

Note that there may exist different quasi-valuations on $\K[X]$, which therefore give geometrical interpretations of different SMTs.

The geometric interpretation of LS-tableaux is based on the work of Chiriv{\`i}, Fang and Littelmann in~\cite{seshstrat} and \cite{seshstratandschubvar}. They introduced the notion of a \textit{Seshadri strati\-fi\-cation} on an embedded projective variety $X \subseteq \PP(V)$, which was generalized by the author to projective varieties embedded into a product $\PP(V_1) \times \dots \times \PP(V_m)$ of projective spaces (see \cite{ownarticle}). A Seshadri stratification consists of a family $(X_p)_{p \in A}$ of projective varieties indexed by a finite, graded poset $A$ and a multihomogeneous function $f_p$, called \textit{extremal function}, in the multihomogeneous coordinate ring $\K[X]$ of $X$ for each $p \in A$. Note that the varieties $X_p$ need not be a subvarieties of $X$ itself, except in the case of an ordinary stratification, \ie for $m = 1$. Instead, each stratum $X_p$ is a subvariety of the projection of $X$ into the product $\prod_{i \in I_p} \PP(V_i)$ indexed by a non-empty subset $I_p \subseteq \set{1, \dots, m}$. The collection $\mathcal I = \set{I_p \mid p \in A}$ of these sets is called the \textit{index poset} of the stratification, which is an additional structure not visible for ordinary stratifications. 

Especially for $m > 1$ it is helpful to switch to the affine picture, when thinking about Seshadri stratifications: The affine multicone $\hat X_p$ of a stratum $X_p$ -- which by definition is a closed subvariety of the affine space $\prod_{i \in I_p} V_i$ -- can be linearly embedded into $V_1 \times \dots \times V_m$. In this way, all multicones live in the same ambient space, which is used to specify the conditions the data of varieties, functions and index sets needs to fulfill to form a Seshadri stratification. For example, the grading on $A$ is required to be compatible with the dimensions of the subvarieties, \ie $\hat X_q$ is a divisor in $\hat X_p$, if and only if $q < p$ is a covering relation in $A$.

Seshadri stratifications use a web of subvarieties in contrast to the Newton-Okounkov theoretical approach (\cite{kaveh2012newton}, \cite{lazarsfeld2009convex}), which uses a flag of subvarieties. By taking successive vanishing multiplicities along this web, every Seshadri stratification induces a quasi-valuation $\mathcal V: \K[X] \setminus \set{0} \to \Q^A$, which can be thought of as a filtration of the multihomogeneous coordinate ring $\K[X]$. In general, the quasi-valuation $\mathcal V$ is not quite canonical, as it depends on the choice of a total order $\geq^t$ linearizing the partial order on $A$. The subquotients (called \textit{leaves}) of the filtration on $\K[X]$ are at most one-dimensional and they are indexed by the image $\Gamma$ of $\mathcal V$, which is a union of finitely generated semigroups $\Gamma_{\mathfrak C}$ over all maximal chains $\mathfrak C$ in the poset $A$. Hence $\Gamma$ is called the \textit{fan of monoids} to the stratification. 

For every \textit{normal} Seshadri stratification, \ie each semigroup $\Gamma_{\mathfrak C}$ is saturated, the fan of monoids $\Gamma$ defines a standard monomial theory on the multihomogeneous coordinate ring $\K[X]$. Every element in $\Gamma$ can be uniquely decomposed as a sum of indecomposable elements. When choosing a function $x_{\underline a}$ for each indecomposable element $\underline a \in \Gamma$ then all monomials in these functions generate $\K[X]$ as a vector space and a monomial $x_{\underline a^1} \cdots x_{\underline a^s}$ is standard, if and only if $\underline a^1 + \dots + \underline a^s$ is contained in $\Gamma$. Seshadri stratifications, where this SMT does not depend on the choice of the linearization of the partial order, are called \textit{balanced}. 

In both \cite{seshstratandschubvar} and \cite{seshstratgeometric}, Chiriv{\`i}, Fang and Littelmann constructed a normal and balanced Seshadri stratification on every Schubert variety $X_\tau$, embedded into a projective space over a Demazure module. Hence they also obtain a SMT on the associated homogeneous coordinate ring. Note that the SMT by Lakshmibai, Musili and Seshadri mentioned above, gives rise to a basis of a different coordinate ring, namely the multihomogeneous coordinate ring of $X_\tau$ with respect to the embedding into the product $\prod_{\omega} \PP(V(\omega))$, where $\omega$ runs over certain fundamental weights. This raises the following question.

\vskip 10pt
\noindent
\textbf{Question}. Is there a normal and balanced Seshadri stratification on $X_\tau$ with respect to the multiprojective embedding, such that one obtains the SMT of Lakshmibai, Musili and Seshadri, or more general, the SMT indexed by LS-tableaux?
\vskip 10pt

In this \whatisthis{} we show that such a stratification exists under certain combinatorial conditions. We now give an overview over the different sections.

First, we briefly recapitulate the definition of a (multiprojective) Seshadri stratification and its quasi-valuation in Section~\ref{sec:multiproj_strat} along with the SMT associated to a normal stratification. All subsequent sections are devoted to answering the above question. Note that the author already covered the case of partial flag varieties in Dynkin type \texttt{A} (cf.~\cite{ownarticle}), where there exists a normal and balanced Seshadri stratification, such that the resulting SMT coincides with the usual SMT of monomials in Pl\"ucker coordinates indexed by semistandard Young-tableaux (\cite[Chapter 2]{seshadri2016introduction}). Of course, the stratifications we seek to construct here should both generalize the ordinary stratification on Schubert varieties from~\cite{seshstratandschubvar} and the multiprojective stratification in type $\texttt{A}$ from~\cite{ownarticle}.

We begin by introducing the tableau model of LS-tableaux of type $(\underline\lambda, \mathcal I)$, which we hope to find in the associated fan of monoids to the generalized stratification. These tableaux depend on two choices: First, a sequence $\underline\lambda = (\lambda_1, \dots, \lambda_m)$ of dominant weights which fixes the multiprojective embedding 
\begin{align*}
    X_\tau \hookrightarrow \prod_{i=1}^m \PP(V(\lambda_i)_\tau)
\end{align*}
and second, a subposet $\mathcal I$ of the power set poset $\mathcal P(\set{1, \dots, m}) \setminus \set{\varnothing}$, that is ordered by inclusion and plays the role of the index poset to the stratification. One can think of the elements in $\mathcal I$ as the possible shapes of the columns in the LS-tableaux: To every element $I \in \mathcal I$ we associate a dominant weight $\lambda_I$. Each column $\pi_1, \dots, \pi_s$ of our LS-tableaux is an LS-path to a dominant weight $\lambda_{I_k}$ and these weights need to follow a weakly decreasing sequence $I_1 \supseteq \dots \supseteq I_s$ in $\mathcal I$. 

To obtain the classical Young-tableaux for the group $\mathrm{SL}_n(\K)$ one could choose the poset $\mathcal I$ of all sets $\set{1, \dots, i}$ for $i = 1, \dots, n-1$. The shapes of the columns in a Young-tableau correspond to their length, so the set $\set{1, \dots, i}$ represents a column of length $i$. Semi\-standard Young-tableaux are generalized in the following way: An LS-tableau is called \textit{$\tau$-standard}, if one can lift the Weyl group cosets of its columns to a \textit{defining chain}, \ie a weakly decreasing sequence in the Weyl group. In the Appendix~\ref{sec:tableaux} we explain in more detail how Young-tableaux and also the tableaux of De Concini (\cite{deconcini}) can be seen as special cases of LS-tableaux.

To the fixed choice of $\underline\lambda$ and $\mathcal I$ we also associate a graded poset $\DCP{}$ (cf. Section~\ref{sec:dcp}), called \textit{defining chain poset}, which serves as the underlying poset for the desired Seshadri stratification. This poset is constructed from the idea that every defining chain of a $\tau$-standard LS-tableau should be contained in a chain of $\DCP{}$. However, only certain index posets $\mathcal I$ induce a well-defined stratification. First, every pair of non-comparable elements needs to satisfy the condition (\ref{eq:need_for_s2}), which assures the existence of specific covering relations. Second, the poset is required to be \nice{\tau}. These are exactly the index posets, where the $\tau$-standardness of an associated LS-tableau can be verified locally, by comparing consecutive columns (which is known as \textit{weak standardness}). 

\vskip 10pt
\noindent
\textbf{Theorem} (Theorem~\ref{thm:stratification}). If the poset $\mathcal I$ is \nice{\tau} and it satisfies the condition~(\ref{eq:need_for_s2}), then there exists a multiprojective Seshadri stratification on $X_\tau$ with underlying poset $\DCP{}$ and index poset $\mathcal I$.
\vskip 10pt

Fortunately, there always exists at least one \nice{\tau} index poset $\mathcal I$, namely the full power set $\mathcal P(\set{1, \dots, m}) \setminus \set{\varnothing}$, but this is a rather unwieldy choice for computations. The author was not able to find a combinatorial characterization of \nice{\tau} posets in full generality. When $\tau$ is the unique maximal element in $W/W_Q$, then $\tau$-standardness is characterized by the existence of certain paths in the Dynkin diagram of $G$ (see Theorem~\ref{thm:nice_I}). If the Dynkin diagram is a line (\ie in the types \texttt{A}, \texttt{B}, \texttt{C}, \texttt{F} and \texttt{G}), one can always choose a totally ordered poset $\mathcal I$ and the associated model of LS-tableaux is similar to classical Young-tableaux. We give an example for $\tau$-standard posets for the partial flag varieties in all Dynkin types (Section~\ref{subsec:poset_examples}).

In Sections~\ref{sec:LS-fan} to \ref{sec:smt}, we compute the fan of monoids $\Gamma$ for the previously constructed stratifications. The elements in $\Gamma$ correspond exactly to the $\tau$-standard LS-tableaux of type $(\underline\lambda, \mathcal I)$. To each of these tableaux we associate a monomial in the path vectors defined by Littelmann. We prove that the stratification is normal and balanced and that these monomials in the path vectors form the resulting SMT. As expected, the standard monomial basis coincides with the basis constructed in \cite{littelmann1998contracting}.

This \whatisthis{} is based on the last two chapters of the author's Ph.\,D. thesis \cite{thesis}. 

\section{Multiprojective Seshadri stratifications}
\label{sec:multiproj_strat}

We begin by reviewing the notion of a multiprojective Seshadri stratification introduced in~\cite{ownarticle}, where the proofs of all statements in the following paragraph can be found.

\subsection{Definitions and properties}

Throughout this section we fix an algebraically closed field $\K$ and a \textit{multiprojective} variety $X$, \ie a (Zariski-)closed subset $X$ in a product $\PP(V_1) \times \dots \times \PP(V_m)$ of projective spaces, where $V_1, \dots, V_m$ are finite-dimensional vector spaces over $\K$.

The multicone $\hat X$ of $X$ is a closed subvariety of the affine space $V = V_1 \times \dots \times V_m$. Let $R = \K[X]$ denote the multihomogeneous coordinate ring of $X$, \ie the coordinate ring $\K[\hat X]$ of the multicone. We write $[k]$ for the set of all integers between $1$ and $k \in \N$. Each subset $I \subseteq [m]$ comes with the two natural projections
\begin{align}
    \label{eq:def_projection_maps}
    \pi_I: \prod_{i \in [m]} \PP(V_i) \twoheadrightarrow \prod_{i \in I} \PP(V_i) \quad \text{and} \quad \hat \pi_I: \prod_{i \in [m]} V_i \twoheadrightarrow \prod_{i \in I} V_i
\end{align}
as well as the multiprojective variety $X_I = \pi_I(X)$. Note that the multicone $\hat X_I$ of $X_I$ coincides with the image of $\hat X$ under the map $\hat \pi_I$. The surjection $\hat X \twoheadrightarrow \hat X_I$ induces an embedding of the multihomogeneous coordinate ring $\K[X_I]$ onto a graded subalgebra of $R$, namely the direct sum of all homogeneous components $R_{\underline d} \subseteq R$ for tuples $\underline d = (d_1, \dots, d_m) \in \N_0^m$ where $d_j = 0$ for all $j \notin I$.

Analogous to the definition of a Seshadri stratification in~\cite{seshstrat}, we fix a finite set $A$, a collection $\set{X_p \mid p \in A}$ of irreducible projective varieties, which are smooth in codimension one, and a collection of functions $\set{f_p \in R \mid p \in A}$ called \textbf{extremal functions}. The main difference to the original definition is that $X_p$ no longer needs to be a subvariety or even a subset of $X$. Instead we fix a third collection $\set{I_p \subseteq [m] \mid p \in A}$ of non-empty subsets of $[m]$ and require that $X_p$ is a closed subvariety of $X_{I_p} = \pi_{I_p}(X)$. If we view the affine space $\prod_{i \in I_p} V_i$ as a closed subvariety of $V$ via the linear embedding $\prod_{i \in I_p} V_i \hookrightarrow V$, then $\hat X_p$ can be seen as a closed subvariety of $\hat X$. This allows us to equip the set $A$ with the partial order $\leq$, such that $q \leq p$ if and only if $\hat X_q \subseteq \hat X_p$. The function $f_p$ needs to be non-constant, multihomogeneous and included in the subring $\K[X_{I_p}] \subseteq R$.

\begin{definition}[Multiprojective Seshadri stratification]
    \label{def:multiproj_seshadri_strat}
    These three collections of varieties, extremal functions and index sets are called a \textbf{(multiprojective) Seshadri stratification}, if there exists an element $p_{\text{max}} \in A$ with $I_{p_{\text{max}}} = [m]$ and $X_{p_{\text{max}}} = X$ and the following three conditions are fulfilled:
    \begin{enumerate}[label=(S\arabic{enumi})]
        \item \label{itm:seshadri_strat_a} If $q < p$ is a covering relation, then $\hat X_q \subseteq \hat X_p$ is a codimension one subvariety (where both are seen as subvarieties of $V$);
        \item \label{itm:seshadri_strat_b} The function $f_q$ vanishes on $\hat X_p$, if $q \nleq p$;
        \item \label{itm:seshadri_strat_c} For each $p \in A$ holds the set-theoretic equality
        \begin{equation}
        \label{eq:s3}
        \set{x \in \hat X \mid f_p(x) = 0} \cap \hat X_p = \set{0} \cup \bigcup_{p \;\text{covers}\; q} \hat X_q.
        \end{equation}
    \end{enumerate}
\end{definition}

Notice, that for $m = 1$ the notion of a multiprojective Seshadri stratification coincides with notion of a Seshadri stratification introduced in \cite{seshstrat}. In this case all strata $X_p$ for $p \in A$ are closed subvarieties of $X$, since $I_p = \set{1}$. 

The affine multicone $\hat X \subseteq V$ of $X$ is the affine cone of a projective variety $\widetilde X \subseteq \PP(V)$. Hence every multiprojective Seshadri stratification on $X \subseteq \prod_{i=1}^m \PP(V_i)$ can also be seen as a Seshadri stratification on $\widetilde X$. Therefore one can informally say that every result in \loccit{}, where the grading on $R$ is not involved, does also hold in the multiprojective case. As a first example: The poset $A$ is a graded poset of length $\dim \widetilde X = \dim \hat X - 1$, that is to say all maximal chains have length $\dim \widetilde X$. The rank of an element $p \in A$ is given by $r(p) = \dim \hat X_p - 1$.

\begin{proposition}
    Every multiprojective variety $X \subseteq \PP(V_1) \times \dots \times \PP(V_m)$ admits a Seshadri stratification.
\end{proposition}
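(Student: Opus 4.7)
The strategy is to adapt the existence proof in the single-projective case from~\cite{seshstrat} by induction on the affine dimension $d = \dim \hat X$, while maintaining the multicone structure of every stratum throughout. Without loss of generality I would first reduce to the case where $X$ (and hence $\hat X$) is irreducible, by stratifying each irreducible component of $\hat X$ separately and gluing. The base case $d \leq 1$ is handled by a one-element poset with $I_{p_{\max}} = [m]$ and $f_{p_{\max}}$ any multihomogeneous linear form non-constant on $\hat X$; the three axioms then reduce to trivialities.

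For the inductive step I would choose a non-constant multihomogeneous $f_{p_{\max}} \in R$, which exists because $R$ is multigraded and $\hat X \neq \{0\}$. Since $\hat X$ is an irreducible affine variety and $f_{p_{\max}}$ is a non-zero-divisor, Krull's Hauptidealsatz guarantees that $V(f_{p_{\max}}) \cap \hat X$ has pure codimension one, with finitely many irreducible components $Z_1, \dots, Z_k$. The crucial observation is that multihomogeneity makes this vanishing locus stable under the torus $(\K^*)^m$, and connectedness of the torus forces each $Z_j$ to be itself torus-stable, hence a multicone. Setting $I_j = \{i \in [m] : \hat\pi_i(Z_j) \neq \{0\}\}$, each $Z_j$ is the multicone of an irreducible multiprojective variety $X_{q_j} \subseteq \prod_{i \in I_j} \PP(V_i)$ of affine dimension strictly less than $d$, to which the induction hypothesis applies.

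I would then assemble the total poset as $A = \{p_{\max}\} \sqcup \bigsqcup_j A_{q_j}$, ordered by inclusion of multicones. With this setup, axiom~\ref{itm:seshadri_strat_a} at the top follows from Krull's theorem and below from the inductive hypothesis, while axiom~\ref{itm:seshadri_strat_c} at $p_{\max}$ is precisely the decomposition $V(f_{p_{\max}}) \cap \hat X = \{0\} \cup \bigcup_j Z_j$.

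The principal obstacle is axiom~\ref{itm:seshadri_strat_b}: an extremal function $f_q$ taken from branch $A_{q_j}$ must vanish on every multicone $\hat X_{q'}$ coming from a different branch $A_{q_{j'}}$ whenever $q \nleq q'$. I would enforce this by multiplying each inductively chosen $f_q$ with a suitable multihomogeneous element that cuts out the offending strata, while keeping its multidegree supported in $I_q$ so that the requirement $f_q \in \K[X_{I_q}]$ is preserved. Arranging such multipliers to exist—using that $R$ has enough multihomogeneous elements of any prescribed support, combined with the inclusion $I_{q'} \subseteq I_{q_{j'}}$ controlling the interaction of differing index sets across branches—and checking that the modification leaves~\ref{itm:seshadri_strat_a} and~\ref{itm:seshadri_strat_c} intact is the main technical point, but it is ultimately a bookkeeping exercise over the multigrading rather than a genuine geometric difficulty.
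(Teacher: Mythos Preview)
The paper does not give its own proof of this proposition; it refers to \cite{ownarticle}. I can therefore only judge your proposal on its internal merits and against the single-projective existence argument in \cite{seshstrat}.

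Your inductive scheme has the right shape, but there are two real gaps.

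First, you never address the requirement that every stratum be \emph{smooth in codimension one}, which is part of the definition (see the setup preceding Definition~\ref{def:multiproj_seshadri_strat}). This property is not preserved when one passes to the irreducible components of $V(f_{p_{\max}}) \cap \hat X$ for an arbitrary multihomogeneous $f_{p_{\max}}$. In the projective case of \cite{seshstrat} this is handled by choosing the section generically and invoking a Bertini-type theorem; you would need the multiprojective analogue, but you do not mention it. Without it the induction hypothesis does not even apply to the $Z_j$.

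Second, your fix for axiom~\ref{itm:seshadri_strat_b} is too optimistic. Multiplying the inductively obtained $f_q$ by an auxiliary function $g$ that vanishes on the offending strata from other branches will in general introduce new zeros on $\hat X_q$ itself, namely along $\hat X_q \cap V(g)$; this set typically contains pieces of $Z_j \cap Z_{j'}$ and need not lie in the union of strata covered by $q$, so~\ref{itm:seshadri_strat_c} breaks. You would have to arrange simultaneously that $g$ has multidegree supported in $I_q$ (so that $f_q \cdot g$ stays in $\K[X_{I_q}]$) and that its zero locus on $\hat X_q$ is already contained in $\{0\} \cup \bigcup_{r} \hat X_r$. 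Neither condition is automatic when the index sets of the two branches overlap, and calling this ``bookkeeping'' undersells the difficulty.
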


In contrast to the Seshadri stratifications introduced in~\cite{seshstrat}, their multiprojective generali\-zations have an additional underlying structure, namely the poset
\begin{align}
    \label{eq:def_mathcal_I}
    \mathcal I = \{ I_p \subseteq [m] \mid p \in A \},
\end{align}
which is ordered by inclusion. We call it the \textbf{index poset}.

\begin{lemma}
    \label{lem:properties_A_mathcal_I}
    The map $A \to \mathcal I$, $p \mapsto I_p$ is monotone and has the following properties:
    \begin{enumerate}[label=(\alph{enumi})]
        \item \label{itm:properties_A_mathcal_I_a} Let $q < p$ be a covering relation in $A$. Then $I_p \setminus I_q$ contains at most one element. In the case $I_q \neq I_p$ it holds $\pi_{I_q}(X_p) = X_q$.
        \item \label{itm:properties_A_mathcal_I_b} If $p \in A$ is a minimal element, then $I_p$ is a one-element set.
    \end{enumerate}
    In particular, $\mathcal I$ is a graded poset of length $m-1$.
\end{lemma}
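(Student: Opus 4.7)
The plan is to prove monotonicity first, then derive (a) and (b), and finally address the grading. For monotonicity, suppose $q \leq p$, so $\hat X_q \subseteq \hat X_p$ inside $V$. Since $\hat X_p$ is contained in the linearly embedded subspace $\prod_{i \in I_p} V_i \subseteq V$ (zeros outside $I_p$), every point of $\hat X_q$ has vanishing coordinate in each slot $i \notin I_p$. On the other hand, $\hat X_q$ is the multicone of the nonempty irreducible variety $X_q \subseteq \prod_{i \in I_q} \PP(V_i)$, so it contains points with nonzero entry in every slot $i \in I_q$; this forces $I_q \subseteq I_p$.

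For part (a), take a covering relation $q < p$. Combining (S1), i.e.\ $\dim \hat X_q = \dim \hat X_p - 1$, with the standard multicone dimension formula $\dim \hat X_r = \dim X_r + |I_r|$, I obtain
\begin{equation*}
\dim X_q - \dim X_p = |I_p| - |I_q| - 1.
\end{equation*}
To bound $|I_p| - |I_q|$ from above, I apply $\hat\pi_{I_q}$ to $\hat X_p$. The image $\hat\pi_{I_q}(\hat X_p)$ is the multicone of $\pi_{I_q}(X_p)$ (by the direct analogue of the identity $\hat\pi_I(\hat X) = \hat X_I$ recalled from the excerpt, applied to $X_p$ in place of $X$), while $\hat\pi_{I_q}$ acts as the identity on $\hat X_q$ because $\hat X_q$ lies in the linearly embedded $\prod_{i \in I_q} V_i$. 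Hence $\hat X_q \subseteq \hat X_p$ yields $X_q \subseteq \pi_{I_q}(X_p)$, and consequently $\dim X_q \leq \dim X_p$, forcing $|I_p| - |I_q| \leq 1$. If $I_p \neq I_q$, then equality $|I_p \setminus I_q| = 1$ holds and the dimension bound is tight, $\dim X_q = \dim X_p$; by irreducibility of $X_p$ and the chain $X_q \subseteq \pi_{I_q}(X_p) \subseteq X_{I_q}$ of closed subvarieties of the same dimension, I conclude $X_q = \pi_{I_q}(X_p)$.

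Part (b) is immediate: minimality of $p$ gives $r(p) = \dim \hat X_p - 1 = 0$, whence $\dim X_p + |I_p| = 1$, and since both summands are nonnegative and $|I_p| \geq 1$, this forces $|I_p| = 1$. For the grading of $\mathcal I$, I take the candidate rank function $\rho(I) := |I| - 1$, which assigns $0$ to any minimum of $\mathcal I$ by (b) and $m - 1$ to the maximum $[m] = I_{p_{\max}}$. Along any maximal chain in $A$, the values $|I_p|$ begin at $1$ and end at $m$, growing by at most one per covering step by (a), so each integer in $\{1, \dots, m\}$ is realized; hence every covering relation in $\mathcal I$ corresponds to a strict jump $|I| \mapsto |I| + 1$, confirming the grading. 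The main technical point to handle carefully is the multicone-projection identity $\hat\pi_{I_q}(\hat X_p) = \widehat{\pi_{I_q}(X_p)}$ used in (a), together with the dimension formula for multicones of irreducible multiprojective varieties; the rest of the argument is then a bookkeeping computation with dimensions.
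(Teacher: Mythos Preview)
The paper does not actually prove this lemma here; it is stated as a result from the author's earlier article, so a direct comparison of approaches is not possible from this excerpt. That said, your arguments for monotonicity and for parts (a) and (b) are essentially correct. The multicone--projection identity you flag need not be used in its full strength: all you really require is the inclusion $X_q \subseteq \pi_{I_q}(X_p)$, and this follows because any point of $X_q$ lifts to $\hat y \in \hat X_q \subseteq \hat X_p$ with all $I_q$-coordinates nonzero, the rational map $\hat X_p \dashrightarrow \prod_{i\in I_q}\PP(V_i)$ is defined at $\hat y$, its image on the dense open set $U_p$ (all $I_p$-coordinates nonzero) lies in $\pi_{I_q}(X_p)$, and the latter is closed since projections between products of projective spaces are proper.

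There is, however, a genuine gap in your deduction that $\mathcal I$ is graded. You correctly observe that along any maximal chain in $A$ the sizes $|I_p|$ run through every value in $\{1,\dots,m\}$, and from this you conclude ``hence every covering relation in $\mathcal I$ corresponds to a strict jump $|I| \mapsto |I|+1$.'' This inference is not justified. What your observation shows is that the image (after removing repetitions) of any maximal chain of $A$ is a maximal chain of $\mathcal I$ of length $m-1$; it does \emph{not} show that every maximal chain of $\mathcal I$ arises this way. Concretely, the two properties you have actually established---for every $J\in\mathcal I$ there exists $J^+\supsetneq J$ in $\mathcal I$ with $|J^+|=|J|+1$ (when $J\neq [m]$) and $J^-\subsetneq J$ with $|J^-|=|J|-1$ (when $|J|>1$)---do not force gradedness: the subposet
\[
\bigl\{\{1\},\{3\},\{1,2\},\{3,4\},\{1,2,3\},\{2,3,4\},[4]\bigr\}\subseteq \mathcal P([4])
\]
satisfies both, yet contains the maximal chain $\{3\}\subsetneq\{1,2,3\}\subsetneq[4]$ of length $2<m-1=3$. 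To close the gap you must use more of the stratification structure---for instance, argue that whenever $J\subsetneq I$ in $\mathcal I$ there exist $q<p$ in $A$ with $I_q=J$ and $I_p=I$, so that a saturated chain in $A$ from $q$ to $p$ forces an intermediate index set of size $|J|+1$ whenever $|I|-|J|\geq 2$, contradicting the covering assumption. Your write-up does not supply such an argument.
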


For multiprojective stratifications we have the following new kind of covering relations in $A$ which do not appear for $m = 1$.

\begin{lemma}
    \label{lem:projective_covering_relation}
    Let $q < p$ be a covering relation in $A$ with $I_p \setminus I_q = \set{i}$. The vanishing multiplicity of a multihomogeneous function $g \in \K[\hat X_p] \setminus \set{0}$ along the prime divisor $\hat X_q \subseteq \hat X_p$ is equal to the $i$-th component of $\deg g \in \N_0^m$. In particular, the $i$-th component of $\deg f_p$ is non-zero.
\end{lemma}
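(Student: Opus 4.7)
The plan is to exploit the $\N_0$-grading on $R_p = \K[\hat X_p]$ obtained by remembering only the $i$-th component of the natural $\N_0^m$-multigrading, and to identify $\hat X_q$ with the zero locus of its positively graded part.

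First I would pin down the geometry of the inclusion $\hat X_q \subseteq \hat X_p$. Since $I_p \setminus I_q = \set{i}$, Lemma~\ref{lem:properties_A_mathcal_I}\ref{itm:properties_A_mathcal_I_a} yields $\hat\pi_{I_q}(\hat X_p) = \hat X_q$, and combining this surjection with the inclusion $\hat X_q \subseteq \hat X_p$ (which holds because $q \le p$) inside $V$ gives the set-theoretic equality $\hat X_q = \hat X_p \cap \set{v_i = 0}$. Consequently the vanishing ideal of $\hat X_q$ in $R_p$ is the ideal $\mathfrak p$ generated by the restrictions of the linear forms in $V_i^*$.

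Next I would coarsen the multigrading of $R_p$ to an $\N_0$-grading $R_p = \bigoplus_{k \ge 0} (R_p)_k$ by tracking only the $i$-th multidegree. Since $R_p$ is generated as a $\K$-algebra by linear forms of multidegrees $e_j$ for $j \in I_p$, and every monomial of $i$-th multidegree $k$ contains exactly $k$ factors from $V_i^*$, one obtains $\mathfrak p^k = \bigoplus_{j \ge k} (R_p)_j$ for every $k \ge 0$. Furthermore $R_p / \mathfrak p = (R_p)_0$ is isomorphic, via pull-back along the projection $\hat X_p \twoheadrightarrow \hat X_q$, to the domain $\K[\hat X_q]$, so $\mathfrak p$ is prime and $\mathcal O_{\hat X_p, \hat X_q} = (R_p)_\mathfrak p$ is indeed the DVR in which the vanishing multiplicity is measured.

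With these preparations in place, computing the multiplicity of $g \in (R_p)_{\underline d} \setminus \set{0}$ along $\hat X_q$ becomes a graded calculation. The inclusion $g \in \mathfrak p^{d_i}$ is immediate, so the multiplicity is at least $d_i$. The reverse inequality is the main obstacle, because one has to pass to the localization $(R_p)_\mathfrak p$, where elements outside $\mathfrak p$ are inverted and the graded structure is partially lost. Assuming the multiplicity is at least $d_i + 1$, I would pick $s \in R_p \setminus \mathfrak p$ with $sg \in \mathfrak p^{d_i+1}$, decompose $s$ in the coarsened grading, and isolate its nonzero degree-zero component $s_0$. Then $s_0 g$ equals the unique homogeneous component of $sg$ of $i$-th multidegree $d_i$, which must vanish because $sg \in \mathfrak p^{d_i+1} = \bigoplus_{j \ge d_i + 1} (R_p)_j$; as $R_p$ is a domain and $s_0 \ne 0$, this forces $g = 0$, contradicting $g \ne 0$. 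The final assertion is then immediate: property~\ref{itm:seshadri_strat_b} shows that $f_p$ vanishes on $\hat X_q$ (since $p \nleq q$), hence its multiplicity along $\hat X_q$ is positive, and by the main claim so is the $i$-th component of $\deg f_p$.
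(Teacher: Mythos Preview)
The paper does not include its own proof of this lemma; it is stated in the preliminary Section~\ref{sec:multiproj_strat}, which summarises results from~\cite{ownarticle}, so there is nothing in the present paper to compare against. Your argument is self-contained and correct: the identification $\hat X_q = \hat X_p \cap \set{v_i = 0}$ via Lemma~\ref{lem:properties_A_mathcal_I}\ref{itm:properties_A_mathcal_I_a}, the coarsened $\N_0$-grading with $\mathfrak p^k = (R_p)_{\ge k}$, the verification that $R_p/\mathfrak p \cong \K[\hat X_q]$ is a domain so that $\mathfrak p$ is the prime ideal of $\hat X_q$, and the graded contradiction in the localisation all go through cleanly. One small remark: for the final assertion you could equally invoke~\ref{itm:seshadri_strat_c} directly, as $\hat X_q$ appears in the vanishing locus of $f_p$ on $\hat X_p$ for every $q$ covered by $p$; your use of~\ref{itm:seshadri_strat_b} with the roles of $p$ and $q$ swapped is also legitimate, since $p \nleq q$.
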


\subsection{The quasi-valuation and the fan of monoids}

We summarize some constructions and results from \cite{seshstrat} about Seshadri stratifications. It is strongly recommended to read the original papers, as we cannot do justice to their results on just a few pages and this section mainly serves as a reminder for all the notation introduced for Seshadri stratifications.

We fix the following notation: \label{txt:def_support}If $K$ is any field of characteristic zero and $S$ is a finite set, then we write $K^S$ for the vector space over $K$ with basis $\set{e_s \mid s \in S}$ indexed by $S$. Let $\N_0^S$ be the monoid generated by these basis elements and $\Z^S \subseteq K^S$ be the smallest group containing $\N_0^S$. For each element $x = \sum_{s \in S} x_s e_s \in K^S$ with coefficients $x_s \in K$ the set
\begin{align*}
    \supp x = \set{s \in S \mid x_s \neq 0}
\end{align*}
is called the \textit{support} of $x$.

By definition, the multicone $\hat X_q$ is a prime divisor of $\hat X_p$ for every covering relation $q < p$ in $A$. If one extends the poset $A$ by a unique minimal element $p_{-1}$ with associated index set $I_{p_{-1}} = \varnothing$, then the multicone $\hat X_{p_{-1}} = \set{0}$ is a prime divisor of $\hat X_p \cong \A^1$ for each minimal element $p \in A$. To each covering relation $p > q$ in the extended poset $\hat A = A \cup \set{p_{-1}}$ we have an associated valuation, namely the discrete valuation
\begin{align*}
    \nu_{p, q}: \K(\hat X_p) \setminus \set{0} \to \Z,
\end{align*}
sending a non-zero, rational function $g$ to its vanishing multiplicity at the prime divisor $\hat X_q \subseteq \hat X_p$. Its value 
\begin{align*}
    b_{p,q} = \nu_{p,q}(f_p\big|_{\hat X_p}) \in \N
\end{align*}
at the extremal function $f_p$ is called the \textit{bond} of the covering relation $q < p$. If $p$ is minimal in $A$, then $b_{p, p_{-1}}$ coincides with the total degree $\vert \mkern-1mu \deg f_p \mkern1mu \vert$, which is the sum of all entries in the degree $\deg f_p \in \N_0^m$. 

Every Seshadri stratification gives rise to a collection of valuations on $R$, one for each maximal chain $\mathfrak C$ in $A$. Let $p_r > \dots > p_0$ be the elements of $\mathfrak C$. To a regular function $g \in R \setminus \set{0}$ one associates a sequence $g_{\mathfrak C} = (g_r, \dots, g_0)$ of rational functions inductively via $g_r \coloneqq g$ and 
\begin{align*}
    g_{i-1} = \frac{ g_i^{\mkern-1mu b_{p_i, p_{i-1}}} }{ f_{p_i}^{\mkern2mu \nu_{p_i, p_{i-1}}(g_i)} } \bigg|_{\hat X_{p_{i-1}}} \in \K(\hat X_{p_{i-1}}).
\end{align*}
for $i = r, \dots, 1$. Further one defines the element
\begin{align*}
    \mathcal V_{\mathfrak C}(g) = \sum_{j=0}^r \frac{\nu_{p_j, p_{j-1}}(g_j)}{\prod_{k=j}^r b_{p_k, p_{k-1}}} \, e_{p_j} \in \Q^{\mathfrak C}.
\end{align*}
By this definition, each extremal function $f_p$ for $p \in \mathfrak C$ is mapped to the vector $\mathcal V_{\mathfrak C}(f_p) = e_p$. We equip the abelian group $\Q^{\mathfrak C}$ with the lexicographic order induced by the total order on the maximal chain $\mathfrak C$, \ie for all elements $\underline a = \sum_{i=0}^r a_i e_{p_i}, \underline b = \sum_{i=0}^r b_i e_{p_i}$ in $\Q^{\mathfrak C}$ it holds
\begin{align*}
    \underline a \geq \underline b \quad \Longleftrightarrow \quad \text{$\underline a = \underline b$ or $a_i > b_i$ for the maximal index $i \in \set{0, \dots, r}$ with $a_i \neq b_i$}.
\end{align*}
Then the map $\mathcal V_{\mathfrak C}: R \setminus \set{0} \to \Q^{\mathfrak C}$ is a valuation. Chiriv{\`i}, Fang and Littelmann also gave another, equivalent definition in \cite{seshstrat}, which we do not use here, as it is less suited for computations.

\begin{remark}
    \label{rem:induced_strat}
    Every element $p \in A$ induces a Seshadri stratification on the multiprojective variety $X_p \subseteq \prod_{i \in I_p} \PP(V_i)$ via the poset $A_p = \set{q \in A \mid q \leq p}$, where we take the same strata, extremal functions and index sets as in the stratification on $X$. By its definition, the valuation $\mathcal V_{\mathfrak C}$ is compatible in the following sense with the valuation $\mathcal V_{\mathfrak C_p}$ of the induced stratification along the maximal chain $\mathfrak C_p = \mathfrak C \cap A_p$: For every $g \in R \setminus \set{0}$, that does not vanish identically on $\hat X_p$, the valuation $\mathcal V_{\mathfrak C_p}(g\big|_{\hat X_p}) \in \Q^{\mathfrak C_p}$ coincides with $\mathcal V_{\mathfrak C}(g)$, when extended by zeros to an element of $\Q^{\mathfrak C}$.
\end{remark}

The collection of all valuations $\mathcal V_{\mathfrak C}$ define a quasi-valuation $\mathcal V$, which respects the structure of the whole poset $A$, not just of one maximal chain. A \textbf{quasi-valuation} is defined similar to valuation, only the condition $\mathcal V(gh) = \mathcal V(g) + \mathcal V(h)$ for all $g, h \in R$ with $gh \neq 0$ is replaced by the inequality $\mathcal V(gh) \geq \mathcal V(g) + \mathcal V(h)$. To obtain this quasi-valuation one needs to extend $\mathcal V_{\mathfrak C}$ to a valuation $R \setminus \set{0} \to \Q^{\mathfrak C} \hookrightarrow \Q^A$, such that all valuations take values in the same abelian group. In order to make sense of this, we need a total order on $\Q^A$ such that each linear inclusion $\Q^{\mathfrak C} \hookrightarrow \Q^A$ is monotone. In general, there is no natural candidate for this total order. For this reason, one needs to choose and fix a total order $\geq^t$ on $A$ linearizing the partial order, \ie for each elements $p, q \in A$ the relation $p \geq q$ implies $p \geq^t q$. This total order induces the lexicographic order on $\Q^A$ and each map $\mathcal V_{\mathfrak C}: R \setminus \set{0} \to \Q^A$ is a valuation. One obtains the quasi-valuation $\mathcal V$ by taking their minimum with respect to this total order on $\Q^A$:
\begin{align*}
    \mathcal V: R \setminus \set{0} \to \Q^A, \quad g \mapsto \operatorname{min} \set{\mathcal V_{\mathfrak C}(g) \mid \text{$\mathfrak C$ maximal chain in $A$}}.
\end{align*}
Hence the quasi-valuation depends on the choice of this total order $\geq^t$ on $A$.

There is also the following inductive way of describing the quasi-valuation $\mathcal V$. Let $p$ be any element in $A$, $g \in \K(\hat X_p)$ be a non-zero rational function. We write $\mathcal V_p$ for the quasi-valuation on the induced Seshadri stratification on $X_p$ with underlying poset $A_p = \set{q \in A \mid q \leq p}$. Then it holds
\begin{align}
    \label{eq:inductive_def_quasi_val}
    \mathcal V_p(g) = \frac{\nu_{p, q}(g)}{b_{p,q}} e_p + \frac{1}{b_{p,q}} \mathcal V_q \big( \frac{g^{b_{p,q}}}{f_p^{\nu_{p, q}(g)}} \big\vert_{\hat X_q} \big),
\end{align}
where $q$ is the unique minimal element covered by $p$ with respect to the total order $\geq^t$, such that it holds
\begin{align*}
    \frac{\nu_{p, q}(g)}{b_{p,q}} = \operatorname{min} \Set{ \frac{\nu_{p, q'}(g)}{b_{p,q'}} \ \middle\vert \ \text{$q' \in A$ covered by $p$}}.
\end{align*}

The quasi-valuation $\mathcal V$ has the following important properties, which we use many times throughout this \whatisthis{} without mention (see \cite[Section 8]{seshstrat}). 
\begin{itemize}
    \item The values of $\mathcal V$ have non-negative entries, \ie the quasi-valuation $\mathcal V(g)$ of every function $g \in R \setminus \set{0}$ is contained in the non-negative orthant $\Q_{\geq 0}^A$.
    \item One can characterize combinatorially for which maximal chains $\mathfrak C$ the quasi-valuation attains its minimum. For each $g \in R \setminus \set{0}$ it holds $\mathcal V_{\mathfrak C}(g) = \mathcal V(g)$, if and only if the support $\supp \mathcal V(g) \subseteq A$ lies in $\mathfrak C$. As a consequence: If $g, h \in R$ are non-zero and there exists a maximal chain $\mathfrak C$ containing both $\supp \mathcal{V}(g)$ and $\supp \mathcal{V}(h)$, then the quasi-valuation is additive, \ie we have $\mathcal{V}(gh) = \mathcal{V}(g) + \mathcal{V}(h)$. 
    \item Every extremal function $f_p$ for $p \in A$ has the quasi-valuation $\mathcal V(f_p) = e_p$, so the support is given by $\supp \mathcal V(f_p) = \set{p}$. In particular: If $p_1, \dots, p_s \in A$ are contained in a chain in $A$ and $n_1, \dots, n_s \in \N_0$, then it follows
    \begin{align*}
        \mathcal V(f_{p_1}^{n_1} \cdots f_{p_s}^{n_s}) = \sum_{i=1}^s n_i e_{p_i}.
    \end{align*}
\end{itemize}

The image of the quasi-valuation is denoted by $\Gamma = \set{\mathcal V(g) \in \Q^A \mid g \in R \setminus \set{0}}$. For each (not necessarily maximal) chain $C$ in $A$ the subset
\begin{align*}
    \Gamma_{C} = \set{\underline a \in \Gamma \mid \supp \underline a \subseteq C}
\end{align*}
is a finitely generated monoid. The set $\Gamma$ is called the \textit{fan of monoids} of the Seshadri stratification, since it is the union of all monoids $\Gamma_{C}$ and the cones in $\R^A$ generated by these monoids form a fan.

The quasi-valuation $\mathcal V: R \setminus \set{0} \to \Q^m$ induces a filtration on $R$ by the subrings
\begin{align*}
    R_{\geq \underline a} = \set{g \in R \setminus \set{0} \mid \mathcal V(g) \geq \underline a} \cup \set{0}
\end{align*}
for $\underline a \in \Gamma$. Since $\mathcal V(g)$ only has non-negative entries for all $g \in R \setminus \set{0}$, these subrings are ideals in $R$. The quotient of $R_{\geq \underline a}$ by the ideal $R_{> \underline a} = \set{g \in R \setminus \set{0} \mid \mathcal V(g) > \underline a} \cup \set{0}$ is one-dimensional for every $\underline a \in \Gamma$. They are called the \textit{leaves} of the quasi-valuation $\mathcal V$. All leaves are at most one-dimensional. Hence choosing a regular function $g_{\underline a} \in R$ with $\mathcal V(g_{\underline a}) = \underline a$ for each $\underline a \in \Gamma$ yields a basis of $R$ as a vector space.

The concepts of normal and balanced Seshadri stratifications were introduced in \cite[Sections 13, 15]{seshstrat}. They can also be used in the multiprojective case. 

\begin{definition}
    A multiprojective Seshadri stratification is called 
    \begin{enumerate}[label=(\alph{enumi})]
        \item \textbf{normal}, if $\Gamma_{\mathfrak C}$ is saturated for every maximal chain $\mathfrak C$, \ie it is equal to the intersection of the lattice $\mathcal L^{\mathfrak C}$ generated by $\Gamma_{\mathfrak C}$ with the positive orthant $\Q^{\mathfrak C}_{\geq 0}$;
        \item \textbf{balanced}, when the fan of monoids $\Gamma$ is independent of the choice of the total order $\geq^t$.
    \end{enumerate}
\end{definition}

Every normal Seshadri stratification defines a standard monomial theory on $R$ in the sense of the next proposition. When the stratification is balanced as well, then the normality and its associated standard monomial theory do not depend on the choice if the total order $\geq^t$.

An element $\underline a \in \Gamma$ is called \textit{decomposable}, if it is $0$ or it can be written in the form $\underline a = \underline a^1 + \underline a^2$ for two elements $\underline a^1, \underline a^2 \in \Gamma \setminus \set{0}$ with $\min \supp \underline a^1 \geq \max \supp \underline a^2$. Otherwise $\underline a$ is  called \textit{indecomposable}. Note that the minima and maxima exist, since the support of each element in $\Gamma$ is totally ordered. Let $\mathbb G$ be the set of all indecomposable elements in $\Gamma$. For each $\underline a \in \mathbb G$ we fix a regular function $x_{\underline a} \in R \setminus \set{0}$ with $\mathcal V(x_{\underline a}) = \underline a$ and let $\mathbb G_R = \set{x_{\underline a} \mid \underline a \in \mathbb G}$ be the set of these functions.

We assume that the stratification is normal. In this case every element $\underline a \in \Gamma$ has a unique decomposition into a sum $\underline a = \underline a^1 + \dots + \underline a^s$ of indecomposable elements $\underline a^k \in \Gamma$, such that $\min \supp \underline a^k \geq \max \supp \underline a^{k+1}$ holds for all $k = 1, \dots, s-1$. With the choice of the set $\mathbb G_R$ one can therefore associate a regular function to every element $\underline a \in \Gamma$ via
\begin{align*}
    x_{\underline a} \coloneqq x_{\underline a^1} \cdots x_{\underline a^s} \in R.
\end{align*}
A monomial in the functions in $\mathbb G_R$ is called \textit{standard}, if it is of the form $x_{\underline a}$ for some element $\underline a \in \Gamma$.

\begin{proposition}[{\cite[Proposition 15.6]{seshstrat}}]
    \label{prop:smt}
    If the stratification is normal and $\mathbb G_R$ and $x_{\underline a}$ are chosen as above, then the following statements hold:
    \begin{enumerate}[label=(\alph{enumi})]
        \item The set $\mathbb G_R$ generates $R$ as a $\K$-algebra.
        \item The set of all standard monomials in $\mathbb G_R$ is a basis of $R$ as a vector space.
        \item If $\underline a = \underline a^1 + \dots + \underline a^s$ is the unique decomposition of $\underline a$ into indecomposables, then $x_{\underline a} \coloneqq x_{\underline a^1} \cdots x_{\underline a^s}$ is a standard monomial with $\mathcal V(x_{\underline a}) = \underline a$.
        \item For each non-standard monomial $x_{\underline a^1} \cdots x_{\underline a^s}$ in $\mathbb G_R$ there exists a straightening relation
        \begin{align*}
            x_{\underline a^1} \cdots x_{\underline a^s} = \sum_{\underline b \in \Gamma} u_{\underline b} \mkern2mu x_{\underline b}
        \end{align*}
        expressing it as a linear combination of standard monomials, where $u_{\underline b} \neq 0$ only if $\underline b \geq^t \underline a^1 + \dots + \underline a^s$.
    \end{enumerate}
\end{proposition}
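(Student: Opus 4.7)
The plan is to establish (c) first, since it pins down the quasi-valuation of each standard monomial, then derive (b) by combining trivial linear independence with a Noetherian spanning argument, and finally obtain (a) and (d) as formal consequences. For (c), note that the support of $\underline a = \underline a^1 + \dots + \underline a^s$ is totally ordered in $A$ thanks to the nesting condition $\min \supp \underline a^k \geq \max \supp \underline a^{k+1}$, so it lies inside some common maximal chain $\mathfrak C$. The additivity property of $\mathcal V$ on products whose factors have supports in a common chain — listed among the basic properties of $\mathcal V$ in the excerpt — then yields
\begin{align*}
    \mathcal V(x_{\underline a^1} \cdots x_{\underline a^s}) = \sum_{k=1}^s \mathcal V(x_{\underline a^k}) = \sum_{k=1}^s \underline a^k = \underline a.
\end{align*}

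For (b), linear independence is immediate: in any finite $\K$-linear relation among standard monomials the values $\mathcal V(x_{\underline a})$ are pairwise distinct by (c), and the one-dimensionality of leaves forces the coefficient at the minimum value to vanish; iterating kills all coefficients. For spanning I would use Noetherian induction inside the finite-dimensional homogeneous component of a given $g \in R \setminus \set{0}$: normality provides a unique decomposition of $\mathcal V(g)$ into indecomposables with the required nesting, so $x_{\mathcal V(g)}$ is defined and has $\mathcal V(x_{\mathcal V(g)}) = \mathcal V(g)$ by (c); an appropriate scalar multiple of $x_{\mathcal V(g)}$ cancels $g$ inside its one-dimensional leaf, strictly increasing the quasi-valuation. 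Only finitely many quasi-valuations occur in a given homogeneous component, so the process terminates.

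Part (a) is now formal from (b), since each $x_{\underline a}$ is by definition a product of elements of $\mathbb G_R$. For (d), expand a non-standard monomial $x_{\underline a^1}\cdots x_{\underline a^s}$ in the standard basis via (b); the quasi-valuation inequality $\mathcal V(x_{\underline a^1}\cdots x_{\underline a^s}) \geq \underline a^1 + \dots + \underline a^s$ combined with the fact that the minimum-valuation summand on the right-hand side must realise $\mathcal V$ of the left-hand side forces $\underline b = \mathcal V(x_{\underline b}) \geq^t \underline a^1 + \dots + \underline a^s$ for every appearing $x_{\underline b}$.

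The main obstacle, as one would expect in a standard-monomial-theory statement, is the unique indecomposable decomposition underlying the definition of $x_{\underline a}$ in (c); this is precisely where normality is essential. After fixing a maximal chain $\mathfrak C \supseteq \supp \underline a$, saturation of $\Gamma_{\mathfrak C}$ together with the total order on $\mathfrak C$ permits a greedy peeling argument: subtract off the largest indecomposable whose support contains $\max \supp \underline a$ and recurse. Existence is driven by finite generation and saturation, while uniqueness follows from the lattice-linear independence of the decomposition inside $\mathcal L^{\mathfrak C}$ together with the lexicographic ordering induced by the chain.
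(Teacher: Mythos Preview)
The paper does not supply its own proof of this proposition: it is quoted verbatim from \cite[Proposition~15.6]{seshstrat} and used as a black box, so there is no in-paper argument to compare against. Your outline is correct and is essentially the standard proof one finds in the source: additivity of $\mathcal V$ along a common chain gives (c), the one-dimensional leaves plus pairwise distinct values give linear independence, Noetherian induction on $\mathcal V$ within a fixed homogeneous component gives spanning (this is where the unique decomposition into indecomposables, and hence normality, enters), and (a) and (d) follow formally. One small point worth making explicit in (d): when you pass from the quasi-valuation inequality $\mathcal V(x_{\underline a^1}\cdots x_{\underline a^s}) \geq^t \underline a^1 + \dots + \underline a^s$ to the conclusion about \emph{every} summand, you are tacitly using that $\mathcal V$ of a finite sum with pairwise distinct values equals the minimum of those values, which follows from the valuation-type axiom $\mathcal V(g+h) \geq \min\{\mathcal V(g),\mathcal V(h)\}$ with equality when $\mathcal V(g)\neq\mathcal V(h)$; it would be worth stating this once.
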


The above standard monomial theory is compatible with the induced stratification (see Remark~\ref{rem:induced_strat}) on $X_p$ for $p \in A$. This was shown in \cite[Theorem 15.12]{seshstrat}. By adapting this result to the multiprojective setting, we obtain the following corollary.

\begin{corollary}
    \label{cor:smt_induced_strat}
    If the Seshadri stratification on $X$ is normal and balanced, then the following statements are fulfilled for each $p \in A$:
    \begin{enumerate}[label=(\alph{enumi})]
        \item The induced stratification on $X_p$ is also normal and balanced.
        \item The fan of monoids of this stratification is equal to $\Gamma_p = \set{\underline a \in \Gamma \mid \max \supp \underline a \leq p}$.
        \item The set of indecomposable elements in $\Gamma_p$ is given by $\mathbb G_p = \mathbb G \cap \Gamma_p$ and the restriction of a function $x_{\underline a}$ with $\underline a \in \mathbb G_p$ fulfills $\mathcal V_p(x_{\underline a}\big\vert_{\hat X_p}) = \underline a$, where $\mathcal V_p$ denotes the quasi-valuation of the induced stratification.
        \item A standard monomial $x_{\underline a}$, $a \in \Gamma$, vanishes identically on $\hat X_p$, if and only if $\max \supp \underline a \leq p$. In this case, the monomial is called \textbf{\boldmath{}standard on $X_p$}.
        \item The restrictions of the standard monomials $x_{\underline a}$, $a \in \Gamma$, which are standard on $X_p$, form a basis of the multihomogeneous coordinate ring $\K[X_p]$ \wrt{} the embedding
        \begin{align*}
            X_p \longhookrightarrow \prod_{i \in I_p} \PP(V_i).
        \end{align*}
    \end{enumerate}
\end{corollary}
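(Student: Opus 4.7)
The plan is to carry over the argument of \cite[Theorem~15.12]{seshstrat} to the multiprojective setting, with Remark~\ref{rem:induced_strat} and the inductive formula~(\ref{eq:inductive_def_quasi_val}) as the two structural inputs that survive the transition from $m=1$ to general $m$. The technical crux is a vanishing criterion that I would establish up front: for every $g \in R \setminus \set{0}$ and every $p \in A$, the restriction $g\vert_{\hat X_p}$ is non-zero if and only if $\max \supp \mathcal V(g) \leq p$. For an extremal function $f_q$ this is a direct consequence of \ref{itm:seshadri_strat_b} (giving $f_q\vert_{\hat X_p} = 0$ when $q \not\leq p$) together with \ref{itm:seshadri_strat_c} (which forces $f_q\vert_{\hat X_q} \neq 0$, hence $f_q\vert_{\hat X_p} \neq 0$ whenever $q \leq p$). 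I would then extend the criterion to products of extremal functions along chains in $A$, and from there to arbitrary $g$ using the basis of standard monomials from Proposition~\ref{prop:smt} and its lower-triangular straightening relations.

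With this criterion in hand, the compatibility $\mathcal V(g) = \mathcal V_p(g\vert_{\hat X_p})$ under the natural inclusion $\Q^{A_p} \hookrightarrow \Q^A$ follows for every $g$ not vanishing on $\hat X_p$: choose a maximal chain $\mathfrak C$ in $A$ passing through $p$ and containing $\supp \mathcal V(g) \subseteq A_p$, so that Remark~\ref{rem:induced_strat} gives $\mathcal V(g) = \mathcal V_{\mathfrak C}(g) = \mathcal V_{\mathfrak C_p}(g\vert_{\hat X_p}) \geq \mathcal V_p(g\vert_{\hat X_p})$, while the reverse inequality follows by extending any maximal chain of $A_p$ to one of $A$. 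Parts~(b), (c) and~(d) then fall out immediately: the image of $\mathcal V_p$ is exactly $\set{\mathcal V(g) \mid g\vert_{\hat X_p} \neq 0} = \set{\underline a \in \Gamma \mid \max \supp \underline a \leq p}$; the notions of $\min$ and $\max$ of the support are unaffected by passing from $A$ to $A_p$, so indecomposability agrees in $\Gamma$ and $\Gamma_p$; and~(d) is just the vanishing criterion applied to standard monomials.

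Normality in part~(a) follows because every maximal chain $\mathfrak C_p$ of $A_p$ extends to a maximal chain $\mathfrak C$ of $A$, and $\Gamma_{p,\mathfrak C_p} = \Gamma_{\mathfrak C} \cap \Q^{\mathfrak C_p}$ is the intersection of a saturated monoid with a coordinate subspace, hence saturated. Balancedness transfers because any total order on $A_p$ linearizing the partial order extends to such a total order on $A$, and the description of $\Gamma_p$ in~(b) depends only on $\Gamma$ and the (choice-independent) partial order. For part~(e) I would apply Proposition~\ref{prop:smt} to the induced stratification, now known to be normal and balanced: by~(c) the restrictions $x_{\underline a}\vert_{\hat X_p}$ for $\underline a \in \mathbb G_p$ form an admissible choice of distinguished functions indexing the indecomposables of $\Gamma_p$, and the unique decomposition of $\underline a \in \Gamma_p$ into indecomposables of $\Gamma_p$ coincides with its decomposition inside $\Gamma$. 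The step I expect to cost the most work is the vanishing criterion, specifically the passage from products of extremal functions along chains to arbitrary standard monomials $x_{\underline a}$ with $\underline a \in \Gamma$: because $\Gamma$ sits inside $\Q^A$ rather than $\Z^A$, this step cannot be handled by a direct product computation and must instead exploit the straightening relations together with the bond data $b_{p,q}$.
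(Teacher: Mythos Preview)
Your approach matches the paper's: both defer to \cite[Theorem~15.12]{seshstrat}, with the paper giving no further detail beyond the remark that the argument carries over to the multiprojective setting. Your elaboration, however, contains an overclaim that breaks the logical flow: the vanishing criterion you propose for \emph{all} $g \in R \setminus \{0\}$ is false in the ``only if'' direction, which is exactly the direction you use to deduce $\supp \mathcal V(g) \subseteq A_p$ from $g\vert_{\hat X_p} \neq 0$. For a counterexample take $g = x_{\underline a} + x_{\underline b}$ with $q = \max\supp\underline a$ incomparable to $p$, $\max\supp\underline b \leq p$, and the linearization $\geq^t$ chosen so that $\underline a <^t \underline b$; then $\mathcal V(g) = \underline a$ (standard monomials have pairwise distinct quasi-valuations) while $\max\supp\mathcal V(g) = q \nleq p$, yet $g\vert_{\hat X_p} = x_{\underline b}\vert_{\hat X_p} \neq 0$. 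Your general compatibility $\mathcal V(g) = \mathcal V_p(g\vert_{\hat X_p})$ fails for the same $g$.

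The fix is to restrict both the vanishing criterion and the compatibility to standard monomials $x_{\underline a}$, which is all the corollary requires. Once $x_{\underline a}\vert_{\hat X_p} = 0 \Leftrightarrow \max\supp\underline a \nleq p$ and $\mathcal V_p(x_{\underline a}\vert_{\hat X_p}) = \underline a$ for $\underline a \in \Gamma_p$ are in hand, the restricted standard monomials indexed by $\Gamma_p$ are linearly independent (distinct $\mathcal V_p$-values via Remark~\ref{rem:induced_strat}) and span $\K[\hat X_p]$ (the full standard basis of $R$ surjects and the remaining monomials restrict to zero), giving (b)--(e); part~(a) goes through as you describe. You are right that the ``only if'' direction for standard monomials with genuinely rational coefficients is the one substantive step and cannot be handled by treating $x_{\underline a}$ as a product of extremal functions.
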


\ytableausetup{smalltableaux}
\begin{figure}
\centering
\begin{subfigure}{.5\textwidth}
    \begin{center}
        \begin{tikzpicture}[scale=0.65]
        \node (p3) at (0,0) {$\begin{ytableau} 3 \end{ytableau}$};
        \node (p2) at (0,-1.5)  {$\begin{ytableau} 2 \end{ytableau}$};
        \node (p1) at (-1.5,-3) {$\begin{ytableau} 1 \end{ytableau}$};
        \node (p23) at (1.5,-3) {$\begin{ytableau} 2 \\ 3 \end{ytableau}$};
        \node (p13) at (0,-4.5) {$\begin{ytableau} 1 \\ 3 \end{ytableau}$};
        \node (p12) at (0,-6.5) {$\begin{ytableau} 1 \\ 2 \end{ytableau}$};
        
        \draw [ thick, shorten <=-2pt, shorten >=-2pt, -stealth] (p3) -- (p2);
        \draw [thick, shorten <=-2pt, shorten >=-2pt, -stealth] (p2) -- (p1);
        \draw [thick, shorten <=-2pt, shorten >=-2pt, -stealth] (p2) -- (p23);
        \draw [thick, shorten <=-2pt, shorten >=-2pt, -stealth] (p1) -- (p13);
        \draw [thick, shorten <=-2pt, shorten >=-2pt, -stealth] (p23) -- (p13);
        \draw [thick, shorten <=-2pt, shorten >=-2pt, -stealth] (p13) -- (p12);
        \end{tikzpicture}
    \end{center}
    \caption{Type $\texttt{A}_2$}
    \label{fig:hasse_a_2}
\end{subfigure}%
\begin{subfigure}{.5\textwidth}
    \begin{center}
        \begin{tikzpicture}[scale=0.70]
        \node (p4) at (0,0) {$\begin{ytableau} 4 \end{ytableau}$};
        \node (p3) at (1.5,0) {$\begin{ytableau} 3 \end{ytableau}$};
        \node (p2) at (3,0)  {$\begin{ytableau} 2 \end{ytableau}$};
        \node (p1) at (4.5,0) {$\begin{ytableau} 1 \end{ytableau}$};
        \node (p34) at (1.5,-1.75) {$\begin{ytableau} 3 \\ 4 \end{ytableau}$};
        \node (p24) at (3,-1.75) {$\begin{ytableau} 2 \\ 4 \end{ytableau}$};
        \node (p14) at (4.5,-1.75) {$\begin{ytableau} 1 \\ 4 \end{ytableau}$};
        \node (p23) at (3,-3.8) {$\begin{ytableau} 2 \\ 3 \end{ytableau}$};
        \node (p13) at (4.5,-3.8) {$\begin{ytableau} 1 \\ 3 \end{ytableau}$};
        \node (p12) at (6,-3.8) {$\begin{ytableau} 1 \\ 2 \end{ytableau}$};
        \node (p234) at (3,-6.1) {$\begin{ytableau} 2 \\ 3 \\ 4 \end{ytableau}$};
        \node (p134) at (4.5,-6.1) {$\begin{ytableau} 1 \\ 3 \\ 4 \end{ytableau}$};
        \node (p124) at (6,-6.1) {$\begin{ytableau} 1 \\ 2 \\ 4 \end{ytableau}$};
        \node (p123) at (7.5,-6.1) {$\begin{ytableau} 1 \\ 2 \\ 3 \end{ytableau}$};
        
        \draw [thick, shorten <=-2pt, shorten >=-2pt, -stealth] (p4) -- (p3);
        \draw [thick, shorten <=-2pt, shorten >=-2pt, -stealth] (p3) -- (p2);
        \draw [thick, shorten <=-2pt, shorten >=-2pt, -stealth] (p2) -- (p1);
        \draw [thick, shorten <=-2pt, shorten >=-2pt, -stealth] (p3) -- (p34);
        \draw [thick, shorten <=-2pt, shorten >=-2pt, -stealth] (p2) -- (p24);
        \draw [thick, shorten <=-2pt, shorten >=-2pt, -stealth] (p1) -- (p14);
        \draw [thick, shorten <=-2pt, shorten >=-2pt, -stealth] (p34) -- (p24);
        \draw [thick, shorten <=-2pt, shorten >=-2pt, -stealth] (p24) -- (p23);
        \draw [thick, shorten <=-2pt, shorten >=-2pt, -stealth] (p24) -- (p14);
        \draw [thick, shorten <=-2pt, shorten >=-2pt, -stealth] (p23) -- (p13);
        \draw [thick, shorten <=-2pt, shorten >=-2pt, -stealth] (p14) -- (p13);
        \draw [thick, shorten <=-2pt, shorten >=-2pt, -stealth] (p13) -- (p12);
        \draw [thick, shorten <=-2pt, shorten >=-2pt, -stealth] (p23) -- (p234);
        \draw [thick, shorten <=-2pt, shorten >=-2pt, -stealth] (p13) -- (p134);
        \draw [thick, shorten <=-2pt, shorten >=-2pt, -stealth] (p12) -- (p124);
        \draw [thick, shorten <=-2pt, shorten >=-2pt, -stealth] (p234) -- (p134);
        \draw [thick, shorten <=-2pt, shorten >=-2pt, -stealth] (p134) -- (p124);
        \draw [thick, shorten <=-2pt, shorten >=-2pt, -stealth] (p124) -- (p123);
        \end{tikzpicture}
    \end{center}
    \caption{Type $\texttt{A}_3$}
    \label{fig:hasse_a_3}
\end{subfigure}
\caption{Hasse-diagrams of $\ulW$ for $Q = B$}
\label{fig:hasse_ulWlambda_type_A}
\end{figure}
\ytableausetup{nosmalltableaux}

\begin{example}
    \label{ex:strat_type_A}
    In~\cite{ownarticle} the author constructed a multiprojective stratification on flag varieties in Dynkin type $\texttt{A}$, which forms the first example of the stratifications, we seek to construct in this \whatisthis{}. Let $G$ denote the simple algebraic group $\mathrm{SL}_n(\K)$, where $\K$ is algebraically closed of characteristic zero. Each time we specifically work in type $\texttt{A}$ we use the notation from the Appendix~\ref{subsec:weyl_type_A}, e.\,g. the maximal parabolic subgroups $P_i$, the fundamental weights $\omega_i$ or the one-line notation for Weyl group elements.
    
    Each parabolic subgroup $Q$ containing the Borel subgroup can be uniquely written as the intersection $Q = P_{k_1} \cap \dots \cap P_{k_m}$ of maximal parabolics with $1 \leq k_1 < \dots < k_m \leq n-1$. The corresponding partial flag variety $G/Q$ admits the multiprojective embedding
    \begin{align*}
        G/Q \longhookrightarrow \prod_{i=1}^m G/P_{k_i} \longhookrightarrow \prod_{i=1}^m \PP(V(\omega_{k_i})).
    \end{align*}
    As the underlying poset of the Seshadri stratification we use the disjoint union $\underline W = \coprod_{i=1}^m W/W_i \times \set{i}$, where $W_i \subseteq W$ denotes the stabilizer of $\omega_{k_i}$. Each element in $\underline W$ can be seen as a Young-tableau, that only consists of one column. The partial order can be defined via row-wise comparison of these tableaux (see~\cite{ownarticle}). The two posets for the full flag variety $G/B$ in the cases $n = 3$ and $n = 4$ are shown in Figure~\ref{fig:hasse_ulWlambda_type_A}.
    
    Let $I_{(\theta, i)} = \set{i, \dots, m} \subseteq [m]$ be the index set of an element $(\theta, i) \in \underline W$. The projection of $G/Q$ via $\pi_{\set{i, \dots, m}}$ can be interpreted as the flag variety $G/Q_i$ to the parabolic subgroup $Q_i = P_{k_i} \cap \dots \cap P_{k_m}$, since the following diagram commutes.
    \begin{equation*}
        \begin{tikzcd}
        G/Q \arrow[d, two heads] \arrow[r, hook] & \prod_{j=1}^m G/P_{k_j} \arrow[r, hook] \arrow[d, two heads] & \prod_{j=1}^m \PP(V(\omega_{k_j})) \arrow[d, two heads] \\
        G/Q_i \arrow[r, hook] & \prod_{j=i}^m G/P_{k_j} \arrow[r, hook] & \prod_{j=i}^m \PP(V(\omega_{k_j}))
        \end{tikzcd}
    \end{equation*}
    To each element $(\theta, i) \in \underline W$ we associate:
    \begin{itemize}
        \item The Schubert variety $X_{\max_{Q_i}(\theta)} \subseteq G/Q_i$ as the stratum $X_{(\theta, i)}$, where $\max_{Q_i}(\theta)$ is the unique maximal lift of $\theta$ to $W/W_{Q_i}$;
        \item the extremal function $f_{(\theta, i)} = p_{(\theta, i)}$, which is the Pl\"ucker coordinate to $\theta$ (up to a non-zero scalar), \ie the projection $V(\omega_{k_i}) \to V(\omega_{k_i})_{\theta} \cong \K$ onto the weight space of weight $\theta(\omega_{k_i})$.
    \end{itemize}
    This data defines a normal and balanced Seshadri stratification and the set of all indecomposable elements in its fan of monoids is given by
    \begin{align*}
        \mathbb G = \set{e_{(\theta, i)} \mid (\theta, i) \in \underline W}.
    \end{align*}
    Hence one can choose $\mathbb G_R$ to be the set of all extremal functions $f_{(\theta, i)}$. 
    The elements in the fan of monoids correspond to semistandard Young-tableaux, where only columns of the lengths $k_1, \dots, k_m$ may appear. More precisely, the coefficient of the vector $e_{(\theta, i)}$ in an element $\underline a \in \Gamma$ is equal to the number of times the column corresponding to $(\theta, i)$ appears in the tableau associated to $\underline a$. The unique decomposition of $\underline a \in \Gamma$ is therefore given by the columns of its Young-tableau and we obtain the usual standard monomial theory of Hodge-Young consisting of monomials in Pl\"ucker coordinates indexed by semistandard Young-tableaux (cf. \cite[Chapter 2]{seshadri2016introduction}).
\end{example}

\section{Background and notation}
\label{sec:choices}

From now on we fix a connected, simply-connected, simple algebraic group $G$ over an algebraically closed field $\K$ of characteristic zero as well as a maximal torus $T \subseteq G$ and a Borel subgroup $B \subseteq G$ containing $T$. Let $\Delta$ be the set of all simple roots corresponding to the choice of $B$. The associated weight lattice shall be denoted by $\Lambda$ and the monoid of dominant weights by $\Lambda^+$. Let $W$ be the Weyl group and $W_\lambda \subseteq W$ be the stabilizer of a weight $\lambda \in \Lambda$.

We fix a Schubert variety $X_\tau$ to some Weyl group coset $\tau \in W/W_Q$, where $Q \subseteq G$ is a parabolic subgroup containing $B$. The flag variety $G/Q$ can be embedded into a projective space by choosing a dominant weight $\lambda$, such that $\langle \lambda, \alpha^\vee \rangle = 0$, if and only if the simple reflection $s_\alpha$ is contained in $W_Q$. Equivalently, the stabilizer of $W_\lambda \subseteq W$ coincides with the subgroup $W_Q$. Let $v_\lambda$ be any highest weight vector in the irreducible representation $V(\lambda)$ of $G$. Then the parabolic subgroup $Q$ is the stabilizer of the highest weight space $\K v_\lambda$ and one obtains a closed embedding
\begin{align*}
    G/Q \hookrightarrow \PP(V(\lambda)), \quad gQ \mapsto [g \cdot v_\lambda].
\end{align*}
For each element $\sigma \in W/W_Q$ the weight space in $V(\lambda)$ of weight $\sigma(\lambda)$ is one-dimensional. Up to a non-zero scalar, one can therefore associate a unique weight vector $v_{\sigma(\lambda)} \in V(\lambda)$ of weight $\sigma(\lambda)$. The linear span of the orbit $B \cdot v_{\tau(\lambda)}$ is known as the \textit{Demazure module} associated to $\lambda$ and $\tau$, which we denote by $V(\lambda)_{\tau}$. As the Schubert variety $X_\tau$ can be written as the closure of the $B$-orbit $B \cdot [v_{\tau(\lambda)}] \subseteq \PP(V(\lambda))$, one can embed $X_\tau$ as a closed subvariety of $\PP(V(\lambda)_\tau)$. 

It is assumed that the reader is familiar with Littelmann path models, in particular with the path model of Lakshmibai-Seshadri-paths (LS-paths). It was originally introduced by Littelmann in~\cite{littelmann1994littlewood} and then further developed in \cite{littelmann1995plactic} and \cite{littelmann1995paths}. We can also recommend the appendix of~\cite{seshstratandschubvar} as an introduction to LS-paths, which is adapted to the language of Seshadri stratifications.

The primary goal of this \whatisthis{} is to construct multiprojective Seshadri stratifications on Schubert varieties, such that their fans of monoids correspond to tableau models of LS-paths (see Section~\ref{sec:LS-tableaux}). Unfortunately, not every tableau model has an associated stratification, as the condition~(\ref{itm:seshadri_strat_b}) may not be fulfilled. We discuss the obstacles in Section~\ref{sec:dcp}, which will lead to the notion of a \nice{\tau} index poset. If we do obtain a well-defined stratification, however, then it is normal and balanced and the resulting standard monomial theory (from~Proposition~\ref{prop:smt}) provides a geometric interpretation of the SMT established in~\cite[Section 6]{littelmann1998contracting}.

Our stratifications are direct generalizations of the work of Chiriv{\`i}, Fang and Littelmann in \cite{seshstratandschubvar}, where they proved that every Schubert variety $X_\tau \subseteq \PP(V(\lambda)_\tau)$ admits a Seshadri stratification via its Schubert subvarieties. Their construction made use of the strong connection between LS-paths and the vanishing multiplicity of \textit{extremal weight vectors} on Schubert varieties (see Remark\,3.4 and Prop.\,A.6 in~\loccit{}). An extremal weight vector is a non-zero function $\ell_\sigma$ of weight $-\sigma(\lambda)$ in the dual representation $V(\lambda)^*$, where $\sigma$ is some Weyl group element. Extremal weight vectors of a given weight are unique up to multiplication by a non-zero scalar, as their weight lies in the Weyl group orbit of the highest weight in $V(\lambda)^*$. 

The underlying poset $A = \set{\sigma \in W/W_Q \mid \sigma \leq \tau}$ of the stratification on $X_\tau \subseteq \PP(V(\lambda)_\tau)$ is induced by the Weyl group and the stratum to $\sigma \in A$ is the Schubert variety $X_\sigma \subseteq G/Q$ associated to $\sigma$. The extremal functions are given by all extremal weight vectors $\ell_\sigma \in V(\lambda)^*$ for $\sigma \in A$, restricted to $X_\tau \subseteq \PP(V(\lambda)_\tau)$. It was proved in~\cite{seshstratandschubvar} that this data forms a normal and balanced Seshadri stratification and that one can interpret the elements of degree $d \in \N_0$ in the associated fan of monoids $\Gamma$ via the Littelmann path model $\B(d\lambda)$ of LS-paths of shape $d\lambda$.

This stratification on $X_\tau$ of course depends on the choice of the dominant weight $\lambda$. However, one can also consider a decomposition $\lambda = \lambda_1 + \dots + \lambda_m$ into a sum of dominant weights, as this gives rise to the closed embedding
\begin{align}
    \label{eq:embedding_G_Q}
    G/Q \hookrightarrow \prod_{i=1}^m \PP(V(\lambda_i)), \quad gQ \mapsto ([g \cdot v_{\lambda_1}], \dots, [g \cdot v_{\lambda_m}]),
\end{align}
where $v_{\lambda_i}$ is a highest weight vector in $V(\lambda_i)$. The most well-known example is the Pl\"ucker embedding of a partial flag variety in type \texttt{A} into a product of fundamental representations, which we used in Example~\ref{ex:strat_type_A}. We therefore seek to generalize both the stratification from this example and the stratification on $X_\tau \subseteq \PP(V(\lambda)_\tau)$ to multiprojectively embedded Schubert varieties in arbitrary Dynkin types. The key for this intent is to consider other -- possibly non totally-ordered -- index posets $\mathcal I$. Therefore we choose the following objects for our construction:
\begin{itemize}
    \item A dominant weight $\lambda \in \Lambda^+$ and a sequence $\underline\lambda = (\lambda_1, \dots, \lambda_m)$ of dominant weights, that sum up to $\lambda$,
    \item a Schubert variety $X_\tau \subseteq G/Q$ for an element $\tau \in W/W_Q$, where $Q = B W_{\lambda} B$ is the parabolic subgroup associated to the stabilizer $W_\lambda \subseteq W$,
    \item and a subposet $\mathcal I$ of the power set poset $\mathcal P(\set{1, \dots, m}) \setminus \set{\varnothing}$, such that $\mathcal I$ is a graded poset of length $m-1$ and it holds
    \begin{align}
    \label{eq:need_for_s2}
    \underline J \subseteq I \ \, \Rightarrow \ \, J \subseteq I \quad \forall J, I \in \mathcal I.
    \end{align}
    \label{txt:def_underline_I}The subset $\underline J \subseteq J$ is defined exactly as in the section on LS-type stratifications in~\cite{ownarticle}: If $J$ is a minimal element in $\mathcal I$, then $\underline J \coloneqq J$, otherwise $\underline J$ is defined as the union of all $J \setminus K$, where $K \subsetneq J$ is a covering relation in $\mathcal I$.
\end{itemize}
The combinatorial requirement~(\ref{eq:need_for_s2}) on the poset $\mathcal I$ is necessary for the condition~\ref{itm:seshadri_strat_b} on a Seshadri stratification (see the proof of Theorem~\ref{thm:stratification}). We want to remark that there are two important cases, where this requirement is automatically satisfied, namely when $\mathcal I$ is totally ordered or equal to the full poset $\mathcal P(\set{1, \dots, m}) \setminus \set{\varnothing}$.

Regarding Weyl groups, we use the notation in Appendix~\ref{sec:weyl_groups}, namely the projection maps $\pi_Q$, the lifting maps $\min_Q$ and $\max_Q$ as well as the notion of $Q$-minimal and $Q$-maximal elements. For every $i \in [m]$ we define the parabolic subgroup
\begin{align*}
    P_i = B W_{\lambda_i} B
\end{align*}
and the projection $\tau_i = \pi_{P_i}(\tau)$ of $\tau$ to $W/W_{P_i}$. As the Schubert variety $X_\tau$ is the closure of the $B$-orbit through $w_\tau Q \in G/Q$ for a representative $w_\tau \in N_G(T)$ of $\tau$, the map~(\ref{eq:embedding_G_Q}) induces an embedding of $X_{\tau}$ into a product of projective spaces over Demazure modules:
\begin{align*}
    X_\tau \hookrightarrow \prod_{i=1}^m X_{\tau_i} \hookrightarrow \prod_{i=1}^m \PP(V(\lambda_i)_{\tau_i}).
\end{align*}
Here $X_{\tau_i}$ denotes the Schubert variety in $G/P_i$ to the element $\tau_i \in W/W_{P_i}$. This is the embedding we use for the multiprojective stratification on $X_\tau$. To obtain the construction from Example~\ref{ex:strat_type_A}, one can choose the sequence $\underline\lambda = (\omega_{k_1}, \dots, \omega_{k_m})$ of dominant weights and the index poset $\mathcal I = \Set{ \set{i, \dots, m} \mid i \in [m]}$. Note that the parabolic subgroups $P_i$ defined above usually do not coincide with the parabolic subgroups $P_i$ from Section~\ref{subsec:weyl_type_A}.

We need to fix some notation for the following sections. For a tuple $\underline d \in \N_0^m$ we define 
\begin{align*}
    \underline d \cdot \underline \lambda \coloneqq d_1 \lambda_1 + \dots + d_m \lambda_m \in \Lambda^+.
\end{align*}
To each index set $I \in \mathcal I$ we associate
\begin{itemize} 
    \item the degree $e_I = \sum_{i \in \underline I} e_i \in \N_0^m$,
    \item the dominant weight $\lambda_I = e_I \cdot \underline\lambda \in \Lambda^+$
    \item and the parabolic subgroup $P_I = B W_{\lambda_I} B = \cap_{i \in \underline I} P_i$.
\end{itemize}
It may not be intuitive to index these objects by $I$ instead of $\underline I$, but helps to simplify the notation. The parabolics $P_I$ take the role of the maximal parabolic subgroups $P_{k_1}, \dots, P_{k_m}$ from the Example~\ref{ex:strat_type_A} and the tuple $e_I$ is the multidegree of all the extremal functions for strata associated to the index set $I$. 

\label{txt:def_Q_tau}Let $Q_\tau$ be the unique parabolic subgroup containing $Q$, that is maximal with the property that $\tau$ is $Q_\tau$-maximal. This parabolic subgroup exists: The element $\tau$ is $Q'$-maximal for a parabolic subgroup $Q'$, if and only if $\ell(\tau s) < \ell(\tau)$ holds for all simple reflections $s \in W_{Q'}$ (see Corollary 2.4.5 in \cite{bjorner2006combinatorics}). Therefore $Q_\tau$ is equal to the subgroup which is generated by all parabolic subgroups $Q'$ containing $Q$, such that $\tau$ is $Q'$-maximal.

We can now define the parabolic subgroups
\begin{align}
    \label{eq:def_Q_I_and_Q^I}
    Q_I = \bigcap_{J \in \mathcal I \atop J \subseteq I} P_J \quad \text{and} \quad Q^I = Q_\tau \cap \bigcap_{J \in \mathcal I \atop J \supseteq I} P_J.
\end{align}
The groups $Q_I$ generalize the parabolic subgroups $Q_i$ from Example~\ref{ex:strat_type_A}. The relevance of the other groups $Q^I$ is not immediately visible in this example. They are only briefly mentioned in~\cite[Remark 11.14]{ownarticle}. However, the groups $Q^I$ play a very important role behind the scenes, as they are crucial for identifying, if the stratification is well-defined (see Theorem~\ref{thm:nice_I}).

\begin{lemma}
    \label{lem:properties_P_Q}
    The following properties hold for all $I \in \mathcal I$:
    \begin{enumerate}[label=(\alph{enumi})]
        \item $Q_I = \cap_{i \in I} \, P_i$ and $W_{Q_I}$ is the stabilizer of $\sum_{i \in I} \lambda_i$;
        \item if $J \subsetneq I$ is a covering relation in $\mathcal I$, then $P_I \cap Q_J = Q_I$;
        \item $Q_I \cap Q^I = Q_\tau$.
    \end{enumerate}
\end{lemma}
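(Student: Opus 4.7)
The plan is to prove (a) first, since it provides the explicit description $Q_I = \bigcap_{i \in I} P_i$ that turns (b) and (c) into elementary statements about the subsets $\underline J, J, I$ of $[m]$.

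For (a), I would unfold $Q_I = \bigcap_{J \in \mathcal I,\, J \subseteq I} P_J = \bigcap_{J \in \mathcal I,\, J \subseteq I} \bigcap_{i \in \underline J} P_i$, which reduces the statement to the set identity $S_I := \bigcup_{J \in \mathcal I,\, J \subseteq I} \underline J = I$. The inclusion $S_I \subseteq I$ is automatic, and I would prove $I \subseteq S_I$ by induction on the rank of $I$ in the graded poset $\mathcal I$. The base case ($I$ minimal) is immediate from $\underline I = I$. In the inductive step, any $i \in I \setminus \underline I$ must, by the definition of $\underline I$, belong to every $K \subsetneq I$ covered by $I$ in $\mathcal I$; fixing any such $K$ and invoking the induction hypothesis places $i$ in $S_K \subseteq S_I$. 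The identification $W_{Q_I} = W_{\sum_{i \in I} \lambda_i}$ then follows from $Q_I = \bigcap_{i \in I} P_i$ together with the standard fact that, for dominant weights, the stabiliser of a sum equals the intersection of the individual stabilisers, which in turn comes from $\langle \lambda + \mu, \alpha^\vee \rangle = 0 \iff \langle \lambda, \alpha^\vee \rangle = 0 = \langle \mu, \alpha^\vee \rangle$ in the dominant case.

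Part (b) will then follow from (a): applying (a) to $Q_J = \bigcap_{j \in J} P_j$ and $Q_I = \bigcap_{i \in I} P_i$ and combining with $P_I = \bigcap_{i \in \underline I} P_i$ gives $P_I \cap Q_J = \bigcap_{i \in \underline I \cup J} P_i$, so the claim reduces to the combinatorial identity $\underline I \cup J = I$. One inclusion is immediate; for the reverse, any $i \in I \setminus \underline I$ lies in every element covered by $I$ in $\mathcal I$, and in particular in $J$, precisely because $J \subsetneq I$ is by hypothesis a covering relation.

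Part (c) is where I expect the main work. The inclusion $Q_I \cap Q^I \subseteq Q_\tau$ is built into the definition of $Q^I$. For the reverse inclusion $Q_\tau \subseteq Q_I \cap Q^I$, using (a) it suffices to establish $Q_\tau \subseteq P_J$ for every $J \in \mathcal I$ comparable to $I$. By the characterisation of $Q_\tau$ recalled in Section~\ref{sec:choices} via right descents, and the identification $W_{P_J} = W_{\lambda_J}$ from (a), this reduces to the statement that every simple reflection $s$ with $\ell(\tau s) < \ell(\tau)$ stabilises $\lambda_J = \sum_{i \in \underline J} \lambda_i$. This is the step I expect to be the real obstacle: it does \emph{not} follow from $\tau$ being merely $Q$-maximal (which only controls reflections in $W_\lambda = \bigcap_i W_{\lambda_i} \subseteq W_{\lambda_J}$). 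I would therefore exploit the maximality defining $Q_\tau$ among parabolics containing $Q$ in tandem with the condition~(\ref{eq:need_for_s2}), which controls which $\underline J$ can appear inside a given $J$ in $\mathcal I$ and thereby forces the simple reflections in $W_{Q_\tau}$ to act trivially on each $\lambda_i$ with $i \in \underline J$.
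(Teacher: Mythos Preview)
Your arguments for (a) and (b) are correct and essentially the paper's: you simply spell out the induction behind the set identity $\bigcup_{J \in \mathcal I,\, J \subseteq I} \underline J = I$, which the paper asserts without proof.

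For (c), your plan hits a genuine obstruction. The inclusion $Q_\tau \subseteq P_J$ you are aiming for is \emph{false} in general: when $\tau = w_0 W_Q$ is the longest coset, $\tau$ is $Q'$-maximal for every parabolic $Q' \supseteq Q$, so $Q_\tau = G$, whereas $P_J \subsetneq G$ whenever $\lambda_J \neq 0$. Neither condition~(\ref{eq:need_for_s2}) nor the maximality defining $Q_\tau$ can salvage this, and your instinct that this step is the obstacle is right for the strongest possible reason --- the inclusion $Q_\tau \subseteq Q_I \cap Q^I$ itself does not hold.

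The paper's argument for (c) is instead entirely parallel to (a): one verifies the set identity
\[
\bigcup_{J \in \mathcal I,\, J \subseteq I} \underline J \ \cup \ \bigcup_{J \in \mathcal I,\, J \supseteq I} \underline J \ = \ [m],
\]
where the first union is $I$ by (a) and the second contains $[m]\setminus I$ because any maximal chain $I = I_r \subsetneq \cdots \subsetneq I_m = [m]$ in $\mathcal I$ has $I_{j}\setminus I_{j-1} \subseteq \underline{I_j}$. Unwinding the definitions then gives
\[
W_{Q_I \cap Q^I} \ = \ W_{Q_\tau} \cap \bigcap_{i \in [m]} W_{\lambda_i} \ = \ W_{Q_\tau} \cap W_Q \ = \ W_Q,
\]
the last step since $Q \subseteq Q_\tau$. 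The correct right-hand side in (c) is therefore $Q$; the $Q_\tau$ in the printed statement (and the ``$W_\tau$'' in the paper's proof) is evidently a typo, which is what sent your attempt down the wrong path.
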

\begin{proof}
    $ $
    \begin{enumerate}[label=(\alph{enumi})]
        \item The definition of the index sets $\underline J$ for $J \in \mathcal I$ implies
        \begin{align*}
            \bigcup_{J \in \mathcal I \atop J \subseteq I} \underline J = I.
        \end{align*}
        In particular, we have $Q_I = \cap_{i \in I} P_i$. The Weyl subgroup $W_{Q_I}$ thus is the intersection of the stabilizers $W_{\lambda_i}$ over all $i \in I$, which is equal to the stabilizer of the weight $\sum_{i \in I} \lambda_i$.
        \item If $J \subsetneq I$ is a covering relation, the Weyl subgroup $W_{P_I \cap Q_J}$ is the stabilizer of $\sum_{i \in \underline I \cup J} \lambda_i = \sum_{i \in I} \lambda_i$ and it therefore coincides with $W_{Q_I}$. Hence $P_I \cap Q_J = Q_I$. 
        \item The equality $W_{Q_I \cap Q^I} = W_\tau$ follows from the fact that the subgroup $W_{Q_I \cap Q^I} \subseteq W$ is the intersection of $W_{Q_\tau}$ with the stabilizers $W_{\lambda_i}$ over all indices in the set
        \begin{align*}
            \bigcup_{J \subseteq I} \underline J \ \cup \ \bigcup_{J \supseteq I} \underline J = [m]. \tag*{\qedhere}
        \end{align*}
    \end{enumerate}
\end{proof}

\section{Lakshmibai-Seshadri-tableaux}
\label{sec:LS-tableaux}

To generalize the stratification from Example~\ref{ex:strat_type_A} we first need a suitable candidate for the underlying poset. It should be motivated by a combinatorial model which parametrizes basis of Demazure modules. Such a model was developed by Lakshmibai, Musili and Seshadri (\cite{lakshmibaiGP4}, \cite{lakshmibaiGP5}, \cite{seshadri2016introduction}) via certain sequences of Weyl group cosets, that admit a so called \textit{defining chain}. A few years later, Littelmann generalized their tableaux to arbitrary Dynkin types using his path model of LS-paths (see~\cite{littelmann1995plactic}). However, we use a slightly different notation than in \loccit{}: Instead of concatenations we consider tuples of LS-paths, and we call them LS-tableaux instead of LS-monomials.

Recall that an \textit{LS-path} $\pi$ of shape $\nu \in \Lambda^+$ is an element
\begin{align*}
    \pi = (\sigma_p > \dots > \sigma_1; 0, d_p, \dots, d_1 = 1),
\end{align*}
where $\sigma_p > \dots > \sigma_1$ is a chain in $W/W_\nu$ and $0 < d_p < \dots < d_1 = 1$ is a sequence of rational numbers, such that there exists a $(d_i, \nu)$-chain in $W/W_\nu$ from $\sigma_i$ to $\sigma_{i-1}$ for each $i = 2, \dots, p$. By definition, this is a chain $\sigma_i = \kappa_t > \dots > \kappa_0 = \sigma_{i-1}$ of covering relations in $W/W_\nu$ with the following integrality property: For every $j = 1, \dots, t$ the number $d_i \langle \kappa_j(\nu), \beta_j^\vee \rangle$ is an integer, where $\beta_j$ is the unique positive root of $G$ with $s_{\beta_j} \min_B(\kappa_{j-1}) = \min_B(\kappa_j)$. The Weyl group coset $\sigma_p$ is called the \textit{initial direction} of $\pi$ and is denoted by $i(\pi)$. 

The set $\mathbb B(\nu)$ of all LS-paths of shape $\nu$ can be interpreted in terms of the Littelmann path model (see \cite{littelmann1994littlewood} or \cite[Appendix A]{seshstratandschubvar}). The corresponding path model $\mathbb B(\pi_\nu)$ is generated by the straight-line path $\pi_\nu: [0,1] \to \Lambda \otimes_\Z \R$, $t \mapsto t \nu$.

We fix a sequence $\underline\mu = (\mu_1, \dots, \mu_s)$ of dominant weights with sum $\mu = \mu_1 + \dots + \mu_s$. 

\begin{definition}
    A \textbf{Lakshmibai-Seshadri-tableau} (short: LS-tableau) of shape $\underline\mu$ is a sequence $\underline\pi = (\pi_1, \dots, \pi_s)$ of LS-paths $\pi_i \in \mathbb B(\mu_i)$, called the \textbf{columns} of $\underline\pi$. Let $\sigma_{p_k}^{(k)} > \dots > \sigma_1^{(k)}$ be the chain of cosets in $W/W_{\mu_k}$ for the LS-path $\pi_k$, $k \in [s]$. For a fixed element $\tau \in W/W_\mu$ the LS-tableau $\underline\pi$ is called
    \begin{enumerate}[label=(\alph{enumi})]
        \item \textbf{\boldmath{}$\tau$-standard}, if there exists a weakly decreasing sequence
        \begin{align*}
            \overline\sigma_{p_1}^{(1)} \geq \dots \geq \overline\sigma_1^{(1)} \geq \dots \geq \overline\sigma_{p_k}^{(s)} \geq \dots \geq \overline\sigma_1^{(s)}
        \end{align*}
        in $W/W_\mu$, such that $\overline\sigma_{j}^{(i)} W_{\mu_i} = \sigma_{j}^{(i)} \in W/W_{\mu_i}$ holds for all $i = 1, \dots, s$ and $j = 1, \dots, p_i$. Such a sequence is called a \textbf{defining chain}.
        \item \textbf{weakly \boldmath{}$\tau$-standard}, if the LS-tableau $(\pi_k, \pi_{k+1})$ of shape $(\mu_k, \mu_{k+1})$ is $\tau$-standard for each $k = 1, \dots, s-1$.
    \end{enumerate}
\end{definition}

Note that defining chains are not unique, there can exist different defining chains for a given LS-tableau. As long as there is at least one defining chain, the tableau is $\tau$-standard. For every parabolic subgroup $Q'$ of $G$ with $W_{Q'} \subseteq W_\mu$, defining chains can also be lifted via the maps $\min_{Q'}$ and $\max_{Q'}$ to weakly decreasing sequences in $W/W_{Q'}$ consisting of lifts of the columns. As the defining chain in $W/W_\mu$ is bounded by $\tau$, its lifts to $W/W_{Q'}$ are bounded by $\max_{Q'}(\tau)$. Conversely, assume we have a weakly decreasing sequence in $W/W_{Q'}$ consisting of lifts of the columns and bounded by $\max_{Q'}(\tau)$. Such a chain clearly projects to a defining chain in $W/W_\mu$ via $W/W_{Q'} \twoheadrightarrow W/W_\mu$. Hence $W_\mu$ is the largest subgroup of $W$, where a defining chain is well-defined, as $W_\mu = W_{\mu_1} \cap \dots \cap W_{\mu_s}$.

When $\tau$ is equal to the unique maximal element $w_0 W_\mu \in W/W_\mu$, we often omit $\tau$ and just talk about (weakly) standard LS-tableaux.

\begin{example}
    Consider the group $G = \mathrm{SL}_4(\K)$ with the notation from the Appendix~\ref{subsec:weyl_type_A}. As all fundamental representations in type \texttt{A} are minuscule, LS-paths of shape $\omega_i$ correspond to Weyl group cosets in $W/W_{P_i}$. The tuple $\underline\pi = (13, 124, 3)$ is an LS-tableau of shape $(\omega_2, \omega_3, \omega_1)$. The stabilizer of $\mu = \omega_2 + \omega_3 + \omega_1$ is trivial, hence $W/W_\mu \cong W$. This tableau $\underline\pi$ is not standard: The element $3124$ is the unique minimal lift of $3 \in W/W_{P_1}$. By Deodhar's Lemma~\ref{lem:deodhar} we have unique minimal lift of $124$ that is greater or equal to $3124$, namely $4123$. But the unique maximal lift $3142$ of $13$ is not greater or equal to $4123$, hence there exists no defining chain for $\underline\pi$. However, $\underline\pi$ is weakly standard, since the tableaux $(13, 124)$ and $(124, 3)$ have the defining chains $1324 \geq 1243$ and $4123 \geq 3124$ respectively.
\end{example}

Let $\pi_{\mu_i}: [0,1] \to \Lambda \otimes_\Z \R$, $t \mapsto t \mu_i$ be the straight-line path to $\mu_i$ and $\mathbb B(\pi_{\mu_1} \ast \dots \ast \pi_{\mu_s})$ be the path model induced by the concatenation $\pi = \pi_{\mu_1} \ast \dots \ast \pi_{\mu_s}$, \ie it is the smallest set of piecewise linear paths which contains $\pi$ and is stable under the root operators. This path model is the connected component of the concatenation $\mathbb B(\pi_{\mu_1}) \ast \dots \ast \mathbb B(\pi_{\mu_s})$ of the LS-path models. As the path $\pi = \pi_{\mu_1} \ast \cdots \ast \pi_{\mu_s}$ and the straight-line path $\pi_\mu: [0,1] \to \Lambda \otimes_\Z \R$, $t \mapsto t \mu$ both have the same end point $\pi(1) = \mu = \pi_\mu(1)$ and their images stay in the dominant Weyl chamber, there exists a unique isomorphism of crystal graphs $\phi: \mathbb B(\pi_{\mu_1} \ast \cdots \ast \pi_{\mu_s}) \to \mathbb B(\pi_\mu)$ with $\phi(\pi) = \pi_\mu$ (see \cite{littelmann1995paths}). Using this isomorphism, Littelmann proved the following connection between $\tau$-standard LS-tableaux and path models.

\begin{proposition}[{\cite[Theorems 7, 8]{littelmann1995plactic}}]
    \label{prop:tau_standard_vs_path_model}
    An LS-tableau $\underline\pi = (\pi_1, \dots, \pi_s)$ of shape $\underline\mu$ is $\tau$-standard, if and only if the path $\pi \coloneqq \pi_{1} \ast \cdots \ast \pi_{s}$ is contained in the connected component $\mathbb B(\pi_{\mu_1} \ast \dots \ast \pi_{\mu_s}) \subseteq \mathbb B(\mu_1) \ast \dots \ast \mathbb B(\mu_s)$ and the initial direction $i(\phi(\pi))$ of the LS-path $\phi(\pi) \in \mathbb B(\mu)$ is smaller or equal to $\tau$.
\end{proposition}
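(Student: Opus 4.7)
The plan is to reduce the claim to Littelmann's tensor product theorem for path models together with Theorems~7 and~8 of \cite{littelmann1995plactic}, essentially translating his \emph{LS-monomial} language into the tableau language used here.

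First, I would identify the connected component $\mathbb B(\pi_{\mu_1} \ast \cdots \ast \pi_{\mu_s})$. Since the straight-line concatenation $\pi_{\mu_1} \ast \cdots \ast \pi_{\mu_s}$ stays in the dominant Weyl chamber, it is annihilated by every lowering root operator and hence is a highest-weight element of its connected component. The tensor product theorem of \cite{littelmann1995paths} then provides the crystal isomorphism $\phi$ to $\mathbb B(\pi_\mu) = \mathbb B(\mu)$, uniquely determined by sending this highest-weight concatenation to $\pi_\mu$.

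Next I would characterize membership of $\pi = \pi_1 \ast \cdots \ast \pi_s$ in this component combinatorially: $\pi$ belongs to $\mathbb B(\pi_{\mu_1} \ast \cdots \ast \pi_{\mu_s})$ if and only if all cosets $\sigma_j^{(k)}$ occurring in the columns admit compatible lifts $\overline\sigma_j^{(k)} \in W/W_\mu$ forming a weakly decreasing sequence, i.e.\ an (unbounded) defining chain exists. This equivalence follows by repeated application of Deodhar's lemma across the concatenation breakpoints and is essentially Theorem~7 of \cite{littelmann1995plactic}. I would then show that $i(\phi(\pi))$ equals the maximum possible top $\overline\sigma_{p_1}^{(1)}$ over all such defining chains: writing $\phi(\pi) = (\kappa_t > \cdots > \kappa_1; 0, e_t, \dots, e_1 = 1) \in \mathbb B(\mu)$ and refining the sequence $(e_j)$ so that the $s-1$ concatenation breakpoints of $\pi$ become rational values $e_j$, the projections into each $W/W_{\mu_k}$ of the consecutive groups of $\kappa_j$'s lying over the $k$-th subinterval recover the chain of cosets of $\pi_k$ and assemble into a defining chain of $\underline\pi$ with top $\kappa_t = i(\phi(\pi))$; conversely every defining chain is dominated by this one. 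Combining the three steps, $\tau$-standardness of $\underline\pi$ is equivalent to $\pi \in \mathbb B(\pi_{\mu_1} \ast \cdots \ast \pi_{\mu_s})$ together with $i(\phi(\pi)) \leq \tau$.

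The main obstacle is the third step: identifying $i(\phi(\pi))$ with the maximum top over all defining chains requires an explicit description of how the isomorphism $\phi$ is realized via successive root operator applications on concatenations of LS-paths. This is the technical heart of the argument and is carried out in detail in Sections~3 and~5 of \cite{littelmann1995plactic}; in our proof it would be invoked rather than redone.
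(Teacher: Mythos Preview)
The paper does not supply a proof of this proposition at all: it is stated with the attribution \cite[Theorems~7,~8]{littelmann1995plactic} and no \texttt{proof} environment follows. So there is nothing to compare your argument against in this paper; the authors simply import the result.

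Your sketch is a reasonable outline of how Littelmann's Theorems~7 and~8 yield the statement, and you correctly identify that the substantive work---the explicit description of $\phi$ via root operators on concatenations and the identification of $i(\phi(\pi))$ with the top of the maximal defining chain---lives in \cite{littelmann1995plactic} and would be invoked rather than reproved. One small caution: your claim that ``$i(\phi(\pi))$ equals the maximum possible top $\overline\sigma_{p_1}^{(1)}$ over all defining chains'' is the right idea, but the argument you give for it (refining the parametrization of $\phi(\pi)$ at the breakpoints and projecting) produces \emph{one} defining chain with top $i(\phi(\pi))$; the maximality over all defining chains is a separate assertion that also needs Littelmann's analysis. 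Since you are citing rather than reproving, this is not a gap, just a point where the wording could be tightened.
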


It was also proved in \cite{littelmann1995plactic} that LS-tableaux give rise to a character formula for the Demazure modules.

\begin{theorem}[{\cite[Corollary 4]{littelmann1995plactic}}]
    Let $\B(\underline\mu)_\tau$ denote the set of all $\tau$-standard LS-tableaux of shape $\underline\mu$. Then the character of the Demazure module $V(\mu)_\tau$ is given by
    \begin{align}
        \label{eq:comb_demazure_character_formula_multi}
        \ch V(\mu)_\tau = \sum_{\underline\pi \in \B(\underline\mu)_\tau} e^{\underline\pi(1)},
    \end{align}
    where $\underline\pi(1)$ denotes the end point $(\pi_1 \ast \dots \ast \pi_s)(1)$ of the concatenation of all paths in the LS-tableau $\underline\pi = (\pi_1, \dots, \pi_s)$.
\end{theorem}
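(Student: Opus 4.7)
The plan is to derive the formula as a direct rewriting of Littelmann's Demazure character formula for the LS-path model of shape $\mu$, transported across the crystal isomorphism $\phi$ introduced right before Proposition~\ref{prop:tau_standard_vs_path_model}, and then reinterpreted combinatorially in terms of LS-tableaux via that same proposition. Everything beyond these two inputs is bookkeeping.

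First I would invoke Littelmann's path-theoretic Demazure character formula \cite{littelmann1995paths}: for any dominant weight $\mu$ and any $\tau \in W/W_\mu$ one has
\begin{align*}
\ch V(\mu)_\tau = \sum_{\substack{\pi \in \mathbb B(\mu) \\ i(\pi) \leq \tau}} e^{\pi(1)}.
\end{align*}
Since $\mathbb B(\pi_\mu) = \mathbb B(\mu)$, this expresses the Demazure character as a sum over LS-paths in the path model generated by the straight-line path $\pi_\mu$ whose initial direction is bounded by $\tau$.

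Next I would transport this sum across the crystal isomorphism $\phi: \mathbb B(\pi_{\mu_1} \ast \dots \ast \pi_{\mu_s}) \to \mathbb B(\pi_\mu)$. An isomorphism of path crystals preserves the $T$-weight of every element, which for a piecewise linear path $\eta$ is precisely the endpoint $\eta(1)$; hence $\phi(\eta)(1) = \eta(1)$ for all $\eta$. Reparametrising the previous sum via $\pi = \phi(\eta)$ therefore gives
\begin{align*}
\ch V(\mu)_\tau = \sum_{\substack{\eta \in \mathbb B(\pi_{\mu_1} \ast \dots \ast \pi_{\mu_s}) \\ i(\phi(\eta)) \leq \tau}} e^{\eta(1)}.
\end{align*}

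Finally I would apply Proposition~\ref{prop:tau_standard_vs_path_model}: every element $\eta$ of $\mathbb B(\pi_{\mu_1} \ast \dots \ast \pi_{\mu_s}) \subseteq \mathbb B(\mu_1) \ast \dots \ast \mathbb B(\mu_s)$ is a concatenation $\eta = \pi_1 \ast \dots \ast \pi_s$ of LS-paths $\pi_i \in \mathbb B(\mu_i)$, and the condition $i(\phi(\eta)) \leq \tau$ cuts out exactly the concatenations arising from $\tau$-standard LS-tableaux $\underline\pi = (\pi_1, \dots, \pi_s)$. Since $\eta(1) = (\pi_1 \ast \dots \ast \pi_s)(1) = \underline\pi(1)$ by definition, the sum rewrites as $\sum_{\underline\pi \in \mathbb B(\underline\mu)_\tau} e^{\underline\pi(1)}$, proving the claim. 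The only substantive ingredient is Proposition~\ref{prop:tau_standard_vs_path_model}, invoked as a black box; all remaining moves are formal manipulations of characters and the bijection supplied by $\phi$, so no step is really a hard obstacle once that proposition is granted.
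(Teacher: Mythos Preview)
The paper does not supply its own proof of this statement: it is quoted verbatim as \cite[Corollary 4]{littelmann1995plactic} and used as a black box. Your derivation is correct and is essentially the standard route to this result---combining Littelmann's Demazure character formula for $\mathbb B(\mu)$ with the weight-preserving crystal isomorphism $\phi$ and the tableau characterisation of Proposition~\ref{prop:tau_standard_vs_path_model}---so there is nothing to compare against and no gap to flag.
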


In the Appendix~\ref{sec:tableaux} we explain how LS-tableaux can be seen as a generalization of classical Young-tableaux and of the Young diagrams of admissable pairs, which were defined by Lakshmibai, Musili and Seshadri (see~\cite{lakshmibaiGP4}, \cite{lakshmibaiGP5}).

The Young-tableaux appearing in the fan of monoids to the stratification in Example~\ref{ex:strat_type_A} have a specific shape determined by the index poset $\mathcal I$: The number of boxes in any column is an element of the set $\set{k_1, \dots, k_m}$. Every element $I \in \mathcal I$ is of the form $I = \set{i, \dots, m}$ for some $i \in [m]$ and the parabolic subgroup $P_I$ coincides with $P_{k_i}$. Hence each column is associated to an element of $\mathcal I$, as columns of length $k_i$ can be viewed as an LS-path in $\mathbb B(\lambda_I)$ for $I = \set{i, \dots, m}$. The partial order on $\mathcal I$ defines the order in which the columns may appear. They follow a weakly increasing chain in $\mathcal I$, so that the lengths of the columns weakly increase from left to right.

We now define the type of tableaux, we hope to retrieve via the Seshadri stratification. Analogous to the above, they should be defined by the sequence of dominant weights $\underline\lambda$ and the index poset $\mathcal I$, which we fixed in Section~\ref{sec:choices}. Every column is associated to a parabolic subgroups $P_I$ for $I \in \mathcal I$. Again, the columns need to follow a chain in $\mathcal I$, but the index poset is not required to be totally ordered. As LS-paths are usually denoted by decreasing Weyl group cosets $\sigma_p > \dots > \sigma_1$ it is convenient to work with decreasing chains in $\mathcal I$ instead of the increasing chains, which the columns of a Young-tableau follow.

\begin{definition}
    \label{def:generalized_yt}
    An \textbf{LS-tableau of type \boldmath$(\underline\lambda, \mathcal I)$} is an LS-tableau $\underline\pi$ of shape $(\lambda_{I_1}, \dots, \lambda_{I_s})$, where $I_1 \supseteq \dots \supseteq I_s$ is a (possibly empty) weakly decreasing sequence in $\mathcal I$. We call the tuple $\deg \underline\pi = e_{I_1} + \dots + e_{I_s} \in \N_0^m$ the \textbf{degree} of $\underline\pi$.
\end{definition}

\begin{example}
    We choose the index poset $\mathcal I = \set{\set{i, \dots, m} \mid i \in [m]}$ and the sequence $\underline\lambda = (\omega_1, \dots, \omega_{n-1})$ of all fundamental weights for the group $\mathrm{SL}_n(\K)$ (see Section~\ref{subsec:weyl_type_A} for all type \texttt{A} notation). By reversing the order of their columns, the LS-tableaux of type $(\underline\lambda, \mathcal I)$ correspond bijectively to all Young-tableaux with less than $n$ rows and entries in $[n]$ (cf. Appendix~\ref{sec:tableaux}). Such an LS-tableau is standard, if and only if the corresponding Young-tableaux is semistandard.

    The index poset $\mathcal I = \set{[i] \mid i \in [m]}$ leads to similar tableaux, which we call \textit{Anti-Young-tableaux} (again, see Appendix~\ref{sec:tableaux}). We prove later in Corollary~\ref{cor:totally_ordered_nice_I} that Anti-Young-tableaux can also be obtained via Seshadri stratifications.
\end{example}

\begin{remark}
    \label{rem:shape_to_degree}
    For for each $\underline d = (d_1, \dots, d_m) \in \N_0^m$ there exists a weakly decreasing sequence $I_1 \supseteq \dots \supseteq I_s$ in $\mathcal I$, such that the LS-tableaux of shape $(\lambda_{I_1}, \dots, \lambda_{I_s})$ have degree $\underline d$: This is clearly true for $m = 1$. If $m \geq 2$, we choose an index $i \in \underline I$ for $I = [m]$, where $d_i$ is minimal. As the $i$-th entry of $\underline d - d_i e_I$ is zero, we can find a weakly decreasing sequence $I_1 \supseteq \dots \supseteq I_s$ with $\sum_{k=1}^s e_{I_k} = \underline d - d_i e_I$ by induction. If we append $[m]$ exactly $d_i$ times to the start of this sequence, we thus get a sequence for $\underline d$. 
    
    It is not obvious that this sequence $I_1 \supseteq \dots \supseteq I_s$ is uniquely determined by $\underline d$. This follows later via Corollary~\ref{cor:unique_shape}. Note that the LS-tableaux for each fixed sequence to a degree $\underline d \in \N_0^m$ give rise to a character formula for the Demazure module $V(\underline d \cdot \underline\lambda)_\tau$.
\end{remark}

\begin{definition}
    $ $
    We define the following posets and monotone maps:
    \begin{enumerate}[label=(\alph{enumi})]
        \item Let $\Wlambda$ be the direct product of the posets $\set{\sigma \in W/W_Q \mid \sigma \leq \tau}$ and $\mathcal I$, \ie the order relation is given by
        \begin{align*}
        (\theta, I) \geq (\phi, J) \quad :\Longleftrightarrow \quad I \supseteq J \quad \text{and} \quad \theta \geq \phi
        \end{align*}
        for all $(\theta, I), (\phi, J) \in \Wlambda$.
        \item Let $\ulWlambda = \coprod_{I \in \mathcal I} \, \set{\theta \in W/W_{P_I} \mid \theta \leq \pi_{P_I}(\tau)} \times \set{I}$ be the poset, which partial order is given by the transitive hull of the following relation:
        \begin{equation}
        \label{eq:quotient_relation}
        (\theta, I) \geq (\phi, J) \quad :\Longleftrightarrow \quad I \supseteq J \ \ \text{and} \ \ \mathrm{max}_Q(\theta) \geq \mathrm{min}_Q(\phi)
        \end{equation}
        for all $(\theta, I), (\phi, J) \in \ulWlambda$. We also denote the order of $\ulWlambda$ by $\geq$.
        \item Furthermore, we define the map $\pi_{P_{\mathcal I}}: \Wlambda \to \ulWlambda$, $(\theta, I) \mapsto (\pi_{P_I}(\theta), I)$, which is clearly monotone.
        \item Let $\underline\theta$ be a chain in $\ulWlambda$ of elements $(\theta_\ell, I_\ell) > \dots > (\theta_0, I_0)$. We say that $\underline\theta$ is \textbf{\boldmath $\tau$-standard}, if it has a \textbf{defining chain}, that is to say a chain $(\overline\theta_\ell, I_\ell) > \dots > (\overline\theta_0, I_0)$ in $\Wlambda$ with $\pi_{P_{\mathcal I}}(\overline\theta_k, I_k) = (\theta_k, I_k)$ for all $k = 0, \dots, \ell$.
    \end{enumerate}
\end{definition}

\begin{lemma}
    \label{lem:relation_ulWlambda}
    For $(\theta, I), (\phi, J) \in \ulWlambda$ with $J \subseteq I$ the condition $\mathrm{max}_Q(\theta) \geq \mathrm{min}_Q(\phi)$ in (\ref{eq:quotient_relation}) is equivalent to each of the following:
    \begin{enumerate}[label=(\alph{enumi})]
        \item $\pi_{P_J} \circ \max_{Q_I}(\theta) \geq \phi$;
        \item \label{itm:relation_ulWlambda_b} there exists a parabolic subgroup $Q \subseteq Q' \subseteq P_I \cap P_J$ and lifts $\overline\theta$ and $\overline\phi$ in $W/W_{Q'}$ of $\theta$ and $\phi$ respectively, such that $\overline\theta \geq \overline\phi$ in $W/W_{Q'}$.
    \end{enumerate}
\end{lemma}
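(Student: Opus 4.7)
The plan is to reduce both equivalences to Deodhar's Lemma (Lemma~\ref{lem:deodhar}) and the standard compatibility of max/min lifts across a tower of parabolic subgroups. The containment $Q \subseteq Q_I \subseteq P_J$ is essential here: the left inclusion holds since $Q = \bigcap_{i=1}^m P_i$ (as $W_\lambda = \bigcap_i W_{\lambda_i}$), hence $Q \subseteq \bigcap_{i \in I} P_i = Q_I$ by Lemma~\ref{lem:properties_P_Q}(a); the right inclusion uses the hypothesis $J \subseteq I$ combined with condition~(\ref{eq:need_for_s2}), which gives $\underline{J} \subseteq I$ and therefore $P_J = \bigcap_{i \in \underline{J}} P_i \supseteq \bigcap_{i \in I} P_i = Q_I$. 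I will freely use the fact that for any tower $Q \subseteq Q' \subseteq Q''$ of parabolic subgroups and any $\sigma \in W/W_{Q''}$, one has $\max_Q(\sigma) = \max_Q(\max_{Q'}(\sigma))$ and $\min_Q(\sigma) = \min_Q(\min_{Q'}(\sigma))$, since the maximal (resp.\ minimal) length representative of a $W_{Q''}$-coset is realised inside its maximal (resp.\ minimal) $W_{Q'}$-sub-coset.

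For part (a), Deodhar's Lemma applied to the inclusion $Q_I \subseteq P_J$ shows that $\pi_{P_J}(\max_{Q_I}(\theta)) \geq \phi$ is equivalent to $\max_{Q_I}(\theta) \geq \min_{Q_I}(\phi)$ in $W/W_{Q_I}$. A second application of Deodhar's Lemma to the inclusion $Q \subseteq Q_I$, combined with the compatibilities $\max_Q(\theta) = \max_Q(\max_{Q_I}(\theta))$ and $\min_Q(\phi) = \min_Q(\min_{Q_I}(\phi))$, gives the further equivalence with $\max_Q(\theta) \geq \min_Q(\phi)$. Concatenating these two biconditionals proves (a).

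For part (b), the forward direction is witnessed by the choice $Q' = P_I \cap P_J$ (which contains $Q$): setting $\overline{\theta} = \pi_{Q'}(\max_Q(\theta))$ and $\overline{\phi} = \pi_{Q'}(\min_Q(\phi))$ yields lifts of $\theta$ and $\phi$ in $W/W_{Q'}$, and the relation $\max_Q(\theta) \geq \min_Q(\phi)$ descends to $\overline{\theta} \geq \overline{\phi}$ by monotonicity of $\pi_{Q'}$. For the reverse direction, given such $Q'$ and lifts, I will use that $\overline{\theta}$, as a $W_{Q'}$-sub-coset of the $W_{P_I}$-coset $\theta$, satisfies $\max_Q(\overline{\theta}) \leq \max_Q(\theta)$; dually $\min_Q(\overline{\phi}) \geq \min_Q(\phi)$. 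One more application of Deodhar's Lemma to $Q \subseteq Q'$ translates $\pi_{Q'}(\max_Q(\overline{\theta})) = \overline{\theta} \geq \overline{\phi}$ into $\max_Q(\overline{\theta}) \geq \min_Q(\overline{\phi})$, and chaining the three inequalities yields $\max_Q(\theta) \geq \min_Q(\phi)$. The main care required is the bookkeeping of parabolic containments; no deeper obstacle is anticipated.
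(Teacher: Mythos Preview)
Your proof is correct and follows essentially the same route as the paper: both establish the tower $Q \subseteq Q_I \subseteq P_J$ and then pass between $W/W_Q$, $W/W_{Q_I}$ and $W/W_{P_J}$ via monotonicity and compatibility of the lifts $\max$ and $\min$. One small remark: the inclusion $\underline J \subseteq I$ already follows from $\underline J \subseteq J \subseteq I$ by the definition of $\underline J$ and does not require condition~(\ref{eq:need_for_s2}) (which is the converse implication), and what you invoke as Deodhar's Lemma is really just the monotonicity of $\min_{Q'}$, $\max_{Q'}$ together with the trivial comparison between an element and the extremum of its fibre.
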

\begin{proof}
    The inclusion $J \subseteq I$ implies $P_J \subseteq Q_I$. Projecting the condition $\max_Q(\theta) \geq \min_Q(\phi)$ to $W/W_{P_J}$ yields $\pi_{P_J} \circ \max_{Q_I}(\theta) \geq \phi$ and this inequality in $W/W_{P_J}$ lifts back to $\max_Q(\theta) \geq \max_Q \circ \pi_{P_J} \circ \max_{Q_I}(\theta) \geq \max_Q(\phi) \geq \min_Q(\phi)$. Clearly, $\max_Q(\theta) \geq \min_Q(\phi)$ implies condition~\ref{itm:relation_ulWlambda_b}. Conversely, the relation $\overline\theta \geq \overline\phi$ of lifts in $W/W_{Q'}$ gives rise to the inequality $\max_{Q'}(\theta) \geq \theta \geq \phi \geq \min_{Q'}(\phi)$, which in turn lifts to $\max_Q(\theta) \geq \min_Q(\phi)$.
\end{proof}

Let $\underline\pi = (\pi_1, \dots, \pi_s)$ be an LS-tableau of type $(\underline\lambda, \mathcal I)$ and let $\sigma_{p_k}^{(k)} > \dots > \sigma_1^{(k)}$ denote the sequence of elements in $W/W_{P_{I_k}}$ of the LS-path $\pi_k \in \mathbb B(\lambda_{I_k})$ for each $k \in [s]$. Then weak $\tau$-standardness can be described using the poset $\ulWlambda$:
\begin{align}
    \label{eq:tableau_chain}
    \underline\pi \ \text{is} \ &\text{weakly} \ \tau\text{-standard} \quad \Longleftrightarrow \quad \nonumber\\
    &(\sigma_{p_1}^{(1)}, I_1) \geq \dots \geq (\sigma_1^{(1)} \mkern-2mu , I_1) \geq \dots \geq (\sigma_{p_s}^{(s)}, I_s) \geq \dots \geq (\sigma_1^{(s)} \mkern-2mu , I_s) \mkern10mu \text{in $\ulWlambda$}
\end{align}
The LS-tableau $\underline\pi$ is $\tau$-standard, if and only if it is weakly $\tau$-standard and the chain one obtains from (\ref{eq:tableau_chain}) by erasing all duplicates is $\tau$-standard.

\ytableausetup{smalltableaux}
\begin{figure}
    \begin{center}
        \begin{tikzpicture}[scale=0.7]
        \node (p4) at (-1.6,-7.5) {$(4, [1])$};
        \node (p3) at (0,-9) {$(3, [1])$};
        \node (p2) at (0,-10.5)  {$(2, [1])$};
        \node (p1) at (0,-12) {$(1, [1])$};
        \node (p34) at (0,1.5) {$(34, [3])$};
        \node (p24) at (0,0) {$(24, [3])$};
        \node (p14) at (1.6,-1.5) {$(14, [3])$};
        \node (p23) at (-1.6,-1.5) {$(23, [3])$};
        \node (p13) at (3.2,-3.0) {$(13, [3])$};
        \node (p12) at (1.6,-4.5) {$(12, [3])$};
        \node (p234) at (-3.2,-3.0) {$(234, [2])$};
        \node (p134) at (-1.6,-4.5) {$(134, [2])$};
        \node (p124) at (0,-6) {$(124, [2])$};
        \node (p123) at (1.6,-7.5) {$(123, [2])$};
        
        \draw [thick, shorten <=-2pt, shorten >=-2pt, -stealth] (p34) -- (p24);
        \draw [thick, shorten <=-2pt, shorten >=-2pt, -stealth] (p24) -- (p14);
        \draw [thick, shorten <=-2pt, shorten >=-2pt, -stealth] (p14) -- (p13);
        \draw [thick, shorten <=-2pt, shorten >=-2pt, -stealth] (p24) -- (p23);
        \draw [thick, shorten <=-2pt, shorten >=-2pt, -stealth] (p23) -- (p13);
        \draw [thick, shorten <=-2pt, shorten >=-2pt, -stealth] (p13) -- (p12);
        \draw [thick, shorten <=-2pt, shorten >=-2pt, -stealth] (p234) -- (p134);
        \draw [thick, shorten <=-2pt, shorten >=-2pt, -stealth] (p134) -- (p124);
        \draw [thick, shorten <=-2pt, shorten >=-2pt, -stealth] (p124) -- (p123);
        \draw [thick, shorten <=-2pt, shorten >=-2pt, -stealth] (p4) -- (p3);
        \draw [thick, shorten <=-2pt, shorten >=-2pt, -stealth] (p3) -- (p2);
        \draw [thick, shorten <=-2pt, shorten >=-2pt, -stealth] (p2) -- (p1);
        \draw [thick, shorten <=-2pt, shorten >=-2pt, -stealth] (p23) -- (p234);
        \draw [thick, shorten <=-2pt, shorten >=-2pt, -stealth] (p13) -- (p134);
        \draw [thick, shorten <=-2pt, shorten >=-2pt, -stealth] (p12) -- (p124);
        \draw [thick, shorten <=-2pt, shorten >=-2pt, -stealth] (p124) -- (p4);
        \draw [thick, shorten <=-2pt, shorten >=-2pt, -stealth] (p123) -- (p3);
        \end{tikzpicture}
    \end{center}
    \caption{Hasse-diagram of $\ulWlambda$ in type $\texttt{A}_3$.}
    \label{fig:hasse_quotient_weird_a_3}
\end{figure}
\ytableausetup{nosmalltableaux}

The poset $\ulWlambda$ generalizes the poset $\ulW$ from Example~\ref{ex:strat_type_A}. The relation (\ref{eq:quotient_relation}) is reflexive and antisymmetric, but not transitive in general. As an example, consider the sequence $\underline\lambda = (\omega_1, \omega_3, \omega_2)$ of fundamental weights in Dynkin type $\texttt{A}_3$, the unique maximal element $\tau = w_0$ in $W$ and the index poset $\mathcal I = \set{[1], [2], [3]}$. Then we have $(12, [3]) \geq (124, [2])$ and $(124, [2]) \geq (4, [1])$ but $(12, [3]) \ngeq (4, [1])$. Again, we used the one-line notation from Appendix~\ref{subsec:weyl_type_A} for elements in $W/W_{P_i}$. The complete poset $\ulWlambda$ is shown in Figure~\ref{fig:hasse_quotient_weird_a_3}. Note that it cannot be the underlying poset $A$ of a multiprojective Seshadri stratification on $X = G/B$, since the length $\ell = 9$ of the poset does not coincide with $\dim \hat X - 1 = 8$.

\section{The defining chain poset}
\label{sec:dcp}

Recall that we fixed a sequence $\underline\lambda$ of dominant weights, an index poset $\mathcal I$ satisfying a certain property and a coset $\tau \in W/W_Q$ in Section~\ref{sec:choices}. We also make use of all notation introduced in this section without mention.

We now construct a poset $\DCP{}$, which serves as the underlying poset $A$ for the multiprojective stratification on $X_\tau$. This construction heavily relies on Theorem~\ref{thm:unique_def_chain}, but before we can state and prove it, we need a few more results about defining chains. Note that the poset $\DCP{}$ is built specifically for the LS-tableaux of type $(\underline\lambda, \mathcal I)$, so that we can hopefully obtain a geometric interpretation of these tableaux via the fan of monoids of the stratification.

\begin{lemma}
    Every $\tau$-standard chain $\underline\theta: (\theta_\ell, I_\ell) > \dots > (\theta_0, I_0)$ in $\ulWlambda$ has a unique maximal and a unique minimal defining chain
    \begin{align*}
        \overline\theta^{\mathrm{max}} \!\! : \; (\overline\theta^{\mathrm{max}}_\ell \mkern-2mu , I_\ell) > \dots > (\overline\theta^{\mathrm{max}}_0 \mkern-2mu , I_0) \quad \text{and} \quad \overline\theta^{\mathrm{min}} \!\! : \; (\overline\theta^{\mathrm{min}}_\ell \mkern-2mu , I_\ell) > \dots > (\overline\theta^{\mathrm{min}}_0 \mkern-2mu , I_0),
    \end{align*}
    \ie for every defining chain $\overline\theta: (\overline\theta_\ell, I_\ell) > \dots > (\overline\theta_0, I_0)$ of $\underline\theta$ it holds $\overline\theta^{\mathrm{max}}_k \geq \overline\theta_k \geq \overline\theta^{\mathrm{min}}_k$ for all $k = 0, \dots, \ell$.
\end{lemma}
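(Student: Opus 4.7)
My plan is to construct $\overline\theta^{\mathrm{max}}$ by a top-down induction and $\overline\theta^{\mathrm{min}}$ by a symmetric bottom-up induction, with each step invoking Deodhar's lemma from Appendix~\ref{sec:weyl_groups} to produce the appropriate extremal lift into $W/W_Q$.

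First I would fix an arbitrary defining chain $(\overline\theta_\ell, I_\ell) > \dots > (\overline\theta_0, I_0)$, whose existence is guaranteed by $\tau$-standardness; this reference chain serves only to verify that Deodhar's lemma is applicable at each inductive step. At the top, the inequality $\theta_\ell \leq \pi_{P_{I_\ell}}(\tau)$ is built into the definition of $\ulWlambda$, so Deodhar's lemma supplies the unique maximal lift $\overline\theta^{\mathrm{max}}_\ell$ of $\theta_\ell$ in $W/W_Q$ subject to $\overline\theta^{\mathrm{max}}_\ell \leq \tau$, and automatically $\overline\theta_\ell \leq \overline\theta^{\mathrm{max}}_\ell$. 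Inductively, if $\overline\theta^{\mathrm{max}}_{k+1}$ has been constructed with $\overline\theta_{k+1} \leq \overline\theta^{\mathrm{max}}_{k+1}$, then projecting the inequality $\overline\theta_k \leq \overline\theta^{\mathrm{max}}_{k+1}$ via $\pi_{P_{I_k}}$ to $W/W_{P_{I_k}}$ yields $\theta_k \leq \pi_{P_{I_k}}(\overline\theta^{\mathrm{max}}_{k+1})$. Deodhar's lemma then furnishes the unique maximal lift $\overline\theta^{\mathrm{max}}_k$ of $\theta_k$ in $W/W_Q$ with $\overline\theta^{\mathrm{max}}_k \leq \overline\theta^{\mathrm{max}}_{k+1}$, and one obtains $\overline\theta_k \leq \overline\theta^{\mathrm{max}}_k$ to feed into the next step.

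Next I would check that the resulting sequence is indeed a defining chain in $\Wlambda$: the bound $\overline\theta^{\mathrm{max}}_k \leq \tau$ propagates down the chain automatically, and the strictness $(\overline\theta^{\mathrm{max}}_{k+1}, I_{k+1}) > (\overline\theta^{\mathrm{max}}_k, I_k)$ holds trivially whenever $I_{k+1} \supsetneq I_k$, whereas in the case $I_{k+1} = I_k$ an equality $\overline\theta^{\mathrm{max}}_{k+1} = \overline\theta^{\mathrm{max}}_k$ would force $\theta_{k+1} = \theta_k$, contradicting the original chain. Maximality is then proved by rerunning the same top-down induction with an arbitrary defining chain $\overline\theta'$ in place of the reference one, giving $\overline\theta'_k \leq \overline\theta^{\mathrm{max}}_k$ at every level; uniqueness follows immediately from this bound.

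The construction of $\overline\theta^{\mathrm{min}}$ is entirely dual, working from the bottom up with the base case $\overline\theta^{\mathrm{min}}_0 = \min_Q(\theta_0)$ and selecting at each step the unique minimal lift of $\theta_k$ above $\overline\theta^{\mathrm{min}}_{k-1}$ via Deodhar. The main -- albeit mild -- obstacle is verifying the hypothesis of Deodhar's lemma at each inductive step: this is exactly where the fixed reference defining chain is needed, and where one must exploit that $I_k \subseteq I_{k+1}$ makes the inclusion $W_{P_{I_{k+1}}} \subseteq W_{P_{I_k}}$ hold and hence the relevant projections compatible. Everything else is formal bookkeeping together with the componentwise nature of the partial order on $\Wlambda$.
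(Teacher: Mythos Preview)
Your approach is essentially identical to the paper's: fix a reference defining chain, build $\overline\theta^{\mathrm{max}}$ top-down via Deodhar's Lemma, and argue the minimal case dually. The paper is terser (it absorbs the strictness check and the maximality argument into the phrase ``by construction''), but the logical content is the same.

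One small correction: your claim that $I_k \subseteq I_{k+1}$ forces $W_{P_{I_{k+1}}} \subseteq W_{P_{I_k}}$ is false in general, since $P_I$ is the stabilizer of $\lambda_I = \sum_{i \in \underline I} \lambda_i$ and the sets $\underline I_k$, $\underline I_{k+1}$ need not be nested (see the $\texttt{A}_3$ example with $\underline\lambda = (\omega_1,\omega_3,\omega_2)$ and $\mathcal I = \{[1],[2],[3]\}$). Fortunately this inclusion is not needed anywhere: the reference defining chain alone already gives $\overline\theta_{k-1} \leq \overline\theta_k$ in $W/W_Q$, and projecting via $\pi_{P_{I_k}}$ (no compatibility of parabolics required) yields the hypothesis for Deodhar. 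You should simply drop that remark.
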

\begin{proof}
    We only proof the statements about the unique maximal defining chain, the other statement follows analogously. Since $\underline\theta$ is $\tau$-standard, there exists a defining chain $(\overline\theta_\ell, I_\ell) > \dots > (\overline\theta_0, I_0)$. In particular, we have $\tau \geq \overline\theta_\ell$, so via Deodhar's Lemma~\ref{lem:deodhar} we can choose a unique maximal lift $\overline\theta_\ell^{\mathrm{max}} \in W/W_Q$ of $\theta_\ell$ that is smaller or equal to $\tau$. Then $\overline\theta_\ell^{\mathrm{max}} \geq \overline\theta_\ell$. For all $k = \ell - 1, \dots, 1$ we now iteratively choose a lift $\overline\theta_k^{\mathrm{max}}$, such that $\overline\theta_k^{\mathrm{max}} \geq \overline\theta_k$. Since we have $\overline\theta_{k+1}^{\mathrm{max}} \geq \overline\theta_{k+1} \geq \overline\theta_k$, there exists a unique maximal lift $\overline\theta_k^{\mathrm{max}} \in W/W_Q$ of $\theta_k$ with $\overline\theta_{k+1}^{\mathrm{max}} \geq \overline\theta_k^{\mathrm{max}}$ and this lift fulfills $\overline\theta_k^{\mathrm{max}} \geq \overline\theta_k$. By construction, we thus obtain the unique maximal defining chain of $\underline\theta$.
\end{proof}

\begin{lemma}
    \label{lem:operations_on_def_chains}
    Let $\underline\theta: (\theta_\ell, I_\ell) > \dots > (\theta_0, I_0)$ be a $\tau$-standard sequence in $\ulWlambda$ and $(\overline\theta_\ell, I_\ell) > \dots > (\overline\theta_0, I_0)$ be a defining chain for $\underline\theta$. For each $k \in \set{0, \dots, \ell}$ we define the parabolic subgroups
    \begin{align*}
    Q^k = Q_\tau \cap \bigcap_{r=k}^\ell P_{I_r} \quad \text{and} \quad Q_k = \bigcap_{r=0}^k P_{I_r}
    \end{align*}
    as well as the following elements:
    \begin{align*}
    \overline\theta_k^\vartriangle = \operatorname{max}_Q \circ \pi_{Q^{k}}(\overline\theta_k) \quad \text{and} \quad \overline\theta_k^\triangledown = \operatorname{min}_Q \circ \pi_{Q_{k}}(\overline\theta_k).
    \end{align*}
    Then $(\overline\theta_\ell^\vartriangle, I_\ell) > \dots > (\overline\theta_0^\vartriangle, I_0)$ and $(\overline\theta_\ell^\triangledown, I_\ell) > \dots > (\overline\theta_0^\triangledown, I_0)$ are also defining chains for $\underline\theta$ satisfying $\overline\theta_k^\vartriangle \geq \overline\theta_k \geq \overline\theta_k^\triangledown$ for each $k \in \set{0, \dots, \ell}$. In particular, the lift of $(\theta_k, I_k)$ in the unique maximal/minimal defining chain of $\underline\theta$ is $Q^{k}$-maximal/$Q_{k}$-minimal respectively.
\end{lemma}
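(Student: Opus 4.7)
The plan is to verify, for the $\vartriangle$-sequence, the three properties defining a defining chain—namely $\pi_{P_{I_k}}(\up{\overline\theta_k}) = \theta_k$, strict monotonicity in $\Wlambda$, and the bound $\up{\overline\theta_\ell} \leq \tau$—together with the pointwise inequality $\up{\overline\theta_k} \geq \overline\theta_k$. The $Q^k$-maximality of the corresponding lift in the unique maximal defining chain then follows by an absorption argument. The $\triangledown$-case runs dually and I will only indicate the sign flips.

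The projection property and the pointwise inequality come essentially for free. Indeed $Q^k \subseteq P_{I_k}$ because $P_{I_k}$ is one of the factors of the intersection defining $Q^k$, so $\pi_{P_{I_k}}$ factors through $\pi_{Q^k}$; since $\operatorname{max}_Q$ is a section of $\pi_{Q^k}$ this yields $\pi_{P_{I_k}}(\up{\overline\theta_k}) = \theta_k$. The inequality $\up{\overline\theta_k} \geq \overline\theta_k$ is immediate because $\overline\theta_k$ is itself a lift of $\pi_{Q^k}(\overline\theta_k)$ while $\up{\overline\theta_k}$ is by definition the largest such lift. The bound $\up{\overline\theta_\ell} \leq \tau$ then uses $Q^\ell \subseteq Q_\tau$ and the $Q_\tau$-maximality of $\tau$:
\[
\up{\overline\theta_\ell} = \operatorname{max}_Q \pi_{Q^\ell}(\overline\theta_\ell) \leq \operatorname{max}_Q \pi_{Q_\tau}(\overline\theta_\ell) \leq \operatorname{max}_Q \pi_{Q_\tau}(\tau) = \tau,
\]
the first inequality invoking the elementary principle that $\operatorname{max}_Q$ applied after a coarser projection can only grow (larger fiber, larger maximum).

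The main technical step is the monotonicity $\up{\overline\theta_k} > \up{\overline\theta_{k-1}}$. Here I plan to use the nested inclusion $Q^{k-1} = Q^k \cap P_{I_{k-1}} \subseteq Q^k$ together with the same coarser-projection principle, chained with monotonicity of $\operatorname{max}_Q \circ \pi_{Q^{k-1}}$ and the assumption $\overline\theta_k \geq \overline\theta_{k-1}$, to obtain
\[
\up{\overline\theta_k} = \operatorname{max}_Q \pi_{Q^k}(\overline\theta_k) \geq \operatorname{max}_Q \pi_{Q^{k-1}}(\overline\theta_k) \geq \operatorname{max}_Q \pi_{Q^{k-1}}(\overline\theta_{k-1}) = \up{\overline\theta_{k-1}}.
\]
Strictness in $\Wlambda$ is then automatic: if $I_k \supsetneq I_{k-1}$ it is inherited from $\mathcal I$, and if $I_k = I_{k-1}$ an equality of the lifts would project to $\theta_k = \theta_{k-1}$, contradicting the strict chain in $\ulWlambda$.

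For the "in particular" statement, applying $\vartriangle$ to the unique maximal defining chain produces, by the steps above, another defining chain whose entries pointwise dominate $\overline\theta_k^{\max}$; maximality of the latter forces the two to coincide, so each $\overline\theta_k^{\max}$ equals $\operatorname{max}_Q \circ \pi_{Q^k}(\overline\theta_k^{\max})$ and is thus $Q^k$-maximal. The $\triangledown$-case proceeds in parallel, using $Q_k \subseteq Q_{k-1}$, $\operatorname{min}_Q$ in place of $\operatorname{max}_Q$, reversed inequalities (now $\operatorname{min}_Q$ after a finer projection can only grow), and the easier bound $\down{\overline\theta_\ell} \leq \overline\theta_\ell \leq \tau$. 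I expect the only real hazard to be the bookkeeping of variance—whether enlarging the parabolic makes the post-projection $\operatorname{max}_Q$ grow or shrink—since $Q^\bullet$ and $Q_\bullet$ nest in opposite directions along $k$, so the $\vartriangle$- and $\triangledown$-arguments, while symmetric, require the inequalities to be handled separately.
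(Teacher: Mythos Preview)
Your proof is correct and follows essentially the same approach as the paper's. The only cosmetic difference is the order in which you chain the two inequalities in the monotonicity step: you first pass from $Q^k$ to $Q^{k-1}$ and then apply monotonicity in $\overline\theta$, whereas the paper first applies monotonicity and then passes from $Q^k$ to $Q^{k-1}$; both give the same conclusion. Your treatment of the $\tau$-bound via $Q^\ell \subseteq Q_\tau$ is likewise equivalent to the paper's use of the $Q^\ell$-maximality of $\tau$, and you spell out the strictness and the ``in particular'' clause in more detail than the paper does.
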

\begin{proof}
    Again, we only prove the statements about the chain $(\overline\theta_\ell^\vartriangle, I_\ell) > \dots > (\overline\theta_0^\vartriangle, I_0)$. Since $Q^{k} \subseteq P_{I_k}$, the element $\overline\theta_k^\vartriangle$ is still a lift of $\theta_k$ in $W/W_Q$ and by definition we have $\overline\theta_k^\vartriangle \geq \overline\theta_k$. The relation $\tau \geq \overline\theta_\ell$ together with the fact, that $\tau$ is $Q^\ell$-maximal, implies $\tau = \max_Q \circ \pi_{Q^{\ell}}(\tau) \geq \max_Q \circ \pi_{Q^{\ell}}(\overline\theta_\ell) = \overline\theta_\ell^\vartriangle$. By monotony of the maps $\max_Q$ and $\pi_{Q_{I_k}}$ and the inclusion $Q^{k-1} \subseteq Q^k$ we get 
    \begin{align*}
        \overline\theta_k^\vartriangle = \operatorname{max}_Q \circ \pi_{Q^{I_k}}(\overline\theta_k) \geq \operatorname{max}_Q \circ \pi_{Q^{I_k}}(\overline\theta_{k-1}) \geq \operatorname{max}_Q \circ \pi_{Q^{I_{k-1}}}(\overline\theta_{k-1}) = \overline\theta_{k-1}^\vartriangle.
    \end{align*}
    Therefore $(\overline\theta_\ell^\vartriangle, I_\ell) > \dots > (\overline\theta_0^\vartriangle, I_0)$ is a defining chain for $\underline\theta$.
\end{proof}

Note that the parabolic subgroup $Q_k$ in Lemma~\ref{lem:operations_on_def_chains} coincides with the group $Q_{I_k}$ for every $k = 0, \dots, \ell$. The analogous statement does not hold for $Q^k$. In general, we just have the inclusion $Q^{I_k} \subseteq Q^k$.

\begin{theorem}
    \label{thm:unique_def_chain}
    Every maximal $\tau$-standard chain $\underline\theta: (\theta_\ell, I_\ell) > \dots > (\theta_0, I_0)$ in $\ulWlambda$ has a unique defining chain $(\overline\theta_\ell, I_\ell) > \dots > (\overline\theta_0, I_0)$ and this chain is a maximal chain in $\Wlambda$. Additionally, the element $\overline\theta_k \in W/W_Q$ is $Q_k$-minimal and $Q^k$-maximal (using the parabolic subgroups from Lemma~\ref{lem:operations_on_def_chains}).
\end{theorem}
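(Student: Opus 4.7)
The theorem combines three assertions (uniqueness of the defining chain, its maximality in $\Wlambda$, and the extremality properties of each entry); my strategy is to reduce everything to uniqueness, from which both the extremality and the maximality in $\Wlambda$ follow.

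The preceding lemma supplies a unique maximal defining chain $\overline\theta^{\max}$ and a unique minimal defining chain $\overline\theta^{\min}$ of $\underline\theta$, and Lemma~\ref{lem:operations_on_def_chains} identifies $\overline\theta^{\max}_k$ as $Q^k$-maximal and $\overline\theta^{\min}_k$ as $Q_k$-minimal. For uniqueness I would show these two defining chains coincide, by contradiction. Suppose $\overline\theta^{\max}_k > \overline\theta^{\min}_k$ at some position $k$; since $\pi_{P_{I_k}}$ is monotone and both elements map to $\theta_k$, the whole Bruhat interval $[\overline\theta^{\min}_k, \overline\theta^{\max}_k]$ lies in the fiber $\pi_{P_{I_k}}^{-1}(\theta_k)$. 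I would exploit the gap to construct a strict refinement of $\underline\theta$ in $\ulWlambda$: find an intermediate element $\psi$ in this interval together with a parabolic $P_J$ indexed by some $J \in \mathcal I$ between $I_{k-1}$ and $I_{k+1}$, such that $(\pi_{P_J}(\psi), J)$ lies strictly between the neighboring elements of $\underline\theta$. The required defining chain for the refined sequence is then obtained by splicing: the maximal defining chain for indices above $k$, the element $\psi$ for the inserted slot, and the minimal defining chain for indices below $k-1$; the necessary inequalities reduce to $\overline\theta^{\max}_k > \psi \geq \overline\theta^{\min}_k \geq \overline\theta^{\min}_{k-1}$ together with the analogous chain up to $\overline\theta^{\max}_{k+1}$, all of which hold by construction. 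This contradicts the maximality of $\underline\theta$.

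Once uniqueness is established and the max and min defining chains collapse to a single $\overline\theta$, the extremality properties follow immediately: applying Lemma~\ref{lem:operations_on_def_chains} to $\overline\theta$ itself produces defining chains $\overline\theta^\vartriangle$ and $\overline\theta^\triangledown$, which by uniqueness both equal $\overline\theta$, forcing each $\overline\theta_k$ to be simultaneously $Q^k$-maximal and $Q_k$-minimal. Maximality of $\overline\theta$ in $\Wlambda$ is then a case analysis on the possible refinements: any proper insertion of a cover in $\Wlambda$ either changes the index component (producing a new element of $\ulWlambda$ under $\pi_{P_{\mathcal I}}$ and refining $\underline\theta$, against maximality), changes the $W/W_Q$-component with a different image in $W/W_{P_I}$ (again refining $\underline\theta$), or changes it within a single fiber of $\pi_{P_I}$ (producing a second distinct defining chain of the same length, against uniqueness).

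The main obstacle is locating the intermediate element $\psi$ and the associated parabolic $P_J$ in the uniqueness step. The challenge is that the whole interval between $\overline\theta^{\max}_k$ and $\overline\theta^{\min}_k$ already lies in the fiber of $\pi_{P_{I_k}}$, so one cannot separate these two elements by projecting to $W/W_{P_{I_k}}$ alone; instead, one must exploit the interplay between the $Q^k$-maximality of $\overline\theta^{\max}_k$, the $Q_k$-minimality of $\overline\theta^{\min}_k$, and the assumption~(\ref{eq:need_for_s2}) on $\mathcal I$ together with Deodhar's lemma to produce a parabolic $P_J$ whose projection separates the two elements and thereby yields a new, strictly intermediate element in $\ulWlambda$.
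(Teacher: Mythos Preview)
Your strategy inverts the paper's order of arguments: you want to prove uniqueness first and derive the extremality properties from it, whereas the paper establishes the extremality of the \emph{maximal} defining chain first (specifically, that each $\overline\theta^{\max}_k$ is already $Q_k$-minimal) and then derives uniqueness from that. This inversion is where the gap lies.

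You correctly identify the obstacle yourself: to contradict maximality of $\underline\theta$ from the assumption $\overline\theta^{\max}_k > \overline\theta^{\min}_k$, you need an element $\psi$ and a parabolic $P_J$ such that $(\pi_{P_J}(\psi),J)$ strictly refines $\underline\theta$. But the entire interval $[\overline\theta^{\min}_k, \overline\theta^{\max}_k]$ lies in the fiber of $\pi_{P_{I_k}}$ over $\theta_k$, so taking $J = I_k$ never yields anything new; and if $I_{k-1} \subsetneq I_k$ is already a covering relation in $\mathcal I$ (as it will be for a maximal $\underline\theta$), there is no intermediate $J$ available. You are then forced to take $J = I_{k-1}$ or $J = I_{k+1}$ and argue that $\psi$ has a different projection there than its neighbors do---but this is precisely the content of the $Q_k$-minimality / $Q^k$-maximality statements you are trying to deduce, so the argument is circular as written. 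Your final paragraph gestures at using Deodhar's lemma and condition~(\ref{eq:need_for_s2}) to break this, but never says how, and I do not see a direct route.

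The paper handles this by a descending induction showing that $\overline\theta^{\max}_k$ is $Q_k$-minimal. The two cases $I_k = I_{k-1}$ and $I_k \neq I_{k-1}$ are treated separately: in the first, one shows that $\pi_{Q_{I_k}}(\overline\theta^{\max}_k) > \pi_{Q_{I_k}}(\overline\theta^{\max}_{k-1})$ is a covering relation in $W/W_{Q_{I_k}}$ (else one refines $\underline\theta$ via Lemma~\ref{lem:operations_on_def_chains}), which forces $\overline\theta^{\max}_{k-1}$ to be $Q_{I_k}$-minimal; in the second, one shows $\overline\theta^{\max}_k = \overline\theta^{\max}_{k-1}$ and uses the embedding $W/W_{Q_I} \hookrightarrow W/W_{Q_J} \times W/W_{P_I}$. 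Only after this is uniqueness established, by a short bottom-up argument starting from $\overline\theta^{\max}_0 = \id W_Q = \overline\theta^{\min}_0$ and using that consecutive entries of the max chain are either equal or covers. Your arguments for the extremality and for maximality in $\Wlambda$, once uniqueness is known, are fine; the issue is solely the uniqueness step.
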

\begin{proof}
    Let $(\overline\theta_\ell, I_\ell) > \dots > (\overline\theta_0, I_0)$ be the unique maximal defining chain of $\underline\theta$. The largest element $(\theta_\ell, I_\ell)$ in $\underline\theta$ is equal to $(\tau, [m])$, otherwise $(\pi_{P_{[m]}}(\tau), [m]) > (\theta_\ell, I_\ell) > \dots > (\theta_0, I_0)$ would be a longer $\tau$-standard chain.
    
    By Lemma~\ref{lem:operations_on_def_chains} every lift $\overline\theta_{k}$ is $Q^{k}$-maximal. We first prove by descending induction over $k = \ell, \dots, 0$, that $\overline\theta_k$ is $Q_{k}$-minimal as well. The element $\overline\theta_\ell = \tau$ is $Q_\ell$-minimal, since $Q_\ell = Q$. Now suppose that $\overline\theta_k$ is $Q_{k}$-minimal for some $k < \ell$. We show, that $\overline\theta_{k-1}$ is $Q_{k-1}$-minimal. In order to keep indices to a minimum, we write $I = I_k$ and $J = I_{k-1}$. We need to differentiate between two cases: $I = J$ and $I \neq J$.
    
    First, suppose that $I = J$. Let $B = [\overline\theta_k, \min_Q \circ \pi_{Q_I}(\overline\theta_{k-1})]$ be the Bruhat interval of all $\sigma \in W/W_Q$ with $\overline\theta_k \geq \sigma \geq \min_Q \circ \pi_{Q_I}(\overline\theta_{k-1})$. The image of $B$ via $\pi_{P_I}$ is exactly $\set{\theta_k, \theta_{k-1}}$. Otherwise there exists an element $\phi \in \pi_{P_I}(B)$ and a lift $\overline\phi$ of $\phi$ in $W/W_Q$ such that $\theta_k > \phi > \theta_{k-1}$ and $\overline\theta_k > \overline\phi > \min_Q \circ \pi_{Q_I}(\overline\theta_{k-1})$. By inserting $(\phi, I)$ between $(\theta_k, I)$ and $(\theta_{k-1}, I)$ we get a longer chain in $\ulWlambda$, which is still $\tau$-standard, since we can use Lemma~\ref{lem:operations_on_def_chains} to construct the following defining chain:
    \begin{align*}
    (\overline\theta_\ell, I_\ell) > \dots > (\overline\theta_k, I) > (\overline\phi, I) > (\operatorname{min}_Q \circ \pi_{Q_I}(\overline\theta_{k-1}), I) \geq (\overline\theta_{k-1}^\triangledown, I_{k-1}) > \dots > (\overline\theta_0^\triangledown, I_0).
    \end{align*}
    The image of $B$ under the projection $\pi_{Q_I}$ is equal to the Bruhat interval $[\pi_{Q_I}(\overline\theta_k), \pi_{Q_I}(\overline\theta_{k-1})]$ in $W/W_{Q_I}$, because both $\overline\theta_k$ and $\min_Q \circ \pi_{Q_I}(\overline\theta_{k-1})$ are $Q_I$-minimal. The element $\pi_{Q_I}(\overline\theta_{k-1})$ is the unique maximal lift of $\theta_{k-1}$ in $W/W_{Q_I}$, which is less or equal to $\pi_{Q_I}(\overline\theta_k)$. Otherwise there would exist a lift $\psi \in W/W_{Q_I}$ of $\theta_{k-1}$ such that $\pi_{Q_I}(\overline\theta_k) \geq \psi > \pi_{Q_I}(\overline\theta_{k-1})$. Taking the $Q$-maximum yields:
    \begin{align*}
    \overline\theta_k = \operatorname{max}_Q \circ \pi_{Q_I}(\overline\theta_k) \geq \operatorname{max}_Q(\psi) > \operatorname{max}_Q \circ \pi_{Q_I}(\overline\theta_{k-1}) \geq \overline\theta_{k-1}.
    \end{align*}
    But this is a contradiction to the construction of $\overline\theta_{k-1}$ as it is the unique maximal lift of $\theta_{k-1}$ in $W/W_Q$ such that $\overline\theta_{k-1} \leq \overline\theta_k$.
    
    Combining our observations, wee see that the only element in $\pi_{Q_I}(A)$, which does not project to $\theta_k$ is $\pi_{Q_I}(\overline\theta_{k-1})$. Using Lemma~\ref{lem:bruhat_interval} on $\pi_{Q_I}(\overline\theta_k) > \pi_{Q_I}(\overline\theta_{k-1})$ it now follows, that this is a covering relation in $W/W_{Q_I}$. It lifts to the covering relation $\overline\theta_k > \min_Q \circ \pi_{Q_I}(\overline\theta_{k-1})$ in $W/W_Q$ and since $\overline\theta_{k-1}$ lies in between them, it is $Q_I$-minimal.
    
    It remains the case $I \neq J$. Let $K \subsetneq I$ be a covering relation in $\mathcal I$ such that $J \subseteq K$. Then we have $J = K$ and $\overline\theta_{k-1} = \overline\theta_k$, since $\underline\theta$ is maximal $\tau$-standard and the inequalities $(\theta_k, I) > (\pi_{P_K}(\overline\theta_k), K) \geq (\theta_{k-1}, J)$ in $\ulWlambda$ can be lifted to $(\overline\theta_k, I) > (\overline\theta_k, K) \geq (\overline\theta_{k-1}, J)$.

    We now prove that $\overline\theta_k$ is $Q_J$-minimal, \ie $\min_Q \circ \pi_{Q_J}(\overline\theta_k) = \overline\theta_k$. Notice that both sides are $Q_I$-minimal. The left hand side is even $Q_J$-minimal and $\overline\theta_k$ is $Q_I$-minimal by induction. It therefore suffices to prove $\pi_{Q_I} \circ \min_Q \circ \pi_{Q_J}(\overline\theta_k) = \pi_{Q_I}(\overline\theta_k)$. As $Q_I = Q_J \cap P_I$ we can use the following argument, which will appear a few more times throughout this \whatisthis{}: The natural map $W/W_{Q_I} \to W/W_{Q_J} \times W/W_{P_I}$ is an isomorphism of posets onto its image . Showing an equality in $W/W_{Q_I}$ can thus be reduced to showing the projected equalities in both $W/W_{Q_J}$ and $W/W_{P_I}$.
    
    The images of the two elements $\min_Q \circ \pi_{Q_J}(\overline\theta_k)$ and $\overline\theta_k$ are equal in $W/W_{P_I}$. Otherwise we could extend the chain $\underline\theta$ to the longer $\tau$-standard chain
    \begin{align*}
    (\theta_\ell, I_\ell) > \dots > (\theta_k, I) > (\pi_{P_I} \circ \operatorname{min}_Q \circ \pi_{Q_J}(\overline\theta_k), I) > (\theta_{k-1}, J) > \dots > (\theta_0, I_0)
    \end{align*}
    as it has the following defining chain:
    \begin{align*}
    (\overline\theta_\ell, I_\ell) > \dots > (\overline\theta_k, I) > (\operatorname{min}_Q \circ \pi_{Q_J}(\overline\theta_k), I) > (\overline\theta_{k-1}^\triangledown, J) > \dots > (\overline\theta_0^\triangledown, I_0).
    \end{align*}
    The images of the two elements $\min_Q \circ \pi_{Q_J}(\overline\theta_k)$ and $\overline\theta_k$ are also equal in $W/W_{Q_J}$, which completes proving the $Q_J$-minimality of $\overline\theta_k$.
    
    We still need to show that $\underline\theta$ has a unique defining chain and compute its length. We know that there is a unique minimal defining chain $(\overline\theta_\ell^\mathrm{min}, I_\ell) > \dots > (\overline\theta_0^\mathrm{min}, I_0)$ and a unique maximal defining chain $(\overline\theta_\ell^\mathrm{max}, I_\ell) > \dots > (\overline\theta_0^\mathrm{max}, I_0)$ for $\underline\theta$. It is easy to see, that $\underline\theta$ ends at the element $\theta_0 = \id W_{P_{I_0}}$. Its lift in the maximal defining chain is $Q_0$-minimal, hence $\overline\theta_0^\mathrm{max} = \id W_Q = \overline\theta_0^\mathrm{min}$. We can now work ourselves inductively through the two defining chains, showing $\overline\theta_k^\mathrm{max} = \overline\theta_k^\mathrm{min}$ for $k = 1, \dots, \ell$. It always holds $\overline\theta_{k+1}^\mathrm{max} \geq \overline\theta_{k+1}^\mathrm{min} \geq \overline\theta_{k}^\mathrm{min} = \overline\theta_{k}^\mathrm{max}$. If $I_{k+1} = I_k$, then $\overline\theta_{k+1}^\mathrm{max} > \overline\theta_k^\mathrm{max}$ is a covering relation and $\overline\theta_{k+1}^\mathrm{min} \neq \overline\theta_{k}^\mathrm{min}$. For $I_{k+1} \neq I_k$ we have $\overline\theta_{k+1}^\mathrm{max} = \overline\theta_k^\mathrm{max}$. In both cases it follows $\overline\theta_{k+1}^\mathrm{max} = \overline\theta_{k+1}^\mathrm{min}$.
    
    As the minimal and maximal defining chain coincide, there is exactly one defining chain for $\underline\theta$. Its first element is $\tau$ and its last element is $\id W_Q$. In between we only have covering relations and $m-1$ equalities representing the change of the subset $W/W_{P_I} \subseteq \ulWlambda$. So the chain $\underline\theta$ is of length $r(\tau) + m - 1$, where $r(\tau)$ denotes the rank of $\tau$ in $W/W_Q$, hence $\underline\theta$ is a maximal chain in $\Wlambda$.
\end{proof}

\begin{definition}
    The \textbf{defining chain poset} $\DCP{} \subseteq \Wlambda$ consists of all elements $(\theta, I) \in \Wlambda$, which are contained in the unique defining chain of a maximal $\tau$-standard chain in $\ulWlambda$. The order relation $\succeq$ on $\DCP{}$ is given by
    \begin{align*}
    (\theta, I) \succeq (\phi, J) \quad \Longleftrightarrow \quad &\text{$(\theta, I) \geq (\phi, J)$ in $\Wlambda$ and there exists a maximal } \\
    &\text{$\tau$-standard chain in $\ulWlambda$, such that $(\theta, I)$ and $(\phi, J)$} \\
    &\text{are contained in its unique defining chain}.
    \end{align*}
\end{definition}

\begin{remark}
    \label{rem:relation_dcp}
    For two elements $(\theta, I), (\phi, J) \in \DCP{}$ the relation $(\theta, I) \succeq (\phi, J)$ is equivalent to the existence of a \textbf{\boldmath{}$P_{\mathcal I}$-chain} from $(\theta, I)$ to $(\phi, J)$, by which we mean a chain of covering relations in $\Wlambda$ from $(\theta, I)$ to $(\phi, J)$, which projects to a chain of the same length via the map $\pi_{P_{\mathcal I}}$. In particular, the relation $\succeq$ is reflexive, antisymmetric and transitive.
\end{remark}

\begin{example}
    \label{ex:small_dcp}
    As a first example consider the group $G = \mathrm{SL}_3(\K)$, $\tau = 312$, $\underline\lambda = (\omega_2, \omega_1)$ and $\mathcal I = \set{[1], [2]}$. The poset $\ulWlambda$ looks as follows in this case:
    \begin{center}
        \begin{tikzpicture}
        \node (3) at (0,0) {$(3, [2])$};
        \node (2) at (2,0) {$(2, [2])$};
        \node (1) at (4,0) {$(1, [2])$};
        \node (13) at (6,0) {$(13, [1])$};
        \node (12) at (8,0) {$(12, [1])$};
        
        \draw [thick, shorten <=-2pt, shorten >=-2pt, -stealth] (3) -- (2);
        \draw [thick, shorten <=-2pt, shorten >=-2pt, -stealth] (2) -- (1);
        \draw [thick, shorten <=-2pt, shorten >=-2pt, -stealth] (1) -- (13);
        \draw [thick, shorten <=-2pt, shorten >=-2pt, -stealth] (13) -- (12);
    \end{tikzpicture}
    \end{center}
    The maximal $\tau$-standard chains have length $4$ by Theorem~\ref{thm:unique_def_chain}, so they contain all but one element of $\ulWlambda$. This element can only be $(2, [2])$ or $(13, [1])$. In total, we get the defining chain poset from Figure~\ref{fig:small_dcp}.
\end{example}

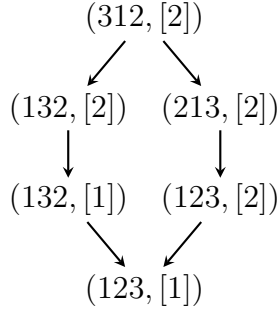
\begin{figure}
    \begin{center}
        \begin{tikzpicture}
        \node (3) at (0,0) {$(312, [2])$};
        \node (2) at (1,-1.2) {$(213, [2])$};
        \node (123) at (1,-2.4) {$(123, [2])$};
        \node (132) at (-1,-1.2) {$(132, [2])$};
        \node (13) at (-1,-2.4) {$(132, [1])$};
        \node (12) at (0,-3.6) {$(123, [1])$};
        
        \draw [thick, shorten <=-2pt, shorten >=-2pt, -stealth] (3) -- (2);
        \draw [thick, shorten <=-2pt, shorten >=-2pt, -stealth] (2) -- (123);
        \draw [thick, shorten <=-2pt, shorten >=-2pt, -stealth] (123) -- (12);
        \draw [thick, shorten <=-2pt, shorten >=-2pt, -stealth] (3) -- (132);
        \draw [thick, shorten <=-2pt, shorten >=-2pt, -stealth] (132) -- (13);
        \draw [thick, shorten <=-2pt, shorten >=-2pt, -stealth] (13) -- (12);
        \end{tikzpicture}
    \end{center}
    \caption{Defining chain poset from Example~\ref{ex:small_dcp}}
    \label{fig:small_dcp}
\end{figure}

\begin{example}
    If the sequence $\underline\lambda = (\lambda)$ only consists of one element, then we have the index poset $\mathcal I = \set{[1]}$, the parabolic subgroup $Q = P_{[1]}$ and the defining chain poset
    \begin{align*}
        \DCP{} \cong \Wlambda \cong \ulWlambda \cong \set{\theta \in W/W_Q \mid \theta \leq \tau}.
    \end{align*}
    Hence we obtain the underlying poset of the Seshadri stratification on $X_\tau \subseteq \PP(V(\lambda)_\tau)$ constructed in~\cite{seshstratandschubvar}, which we mentioned in Section~\ref{sec:choices}.
\end{example}

We now examine the covering relations in $\DCP{}$. By definition, $\DCP{}$ is a graded poset and of the same length as $\Wlambda$, so every covering relation $(\theta, I) \succ (\theta, J)$ in $\DCP{}$ is also a covering relation in $\Wlambda$. Therefore $(\theta, I)$ covers $(\phi, J)$ in $\DCP{}$ if and only if these elements are of one of the following two forms:
\begin{itemize}
    \item $J = I$, $\theta > \phi$ is a covering relation in $W/W_Q$ and $\pi_{P_I}(\theta) > \pi_{P_I}(\phi)$;
    \item $J \subsetneq I$ is a covering relation in $\mathcal I$ and $\theta = \phi$ in $W/W_Q$.
\end{itemize}

\begin{remark}
    \label{rem:restriction_of_DCP}
    The defining chain poset is compatible with restriction: For every $(\sigma, I) \in \DCP{}$ the subposet
    \begin{align*}
        \DCP{}_{\preceq (\sigma, I)} = \set{(\theta, J) \in \DCP{} \mid (\theta, J) \preceq (\sigma, I)}
    \end{align*}
    is also a defining chain poset in the following sense. Let $m'$ be the number of elements in $I$ and let $\kappa: [m'] \to I$ be a bijection. We define the sequence $\underline\lambda' = (\lambda_{\kappa(1)}, \dots, \lambda_{\kappa(m')})$ and the index poset $\mathcal I' = \set{ \kappa^{-1}(J) \mid J \in \mathcal I, J \subseteq I }$. Then the map
    \begin{align*}
        \DCP{}_{\preceq (\sigma, I)} \to D(\underline\lambda', \pi_{Q_I}(\sigma)), \quad (\theta, J) \mapsto ( \pi_{Q_I}(\theta), \kappa^{-1}(J) )
    \end{align*}
    is well-defined and monotone, where $D(\underline\lambda', \pi_{Q_I}(\sigma))$ is the defining chain poset with respect to the index poset $\mathcal I'$. 
    Since $\theta$ is $Q_I$-minimal for each $(\theta, J) \in \DCP{}_{\preceq (\sigma, I)}$, the map is injective. It is also surjective and its inverse map is monotone, because every maximal $\pi_{Q_I}(\sigma)$-standard chain can be extended to a maximal $\tau$-standard chain by using a maximal chain from $(\tau, [m])$ to $(\sigma, I)$ in $\DCP{}$.
\end{remark}

\begin{lemma}
    \label{lem:dcp_elements}
    The following are equivalent for every $(\theta, I) \in \Wlambda$:
    \begin{enumerate}[label=(\roman{enumi})]
        \item \label{itm:dcp_elements_a} $(\theta, I) \in \DCP{}$;
        \item \label{itm:dcp_elements_b} $\theta$ is $Q_I$-minimal and there exists a $P_{\mathcal I}$-chain from $(\tau, [m])$ to $(\theta, I)$;
        \item \label{itm:dcp_elements_c} $\theta$ is $Q_I$-minimal and there exists a $P_{\mathcal I}$-chain from an element $(\phi, J) \succeq (\theta, I)$ in $\DCP{}$ to $(\theta, I)$. 
    \end{enumerate}
\end{lemma}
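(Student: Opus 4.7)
The plan is to prove the cycle of implications (i) $\Rightarrow$ (ii) $\Rightarrow$ (iii) $\Rightarrow$ (i), with the last implication doing the bulk of the work.

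For (i) $\Rightarrow$ (ii), I unpack the definition of $\DCP{}$: the element $(\theta, I)$ lies in the unique defining chain of some maximal $\tau$-standard chain $\underline\theta$. Theorem~\ref{thm:unique_def_chain} yields three facts in one: the defining chain is a maximal chain in $\Wlambda$, it starts at $(\tau, [m])$, and each of its lifts $\overline\theta_k$ is $Q_{I_k}$-minimal. The first two say that truncating the defining chain to its segment from $(\tau, [m])$ down to $(\theta, I)$ provides a chain of covering relations in $\Wlambda$ whose projection coincides with the corresponding segment of $\underline\theta$; in particular this segment projects to a chain in $\ulWlambda$ of equal length and is therefore a $P_{\mathcal I}$-chain (in the sense of Remark~\ref{rem:relation_dcp}). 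The third fact gives the $Q_I$-minimality of $\theta$.

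The step (ii) $\Rightarrow$ (iii) is immediate: take $(\phi, J) = (\tau, [m])$. This element lies in $\DCP{}$ because, as noted at the start of the proof of Theorem~\ref{thm:unique_def_chain}, every maximal $\tau$-standard chain has $(\tau, [m])$ as the top element of its unique defining chain.

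For (iii) $\Rightarrow$ (i), I first apply the already-proven implication (i) $\Rightarrow$ (ii) to the element $(\phi, J) \in \DCP{}$ in order to obtain a $P_{\mathcal I}$-chain from $(\tau, [m])$ to $(\phi, J)$ in $\Wlambda$. Concatenating with the given $P_{\mathcal I}$-chain from $(\phi, J)$ to $(\theta, I)$ produces a $P_{\mathcal I}$-chain $C$ from $(\tau, [m])$ to $(\theta, I)$, which projects to a $\tau$-standard chain in $\ulWlambda$ having $C$ as a defining chain. I then extend $C$ to a maximal chain $C^*$ in $\Wlambda$ that is still a $P_{\mathcal I}$-chain. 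By length comparison (using that maximal chains in $\Wlambda$ have length $r(\tau) + m - 1$, matching the length of maximal $\tau$-standard chains in $\ulWlambda$ from Theorem~\ref{thm:unique_def_chain}), the projection of $C^*$ is a maximal $\tau$-standard chain; by the uniqueness part of Theorem~\ref{thm:unique_def_chain} this projection's unique defining chain is $C^*$ itself. Since $(\theta, I) \in C^*$, we conclude $(\theta, I) \in \DCP{}$.

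The principal obstacle is constructing the extension $C \to C^*$. I plan to append covering relations at the bottom iteratively: if the current bottom is $(\psi, K)$ with $K$ not minimal in $\mathcal I$, I append $(\psi, K')$ for a cover $K' \subsetneq K$ in $\mathcal I$, which automatically respects the $P_{\mathcal I}$ condition since the $\mathcal I$-coordinate distinguishes the projected images; if $K$ is minimal, I use Deodhar's lemma to pick a cover $\psi \succ \psi'$ in $W/W_Q$ for which $\pi_{P_K}(\psi) > \pi_{P_K}(\psi')$ remains strict, together with the operators $\max_Q \circ \pi_{Q^{k}}$ and $\min_Q \circ \pi_{Q_{k}}$ from Lemma~\ref{lem:operations_on_def_chains} to propagate the minimality properties consistently. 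The hypothesis that $\theta$ is $Q_I$-minimal is exactly what ensures that $(\theta, I)$ is compatible with such an extension, since by Theorem~\ref{thm:unique_def_chain} the defining-chain lift at a given position is forced to be $Q_{I_k}$-minimal and an element failing this minimality could never be absorbed into a maximal $P_{\mathcal I}$-chain.
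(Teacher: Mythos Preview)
Your implications (i) $\Rightarrow$ (ii) $\Rightarrow$ (iii) are correct and match the paper. For (iii) $\Rightarrow$ (i), your overall plan---concatenate to get a $P_{\mathcal I}$-chain from $(\tau,[m])$ to $(\theta,I)$, then extend downward to a maximal $P_{\mathcal I}$-chain and invoke uniqueness from Theorem~\ref{thm:unique_def_chain}---is exactly the paper's plan. The gap is in your extension procedure.

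Your rule ``if $K$ is not minimal, append $(\psi,K')$ for a cover $K' \subsetneq K$'' fails in general: the new element $\psi$ need not be $Q_{K'}$-minimal, since $Q_I$-minimality of $\theta$ does not imply $Q_{K'}$-minimality for $K' \subsetneq I$. Once a non-$Q_{K'}$-minimal element $(\psi,K')$ enters the chain, no extension can succeed. Indeed, any maximal $P_{\mathcal I}$-chain in $\Wlambda$ projects to a $\tau$-standard chain of maximal length and is therefore, by the uniqueness in Theorem~\ref{thm:unique_def_chain}, its \emph{unique} defining chain; but that theorem forces every entry of a unique defining chain to be $Q_{I_k}$-minimal at its level. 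For a concrete obstruction, take $G=\mathrm{SL}_3$, $\underline\lambda=(\omega_1,\omega_2)$, $\mathcal I=\{[1],[2]\}$, $Q=B$: starting from $(s_2,[2])$ (which is $Q_{[2]}=B$-minimal) your rule drops to $(s_2,[1])$, but $s_2$ is not $P_1$-minimal and $\pi_{P_1}(s_2)=\mathrm{id}$, so no further cover has strict $P_{[1]}$-projection.

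The paper repairs this by interleaving the two kinds of descent in the opposite order. Having fixed a maximal chain $I_1\subsetneq\cdots\subsetneq I_s=I$ below $I$, one first descends inside $W/W_Q$ at level $I$ from $\theta$ to $\theta^{\triangledown}=\min_Q\circ\pi_{Q_{I_{s-1}}}(\theta)$; since $\theta$ and $\theta^{\triangledown}$ are both $Q_I$-minimal with the same $Q_{I_{s-1}}$-image and $Q_I=Q_{I_{s-1}}\cap P_I$, every covering step in this descent has strict $P_I$-projection. Only then does one drop one level in $\mathcal I$ to $(\theta^{\triangledown},I_{s-1})$, where $\theta^{\triangledown}$ is now $Q_{I_{s-1}}$-minimal by construction, and iterate. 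Your reference to the operators $\min_Q\circ\pi_{Q_k}$ from Lemma~\ref{lem:operations_on_def_chains} is the right ingredient, but it must be applied \emph{before} changing the $\mathcal I$-level, not after; and Deodhar's Lemma is not the tool that produces the strict $P_K$-projections---the argument above via $Q_I=Q_{I_{s-1}}\cap P_I$ is.
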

\begin{proof}
    The implication \ref{itm:dcp_elements_b} $\Rightarrow$ \ref{itm:dcp_elements_c} is obvious and \ref{itm:dcp_elements_a} $\Rightarrow$ \ref{itm:dcp_elements_b} follows from Theorem~\ref{thm:unique_def_chain}, since $(\tau, [m])$ is contained in every unique defining chain of a maximal $\tau$-standard chain in $\ulWlambda$. Now suppose, that $\theta$ is $Q_I$-minimal and there exists an element $(\phi, J) \in \DCP{}$ and a  $P_{\mathcal I}$-chain from $(\phi, J)$ to $(\theta, I)$. We choose a maximal chain $I_1 \subsetneq \dots \subsetneq I_s = I$ in $\mathcal I$ from a minimal element $I_1 \in \mathcal I$ to $I$. Since the element $\theta^\triangledown \coloneqq \min_Q \circ \pi_{Q_{I_{s-1}}}(\theta)$ is less or equal to $\theta$, there exists a chain $\theta = \theta_r > \dots > \theta_0 = \theta^\triangledown$ of covering relations in $W/W_Q$. Both $\theta$ and $\theta^\triangledown$ are $Q_I$-minimal and  $\pi_{Q_{I_{s-1}}}(\theta^\triangledown) = \pi_{Q_{I_{s-1}}}(\theta)$. Since $Q_I = Q_{I_{s-1}} \cap P_I$, this implies $\pi_{P_I}(\theta_k) > \pi_{P_I}(\theta_{k-1})$ for all $1 \leq k \leq r$. Hence we get the $P_{\mathcal I}$-chain $(\theta_r, I) > \dots > (\theta_0, I) > (\theta_0, J)$ in $\Wlambda$. Analogously, we can continue this procedure by constructing $P_{\mathcal I}$-chains from $(\min_Q \circ \pi_{Q_{I_k}}(\theta), I_k)$ to $(\min_Q \circ \pi_{Q_{I_{k-1}}}(\theta), I_{k-1})$ for all $k = s-1, \dots, 2$. The element $\min_Q \circ \pi_{Q_{I_1}}(\theta)$ is minimal w.\,r.\,t. $Q_{I_{1}} = P_{I_{1}}$, so there is a $P_{\mathcal I}$-chain in $\Wlambda$ from this element to $(\id W_Q, I_1)$. There also exists a $P_{\mathcal I}$-chain from $(\tau, [m])$ to $(\phi, J)$, as $(\phi, J) \in \DCP{}$. In total, we can now glue the chain from $(\tau, [m])$ to $(\phi, J)$ with the chain from $(\phi, J)$ to $(\theta, I)$ and all of our constructed chains, to obtain a $P_{\mathcal I}$-chain $\overline\theta$, which also is a maximal chain in $\Wlambda$. Its projection to $\ulWlambda$ is a maximal $\tau$-standard chain and $\overline\theta$ is its unique defining chain. Therefore, we have $(\theta, I) \in \DCP{}$.
\end{proof}

\begin{corollary}
    \label{cor:dcp_go_down}
    For all $J \subseteq I$ in $\mathcal I$ and $(\theta, I) \in \DCP{}$ the element $(\min_Q \circ \pi_{Q_J}(\theta), J)$ lies in $\DCP{}$ and is less or equal to $(\theta, I)$.
\end{corollary}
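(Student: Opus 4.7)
The plan is to verify criterion~\ref{itm:dcp_elements_c} of Lemma~\ref{lem:dcp_elements} for the element $(\phi, J)$, where $\phi \coloneqq \min_Q \circ \pi_{Q_J}(\theta)$. The $Q_J$-minimality of $\phi$ is immediate from its definition, so it suffices to exhibit a $P_{\mathcal I}$-chain in $\Wlambda$ from some element of $\DCP{}$ above $(\phi, J)$ down to $(\phi, J)$. I will take $(\theta, I) \in \DCP{}$ itself as the starting point.

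To construct the chain, fix a maximal chain $J = J_1 \subsetneq J_2 \subsetneq \dots \subsetneq J_t = I$ in $\mathcal{I}$ and set $\phi_k \coloneqq \min_Q \circ \pi_{Q_{J_k}}(\theta)$, so that $\phi_t = \theta$ (using the $Q_I$-minimality of $\theta$) and $\phi_1 = \phi$. Each $\phi_k$ is $Q_{J_k}$-minimal by construction, and the nested containments imply that $\phi_k$ and $\phi_{k-1}$ lie in the same $Q_{J_{k-1}}$-coset with $\phi_k \geq \phi_{k-1}$. Mimicking the proof of Lemma~\ref{lem:dcp_elements}, pick a saturated chain of covering relations $\phi_k = \theta^{(k)}_{r_k} > \dots > \theta^{(k)}_0 = \phi_{k-1}$ in $W/W_Q$. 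The parabolic identity $Q_{J_k} = Q_{J_{k-1}} \cap P_{J_k}$ from Lemma~\ref{lem:properties_P_Q}(b) implies that the map $W/W_{Q_{J_k}} \hookrightarrow W/W_{Q_{J_{k-1}}} \times W/W_{P_{J_k}}$ is injective, and the same argument used in that proof then forces $\pi_{P_{J_k}}(\theta^{(k)}_i) > \pi_{P_{J_k}}(\theta^{(k)}_{i-1})$ at every step. Appending the single index-change step $(\phi_{k-1}, J_k) > (\phi_{k-1}, J_{k-1})$, which is a covering relation in $\Wlambda$ because $J_{k-1} \subsetneq J_k$ is covering in $\mathcal I$ and is automatically strict under $\pi_{P_{\mathcal I}}$ (the $J_k$- and $J_{k-1}$-components of $\ulWlambda$ are disjoint), yields a $P_{\mathcal I}$-chain from $(\phi_k, J_k)$ to $(\phi_{k-1}, J_{k-1})$.

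Concatenating these pieces for $k = t, t-1, \dots, 2$ produces a $P_{\mathcal I}$-chain in $\Wlambda$ from $(\theta, I)$ to $(\phi, J)$. Lemma~\ref{lem:dcp_elements} then places $(\phi, J)$ in $\DCP{}$, and the same chain witnesses $(\theta, I) \succeq (\phi, J)$ via Remark~\ref{rem:relation_dcp}, completing the argument.

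The main obstacle is the propagation of the strict inequalities $\pi_{P_{J_k}}(\theta^{(k)}_i) > \pi_{P_{J_k}}(\theta^{(k)}_{i-1})$ along each saturated chain; this is where the combinatorial structure of $\mathcal I$ (through Lemma~\ref{lem:properties_P_Q}(b)) really enters. I expect that choosing a saturated chain that remains inside the $Q_{J_{k-1}}$-coset of $\phi_k$ is exactly what is needed so that the injectivity argument above applies step-by-step, as in the proof of Lemma~\ref{lem:dcp_elements}.
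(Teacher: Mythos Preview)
Your argument is correct and follows essentially the same route as the paper, which simply states that the corollary follows from the proof of Lemma~\ref{lem:dcp_elements}. You have unpacked that reference: the construction of $P_{\mathcal I}$-chains from $(\min_Q \circ \pi_{Q_{J_k}}(\theta), J_k)$ to $(\min_Q \circ \pi_{Q_{J_{k-1}}}(\theta), J_{k-1})$ is precisely the iterative step carried out in the proof of Lemma~\ref{lem:dcp_elements}, only you stop the descent at $J$ rather than continuing to a minimal element of $\mathcal I$; the resulting chain then both verifies criterion~\ref{itm:dcp_elements_c} and, via Remark~\ref{rem:relation_dcp}, the relation $(\theta, I) \succeq (\phi, J)$.
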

\begin{proof}
    Follows from the proof of Lemma~\ref{lem:dcp_elements}.
\end{proof}

Lemma~\ref{lem:dcp_elements} also gives an inductive procedure to compute the defining chain poset. For every $r = r(\tau) + m - 1, \dots, 0$ we construct the set $D_r$ of all elements in $\DCP{}$ of rank $r$, starting with the largest rank, where we clearly have $D_r = \set{(\tau, [m])}$. If $D_r$ is known for some $r > 0$, then $D_{r-1}$ is the union of the sets
\begin{align*}
D_{r-1}(\theta, I) = \, &\set{(\theta, J) \mid \text{$\theta$ is $Q_J$-minimal and $I$ covers $J$}} \, \cup \\
&\set{(\phi, I) \mid \text{$\phi$ is $Q_I$-minimal, $\theta$ covers $\phi$ in $W/W_Q$ and $\pi_{P_I}(\theta) > \pi_{P_I}(\phi)$}}
\end{align*}
over all $(\theta, I) \in D_r$. Using this procedure, we compute another example of a defining chain poset, drawn in Figure~\ref{fig:dcp_example}.

The time complexity of this inductive procedure, however, scales linearly with the number of covering relations in $\Wlambda$, which can get out of hand quickly. Fortunately, the computation can be significantly accelerated, when $\tau = w_0 W_Q$ is the unique maximal element in $W/W_Q$. In this case the defining chain poset can be computed directly.

\begin{figure}
    \begin{center}
        \begin{tikzpicture}[scale=0.8]
        \node (p43) at (0,0) {$(4321, [3])$};
        \node (p42) at (0,-1.2) {$(4231, [3])$};
        \node (p41) at (2.7,-2.4) {$(4132, [3])$};
        \node (p32) at (0,-2.4) {$(3241, [3])$};
        \node (p31) at (2.7,-3.6) {$(3142, [3])$};
        \node (p21) at (2.7,-4.8) {$(2143, [3])$};
        \node (p423) at (-2.7,-2.4) {$(4231, [2])$};
        \node (p413) at (-2.7,-3.6) {$(4132, [2])$};
        \node (p412) at (-2.7,-4.8) {$(4123, [2])$};
        \node (p312) at (0,-6.0) {$(3124, [2])$};
        \node (p324) at (0,-3.6) {$(3241, [2])$};
        \node (p314) at (0,-4.8) {$(3142, [2])$};
        \node (p214) at (2.7,-6.0) {$(2143, [2])$};
        \node (p213) at (2.7,-7.2) {$(2134, [2])$};
        \node (p4) at (-2.7,-6.0) {$(4123, [1])$};
        \node (p3) at (0,-7.2) {$(3124, [1])$};
        \node (p2) at (0,-8.4) {$(2134, [1])$};
        \node (p1) at (0,-9.6) {$(1234, [1])$};
        
        \draw [thick, shorten <=-2pt, shorten >=-2pt, -stealth] (p43) -- (p42);
        \draw [thick, shorten <=-2pt, shorten >=-2pt, -stealth] (p42) -- (p41);
        \draw [thick, shorten <=-2pt, shorten >=-2pt, -stealth] (p42) -- (p32);
        \draw [thick, shorten <=-2pt, shorten >=-2pt, -stealth] (p41) -- (p31);
        \draw [thick, shorten <=-2pt, shorten >=-2pt, -stealth] (p32) -- (p31);
        \draw [thick, shorten <=-2pt, shorten >=-2pt, -stealth] (p31) -- (p21);
        \draw [thick, shorten <=-2pt, shorten >=-2pt, -stealth] (p42) -- (p423);
        \draw [thick, shorten <=-2pt, shorten >=-2pt, -stealth] (p423) -- (p413);
        \draw [thick, shorten <=-2pt, shorten >=-2pt, -stealth] (p413) -- (p412);
        \draw [thick, shorten <=-2pt, shorten >=-2pt, -stealth] (p412) -- (p312);
        \draw [thick, shorten <=-2pt, shorten >=-2pt, -stealth] (p32) -- (p324);
        \draw [thick, shorten <=-2pt, shorten >=-2pt, -stealth] (p324) -- (p314);
        \draw [thick, shorten <=-2pt, shorten >=-2pt, -stealth] (p314) -- (p214);
        \draw [thick, shorten <=-2pt, shorten >=-2pt, -stealth] (p214) -- (p213);
        \draw [thick, shorten <=-2pt, shorten >=-2pt, -stealth] (p412) -- (p4);
        \draw [thick, shorten <=-2pt, shorten >=-2pt, -stealth] (p4) -- (p3);
        \draw [thick, shorten <=-2pt, shorten >=-2pt, -stealth] (p3) -- (p2);
        \draw [thick, shorten <=-2pt, shorten >=-2pt, -stealth] (p2) -- (p1);
        \draw [thick, shorten <=-2pt, shorten >=-2pt, -stealth] (p21) -- (p214);
        \draw [thick, shorten <=-2pt, shorten >=-2pt, -stealth] (p31) -- (p314);
        \draw [thick, shorten <=-2pt, shorten >=-2pt, -stealth] (p314) -- (p312);
        \draw [thick, shorten <=-2pt, shorten >=-2pt, -stealth] (p312) -- (p3);
        \draw [thick, shorten <=-2pt, shorten >=-2pt, -stealth] (p213) -- (p2);
        \draw [thick, shorten <=-2pt, shorten >=-2pt, -stealth] (p41) -- (p413);
        \end{tikzpicture}
    \end{center}
    \caption{$D((\omega_1, \omega_3, \omega_2), w_0)$ in type $\texttt{A}_3$ for $\mathcal I = \set{[1], [2], [3]}$}
    \label{fig:dcp_example}
\end{figure}
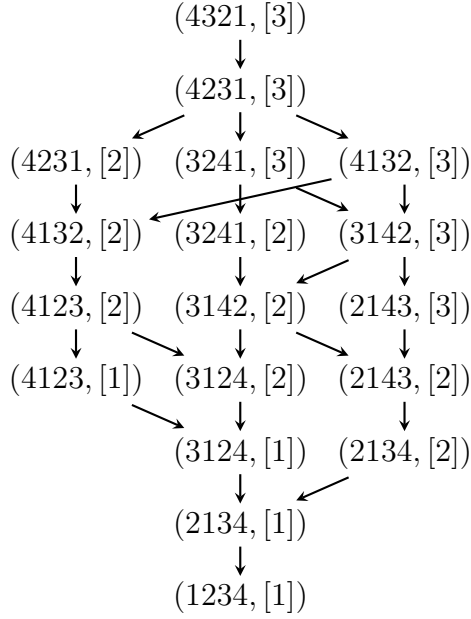

\begin{proposition}
    \label{prop:min_max_crit}
    Suppose $\tau = w_0 W_Q$ is the unique maximal element in $W/W_Q$. Then an element $(\theta, I) \in \Wlambda$ lies in $\DCP{}$, if and only if $\theta$ is $Q_I$-minimal and there exists a chain $I = I_r \subsetneq \dots \subsetneq I_m = [m]$ of covering relations in $\mathcal I$, such that $\theta$ is maximal w.\,r.\,t. the parabolic subgroup $Q^r = \bigcap_{j=r}^m P_{I_j}$ from Lemma~\ref{lem:operations_on_def_chains} (since $Q_\tau = Q$).
\end{proposition}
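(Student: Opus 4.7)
The plan is to prove both implications using Theorem~\ref{thm:unique_def_chain}, Lemma~\ref{lem:operations_on_def_chains} and Lemma~\ref{lem:dcp_elements}. A preliminary observation: since $\tau = w_0 W_Q$ satisfies $\ell(w_0 s) < \ell(w_0)$ for every simple reflection $s$, the element $\tau$ is $Q'$-maximal for \emph{every} parabolic $Q \subseteq Q'$. Hence $Q_\tau$ equals the full group, and the parabolics from Lemma~\ref{lem:operations_on_def_chains} reduce to $Q^k = \bigcap_{j=k}^m P_{I_j}$ as in the statement.

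For the forward implication, suppose $(\theta, I) \in \DCP{}$. By definition $(\theta, I)$ appears in the unique defining chain $(\overline\theta_\ell, I_\ell) > \dots > (\overline\theta_0, I_0)$ of some maximal $\tau$-standard chain in $\ulWlambda$. Theorem~\ref{thm:unique_def_chain} tells us this defining chain is a maximal chain in $\Wlambda$ with top $(\tau, [m])$; writing $(\theta, I) = (\overline\theta_i, I_i)$, the index components between position $i$ and the top form a weakly decreasing sequence $I = I_i \subseteq \dots \subseteq I_\ell = [m]$ whose strict drops provide the covering chain $I = I_r \subsetneq I_{r+1} \subsetneq \dots \subsetneq I_m = [m]$ in $\mathcal I$ required by the statement. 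Theorem~\ref{thm:unique_def_chain} gives the $Q_I$-minimality of $\theta$, and Lemma~\ref{lem:operations_on_def_chains} (together with $Q_\tau = G$) gives the $Q^r$-maximality.

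For the reverse implication, assume the combinatorial hypothesis with chain $I = I_r \subsetneq \dots \subsetneq I_m = [m]$. By Lemma~\ref{lem:dcp_elements} it suffices to build a $P_{\mathcal I}$-chain in $\Wlambda$ from $(\tau,[m])$ to $(\theta,I)$. Set $\theta_k = \max_Q \circ \pi_{Q^k}(\theta)$ for $k = r, \dots, m$; the assumed $Q^r$-maximality gives $\theta_r = \theta$, the inclusions $Q^k \subseteq Q^{k+1}$ give $\theta_r \leq \theta_{r+1} \leq \dots \leq \theta_m$, and each $\theta_k$ is $Q^k$-maximal by construction. My chain starts at $(\tau,[m])$, descends in the $W/W_Q$-direction to $(\theta_m, [m])$ by $\max_Q$-lifting a chain of covering relations in $W/W_{P_{[m]}}$ from $\pi_{P_{[m]}}(\tau)$ down to $\pi_{P_{[m]}}(\theta_m)$, and then alternates, for $k$ decreasing from $m$ to $r+1$, between an index drop $(\theta_k, I_k) \to (\theta_k, I_{k-1})$ (automatically strict in $\ulWlambda$ because the index components differ) and a $W/W_Q$-descent from $\theta_k$ to $\theta_{k-1}$ at level $I_{k-1}$, terminating at $(\theta_r, I_r) = (\theta, I)$.

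The main obstacle is verifying that each $W/W_Q$-descent from $\theta_{k+1}$ to $\theta_k$ at level $I_k$ can be realized by covering relations whose projections to $W/W_{P_{I_k}}$ are also covering relations, so that the whole assembly satisfies the $P_{\mathcal I}$-property. The decisive observation is that $\pi_{Q^{k+1}}(\theta_k) = \pi_{Q^{k+1}}(\theta) = \pi_{Q^{k+1}}(\theta_{k+1})$, placing $\theta_k$ and $\theta_{k+1}$ in a common $W_{Q^{k+1}}$-coset; the identity $Q^k = Q^{k+1} \cap P_{I_k}$ together with $Q^k$-maximality of $\theta_k$ and $Q^{k+1}$-maximality of $\theta_{k+1}$ then forces any saturated chain between them in $W/W_Q$ to project to a chain of the same length in $W/W_{P_{I_k}}$, using the lifting properties of $\max_Q$ summarized in Deodhar's Lemma~\ref{lem:deodhar}. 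The $Q_I$-minimality of $\theta$ ensures that the lowest-level descent terminates exactly at $(\theta, I)$ rather than at some other lift of $\pi_{P_I}(\theta)$, completing the construction.
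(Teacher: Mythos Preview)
Your proof is correct and follows the same approach as the paper's: the forward implication via Theorem~\ref{thm:unique_def_chain}, and the reverse by constructing a $P_{\mathcal I}$-chain through the elements $\theta_k = \max_Q \circ \pi_{Q^k}(\theta)$ and then invoking Lemma~\ref{lem:dcp_elements}. The paper is terser (it merely cites the analogy with the proof of Lemma~\ref{lem:dcp_elements} for the $P_{\mathcal I}$-property), whereas you spell out the key identity $Q^k = Q^{k+1} \cap P_{I_k}$; the only imprecision is your final sentence---the $Q_I$-minimality of $\theta$ enters as a hypothesis of Lemma~\ref{lem:dcp_elements}\,\ref{itm:dcp_elements_b}, not as what makes your chain terminate at $(\theta,I)$ (that follows already from the assumed $Q^r$-maximality, giving $\theta_r = \theta$).
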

\begin{proof}
    Let $\theta \in W/W_Q$ be $Q_I$-minimal and  $I = I_r \subsetneq \dots \subsetneq I_m = [m]$ be a chain of covering relations in $\mathcal I$ with the above property. We define the parabolic subgroups $Q^k = \bigcap_{j=k} P_{I_j}$ for $k = r, \dots, m$. If $k < m$ we have $\max_Q \circ \pi_{Q^{k+1}}(\theta) \geq \max_Q \circ \pi_{Q^k}(\theta)$ in $W/W_Q$ and by a proof, which is completely analogous to parts of the proof of Lemma~\ref{lem:dcp_elements}, we can construct a $P_{\mathcal I}$-chain between $(\max_Q \circ \pi_{Q^{k+1}}(\theta), I_{k+1})$ and $(\max_Q \circ \pi_{Q^k}(\theta), I_k)$. Since $Q^m = P_{[m]}$ there also is a $P_{\mathcal I}$-chain between $(\tau, [m])$ and $(\max_Q \circ \pi_{Q^m}(\theta), [m])$. Hence $(\theta, I)$ lies in $\DCP{}$ by Lemma~\ref{lem:dcp_elements}. The other implication follows from Theorem~\ref{thm:unique_def_chain}.
\end{proof}

\begin{remark}
    If $(\theta, I)$ is an element of $\DCP{}$, then the $Q^r$-maximality of $\theta$ even holds for every chain of covering relations in $\mathcal I$. This follows later from Corollary~\ref{cor:nice_I}. Hence the choice of the this chain does not affect the statement of Proposition~\ref{prop:min_max_crit}: If one such chain exists, then $\theta$ is $Q^r$-maximal for any chain.
\end{remark}

Although the defining chain poset can be defined in this full generality, it is not always a reasonable candidate for the underlying poset $A$ of a Seshadri stratification on $X_\tau$. The extremal function of an element $(\theta, I) \in \DCP{}$, we wish to use, is a generalization of the Pl\"ucker coordinates in type \texttt{A} and it only depends on the image $\rho(\theta, I) \in \ulWlambda$, where $\rho$ denotes the (monotone) composition $\DCP{} \hookrightarrow \Wlambda \twoheadrightarrow \ulWlambda$. In combination with the condition~\ref{itm:seshadri_strat_b} on a Seshadri stratification, this forces us to only consider those defining chain posets, where $\rho$ is injective, such that no two elements in $\DCP{}$ have the same extremal function.

\begin{definition}
    We say the poset $\mathcal I$ is \textbf{\boldmath \nice{\tau}}, if the monotone map 
    \begin{align*}
        \rho: \DCP{} \to \ulWlambda, \quad (\theta, I) \mapsto (\pi_{P_I}(\theta), I)
    \end{align*}
    is an isomorphism of posets.
\end{definition}

The map $\rho: \DCP{} \twoheadrightarrow \ulWlambda$ is automatically an isomorphism, if it is injective. Indeed, if $(\theta, I) > (\phi, J)$ in $\ulWlambda$, then this is a $\tau$-standard chain, which we can therefore extend to a maximal $\tau$-standard chain. Its unique defining chain contains the preimages of $(\theta, I)$, and $(\phi, J)$ under $\rho$, because of the bijectivity of $\rho$ (it is always surjective), hence these preimages are comparable in $\DCP{}$.

There always exists at least one \nice{\tau} poset $\mathcal I$, namely $\mathcal I = \mathcal P(\set{1, \dots, m}) \setminus \set{\varnothing}$. Here the map $\rho$ is injective, since $P_I = Q_I$ holds for every $I \in \mathcal I$.

\begin{proposition}
    \label{prop:nice_I_vs_weakly_standard}
    The poset $\mathcal I$ is \nice{\tau}, if and only if every weakly $\tau$-standard LS-tableau of type $(\underline\lambda, \mathcal I)$ is $\tau$-standard. In this case the relation~(\ref{eq:quotient_relation}) is transitive.
\end{proposition}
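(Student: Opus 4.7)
The plan is to establish both directions by translating between LS-tableaux and chains in $\ulWlambda$ and $\DCP{}$, exploiting Theorem~\ref{thm:unique_def_chain} in the harder direction. For the forward implication, I would fix a weakly $\tau$-standard LS-tableau $\underline\pi$ and consider its associated chain~(\ref{eq:tableau_chain}) in $\ulWlambda$. Since $\rho$ is a poset isomorphism by hypothesis, $\rho^{-1}$ transports this chain to a chain in $\DCP{} \subseteq \Wlambda$. Projecting to the first coordinate yields a weakly decreasing lift of the columns in $W/W_Q$, bounded above by $\tau$; further projection to $W/W_\mu$ produces the required defining chain, so $\underline\pi$ is $\tau$-standard.

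For the converse, the goal is to show $\rho$ is injective (surjectivity is automatic by the remark preceding the proposition). I would first show that every maximal chain $\mathfrak C$ in $\ulWlambda$ is $\tau$-standard. Since covering relations in $\ulWlambda$ must come directly from~(\ref{eq:quotient_relation}) --- a genuine subdivision through the transitive hull would contradict the covering property --- the LS-tableau of extremal paths $(\alpha_k;0,1)$ of shapes $\lambda_{I_k}$ associated to $\mathfrak C$ is weakly $\tau$-standard. The hypothesis then makes it $\tau$-standard, and the resulting defining chain in $W/W_\mu$ lifts to a defining chain for $\mathfrak C$ in $\Wlambda$.

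The main obstacle is to deduce injectivity from this: suppose $(\theta_1, I), (\theta_2, I) \in \DCP{}$ both project to the same $(\beta, I) \in \ulWlambda$. Each lies in the unique defining chain of some maximal $\tau$-standard chain $\mathfrak C_i$ through $(\beta, I)$. I then form a new maximal chain $\mathfrak C$ by concatenating the portion of $\mathfrak C_1$ from $(\tau, [m])$ down to $(\beta, I)$ with the portion of $\mathfrak C_2$ from $(\beta, I)$ down to its minimum; this chain is again $\tau$-standard by the previous step. The key observation is that the maximal defining chain of $\mathfrak C$, built top-down in the proof of Theorem~\ref{thm:unique_def_chain}, depends only on the upper portion of $\mathfrak C$, forcing its lift of $(\beta, I)$ to coincide with that of $\mathfrak C_1$, namely $\theta_1$; dually, the minimal defining chain built bottom-up depends only on the lower portion and yields $\theta_2$. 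Since the two defining chains coincide by Theorem~\ref{thm:unique_def_chain}, $\theta_1 = \theta_2$. Finally, the transitivity of~(\ref{eq:quotient_relation}) under $\tau$-niceness is quick: given direct relations $(\alpha_1, I_1) \geq (\alpha_2, I_2) \geq (\alpha_3, I_3)$, the chain sits in $\ulWlambda$ and pulls back via $\rho^{-1}$ to lifts satisfying $\overline\alpha_1 \geq \overline\alpha_3$ in $W/W_Q$, so $\max_Q(\alpha_1) \geq \overline\alpha_1 \geq \overline\alpha_3 \geq \min_Q(\alpha_3)$.
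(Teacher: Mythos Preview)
Your approach is essentially the paper's, just run as a direct implication rather than a contrapositive. The forward direction and the transitivity argument match the paper exactly. For the converse, the paper also builds the glued chain through $(\beta,I)$ and invokes the uniqueness in Theorem~\ref{thm:unique_def_chain}; the only structural difference is that the paper assumes $\rho$ is \emph{not} injective and shows the glued chain fails to be $\tau$-standard, whereas you assume the hypothesis and argue the two lifts must coincide.

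There is, however, a small gap in your Step~2 that the paper handles explicitly. You assert that the concatenation $\mathfrak C$ is a ``new maximal chain'' and that it is $\tau$-standard ``by the previous step'', but neither is automatic. Your Step~1 only covers maximal chains in $\ulWlambda$, and the concatenation of pieces of two maximal $\tau$-standard chains need not itself be maximal in $\ulWlambda$; moreover, the positions of $(\beta,I)$ in $\mathfrak C_1$ and $\mathfrak C_2$ are governed by the ranks of $(\theta_1,I)$ and $(\theta_2,I)$ in $\DCP{}$, which a priori differ. The paper splits into two cases: if the ranks differ, the glued chain has length strictly greater than $r(\tau)+m-1$, which already contradicts the existence of any defining chain in $\Wlambda$; if the ranks agree, the glued chain has exactly the maximal length and Theorem~\ref{thm:unique_def_chain} applies. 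In your direct formulation the fix is easy: once you know (from Step~1, or more simply from~(\ref{eq:tableau_chain}) applied to an arbitrary chain) that \emph{every} chain in $\ulWlambda$ is $\tau$-standard, no chain can exceed length $r(\tau)+m-1$, so $\ulWlambda$ is graded and the ranks are forced to coincide. You should state this explicitly before invoking Theorem~\ref{thm:unique_def_chain}. Also note that your covering-relation argument in Step~1 is unnecessary: by~(\ref{eq:tableau_chain}), \emph{any} chain in $\ulWlambda$ already yields a weakly $\tau$-standard tableau.
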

\begin{proof}
    The notions of weakly $\tau$-standard and $\tau$-standard LS-tableaux coincide, if and only if every chain in $\ulWlambda$ is $\tau$-standard. This follows from the equivalence~(\ref{eq:tableau_chain}) and the fact that each element $(\theta, I) \in \ulWlambda$ defines an LS-path in $\mathbb{B}(\lambda_I)$, namely the straight-line path from the origin to $\theta(\lambda_I) \in \Lambda^+$.
    
    If $\mathcal I$ is \nice{\tau} and $\underline\theta: (\theta_\ell, I_\ell) > \dots > (\theta_0, I_0)$ is a chain in $\ulWlambda$, then its unique preimage via $\rho$ is a defining chain for $\underline\theta$ as $\DCP{} \cong \ulWlambda$. Additionally the relation~(\ref{eq:quotient_relation}) is transitive by Lemma~\ref{lem:relation_ulWlambda}, since chains in $\ulWlambda$ can be lifted to $W/W_Q$ via $\rho$. Conversely, if $\mathcal I$ is not \nice{\tau}, then there are two different preimages $(\overline\theta, I), (\overline{\theta'}, I) \in \DCP{}$ of an element $(\theta, I) \in \ulWlambda$, \wwlog the rank of $(\overline\theta, I)$ in $\DCP{}$ is less or equal to the rank of $(\overline{\theta'}, I)$. We choose chains of covering relations 
    \begin{align*}
    (\tau, [m]) = (\sigma_r, I_r) \succ \dots \succ (\sigma_{j+1}, I_{j+1}) \succ (\overline\theta, I)& \quad \text{and}\\
    (\overline{\theta'}, I) \succ (\sigma_{j-1}, I_{j-1}) \succ \dots \succ (\sigma_0, I_0)&
    \end{align*}
    in $\DCP{}$, where $(\sigma_0, I_0)$ is a minimal element. By projecting both chains to $\ulWlambda$ and gluing them together at their shared element, we get a chain $\underline\theta$ in $\ulWlambda$ containing $(\theta, I)$. Its length is equal to the length of $\DCP{}$. In the case where $(\overline\theta, I)$ and $(\overline{\theta'}, I)$ have different ranks in $\DCP{}$, the chain $\underline\theta$ certainly is too long to be $\tau$-standard. 
    
    If the ranks are equal, suppose that $\underline\theta$ is $\tau$-standard. Then there exists an unique defining chain by Theorem~\ref{thm:unique_def_chain}. The beginning of this defining chain must agree with $(\sigma_r, I_r) \succ \dots \succ (\sigma_{j+1}, I_{j+1})$ and its end agrees with $(\sigma_{j-1}, I_{j-1}) \succ \dots \succ (\sigma_0, I_0)$. The element in between would be a lift of $(\theta, I)$ via $\rho$. It is equal to $(\overline\theta, I)$, as $\overline\theta$ is the unique maximal lift of $\theta$, which is less or equal to $\sigma_{j+1}$. Analogously this lift is equal to $(\overline{\theta'}, I)$, which is impossible.
\end{proof}

\begin{example}
    Even in type \texttt{A} there are elements $\tau \in W/W_Q$, where no totally ordered, \nice{\tau} poset $\mathcal I$ exists. One of the easiest examples is $\tau = 3412$ for $G = \mathrm{SL}_4(\K)$ and $\underline\lambda = (\omega_1, \omega_2, \omega_3)$. The reason for this is the following: When $\mathcal I$ is totally ordered, then $P_{[3]}$ is equal to a maximal parabolic subgroup $P_i$ for $i \in [3]$. We can write $\tau$ in the form $\tau^{P_i} \tau_{P_i}$ for $\tau^{P_i} \in W^{P_i}$ and $\tau_{P_i} \in W_{P_i}$ (see Appendix~\ref{sec:weyl_groups}). Then the element $(\sigma \tau_{P_i}, [3])$ lies in the defining chain poset for every $\sigma \in W^{P_i}$ with $\sigma \leq \tau^{P_i}$. But for each $i \in [3]$ there is a covering relation $\sigma' < \tau$, such that $\pi_{P_i}(\sigma') < \pi_{P_i}(\tau)$ is not a covering relation: $1432 < 3412$ for $i = 1,2$ and $3214 < 3412$ for $i = 3$. Hence $(\pi_{P_i}(\sigma'), [3])$ has multiple preimages under $\rho$.

    Let $\mathcal I$ be a \nice{\tau} index poset for $\tau = 3412$. We show that there are only two possible choices for $\mathcal I$. Suppose that $P_{[3]} = P_1 \cap P_2$ or $P_{[3]} = P_2 \cap P_3$. In the first case the defining chain poset would contain the chains $(3412, [3]) \succ (1432, [3])$ and $(3412, [3]) \succ (2413, [3]) \succ (1423, [3])$. But $1432 = 1423$ in $W/W_{P_1 \cap P_2}$, so $\mathcal I$ is not \nice{\tau}. Similarly, in the second case we have the chains $(3412, [3]) \succ (1432, [3]) \succ (1342, [3])$ and $(3412, [3]) \succ (3142, [3])$ with $1342 = 3142$ in $W/W_{P_2 \cap P_3}$.

    Therefore the parabolic $P_{[3]}$ is either equal to $P_1 \cap P_3$ or to $B$. If $P_{[3]} = B$, then the requirement~(\ref{eq:need_for_s2}) implies that $\mathcal I$ is equal to the poset $\mathcal I = \mathcal P(\set{1,2,3}) \setminus \set{\varnothing}$. If $P_{[3]} = P_1 \cap P_3$, the index poset $\mathcal I$ contains $I = \set{1,2}$ and $J = \set{2,3}$. Then the following elements lie in the defining chain poset: $(3412, I)$, $(3214, [3])$, $(3214, I)$, $(3412, J)$, $(1432, [3])$ and $(1432, J)$. Since $3412 = 3214$ in $W/W_{P_1}$, the set $\underline I$ cannot be equal to $\set{1}$. As $I \nsubseteq J$, we have $2 \in \underline I$ by~(\ref{eq:need_for_s2}). Therefore $\underline I = I$. Analogously, one can show $\underline J = J$. Hence $\mathcal I$ is given by $\mathcal I = \set{\set{1}, \set{2}, \set{3}, \set{1,2}, \set{2,3}, [3]}$.
\end{example}

For a general element $\tau$ it is difficult to tell, which posets $\mathcal I$ are \nice{\tau}. However, if $\tau = w_0 W_Q$ is the unique maximal element in $W/W_Q$, then the injectivity of $\rho$ translates into the absence of certain paths in the Dynkin diagram of $G$. To state this criterion, we define the set $\Delta_{Q'} = \set{\alpha \in \Delta \mid s_\alpha \in W_{Q'}}$ of simple roots for every parabolic subgroup $Q' \subseteq G$.

\begin{theorem}
    \label{thm:nice_I}
    For $\tau = w_0 W_Q$ the poset $\mathcal I$ is \nice{\tau}, if and only if one of the following equivalent conditions holds for each $I \in \mathcal I$ and every chain $I = I_r \subsetneq \dots \subsetneq I_m = [m]$ of covering relations in $\mathcal I$. Here $Q^r = \bigcap_{j=r}^m P_{I_j}$.
    \begin{enumerate}[label=(\roman{enumi})]
        \item \label{itm:nice_I_a} The element $(\id W_{P_I}, I)$ has exactly one preimage via $\rho: \DCP{} \twoheadrightarrow \ulWlambda$;
        \item \label{itm:nice_I_b} $\min_Q \circ \max_{Q_I}(\id W_{P_I}) = \max_Q \circ \min_{Q^r}(\id W_{P_I})$;
        \item \label{itm:nice_I_c} $W_{P_I} \cap W^{Q_I} \subseteq W_{Q^r} \cap W^Q$.
        \item \label{itm:nice_I_d} The two parabolic subgroups $Q_I$ and $Q^r$ generate $P_I$ and every path in the Dynkin diagram of $G$ (not visiting the same vertex twice) connecting a vertex of $\Delta_{Q_I} \setminus \Delta_{Q^r}$ with a vertex of $\Delta_{Q^r} \setminus \Delta_{Q_I}$ contains a vertex not in $\Delta_{P_I}$.
    \end{enumerate}
\end{theorem}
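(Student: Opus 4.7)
The plan is to establish the global equivalence that $\mathcal{I}$ is \nice{\tau} if and only if condition~(i) holds for every $I\in\mathcal{I}$, and then the pointwise chain $(\mathrm{i})\Leftrightarrow(\mathrm{ii})\Leftrightarrow(\mathrm{iii})\Leftrightarrow(\mathrm{iv})$ for a fixed $I$ and a fixed maximal chain $I=I_r\subsetneq\dots\subsetneq I_m=[m]$ of covering relations in $\mathcal{I}$. Since the map $\rho\colon\DCP{}\twoheadrightarrow\ulWlambda$ is always surjective, being \nice{\tau} amounts to the injectivity of $\rho$. One direction of the global equivalence is immediate from the definition. For the converse, assume (i) and suppose $(\theta,I)\in\ulWlambda$ had two distinct preimages $(\overline\theta_1,I)$ and $(\overline\theta_2,I)$ in $\DCP{}$. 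Using Remark~\ref{rem:restriction_of_DCP}, I would restrict $\DCP{}$ below a suitably chosen element of $\DCP{}$ lying above both preimages so that the obstruction transforms into a failure of uniqueness of a preimage of $(\id W_{P_{I'}},I')$ in the restricted defining chain poset, contradicting~(i) applied to the smaller data; the hypothesis $\tau=w_0 W_Q$ ensures that all the maximal lifts needed for this reduction exist.

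\textbf{Equivalence $(\mathrm{i})\Leftrightarrow(\mathrm{ii})$.} By Proposition~\ref{prop:min_max_crit}, the preimages of $(\id W_{P_I},I)$ in $\DCP{}$ are exactly the elements $\overline\theta\in W_{P_I}/W_Q$ which are $Q_I$-minimal and also $Q^r$-maximal for some chain through $I$. I would then set $\alpha=\min_Q\circ\max_{Q_I}(\id W_{P_I})$, the unique largest $Q_I$-minimal lift of $\id W_{P_I}$ to $W/W_Q$, and $\beta=\max_Q\circ\min_{Q^r}(\id W_{P_I})$, which is the longest element of $W_{Q^r}/W_Q$ and is automatically $Q^r$-maximal. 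Any preimage is bounded above in the Bruhat order by $\alpha$ (by $Q_I$-minimality) and by $\beta$ (by $Q^r$-maximality), and these bounds must agree precisely when the preimage is unique; conversely if $\alpha=\beta$, the common element satisfies both extremality conditions and a direct Bruhat-interval argument rules out any further preimages.

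\textbf{Equivalences $(\mathrm{ii})\Leftrightarrow(\mathrm{iii})\Leftrightarrow(\mathrm{iv})$; main obstacle.} For $(\mathrm{ii})\Leftrightarrow(\mathrm{iii})$, I would use that $W^{Q_I}\subseteq W^Q$ automatically, so (iii) collapses to $W_{P_I}\cap W^{Q_I}\subseteq W_{Q^r}$. The longest element $w_0(P_I/Q_I)\in W_{P_I}\cap W^{Q_I}$ represents $\alpha$ in $W/W_Q$, while the longest element of $W_{Q^r}\cap W^Q$ represents $\beta$; the equality $\alpha=\beta$ is equivalent to $w_0(P_I/Q_I)\in W_{Q^r}$, and since every element of $W_{P_I}\cap W^{Q_I}$ occurs as a subword of a reduced decomposition of $w_0(P_I/Q_I)$ in $\Delta_{P_I}$, the containment then propagates to the entire set. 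The equivalence $(\mathrm{iii})\Leftrightarrow(\mathrm{iv})$ is the \emph{main obstacle}: it is a combinatorial analysis on the Dynkin diagram of $G$. The first clause of (iv), $\langle W_{Q_I},W_{Q^r}\rangle=W_{P_I}$, is equivalent to $\Delta_{Q_I}\cup\Delta_{Q^r}=\Delta_{P_I}$ and is forced by (iii) because otherwise a simple reflection in $\Delta_{P_I}\setminus(\Delta_{Q_I}\cup\Delta_{Q^r})$ would yield an element of $W_{P_I}\cap W^{Q_I}\setminus W_{Q^r}$. Given this equality, any path in $\Delta_{P_I}$ from a vertex of $\Delta_{Q_I}\setminus\Delta_{Q^r}$ to a vertex of $\Delta_{Q^r}\setminus\Delta_{Q_I}$ would, by a braid-relation argument along the path, produce a reduced expression in $W_{P_I}\cap W^{Q_I}$ containing a simple reflection outside $\Delta_{Q^r}$, violating (iii); the absence of such paths confines every reduced expression of an element of $W_{P_I}\cap W^{Q_I}$ to the connected components of $\Delta_{Q^r}$ inside $\Delta_{P_I}$, yielding the containment.
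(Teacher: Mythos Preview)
Your sketch for $(\mathrm{i})\Leftrightarrow(\mathrm{ii})$ has a genuine gap. First, the inequality direction for $\beta$ is reversed: a preimage $\sigma$ is $Q^r$-maximal, so $\sigma=\max_Q(\pi_{Q^r}(\sigma))\geq\max_Q(\id W_{Q^r})=\beta$; thus $\beta$ is a \emph{lower} bound, and preimages lie in the Bruhat interval $[\beta,\alpha]$. More seriously, you never establish that $\alpha$ and $\beta$ are themselves preimages, which is what forces $\alpha=\beta$ under uniqueness. By Proposition~\ref{prop:min_max_crit} this means showing that $\alpha$ is also $Q^r$-maximal and that $\beta$ is also $Q_I$-minimal, and neither follows from your description of $\alpha,\beta$ as extremal lifts for one parabolic. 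The paper proves both facts via the embedding $\iota\colon W/W_Q\hookrightarrow W/W_{Q_I}\times W/W_{Q^r}$, which is injective because $Q_I\cap Q^r=Q$ (here $\tau=w_0W_Q$ is used); one then compares $\iota(\alpha)$ with $\iota(\max_Q\circ\pi_{Q^r}(\alpha))$ componentwise. Without this step your $(\mathrm{i})\Rightarrow(\mathrm{ii})$ does not go through, and your converse ``Bruhat-interval argument'' also relies on both bounds being sharp.

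Two further points. Your reduction of the global equivalence to the identity element via Remark~\ref{rem:restriction_of_DCP} is vague: restricting below an element above both preimages does not by itself produce a violation at an identity coset. The paper instead factors $\min_B(\overline\theta)=\theta^{P_I}\cdot\overline\theta_{P_I}$ with $\theta^{P_I}\in W^{P_I}$ and $\overline\theta_{P_I}\in W_{P_I}$, constructs a $P_{\mathcal I}$-chain from $(\overline\theta,I)$ down to $(\overline\theta_{P_I}W_Q,I)$, and observes that two distinct lifts of $(\theta,I)$ yield two distinct lifts of $(\id W_{P_I},I)$; this works for arbitrary $\tau$. Finally, note that the paper does not attempt the biconditionals you outline: it runs the cycle $(\mathrm{i})\Rightarrow(\mathrm{ii})\Rightarrow(\mathrm{iii})\Rightarrow(\mathrm{i})$ and treats $(\mathrm{iii})\Leftrightarrow(\mathrm{iv})$ separately; in particular $(\mathrm{iii})\Rightarrow(\mathrm{i})$ uses the product decomposition $W^Q=W^{P_I}\cdot(W_{P_I}\cap W^{Q^r})\cdot(W_{Q^r}\cap W^Q)$ together with the reverse inclusion $W_{Q^r}\cap W^Q\subseteq W_{P_I}\cap W^{Q_I}$, which you would also need for your $(\mathrm{iii})\Rightarrow(\mathrm{ii})$ but do not mention. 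Your sketch of $(\mathrm{iii})\Leftrightarrow(\mathrm{iv})$ is in the right spirit, but the actual arguments (showing $s_{\alpha_1}\cdots s_{\alpha_k}\in W^{Q_I}$ for a Dynkin path, and the contradiction via a maximal pair $(p,q)$ in a reduced word) are more delicate than ``braid-relation'' and ``connected components'' suggest.
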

\begin{proof}
    The poset $\mathcal I$ is \nice{\tau}, if and only if statement~\ref{itm:nice_I_a} is fulfilled for every $I \in \mathcal I$. We remark, that this equivalence also holds for every $\tau \neq w_0 W_Q$. Indeed if $\rho$ is not bijective, then there exist two different lifts $(\overline\theta, I), (\overline{\theta'}, I) \in \DCP{}$ of an element $(\theta, I) \in \ulWlambda$. We write $\min_B(\overline\theta) = \theta^{P_I} \overline\theta_{P_I}$ and $\min_B(\overline\theta) = \theta^{P_I} \overline{\theta'}_{P_I}$ for $\theta^{P_I} \in W^{P_I}$ and $\overline\theta_{P_I}, \overline{\theta'}_{P_I} \in W_{P_I}$. Since there are $P_{\mathcal I}$-chains from $(\overline\theta, I)$ to $(\overline\theta_{P_I} W_Q, I)$ and from $(\overline\theta, I)$ to $(\overline\theta_{P_I} W_Q, I)$, both $(\overline\theta_{P_I} W_Q, I)$ and $(\overline{\theta'}_{P_I} W_Q, I)$ are two different lifts of $(\id W_{P_I}, I)$ in $\DCP{}$ by Lemma~\ref{lem:dcp_elements}.
    
    Next we show the implication \ref{itm:nice_I_a} $\Rightarrow$ \ref{itm:nice_I_b}, by proving that both $\sigma = \min_Q \circ \max_{Q_I}(\theta)$ and $\sigma' = \max_Q \circ \min_{Q^r}(\theta)$ are $Q_I$-minimal and $Q^r$-maximal lifts of $(\theta, I)$, where we set $\theta = \id W_{P_I}$. It then follows $\sigma = \sigma'$ from Proposition~\ref{prop:min_max_crit}. We show this equality for any $\theta \in W/W_{P_I}$, because then we obtain the next corollary as a consequence. 
    
    The element $\sigma$ is $Q_I$-minimal by definition and maps to $(\max_{Q_I}(\theta), \pi_{Q^r}(\sigma))$ via the map $\iota: W/W_Q \to W/W_{Q_I} \times W/W_{Q^r}$. On the other hand, $\max_Q \circ \pi_{Q^r}(\sigma)$ maps to $(\overline\theta, \pi_{Q^r}(\sigma))$ for some lift $\overline\theta \in W/W_{Q_I}$ of $\theta$. We clearly have $\max_{Q_I}(\theta) \geq \overline\theta$. As $Q_I \cap Q^r = Q_\tau = Q$, the map $\iota$ is an embedding and we can conclude $\sigma \geq \max_Q \circ \pi_{Q^r}(\sigma)$. In particular, $\sigma$ is $Q^r$-maximal. Analogously, one can show the $Q_I$-minimality of $\sigma'$.
    
    Part~\ref{itm:nice_I_c} follows from \ref{itm:nice_I_b}: Since $W_{P_I} \cap W^{Q_I} \subseteq W^Q$, we only need to prove the inclusion $W_{P_I} \cap W^{Q_I} \subseteq W_{Q^r}$. Every element $\phi \in W_{P_I} \cap W^{Q_I}$ is smaller or equal to $\sigma \coloneqq \min_B \circ \max_{Q_I}(\id W_{P_I})$ since both are $Q_I$-minimal and $\phi W_{Q_I} \leq \max_{Q_I}(\id W_{P_I}) = \sigma W_{Q_I}$. By statement~\ref{itm:nice_I_b} we now have
    \begin{align*}
        \pi_{Q^r}(\phi) \leq \pi_{Q^r}(\sigma) = \pi_{Q^r} \circ \operatorname{min}_B \circ \operatorname{max}_Q \circ \operatorname{min}_{Q^r}(\id W_{P_I}) = \operatorname{min}_{Q^r}(\id W_{P_I}) = \id W_{Q^r}
    \end{align*}
    and this inequality is equivalent to $\phi \in W_{Q^r}$.
    
    We close the first circle of implications via \ref{itm:nice_I_c} $\Rightarrow$ \ref{itm:nice_I_a}: Let $(\sigma, I) \in \DCP{}$ be any preimage of $(\id W_{P_I}, I)$ via $\rho$. Then by Proposition~\ref{prop:min_max_crit} there exists a chain $I = I_r \subsetneq \dots \subsetneq I_m = [m]$ of covering relations in $\mathcal I$, such that $\sigma$ is $Q_I$-minimal and $Q^r$-maximal (w.\,r.\,t. this chain). By assumption we have $W_{P_I} \cap W^{Q_I} \subseteq W_{Q^r} \cap W^Q$. Notice, that the other inclusion $W_{Q^r} \cap W^Q \subseteq W_{P_I} \cap W^{Q_I}$ is always fulfilled, even if $\mathcal I$ is not \nice{\tau}: Here $W_{Q^r} \cap W^Q \subseteq W_{P_I}$ follows from $Q^r \subseteq P_I$ and the $Q_I$-minimality of every element in $Q_{Q^r} \cap W^Q$ can again be shown via the embedding $W/W_Q \to W/W_{Q_I} \times W/W_{Q^r}$. 
    
    By the bijection in~(\ref{eq:product_map_weyl}) the set $W^Q$ can be decomposed into the product
    \begin{align*}
    W^Q = W^{Q^r} \cdot (W_{Q^r} \cap W^Q) = W^{P_I} \cdot (W_{P_I} \cap W^{Q^r}) \cdot (W_{Q^r} \cap W^Q).
    \end{align*}
    The element $\min_B(\sigma)$ is contained in $W_{P_I} \cap W^{Q_I}$ and can therefore be (uniquely!) written in the form $\min_B(\sigma) = \id \cdot \theta \cdot \phi$ for $\theta \in W_{P_I} \cap W^{Q^r}$ and $\phi \in W_{Q^r} \cap W^Q$. As $\min_B(\sigma)$ is $Q^r$-maximal, $\phi$ is equal to the unique maximal element in the poset $W_{Q^r} \cap W^Q = W_{P_I} \cap W^{Q_I}$. But since $\min_B(\sigma)$ is also an element of $W^{Q_I} = W^{P_I} \cdot (W_{P_I} \cap W^{Q_I})$, it follows $\theta = \id$. Therefore $\sigma$ is uniquely determined: It is the maximal element of $W_{P_I} \cap W^{Q_I}$ and this is independent of the choice of the chain $I = I_r \subsetneq \dots \subsetneq I_m = [m]$.
    
    Next we show \ref{itm:nice_I_c} $\Rightarrow$ \ref{itm:nice_I_d}. For each simple root $\alpha \in \Delta_{P_I}$ not contained in $\Delta_{Q_I}$ or $\Delta_{Q^r}$, $s_\alpha$ is an element of $W_{P_I} \cap W^{Q_I}$, but does not lie in $W_{Q^r}$. Hence $Q_I$ and $Q^r$ generate $P_I$.
    
    Now suppose that there exists a path $\alpha_1 \to \dots \to \alpha_k$ in the Dynkin diagram of $G$, such that $\alpha_1, \dots, \alpha_k \in \Delta_{P_I}$, $\alpha_1 \in \Delta_{Q_I} \setminus \Delta_{Q^r}$ and $\alpha_k \in \Delta_{Q^r} \setminus \Delta_{Q_I}$. Then $\sigma = s_{\alpha_1} \cdots s_{\alpha_k}$ is an element of $W_{P_I}$ and we claim, that $\sigma$ also lies in $W^{Q_I}$. When we have shown this $Q_I$-minimality, it then follows $\sigma \in W_{P_I} \cap W^{Q_I}$. But $\sigma \notin W_{Q^r}$, since $\alpha_1 \notin \Delta_{Q^r}$.

    The $Q_I$-minimality of $\sigma$ is equivalent to $\sigma s_{\beta} > \sigma$ for all $\beta \in \Delta_{Q_I}$. First, let $\beta \in \Delta_{Q_I}$ be a simple root, which in not contained in our chosen path. In this case $s_{\alpha_1} \cdots s_{\alpha_k} s_\beta$ is in reduced decomposition by Lemma~\ref{lem:red_decomp}, so $\sigma s_\beta > \sigma$. Now let $\beta = \alpha_i \in \Delta_{Q_I}$ for an index $1 \leq i \leq k$. Suppose that $s_{\alpha_1} \cdots s_{\alpha_k} s_\beta$ is not in reduced decomposition. Then there exists an index $j$, such that $\sigma s_\beta = s_{\alpha_1} \cdots \hat s_{\alpha_j} \cdots s_{\alpha_k}$ ($s_{\alpha_j}$ is omitted). Thus $\sigma = s_{\alpha_1} \cdots \hat s_{\alpha_j} \cdots s_{\alpha_k} s_\beta$. The simple reflections occuring in a reduced decomposition are always uniquely determined (\textit{not} counting with multiplicity), hence we have $\beta = \alpha_j$ and $s_{\alpha_1} \cdots s_{\alpha_k} = s_{\alpha_1} \cdots \hat s_{\alpha_j} \cdots s_{\alpha_k} s_{\alpha_j}$. Hence $s_{\alpha_j}$ commutes with $s_{\alpha_{j+1}} \cdots s_{\alpha_k}$. The simple reflection $s_{\alpha_j}$ commutes with all $s_{\alpha_k}$ for $k > j+1$, since Dynkin diagrams contain no cycles and we have a path from $\alpha_j$ to $\alpha_k$. Therefore $s_{\alpha_j}$ and $s_{\alpha_{j+1}}$ must commute, but this contradicts to edge between $\alpha_j$ and $\alpha_{j+1}$ in the Dynkin diagram. The decomposition $\sigma s_\beta = s_{\alpha_1} \cdots s_{\alpha_k} s_\beta$ thus is reduced and $\sigma s_\beta > \sigma$.
    
    Finally, \ref{itm:nice_I_d} implies \ref{itm:nice_I_c}. Suppose that $\sigma$ is an element in $W_{P_I} \cap W^{Q_I}$, but not in $W_{Q^r} \cap W^Q$. Since $W^{Q_I} \subseteq W^Q$, we thus have $\sigma \notin W_{Q^r}$. To each reduced decomposition $\sigma = s_{\alpha_1} \cdots s_{\alpha_k}$ we now associate a pair $(p,q)$ of natural numbers in the following way. Since $Q_I$ and $Q^r$ generate $P_I$, all simple roots $\alpha_1, \dots, \alpha_k$ lie in $\Delta_{Q_I}$ or $\Delta_{Q^r}$. As $\sigma \notin W_{Q^r}$, there exists a maximal index $1 \leq p \leq k$ with $\alpha_p \in \Delta_{Q_I} \setminus \Delta_{Q^r}$. The simple root $\alpha_k$ is not contained in $\Delta_{Q_I}$ by the $Q_I$-minimality of $\sigma$. Hence there exists a minimal number $q \in \set{p, \dots, k}$ with $\alpha_q \in \Delta_{Q^r} \setminus \Delta_{Q_I}$. We can assume, that $\sigma = s_{\alpha_1} \cdots s_{\alpha_k}$ is a reduced decomposition, such that the associated pair $(p,q)$ is maximal \wrt{} the total order
    \begin{align*}
        (p,q) \geq (p',q') \quad \Longleftrightarrow \quad p > p' \mkern8mu \text{or} \mkern8mu(p = p' \mkern8mu \text{and} \mkern8mu q-p \leq q'-p')
    \end{align*}
    on $\Z \times \Z$. We now partition the set $J = \set{p, \dots, q}$ by fixing numbers $p = p_0 < p_1 < \dots < p_t \leq q$, such that for all $p+1 \leq j \leq q$ the simple roots $\alpha_{j-1}$ and $\alpha_{j}$ are disconnected in the Dynkin diagram if and only if there exists a non-zero index $i \in \set{1, \dots, t}$ with $j = p_i$. Set $p_{t+1} \coloneqq q+1$. For all $i = 0, \dots, t$ we define the set $J_i = \set{j \in J \mid p_i \leq j < p_{i+1}}$, the associated set $\Delta_i = \set{\alpha_j \mid j \in J_i}$ of simple roots and the subword $\sigma_i = \prod_{j \in J_i} s_{\alpha_j}$, where the product is taken is ascending order. Let $s \in \set{0, \dots, t}$ be the smallest integer, such that the union $\bigcup_{i=s}^t \Delta_i$ is connected in the Dynkin diagram. 
    
    By the assumption~\ref{itm:nice_I_d}, every path connecting $\alpha_p$ and $\alpha_q$ contains a simple root $\beta \notin \Delta_{P_I}$. Therefore the number $s$ is at least $1$ and by the definition of $s$ we have $\sigma_{s-1} \sigma_s \cdots \sigma_t = \sigma_s \cdots \sigma_t \sigma_{s-1}$. Hence, for $s \geq 2$, the number $q-p$ was not minimal and for $s = 1$ the number $p$ was not maximal, which contradicts our choice of the reduced decomposition of $\sigma$.
\end{proof}

\begin{remark}
    For $Q = B$ the condition~\ref{itm:nice_I_d} simplifies as follows: The two parabolic subgroups $Q_I$ and $Q^r$ generate $P_I$ and there is no edge in the Dynkin diagram of $G$ connecting the two subsets $\Delta_{Q_I}$ and $\Delta_{Q^r}$.
\end{remark}

\begin{corollary}
    \label{cor:nice_I}
    Let $\mathcal I$ be a \nice{\tau} poset for $\tau = w_0 W_Q$. Then the map
    \begin{align*}
        \ulWlambda \to \DCP{}, \quad (\theta, I) \mapsto (\operatorname{min}_Q \circ \operatorname{max}_{Q_I}(\theta), I)
    \end{align*}
    is the inverse map of $\rho$. In particular, it is an isomorphism of posets.
\end{corollary}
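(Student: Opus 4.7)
The plan is to exhibit the given assignment as a right inverse of $\rho$; since $\rho$ is already a bijection of posets under the \nice{\tau} hypothesis, this will force it to be the two-sided inverse and hence automatically a poset isomorphism.

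Given $(\theta, I) \in \ulWlambda$, I would set $\sigma \coloneqq \min_Q \circ \max_{Q_I}(\theta)$. Since $Q \subseteq Q_I \subseteq P_I$, the element $\sigma$ is a lift of $\theta$ to $W/W_Q$, \ie $\pi_{P_I}(\sigma) = \theta$. Thus, once $(\sigma, I)$ is shown to lie in $\DCP{}$, it follows immediately that $\rho(\sigma, I) = (\theta, I)$, which establishes surjectivity of $\rho$ onto $\ulWlambda$ in an explicit, functorial way.

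To place $(\sigma, I)$ in $\DCP{}$, I would apply Proposition~\ref{prop:min_max_crit} (which is available because $\tau = w_0 W_Q$): pick any chain of covering relations $I = I_r \subsetneq \dots \subsetneq I_m = [m]$ in $\mathcal I$, which exists since $\mathcal I$ is graded of length $m-1$. The $Q_I$-minimality of $\sigma$ is built into its definition. For the $Q^r$-maximality I would cite the identity $\min_Q \circ \max_{Q_I}(\theta) = \max_Q \circ \min_{Q^r}(\theta)$, which was proved inside the argument for Theorem~\ref{thm:nice_I} for arbitrary $\theta \in W/W_{P_I}$ under the \nice{\tau} hypothesis — indeed the author explicitly flagged there that this identity was being established at that level of generality precisely in order to yield the present corollary.

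The one possible sticking point would have been verifying $Q^r$-maximality of $\sigma$ from scratch (and checking independence of the chain $I_r \subsetneq \dots \subsetneq I_m$), but since that calculation is already absorbed into the proof of Theorem~\ref{thm:nice_I}, the remaining work is purely formal: the assignment $(\theta, I) \mapsto (\sigma, I)$ is a well-defined right inverse of the bijection $\rho$, hence its two-sided inverse, and monotonicity is automatic from the fact that $\rho$ is a poset isomorphism.
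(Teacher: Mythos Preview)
Your proposal is correct and follows essentially the same approach as the paper: fix a chain of covering relations above $I$, use the fact established inside the proof of Theorem~\ref{thm:nice_I} that $\sigma = \min_Q \circ \max_{Q_I}(\theta)$ is $Q_I$-minimal and $Q^r$-maximal for any $\theta$, apply Proposition~\ref{prop:min_max_crit} to get $(\sigma,I)\in\DCP{}$, and conclude that the assignment is the inverse of the bijection $\rho$. One small point of phrasing: what the proof of Theorem~\ref{thm:nice_I} establishes directly for general $\theta$ is the $Q^r$-maximality of $\sigma$ (and analogously the $Q_I$-minimality of $\sigma'$), not the identity $\sigma=\sigma'$ itself; the identity then follows under the \nice{\tau} hypothesis via injectivity of $\rho$, so your citation should really be to the $Q^r$-maximality, which is all Proposition~\ref{prop:min_max_crit} needs anyway.
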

\begin{proof}
    Let $(\theta, I)$ be any element in $\ulWlambda$. We fix a chain $I = I_r \subsetneq \dots \subsetneq I_m = [m]$ of covering relations in $\mathcal I$. As we remarked in the proof of Theorem~\ref{thm:nice_I}, the lift $\widetilde\theta = \min_Q \circ \max_{Q_I}(\theta)$ of $\theta$ is $Q_I$-minimal and $Q^r$-maximal, where $Q^r = P_{I_r} \cap \dots \cap P_{I_m}$. Hence $(\widetilde\theta, I)$ lies in the defining chain poset by Proposition~\ref{prop:min_max_crit} and thus it is the preimage of $(\theta, I)$ under $\rho$.
\end{proof}

\section{Non \nice{\tau} posets}

We prove in Theorem~\ref{thm:standard_monomial_basis} that \nice{\tau} posets induce a stratification of LS-type. Among other benefits, such a stratification has the advantage that its Newton-Okounkov polytopal complexes consist of products of simplices and are therefore compatible with the multiprojective structure (see~\cite[Section 10]{ownarticle}). In particular, there exists a purely combinatorial formula for the leading term of the multivariate Hilbert polynomial. If the index poset is totally ordered, then one can easily extract the coefficients of all the occurring monomials. Up to multiplying by a product of factorials, these coefficients are equal to the multidegrees of the stratified variety. 

Even though the index posets which are not \nice{\tau} are incompatible with our notion of a Seshadri stratification, there still seems to exist hidden geometry behind the defining chain poset. 

\begin{conjecture}
    Let $\mathcal I$ be any totally-ordered index poset satisfying condition~(\ref{eq:need_for_s2}). By permuting the dominant weights in the sequence $\underline\lambda$ we can assume that $\mathcal I$ consists of the sets $[i]$ for $i \in [m]$. Then the multidegree of the Schubert variety $X_\tau \subseteq \prod_{i=1}^m \PP(V(\lambda_i)_{\tau_i})$ to a tuple $\underline k \in \N_0^m$ with $k_1 + \dots + k_m = \dim X_\tau$ is given by
    \begin{align*}
        \deg_{\underline k}(X_\tau) = \sum_{\mathfrak C} \mkern2mu b_{\mathfrak C} \mkern1mu,
    \end{align*}
    where the sum runs over all maximal chains $\mathfrak C$ in $\DCP{}$, which contain exactly $k_i + 1$ elements from $W/W_{P_i} \subseteq \DCP{}$ for each $i = 1, \dots, m$ and $b_{\mathfrak C}$ denotes the product of all bonds in $\mathfrak C$.
\end{conjecture}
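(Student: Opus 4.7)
The plan is to combine the LS-type Newton--Okounkov framework for multiprojective Seshadri stratifications developed in~\cite[Section 10]{ownarticle} with the defining chain poset $\DCP{}$ introduced in this paper. I first treat the case in which $\mathcal I$ is additionally \nice{\tau}, and then address the removal of that hypothesis.

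Under the \nice{\tau} assumption, Theorem~\ref{thm:stratification} equips $X_\tau \hookrightarrow \prod_{i=1}^m \PP(V(\lambda_i)_{\tau_i})$ with a normal and balanced multiprojective stratification whose underlying poset is $\DCP{}$ and whose index poset $\mathcal I = \set{[1], \dots, [m]}$ is totally ordered. The resulting stratification is of LS-type, so the associated Newton--Okounkov polytopal complex is a union, indexed by the maximal chains $\mathfrak C$ of $\DCP{}$, of products of standard simplices with one factor for each $i \in [m]$. The extremal function attached to an element $(\theta, [i]) \in \DCP{}$ has multidegree $e_i \in \N_0^m$, so $\mathfrak C$ contributes to the $\underline t^{\underline k}$-coefficient of the multigraded Hilbert polynomial precisely when $\mathfrak C$ contains $k_i + 1$ elements at the $i$-th level for every $i$. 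By Lemma~\ref{lem:projective_covering_relation} the bonds at transitions between successive levels are all equal to $1$, hence the normalised volume of the product simplex attached to $\mathfrak C$ equals $b_{\mathfrak C}/\prod_i k_i!$. Multiplying by $\prod_i k_i!$ to convert the leading Hilbert-polynomial coefficient into the multidegree $\deg_{\underline k}(X_\tau)$ yields the claimed sum $\sum_{\mathfrak C} b_{\mathfrak C}$; the translation between elements at the $i$-th level of $\DCP{}$ and elements of $W/W_{P_i}$ is supplied by the $Q_{[i]}$-minimality of Lemma~\ref{lem:dcp_elements} combined with the projection $\pi_{P_i}$.

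The principal obstacle is removing the \nice{\tau} hypothesis: the example $\tau = 3412$ in type $\texttt{A}_3$ shows that a totally-ordered $\mathcal I$ need not be \nice{\tau}, in which case the above stratification does not exist. I propose to construct a minimal \nice{\tau} refinement $\mathcal I'$ of $\mathcal I$ (no longer totally ordered), obtained by inserting the index sets forced by the path-obstruction in Theorem~\ref{thm:nice_I}~\ref{itm:nice_I_d}, apply the preceding steps to the stratification associated with $\mathcal I'$, and then exhibit a bond-preserving surjection from the maximal chains of the defining chain poset for $\mathcal I'$ onto the maximal chains of $\DCP{}$ whose fibres exactly account for the non-injectivity of $\rho$ on the levels $I \in \mathcal I$. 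Verifying that the within-level bond products and the cardinality constraints at each level $[i]$ collapse correctly under this map is the combinatorial heart of the argument, and it is where I expect the technical difficulty to concentrate.
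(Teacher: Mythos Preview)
The paper does \emph{not} prove this statement: it is explicitly labelled a Conjecture and left open. The surrounding discussion (Section~6) makes clear that the formula is known only in the \nice{\tau} case, where it follows from the LS-type framework of~\cite[Section 10]{ownarticle} applied to the stratification of Theorem~\ref{thm:stratification}; the paper then records the conjecture precisely because the non-\nice{\tau} case is beyond current methods.

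Your first paragraph correctly reconstructs the \nice{\tau} argument, and that part is fine. The second paragraph, however, is not a proof but a proposal for one, and the paper gives no indication that it can be carried out. The existence of a \nice{\tau} refinement $\mathcal I'$ is unproblematic (one can always take the full power set), but the crucial step --- a bond-preserving surjection from maximal chains of the defining chain poset for $\mathcal I'$ onto those of $\DCP{}$ whose fibres have the correct multiplicities and respect the level counts --- is exactly the open content of the conjecture. There is no mechanism in the paper that produces such a surjection; indeed, the example in Figure~\ref{fig:dcp_example} shows that $\DCP{}$ for a non-\nice{\tau} poset can be non-isomorphic to the poset for any \nice{\tau} choice, so the comparison of maximal chains is not a routine bookkeeping exercise. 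The paper's own speculation goes in a different direction: rather than comparing two stratifications, it suggests enlarging the definition of a Seshadri stratification to allow a non-injective map $\rho: A \to \underline A$ and defining the quasi-valuation with values in $\Q^{\underline A}$, so that the defining chain poset for the original $\mathcal I$ could itself carry the geometry directly. Neither approach is carried out.

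In short: your \nice{\tau} argument matches what the paper establishes, but the general statement remains a conjecture, and your proposed reduction does not close the gap.
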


A non-trivial example of the above behaviour is the poset from Figure~\ref{fig:dcp_example}. The reader may check, that all bonds are equal to $1$ and -- even though this poset is clearly non-isomorphic to the poset from Figure~\ref{fig:hasse_a_3} -- the conjecture provides the same multidegrees of $\mathrm{SL}_4(\K)/B$, which the author computed in~\cite[Example 11.12]{ownarticle}. 

If the conjecture is true, it might be possible that there is simply no geometric explanation behind it. But maybe there exists a more general definition of a Seshadri stratification, which allows the use of the same or similar extremal functions for different strata. A possible approach would be to demand additional structure in the definition of a stratification, namely a surjective monotone map $\rho: A \to \underline A$. In case of the defining chain poset $A = \DCP{}$ one chooses $\underline A = \ulWlambda$. Index sets, strata and extremal functions would still be indexed by the elements of $A$ and the condition~\ref{itm:seshadri_strat_b} would be replaced by the following: The function $f_q$ vanishes on $\hat X_p$, if $\rho(q) \nleq \rho(p)$. The quasi-valuation can then be defined as a map $\mathcal V: \K[X] \setminus \set{0} \to \Q^A \twoheadrightarrow \Q^{\underline A}$. The author presumes that this still leads to a quasi-valuation with non-negative entries, such that its image is a union of finitely generated monoids and with $\mathcal V(f_p) = e_{\rho(p)}$ for all $p \in A$. As this endeavour requires re-checking many of the arguments in~\cite{seshstrat}, we only consider \nice{\tau} posets.

\section{Sequences of fundamental weights}
\label{subsec:poset_examples}

In this section we consider the special case where $\tau = w_0 W_Q$ is the unique maximal element in $W/W_Q$ and the sequence $\underline\lambda = (k_1 \omega_1, \dots, k_m \omega_m)$ is defined by pairwise distinct fundamental weights $\omega_1, \dots, \omega_m$ of $G$ and natural numbers $k_1, \dots, k_m \in \N$. For a fixed parabolic subgroup $Q \subseteq G$ one can always choose a sequence consisting of the fundamental weights $\omega$ with $\langle \omega, \alpha^\vee \rangle = 0$ for all $\alpha \in \Delta_Q$. In practice, one would most likely choose $k_1 = \dots = k_m = 1$, so that the LS-tableaux of type $(\underline\lambda, \mathcal I)$ give rise to a character formula for the irreducible representation $V(\mu)$ to every dominant weight $\mu$ of the parabolic subgroup $Q$, \ie for each $\mu \in \N_0 \mkern1mu \omega_1 + \dots + \N_0 \mkern1mu \omega_m$ (see Remark~\ref{rem:shape_to_degree}).

Let $\alpha_i \in \Delta$ denote the simple root with $\langle \omega_i, \alpha_i^\vee \rangle = 1$ for all $i \in [m]$. Then the following criterion is just a reformulation of Theorem~\ref{thm:nice_I}\,\ref{itm:nice_I_d}.

\begin{corollary}
    \label{cor:nice_I_fundamental_weights}
    The poset $\mathcal I$ is \nice{\tau}, if and only if the following two conditions hold for every $I \in \mathcal I$ and each chain $I = I_r \subsetneq \dots \subsetneq I_m = [m]$ of covering relations in $\mathcal I$:
    \begin{enumerate}[label=(\roman{enumi})]
        \item The intersection of the two sets $I$ and $I' = \bigcup_{j=r}^m \underline I_j$ is equal to $\underline I$.
        \item All paths from $\set{\alpha_i \mid i \in I \setminus \underline I}$ to $\set{\alpha_i \mid i \in I' \setminus \underline I}$ contain a vertex $\alpha_i$ for $i \in \underline I$.
    \end{enumerate}
\end{corollary}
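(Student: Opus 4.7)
The plan is to directly translate condition~\ref{itm:nice_I_d} of Theorem~\ref{thm:nice_I} into the concrete combinatorial language of the corollary, using the indexing $i \mapsto \alpha_i$. The crucial preliminary observation is that because $\omega_1, \dots, \omega_m$ are pairwise distinct, the assignment $i \mapsto \alpha_i$ is an injection from $[m]$ into $\Delta$; in particular, $\{\alpha_i : i \in S\} = \{\alpha_i : i \in S'\}$ forces $S = S'$ for any $S, S' \subseteq [m]$.

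First, I would compute the simple root sets entering~\ref{itm:nice_I_d}. Since $W_{\omega_i}$ is generated by the simple reflections $s_\alpha$ with $\alpha \neq \alpha_i$, we have $\Delta_{P_i} = \Delta \setminus \{\alpha_i\}$. Lemma~\ref{lem:properties_P_Q}\,(a) gives $Q_I = \bigcap_{i \in I} P_i$, hence $\Delta_{Q_I} = \Delta \setminus \{\alpha_i : i \in I\}$, and by definition of $P_I$ we have $\Delta_{P_I} = \Delta \setminus \{\alpha_i : i \in \underline I\}$. Since $\tau = w_0 W_Q$ forces $Q_\tau = Q$, the parabolic $Q^r = \bigcap_{j=r}^m P_{I_j}$ from Proposition~\ref{prop:min_max_crit} satisfies $\Delta_{Q^r} = \Delta \setminus \{\alpha_i : i \in I'\}$ with $I' = \bigcup_{j=r}^m \underline I_j$.

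Second, I would translate the generation statement in~\ref{itm:nice_I_d}. Two standard parabolics $Q_I, Q^r \supseteq B$ generate the standard parabolic associated to $\Delta_{Q_I} \cup \Delta_{Q^r} = \Delta \setminus \{\alpha_i : i \in I \cap I'\}$. So $\langle Q_I, Q^r \rangle = P_I$ if and only if $\{\alpha_i : i \in I \cap I'\} = \{\alpha_i : i \in \underline I\}$, which by injectivity becomes $I \cap I' = \underline I$, i.e.\ condition~(i). Note that the inclusion $\underline I \subseteq I \cap I'$ is automatic, since $\underline I \subseteq I$ by definition and $\underline I = \underline I_r \subseteq I'$; thus condition~(i) is really the reverse inclusion $I \cap I' \subseteq \underline I$.

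Finally, assuming~(i), I would rewrite the path condition. Injectivity of $i \mapsto \alpha_i$ yields $\Delta_{Q_I} \setminus \Delta_{Q^r} = \{\alpha_i : i \in I' \setminus I\}$ and $\Delta_{Q^r} \setminus \Delta_{Q_I} = \{\alpha_i : i \in I \setminus I'\}$, which under~(i) simplify to $\{\alpha_i : i \in I' \setminus \underline I\}$ and $\{\alpha_i : i \in I \setminus \underline I\}$ respectively. Moreover, a simple root lies outside $\Delta_{P_I}$ precisely when it is of the form $\alpha_i$ for some $i \in \underline I$. Thus the path condition in~\ref{itm:nice_I_d} is exactly condition~(ii). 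If condition~(i) fails, then the generation part of~\ref{itm:nice_I_d} already fails and $\mathcal I$ is not \nice{\tau}, consistent with the corollary requiring both~(i) and~(ii). There is no genuine obstacle here; the argument is pure bookkeeping of indexing sets, and the only point that demands care is the use of pairwise distinctness of the $\omega_i$ to pass freely between subsets of $[m]$ and subsets of $\Delta$.
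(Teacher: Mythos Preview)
Your proposal is correct and is precisely the bookkeeping reformulation of Theorem~\ref{thm:nice_I}\,\ref{itm:nice_I_d} that the paper intends; the paper itself offers no argument beyond calling it a reformulation. One small slip: the justification ``$\tau = w_0 W_Q$ forces $Q_\tau = Q$'' is not right (in fact $w_0 W_Q$ is $Q'$-maximal for every $Q' \supseteq Q$, so $Q_\tau = G$), but this is immaterial since the formula $Q^r = \bigcap_{j=r}^m P_{I_j}$ you need is stated explicitly in Theorem~\ref{thm:nice_I}.
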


It is of course computationally advantageous when the index poset $\mathcal I$ is totally ordered, as the defining chain poset consists of much less elements in these cases. However, there might not exist a poset $\mathcal I$, which is \nice{\tau} and totally ordered at the same time.

\begin{corollary}
    \label{cor:totally_ordered_nice_I}
    There exists a \nice{\tau}, totally ordered index poset $\mathcal I$, if and only if there is a path in the Dynkin diagram of $G$ containing all vertices from the set $\Delta \setminus \Delta_Q = \set{\alpha_1, \dots, \alpha_m}$.
\end{corollary}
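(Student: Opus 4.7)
The plan is to unpack what it means for a totally ordered $\mathcal I$ of length $m-1$ to exist, and then to invoke Corollary~\ref{cor:nice_I_fundamental_weights}. Since $\mathcal I$ is graded of length $m-1$ with maximum $[m]$ and all minimal elements singletons (Lemma~\ref{lem:properties_A_mathcal_I}), a totally ordered $\mathcal I$ must be a chain $I_1 \subsetneq \cdots \subsetneq I_m = [m]$ with $|I_j| = j$; I would write $I_j = \{\sigma(1), \ldots, \sigma(j)\}$ for some permutation $\sigma$ of $[m]$. Requirement~(\ref{eq:need_for_s2}) is automatic in a totally ordered poset.

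Next I would compute the ingredients of Corollary~\ref{cor:nice_I_fundamental_weights}. For $I = I_r$ the unique covering chain in $\mathcal I$ from $I_r$ up to $[m]$ is $I_r \subsetneq I_{r+1} \subsetneq \cdots \subsetneq I_m$, and one reads off $\underline{I_r} = \{\sigma(r)\}$ together with $I' = \{\sigma(r), \sigma(r+1), \ldots, \sigma(m)\}$. Condition~(i) of the corollary then reduces to the tautology $I_r \cap I' = \{\sigma(r)\}$. Condition~(ii) is vacuous for $r \in \{1, m\}$, while for $1 < r < m$ it demands: every path in the Dynkin diagram of $G$ from some $\alpha_{\sigma(i)}$ with $i < r$ to some $\alpha_{\sigma(j)}$ with $j > r$ passes through $\alpha_{\sigma(r)}$.

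The key observation is that the Dynkin diagram of a simple algebraic group is a tree, so there is a unique geodesic between any two of its vertices, and ``path'' in (ii) simply means this geodesic. Condition~(ii) thus becomes: $\alpha_{\sigma(r)}$ lies on the geodesic from $\alpha_{\sigma(i)}$ to $\alpha_{\sigma(j)}$ whenever $i < r < j$. It therefore remains to prove the purely graph-theoretic equivalence between the existence of such an ordering $\sigma$ and the existence of a path in the Dynkin diagram containing all of $\alpha_1, \ldots, \alpha_m$. The ``if'' direction is immediate: order the $\alpha_i$ along such a path to define $\sigma$, and observe that $\alpha_{\sigma(r)}$ lies on the sub-geodesic between any $\alpha_{\sigma(i)}$ and $\alpha_{\sigma(j)}$ with $i < r < j$.

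For the ``only if'' direction, I would use only the special cases $(i, r, j) = (k, k+1, k+2)$: these force each $\alpha_{\sigma(k+1)}$ to lie on the geodesic from $\alpha_{\sigma(k)}$ to $\alpha_{\sigma(k+2)}$, so the concatenation of the consecutive geodesics $\alpha_{\sigma(1)} \to \alpha_{\sigma(2)} \to \cdots \to \alpha_{\sigma(m)}$ in the tree has no backtracking and is therefore a single path through all the $\alpha_i$. No genuine obstacle is anticipated; the only care required is in the combinatorics of $\underline{I_r}$ and $I'$ for totally ordered $\mathcal I$, with the remainder being elementary tree geometry.
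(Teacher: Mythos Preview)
Your proposal is correct and follows essentially the same route as the paper: parameterize a totally ordered $\mathcal I$ by a permutation, feed it through Corollary~\ref{cor:nice_I_fundamental_weights}, observe that condition~(i) is automatic, and reduce condition~(ii) to a separation statement for the geodesics in the (tree-shaped) Dynkin diagram. The one difference is in the ``only if'' direction: you restrict to the consecutive triples $(k,k+1,k+2)$ and then argue that the concatenated geodesics $\alpha_{\sigma(1)}\!\to\!\cdots\!\to\!\alpha_{\sigma(m)}$ have no backtracking. The paper instead applies condition~(ii) directly with the extreme indices: the unique geodesic from $\alpha_{\sigma(1)}$ to $\alpha_{\sigma(m)}$ must, for each $1<r<m$, pass through $\alpha_{\sigma(r)}$, so this single geodesic already contains all the vertices. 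That shortcut spares you the inductive tree argument; otherwise the two proofs coincide.
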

\begin{proof}
    First we consider the index poset of the form $\mathcal I = \set{[i] \mid i \in [m]}$. Then the first condition in Corollary~\ref{cor:nice_I_fundamental_weights} is automatically fulfilled for each $I = [i] \in \mathcal I$ as $I' = \set{i, \dots, m}$. Hence the Corollary implies:
    \begin{align}
        \label{eq:equiv_nice_I}
        \text{$\mathcal I$ is \nice{\tau}} \quad \Longleftrightarrow \quad &\text{for each $i \in [m]$ all paths from $\set{\alpha_1, \dots, \alpha_{i-1}}$} \nonumber \\
        & \text{to $\set{\alpha_{i+1}, \dots, \alpha_m}$ contain $\alpha_i$}.
    \end{align}

    Let $\mathcal I = \set{[i] \mid i \in [m]}$ be totally ordered and \nice{\tau}. \WWlog{} it consists of all sets $[i]$ for $i \in [m]$. Then there exists a path containing $\set{\alpha_1, \dots, \alpha_m}$: It is given by the unique path $\pi$ from $\alpha_1$ to $\alpha_m$. Dynkin diagrams of simple algebraic groups are connected and contain no cycles, hence $\pi$ is unique. It also contains the roots $\alpha_2, \dots, \alpha_{m-1}$ by (\ref{eq:equiv_nice_I}). 
    
    Conversely, let us assume there is a path $\pi$ containing $\alpha_1, \dots, \alpha_m$. We can rearrange the indices, such that this is also the order in which $\pi$ visits these simple roots. Then the poset $\mathcal I = \set{[i] \mid i \in [m]}$ is \nice{\tau}, by the equivalence in~(\ref{eq:equiv_nice_I}).
\end{proof}

For every flag variety $G/Q$ in each Dynkin type we now give an example of a \nice{\tau} poset $\mathcal I$, that is as small as possible. We already covered the cases, where the simple roots in $\Delta \setminus \Delta_Q$ are all contained in one path in the Dynkin diagram. Here we can choose a totally ordered poset $\mathcal I$. This is always possible in the types \texttt{A}, \texttt{B}, \texttt{C}, \texttt{F} and \texttt{G}. In particular, for the sequence $\underline\lambda = (\omega_{k_1}, \dots, \omega_{k_m})$ we used in Example~\ref{ex:strat_type_A} the poset $\mathcal I = \set{ \set{i, \dots, m} \mid i \in [m]}$ is \nice{w_0 W_Q}. 

\begin{remark}
    In combination with Proposition~\ref{prop:min_max_crit} this also proves \cite[Remark 11.14]{ownarticle}. It states that in the stratification from Example~\ref{ex:strat_type_A} the set of lifts
    \begin{align*}
        \Set{\mkern2mu \operatorname{min}_Q \circ \operatorname{max}_{Q_i}(\theta) \in W/W_Q \mid (\theta, i) \in \ulW \mkern2mu }
    \end{align*}
    to a fixed index $i \in [m]$ coincides with the set of all elements in $W/W_Q$, which are $Q_i$-minimal and $Q^i$-maximal for the parabolic subgroup $Q^i = \bigcap_{j=1}^i P_{k_i}$.
\end{remark}

Next we look at the remaining cases in type \texttt{D}, where we cannot choose a totally ordered poset $\mathcal I$. The set of the simple roots $\alpha_1, \dots, \alpha_m$ not belonging to $\Delta_Q$ therefore contains both degree one vertices (\wwlog{} they are $\alpha_1$ and $\alpha_2$), which are adjacent to the degree $3$ vertex. We assume that the other roots $\alpha_3, \dots \alpha_m$ are ordered along the line beginning with the degree $3$ vertex. For example, we may order the roots as depicted:
\begin{center}
    \begin{tikzpicture}[baseline]
        \dynkin[scale=2.5,labels={,\alpha_m,,\, \alpha_{3},,\, \alpha_1,\, \alpha_2},label directions={,,,,,,}] D{o*o...*o**}
    \end{tikzpicture}
\end{center}
Then the following poset satisfies the condition~(\ref{eq:need_for_s2}) and is \nice{\tau}:
\begin{center}
    \begin{tikzpicture}[baseline]
        \node (1) at (3.5,0) {$[m]$};
        \node (2) at (5.5,0) {$[m-1]$};
        \node (dots) at (7.4,0) {$\,\cdots$};
        \node (3last) at (9,0) {$[3]$};
        \node (2last) at (10.5,0) {$[2]$};
        \node (m-1) at (11.6,1.1) {$\set{1}$};
        \node (m) at (11.6,-1.1) {$\set{2}$};
        
        \draw [thick, shorten <=-2pt, shorten >=-2pt, -stealth] (1) -- (2);
        \draw [thick, shorten <=-2pt, shorten >=-2pt, -stealth] (2) -- (dots);
        \draw [thick, shorten <=-2pt, shorten >=-2pt, -stealth] (dots) -- (3last);
        \draw [thick, shorten <=-2pt, shorten >=-2pt, -stealth] (3last) -- (2last);
        \draw [thick, shorten <=-2pt, shorten >=-2pt, -stealth] (2last) -- (m-1);
        \draw [thick, shorten <=-2pt, shorten >=-2pt, -stealth] (2last) -- (m);
    \end{tikzpicture}
\end{center}
In type \texttt{E} we can take the same index poset (with suitable numbering of the roots $\alpha_1, \dots, \alpha_m$), if in the graph one obtains by erasing the vertex $\beta$ of degree $3$ from the Dynkin diagram there is at most one connected component, which contains two or more simple roots $\alpha_1, \dots, \alpha_m$. 

This only leaves us with the case, where two of these connected components contain two or more of our fixed simple roots. One of the two components then contains exactly two roots. We call them $\alpha_2$ and $\alpha_3$ where the latter is adjacent to the degree $3$ vertex $\beta$. Let $\alpha_1$ be the degree one vertex adjacent to $\beta$. We order the remaining simple roots along the line starting at $\beta$.
\begin{center}
    \begin{tikzpicture}[baseline]
        \dynkin[scale=2.5,labels={\alpha_{2},\, \alpha_{1},\alpha_{3},,\alpha_{4},,\alpha_5},label directions={,,,,,,}] E{***o*o...*}
    \end{tikzpicture}
\end{center}
To obtain a \nice{\tau} index poset, we again include all sets $[i]$ for $i \geq 3$ as well as $\set{1,2}$ and $\set{2,3}$. However, Corollary~\ref{cor:nice_I_fundamental_weights} and the requirement~(\ref{eq:need_for_s2}) force us to take all possible rank one elements $\set{1}, \set{2}$ and $\set{3}$. In total, the following index poset is \nice{\tau}:
\begin{center}
    \begin{tikzpicture}[baseline]
        \node (m) at (3.5,0) {$[m]$};
        \node (m-1) at (5.5,0) {$[m-1]$};
        \node (dots) at (7.4,0) {$\,\cdots$};
        \node (3last) at (9,0) {$[3]$};
        \node (12) at (10.5,1) {$\set{1,2}$};
        \node (23) at (10.5,-1) {$\set{2,3}$};
        \node (1) at (12.2,2.0) {$\set{1}$};
        \node (2) at (12.2,0) {$\set{2}$};
        \node (3) at (12.2,-2.0) {$\set{3}$};
        
        \draw [thick, shorten <=-2pt, shorten >=-2pt, -stealth] (m) -- (m-1);
        \draw [thick, shorten <=-2pt, shorten >=-2pt, -stealth] (m-1) -- (dots);
        \draw [thick, shorten <=-2pt, shorten >=-2pt, -stealth] (dots) -- (3last);
        \draw [thick, shorten <=-2pt, shorten >=-2pt, -stealth] (3last) -- (12);
        \draw [thick, shorten <=-2pt, shorten >=-2pt, -stealth] (3last) -- (23);
        \draw [thick, shorten <=-2pt, shorten >=-2pt, -stealth] (12) -- (1);
        \draw [thick, shorten <=-2pt, shorten >=-2pt, -stealth] (12) -- (2);
        \draw [thick, shorten <=-2pt, shorten >=-2pt, -stealth] (23) -- (2);
        \draw [thick, shorten <=-2pt, shorten >=-2pt, -stealth] (23) -- (3);
    \end{tikzpicture}
\end{center}
 
\section{The multiprojective stratification on a Schubert variety}

From now on we assume that the index poset $\mathcal I$ we fixed in Section~\ref{sec:choices} is \nice{\tau}. In this section we construct the multiprojective Seshadri stratification associated to our choices of $\underline\lambda$, $\mathcal I$ and $\tau$. Let $R$ denote the multihomogeneous coordinate ring of the Schubert variety $X = X_\tau$ with respect to the embedding
\begin{align*}
    X_\tau \longhookrightarrow \prod_{i=1}^m \PP(V(\lambda_i)_{\tau_i})
\end{align*}
fixed by the sequence $\underline\lambda$. We begin by relating the multihomogeneous elements in $R$ to certain irreducible representations.

\begin{lemma}
    \label{lem:graded_components}
    There exists a graded isomorphism of $B$-modules
    \begin{align*}
    \K[\hat X_\tau] = \bigoplus_{\underline d \in \N_0^m} \K[\hat X_\tau]_{\underline d} \cong \bigoplus_{\underline d \in \N_0^m} V(\underline d \cdot \underline \lambda)_\tau^*.
    \end{align*}
\end{lemma}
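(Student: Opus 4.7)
The plan is to check the asserted graded isomorphism one multidegree at a time, reducing the task to exhibiting a natural $B$-equivariant isomorphism $R_{\underline d} \coloneqq \K[\hat X_\tau]_{\underline d} \cong V(\underline d \cdot \underline \lambda)_\tau^*$ for every $\underline d \in \N_0^m$.

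First I would interpret $R_{\underline d}$ as a space of global sections on $X_\tau$. The map $G/Q \to \PP(V(\lambda_i))$, $gQ \mapsto [g v_{\lambda_i}]$, pulls $\mathcal O_{\PP(V(\lambda_i))}(1)$ back to the $G$-equivariant line bundle $\mathcal L_{\lambda_i}$ on $G/Q$, because $Q$ stabilises the highest weight line $\K v_{\lambda_i}$ and acts on it by $-\lambda_i$. Tensoring in $i$, the restriction of the external twist $\mathcal O(d_1, \dots, d_m)$ on $\prod_i \PP(V(\lambda_i))$ to $X_\tau$ equals $\mathcal L_{\underline d \cdot \underline \lambda}\big|_{X_\tau}$. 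So provided the restriction map from the ambient multihomogeneous coordinate ring to $R$ is surjective in every multidegree, we obtain the identification $R_{\underline d} = H^0\bigl(X_\tau, \mathcal L_{\underline d \cdot \underline \lambda}\bigr)$ as $B$-modules.

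Next I would invoke the Demazure-Andersen-Ramanan-Ramanathan theorem on the cohomology of line bundles on Schubert varieties: for every dominant weight $\mu$ there is a canonical $B$-equivariant isomorphism $H^0(X_\tau, \mathcal L_\mu) \cong V(\mu)_\tau^*$ (together with $H^{>0}(X_\tau, \mathcal L_\mu) = 0$). Specialising $\mu = \underline d \cdot \underline \lambda$ yields $R_{\underline d} \cong V(\underline d \cdot \underline \lambda)_\tau^*$ as $B$-modules, and summing over $\underline d$ gives the claimed graded $B$-equivariant isomorphism; $B$-equivariance is automatic from the construction.

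The main obstacle is the surjectivity statement used in the first step, i.e.\ that $R_{\underline d}$ really fills out all of $H^0(X_\tau, \mathcal L_{\underline d \cdot \underline \lambda})$ rather than a proper subspace. By a Künneth argument, using the vanishing of higher cohomology of the line bundles $\mathcal L_{d_i \lambda_i}$ on each factor $X_{\tau_i}$, this reduces to the classical single-projective version: for a Schubert variety $X_\tau \subseteq \PP(V(\lambda)_\tau)$ the multiplication map $\mathrm{Sym}^d V(\lambda)^* \to H^0(X_\tau, \mathcal L_{d\lambda})$ is surjective for all $d \geq 0$. This in turn follows from the same vanishing theorem combined with projective normality of Schubert varieties in their minimal homogeneous embedding.
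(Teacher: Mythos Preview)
Your approach is correct but takes a genuinely different route from the paper's proof.

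The paper argues directly and constructively, without cohomology: for each $\underline d$ it uses the Cartan embedding $\phi: V(\underline d \cdot \underline\lambda) \hookrightarrow \bigotimes_{i} V(\lambda_i)^{\otimes d_i}$ (injective in characteristic zero) and the diagonal--Segre map to see that the image of $\hat X_{\tau}$ in $\bigotimes_i V(\lambda_i)_{\tau_i}^{\otimes d_i}$ lands inside $V(\underline d \cdot \underline\lambda)_\tau$. Since the Demazure module is by definition the linear span of the $B$-orbit through the extremal weight vector and this orbit lies in the image, the induced map $V(\underline d \cdot \underline\lambda)_\tau^* \to \K[\hat X_\tau]_{\underline d}$ is both surjective (from the diagram) and injective (from the spanning). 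No projective normality or vanishing theorem is invoked. Your cohomological route trades this explicit picture for an appeal to the Demazure/Ramanan--Ramanathan machinery; it is the textbook path and arguably more conceptual, while the paper's argument is more self-contained and yields the explicit comparison maps used later (e.g.\ in the bond computation).

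One point in your last paragraph needs tightening. Your K\"unneth step gives surjectivity of $\bigotimes_i \operatorname{Sym}^{d_i} V(\lambda_i)_{\tau_i}^* \to H^0\bigl(\prod_i X_{\tau_i}, \mathcal O(\underline d)\bigr)$, but you still need surjectivity of the restriction along the \emph{diagonal} $X_\tau \hookrightarrow \prod_i X_{\tau_i}$, and that is not a formal consequence of K\"unneth or single-factor projective normality. It does hold---factor through $G/Q$, use the surjection $\bigotimes_i V(\lambda_i)^* \twoheadrightarrow V(\sum_i \lambda_i)^* = H^0(G/Q, \mathcal L_{\underline d \cdot \underline\lambda})$, and then the known surjectivity $H^0(G/Q, \mathcal L_\mu) \twoheadrightarrow H^0(X_\tau, \mathcal L_\mu)$---but as written your reduction skips this diagonal step.
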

\begin{proof}
    Fix a tuple $\underline d \in \N_0^m$ and let $Q' = \bigcap_{i \in I} P_i$ be the parabolic subgroup associated to the set $I = \set{i \in [m] \mid d_i \neq 0}$. It suffices to work with the Schubert variety $X_{\tau'} \subseteq G/Q'$ for $\tau' = \pi_{Q'}(\tau)$, as the surjection $X_\tau \twoheadrightarrow X_{\tau'}$ induces an isomorphism of $B$-modules $\K[\hat X_{\tau'}]_{\underline d_I} \to \K[\hat X_{\tau}]_{\underline d}$ between the graded components of their multihomogeneous coordinate rings of degree $\underline d$ and $\underline d_I = (d_i)_{i \in I}$.

    We define the $G$-equivariant, linear map $\phi: V(\underline d \cdot \underline \lambda) \to \bigotimes_{i=1}^m \, V(\lambda_i)^{\otimes d_i}$ sending a highest weight vector $v_{\underline d \cdot \underline \lambda} \in V(\underline d \cdot \underline \lambda)$ to the product $v_{\lambda_1}^{\otimes d_1} \otimes \cdots \otimes v_{\lambda_m}^{\otimes d_m}$ of highest weight vectors $v_{\lambda_i} \in V(\lambda_i)$. Every weight vector $v_{\tau} \in V(\underline d \cdot \underline \lambda)$ of weight $\tau(\underline d \cdot \underline \lambda)$ is mapped to the tensor product $v_{\tau_1}^{\otimes d_1} \otimes \cdots \otimes v_{\tau_m}^{\otimes d_m}$, where $v_{\tau_i} \in V(\lambda_i)$ is a weight vector of weight $\tau(\lambda_i)$. Therefore $\phi$ induces a morphism
    \begin{align*}
        \phi_\tau: \PP(V(\underline d \cdot \underline \lambda)_{\tau}) \to \PP(\bigotimes_{i \in I} V(\lambda_i)_{\tau_i}^{\otimes d_i}).
    \end{align*}
    It is well known that $\phi$ is injective for $\operatorname{char} \K = 0$, so $\phi_\tau$ is injective as well. Hence we have the following commutative diagram of closed, $B$-equivariant embeddings:
    \begin{equation}
        \label{eq:diagram_graded_iso}
        \begin{tikzcd}
        X_{\tau'} \arrow[d, hook] \arrow[rr, hook] & & \PP(V(\underline d \cdot \underline \lambda)_{\tau}) \arrow[d, "\phi_\tau", hook] \\
        \prod_{i \in I} \PP(V(\lambda_i)_{\tau_i}) \arrow[r, hook, "\delta"] & \prod_{i \in I} \PP(V(\lambda_i)_{\tau_i})^{\times d_i} \arrow[r, hook] & \PP(\bigotimes_{i \in I} V(\lambda_i)_{\tau_i}^{\otimes d_i})
        \end{tikzcd}
    \end{equation}
    Here $\delta$ is the diagonal embedding. This diagram implies that the image of the multicone $\hat X_\tau$ in $\bigotimes_{i=1}^m V(\lambda_i)_{\tau_i}^{\otimes d_i}$ is contained in $V(\underline d \cdot \underline \lambda)_{\tau} \subseteq \bigotimes_{i \in I} V(\lambda_i)_{\tau_i}^{\otimes d_i}$. Hence there is an induced morphism $\iota: \hat X_{\tau'} \to V(\underline d \cdot \underline \lambda)_{\tau}$, such that the following diagram commutes:
    \begin{equation}
    \label{eq:diagram_graded_iso_2}
    \begin{tikzcd}
    \hat X_{\tau'} \arrow[d, hook] \arrow[rr, "\iota", hook] & & V(\underline d \cdot \underline \lambda)_{\tau} \arrow[d, hook] \\
    \prod_{i \in I} V(\lambda_i)_{\tau_i} \arrow[r, hook] & \prod_{i \in I} V(\lambda_i)_{\tau_i}^{\times d_i} \arrow[r] & \bigotimes_{i \in I} V(\lambda_i)_{\tau_i}^{\otimes d_i}
    \end{tikzcd}
    \end{equation}
    The comorphism of the bottom row in this diagram induces a surjection from the dual space $(\bigotimes_{i \in I} V(\lambda_i)_{\tau_i}^{\otimes d_i})^*$ to the space $\K[\prod_{i \in I} V(\lambda_i)_{\tau_i}]_{\underline d_I}$. Wetherefore get the diagram of $B$-invariant surjections
    \begin{equation*}
    \begin{tikzcd}
    \K[\hat X_{\tau'}]_{\underline d_I} & V(\underline d \cdot \underline \lambda)_{\tau}^* \arrow[l, two heads] \\
    \K[\prod_{i \in I} V(\lambda_i)_{\tau_i}]_{\underline d_I} \arrow[u, two heads] & (\bigotimes_{i \in I} V(\lambda_i)_{\tau_i}^{\otimes d_i})^* \arrow[u, two heads] \arrow[l, two heads]
    \end{tikzcd}
    \end{equation*}
    The top row of this diagram is an isomorphism: Since all maps in (\ref{eq:diagram_graded_iso_2}) are $B$-equivariant, the Demazure module $V(\underline d \cdot \underline \lambda)_{\tau}$ is (linearly) spanned by the image of $\iota$.
\end{proof}

For every element $I \in \mathcal I$ the poset
\begin{align*}
    D(\lambda_I, \tau_I) = \set{\theta \in W/W_{P_I} \mid \theta \leq \tau_I}
\end{align*}
is the defining chain poset to the (one-element) sequence $(\lambda_I)$ and $\tau_I = \pi_{P_I}(\tau)$. Since the index poset is \nice{\tau}, the restriction of the monotone map
\begin{align*}
    \DCP{} \to D(\lambda_I, \tau_I), \quad (\theta, I) \mapsto \pi_{P_I}(\theta)
\end{align*}
to the subset $D_I(\underline\lambda, \tau) = \set{(\theta, J) \in \DCP{} \mid J = I}$ is an isomorphism of posets. It is also compatible with covering relations: A relation $(\theta, I) \succeq (\phi, I)$ is a covering relation in $D_I(\underline\lambda, \tau)$, if and only if $\pi_{P_I}(\theta) \geq \pi_{P_I}(\phi)$ is a covering relation in $D(\lambda_I, \tau_I)$.

As described in Section~\ref{sec:choices}, there is a Seshadri stratification on each Schubert variety $X_{\tau_I} \subseteq \PP(V(\lambda_I)_{\tau_I})$ with underlying poset $D_I(\underline\lambda, \tau)$. The strata in the subset $D_I(\underline\lambda, \tau)$ for the multiprojective stratification should be Schubert varieties in $G/Q_I$ and also be compatible with the strata for $X_{\tau_I} \subseteq \PP(V(\lambda_I)_{\tau_I})$ in the sense that $X_{(\theta, I)}$ should project to $X_{\pi_{P_I}(\theta)} \subseteq G/P_I$ via the map $G/Q_I \twoheadrightarrow G/P_I$. Hence we define the stratum $X_{(\theta, I)}$ of an element $(\theta, I) \in \DCP{}$ to be the Schubert variety
\begin{align*}
    X_{(\theta, I)} \coloneqq X_{\pi_{Q_I}(\theta)} \subseteq \prod_{i \in I} \PP(V(\lambda_i)_{\tau_i}).
\end{align*}

For each index $i = 1, \dots, m$ we also have a Seshadri stratification on the Schubert variety $X_{\tau_i} \subseteq \PP(V(\lambda_i)_{\tau_i})$ with the underlying poset
\begin{align*}
    D(\lambda_i, \tau_i) = \set{\theta \in W/W_{P_i} \mid \theta \leq \tau_i}.
\end{align*}
Its extremal functions $f_\theta$ can be pulled back to the multicone $\hat X_\tau$ via the linear projection
\begin{align*}
    \prod_{j=1}^m V(\lambda_j)_{\tau_j} \to V(\lambda_i)_{\tau_i}.
\end{align*}
Hence we can use the same extremal functions for the multiprojective stratification as well: For every $i \in [m]$ and $\phi \in D(\lambda_i, \tau_i)$ we choose a $T$-eigenvector $\ell_\phi$ of weight $-\phi(\lambda_i)$ in the irreducible $G$-representation $V(\lambda_i)^*$. The extremal function $f_{(\theta, I)}$ shall then be defined as the product 
\begin{align*}
    f_{(\theta, I)} \coloneqq \prod_{i \in \underline I} \ell_{\pi_{P_i}(\theta)}.
\end{align*}
This function is of multidegree $e_I$ and by Lemma~\ref{lem:graded_components} it can also be interpreted as a weight vector of weight $-\theta(\lambda_I)$ in the representation $V(\lambda_I)^*$.

Note that this construction is a generalization of the following two stratifications:
\begin{enumerate}[label=(\alph{enumi})]
    \item When choosing the one-element sequence $\underline\lambda = (\lambda)$ one obtains the original stratification on $X_\tau \subseteq \PP(V(\lambda)_\tau)$ from~\cite{seshstratandschubvar}.
    \item Let $Q = P_{k_1} \cap \dots \cap P_{k_m}$ be a parabolic subgroup in Dynkin type \texttt{A} as in Example~\ref{ex:strat_type_A}. By Corollary~\ref{cor:totally_ordered_nice_I}, the index poset $\mathcal I = \set{[1], \dots, [m]}$ is \nice{\tau} for the sequence $\underline\lambda = (\omega_{k_m}, \dots, \omega_{k_1})$ and $\tau = w_0 W_Q$. Hence the defining chain poset $\DCP{}$ is isomorphic to $\ulWlambda \cong \ulW$.
\end{enumerate}

\begin{lemma}
    \label{lem:prop_strata}
    For all $J \subseteq I$ in $\mathcal I$ and $(\theta, I) \in \DCP{}$, we have
    \begin{align*}
    \set{(v_1, \dots, v_m) \in \hat X_{(\theta, I)} \mid v_i = 0 \mkern8mu \forall \mkern1mu i \in I \setminus J} = \hat X_{(\min_Q \circ \pi_{Q_J}(\theta), J)}.
    \end{align*}
\end{lemma}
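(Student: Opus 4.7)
The plan is to realize both sides as closed subvarieties of the linear subspace
$E = \Set{v \in \prod_{i \in I} V(\lambda_i)_{\tau_i} : v_i = 0 \; \forall \, i \in I \setminus J}$
and verify equality via both inclusions. First note that $\pi_{Q_J}(\min_Q \circ \pi_{Q_J}(\theta)) = \pi_{Q_J}(\theta)$, so $X_{(\min_Q \circ \pi_{Q_J}(\theta), J)} = X_{\pi_{Q_J}(\theta)} \subseteq \prod_{j \in J} \PP(V(\lambda_j)_{\tau_j})$; its multicone lives in $\prod_{j \in J} V(\lambda_j)_{\tau_j}$, and the canonical extension-by-zero isomorphism identifies the right-hand side with $\hat X_{\pi_{Q_J}(\theta)}$ sitting inside $E$, while the left-hand side is precisely $\hat X_{\pi_{Q_I}(\theta)} \cap E$.

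For the inclusion ``$\subseteq$'' I would use the linear projection $\hat \pi_J \colon \prod_{i \in I} V(\lambda_i)_{\tau_i} \twoheadrightarrow \prod_{j \in J} V(\lambda_j)_{\tau_j}$. Since $Q_I \subseteq Q_J$, the canonical map $G/Q_I \twoheadrightarrow G/Q_J$ restricts to the Schubert surjection $X_{\pi_{Q_I}(\theta)} \twoheadrightarrow X_{\pi_{Q_J}(\theta)}$, and this lifts to a morphism of multicones $\hat \pi_J \colon \hat X_{\pi_{Q_I}(\theta)} \to \hat X_{\pi_{Q_J}(\theta)}$. Algebraically this comes from Lemma~\ref{lem:graded_components}: for a multidegree $\underline d$ supported on $J$, the weight $\mu = \underline d \cdot \underline\lambda$ has stabilizer $W_\mu \supseteq W_{Q_J}$, so $V(\mu)_{\pi_{Q_I}(\theta)} = V(\mu)_{\pi_{Q_J}(\theta)}$ and the $J$-supported subalgebra of $\K[\hat X_{\pi_{Q_I}(\theta)}]$ coincides with $\K[\hat X_{\pi_{Q_J}(\theta)}]$. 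A point $(v_i)_{i \in I}$ in the left-hand side already lies in $E$, and $\hat \pi_J((v_i)_{i \in I}) = (v_j)_{j \in J} \in \hat X_{\pi_{Q_J}(\theta)}$ places $(v_i)_{i \in I}$ in the right-hand side.

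For ``$\supseteq$'' I would exhibit a Zariski-dense subset of $\hat X_{\pi_{Q_J}(\theta)}$ whose extension-by-zero already lies in $\hat X_{\pi_{Q_I}(\theta)}$, and then pass to closures. For each $\tilde g \in G$ with $\tilde g Q_I \in X_{\pi_{Q_I}(\theta)}$, the morphism $\K^I \to \prod_{i \in I} V(\lambda_i)_{\tau_i}$, $(c_i) \mapsto (c_i \, \tilde g \cdot v_{\lambda_i})_{i \in I}$, maps $(\K^*)^I$ into $\hat X_{\pi_{Q_I}(\theta)}$ by the very definition of the multicone; since this multicone is closed and $(\K^*)^I$-stable (being $\Spec$ of an $\N_0^I$-graded ring), the whole image is contained in $\hat X_{\pi_{Q_I}(\theta)}$. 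Specialising $c_i = 0$ for $i \in I \setminus J$ produces a set $S \subseteq \hat X_{\pi_{Q_I}(\theta)} \cap E$ whose projection $\hat \pi_J(S)$ contains all scaled lifts $(c_j \, g \cdot v_{\lambda_j})_{j \in J}$ of points $g Q_J \in X_{\pi_{Q_J}(\theta)}$ (using the Schubert surjection and absorbing $Q_J$-characters into the scalars $c_j$), and is therefore Zariski-dense in $\hat X_{\pi_{Q_J}(\theta)}$. Closing up in $E$ yields $\hat X_{\pi_{Q_J}(\theta)} \subseteq \hat X_{\pi_{Q_I}(\theta)} \cap E$.

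The hard part is checking that $\hat \pi_J$ genuinely maps $\hat X_{\pi_{Q_I}(\theta)}$ into $\hat X_{\pi_{Q_J}(\theta)}$; as indicated, this is a direct consequence of Lemma~\ref{lem:graded_components} together with the $W_{Q_J}$-invariance of the relevant dominant weights, which identifies the $J$-supported subalgebra of the big multihomogeneous coordinate ring with the coordinate ring of the small multicone. Everything else — the Schubert-variety surjection, the $(\K^*)^I$-stability and closedness of the multicone, and the Zariski density of scaled lifts — is standard.
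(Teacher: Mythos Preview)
Your proof is correct. Both you and the paper rely on the same underlying geometric fact --- the Schubert surjection $X_{\pi_{Q_I}(\theta)} \twoheadrightarrow X_{\pi_{Q_J}(\theta)}$ coming from $Q_I \subseteq Q_J$ --- but you justify the passage to multicones differently. The paper argues pointwise via the commutative diagram: given $v \in \hat X_{(\theta,I)}$ it chooses nonzero representatives $w_i$ with $v_i \in \K w_i$, projects $([w_j])_{j\in J}$ to $X_{\pi_{Q_J}(\theta)}$, and for the converse lifts along the Schubert surjection and sets coordinates in $I \setminus J$ to zero. You instead handle ``$\subseteq$'' algebraically by invoking Lemma~\ref{lem:graded_components} to identify the $J$-supported graded subalgebra of $\K[\hat X_{\pi_{Q_I}(\theta)}]$ with $\K[\hat X_{\pi_{Q_J}(\theta)}]$, and treat ``$\supseteq$'' by exhibiting a dense set of scaled orbit points and closing up. Your route is more explicit about why setting coordinates to zero stays in the multicone (via $(\K^*)^I$-stability and closedness) and sidesteps the question of whether an arbitrary multicone point with some $v_i=0$ can be lifted to a point of the projective Schubert variety; the paper's argument is shorter but leans on that lifting implicitly.
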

\begin{proof}
    By Corollary~\ref{cor:dcp_go_down}, the element $(\min_Q \circ \pi_{Q_J}(\theta), J)$ indeed lies in the defining chain poset. Let $v = (v_1, \dots, v_m) \in \hat X_{(\theta, I)}$ and choose $w_i \in V(\lambda_i)_{\tau_i} \setminus \set{0}$ with $v_i \in \K w_i$ for all $i \in I$. As the diagram
    \begin{equation*}
    \begin{tikzcd}
    X_{\pi_{Q_I}(\theta)} \arrow[r, hook] \arrow[d, two heads] & \prod_{i \in I} \PP(V(\lambda_i)_{\tau_i}) \arrow[d, two heads] &  \\
    X_{\pi_{Q_J}(\theta)} \arrow[r, hook] & \prod_{i \in J} \PP(V(\lambda_i)_{\tau_i})
    \end{tikzcd}
    \end{equation*}
    commutes, the tuple $([w_j])_{j \in J} \in \prod_{j \in J} \PP(V(\lambda_j)_{\tau_j})$ can be viewed as an element of $G/Q_J$ and lies in the Schubert variety $X_{\pi_{Q_J}(\theta)}$. When $v_i = 0$ holds for all $i \in I \setminus J$, we have $v \in \hat X_{(\min_Q \circ \pi_{Q_J}(\theta), J)}$. Conversely, if $v \in \hat X_{(\min_Q \circ \pi_{Q_J}(\theta), J)}$, we choose a preimage via the projection $X_{\pi_{Q_I}(\theta)} \twoheadrightarrow X_{\pi_{Q_J}(\theta)}$ and set all coordinates in $I \setminus J$ to zero. Hence $v$ lies in the multicone $\hat X_{(\theta, I)}$.
\end{proof}

\begin{theorem}
    \label{thm:stratification}
    Suppose that the poset $\mathcal I$ we chose in Section~\ref{sec:choices} is $\tau$-standard. Then the varieties $X_{(\theta, I)}$ and extremal functions $f_{(\theta, I)}$ defined at the beginning of this section form a (multiprojective) Seshadri stratification on $X = X_\tau$. The defining chain poset $A = \DCP{}$ is the underlying poset of this stratification and $\mathcal I$ is its index poset.
\end{theorem}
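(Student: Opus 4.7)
The plan is to verify the three axioms~(S1)--(S3) of Definition~\ref{def:multiproj_seshadri_strat}. The auxiliary hypotheses are immediate: the top element is $(\tau, [m])$ and $X_{(\tau, [m])} = X_{\pi_Q(\tau)} = X_\tau$ since $Q_{[m]} = Q$ by Lemma~\ref{lem:properties_P_Q}\,(a); each stratum $X_{(\theta, I)}$ is an irreducible, normal Schubert variety, hence smooth in codimension one; and each $f_{(\theta, I)}$ is a non-constant $T$-eigenvector of multidegree $e_I$ supported on $\underline I \subseteq I$, so it lies in $\K[X_I] \subseteq R$. Underpinning the whole argument is the following identification, obtained from Lemma~\ref{lem:graded_components}: the graded component $R_{e_I}$ is isomorphic to $V(\lambda_I)_\tau^*$ as a $B$-module, and since $f_{(\theta, I)} = \prod_{i \in \underline I} \ell_{\pi_{P_i}(\theta)}$ is a $T$-eigenvector of weight $-\theta(\lambda_I) = -\pi_{P_I}(\theta)(\lambda_I)$ whose weight space is one-dimensional, $f_{(\theta, I)}$ is identified, up to a non-zero scalar, with the classical extremal weight vector $\ell_{\pi_{P_I}(\theta)} \in V(\lambda_I)_\tau^*$.

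For axiom~(S2), we suppose $(\phi, J) \not\preceq (\theta, I)$ in $\DCP{}$. If $J \nsubseteq I$, condition~(\ref{eq:need_for_s2}) forces some $j \in \underline J \setminus I$; the factor $\ell_{\pi_{P_j}(\phi)}$ of $f_{(\phi, J)}$ depends only on the $j$-th coordinate, which vanishes identically on $\hat X_{(\theta, I)}$ viewed inside $V$. If instead $J \subseteq I$, then $\underline J \subseteq J \subseteq I$ gives $Q_I \subseteq P_J$, so the $Q_I$-minimal element $\theta$ (Lemma~\ref{lem:dcp_elements}) is also $P_J$-minimal and satisfies $\theta = \min_Q \pi_{P_J}(\theta)$. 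Using that $\rho$ is a poset isomorphism by the $\tau$-standardness of $\mathcal I$, the hypothesis translates via Lemma~\ref{lem:relation_ulWlambda} into $\max_Q \pi_{P_I}(\theta) \ngeq \min_Q \pi_{P_J}(\phi)$; combining with $\max_Q \pi_{P_I}(\theta) \geq \theta = \min_Q \pi_{P_J}(\theta)$ and the monotonicity of $\min_Q$, we obtain $\pi_{P_J}(\theta) \ngeq \pi_{P_J}(\phi)$. The classical vanishing theorem for extremal weight vectors on Demazure modules (cf.\ \cite[Prop.\,A.6]{seshstratandschubvar}) then implies that $\ell_{\pi_{P_J}(\phi)}$ vanishes on $\hat X_{\pi_{P_J}(\theta)}$, and hence on $\hat X_{(\theta, I)}$ after pullback along $G/Q_I \twoheadrightarrow G/P_J$.

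For axiom~(S1), we distinguish the two types of covering relations in $\DCP{}$. A type~(a) covering has the same index $I$ with $\theta \succ \phi$ a covering in $W/W_Q$ and $\pi_{P_I}(\theta) \neq \pi_{P_I}(\phi)$; since $Q_I \subseteq P_I$, this forces $\pi_{Q_I}(\theta) \succ \pi_{Q_I}(\phi)$ to be a covering in $W/W_{Q_I}$, so $X_{\pi_{Q_I}(\phi)}$ has codimension one in $X_{\pi_{Q_I}(\theta)}$ and the same holds for their multicones in the shared ambient $\prod_{i \in I} V(\lambda_i)_{\tau_i} \subseteq V$. A type~(b) covering has the form $(\theta, J) \prec (\theta, I)$ with $J \subsetneq I$ a covering in $\mathcal I$; since $\mathcal I$ is graded of length $m-1$, every chain from a minimum to $[m]$ consists of $m-1$ strict size-one jumps, forcing $|I \setminus J| = 1$. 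Both $(\theta, I), (\theta, J) \in \DCP{}$ imply that $\theta$ is simultaneously $Q_I$- and $Q_J$-minimal, yielding $\dim X_{\pi_{Q_I}(\theta)} = \ell(\min_B \theta) = \dim X_{\pi_{Q_J}(\theta)}$. Using $\dim \hat X_{(\theta, K)} = \dim X_{\pi_{Q_K}(\theta)} + |K|$ for $K \in \{I, J\}$, the codimension equals $|I| - |J| = 1$, and the explicit set-theoretic inclusion is supplied by Lemma~\ref{lem:prop_strata}.

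The main obstacle will be axiom~(S3), for which the plan is to pull the classical CFL vanishing result through the Cartan projection. From the identification in the first paragraph, $f_{(\theta, I)}$ coincides with the pullback of $\ell_{\pi_{P_I}(\theta)} \in V(\lambda_I)^*$ along the $G$-equivariant multilinear map sending $(v_i)_{i \in \underline I}$ to the image of $\bigotimes_{i \in \underline I} v_i$ in the Cartan component $V(\lambda_I) \subseteq \bigotimes_{i \in \underline I} V(\lambda_i)$. For $v \in \hat X_{(\theta, I)}$ with $f_{(\theta, I)}(v) = 0$, either some $v_j$ with $j \in \underline I$ vanishes, in which case Lemma~\ref{lem:prop_strata} places $v$ in the type~(b) stratum $\hat X_{(\theta, I \setminus \{j\})}$, or all these $v_j$ are non-zero and the image of $v$ in $V(\lambda_I)$ lies in $\hat X_\phi$ for a covering $\phi \prec \pi_{P_I}(\theta)$ in $W/W_{P_I}$ (by classical CFL on $X_{\pi_{P_I}(\theta)} \hookrightarrow \PP(V(\lambda_I)_{\tau_I})$); then $[v]$ lies in the preimage of $X_\phi$ under $G/Q_I \twoheadrightarrow G/P_I$, which intersected with $X_{\pi_{Q_I}(\theta)}$ decomposes as a union of Schubert subvarieties. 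The delicate step is to verify that this decomposition corresponds precisely to the type~(a) coverings $(\psi, I) \prec (\theta, I)$ in $\DCP{}$, which will rely on Deodhar's Lemma to lift coverings from $W/W_{P_I}$ to $W/W_Q$ compatibly with the $Q_I$-minimality imposed by the defining chain poset. The reverse inclusion of each covering stratum in the vanishing locus is an immediate consequence of~(S2).
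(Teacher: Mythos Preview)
Your overall strategy matches the paper's, but there are two structural gaps and one minor slip.

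\textbf{Missing step: the order on $A$.} In Definition~\ref{def:multiproj_seshadri_strat} the partial order on $A$ is \emph{defined} by multicone inclusion, and axioms (S1)--(S3) are stated relative to that order. Your entire argument works with the abstract order $\preceq$ on $\DCP{}$, so you must first verify that $(\theta,I)\succeq(\phi,J)$ is equivalent to $\hat X_{(\theta,I)}\supseteq\hat X_{(\phi,J)}$. The paper does this explicitly: the forward direction is easy, but for the reverse one uses Lemma~\ref{lem:prop_strata} to sandwich $\hat X_{(\min_Q\circ\pi_{Q_J}(\theta),J)}$ between the two multicones, deduces $\pi_{Q_J}(\theta)\geq\pi_{Q_J}(\phi)$, lifts via $Q_J$-minimality of $\phi$, and then invokes the $\tau$-standardness of $\mathcal I$ (that $\rho$ is an isomorphism) to conclude $(\theta,I)\succeq(\phi,J)$. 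Without this, your coverings in (S1) and your hypothesis $(\phi,J)\not\preceq(\theta,I)$ in (S2) are not yet the ones appearing in the axioms.

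\textbf{The (S3) case split and the lifting.} Your dichotomy ``some $v_j=0$ for $j\in\underline I$'' versus ``all $v_j\neq 0$ for $j\in\underline I$'' is not enough: in the second branch you write ``$[v]$ lies in the preimage \ldots under $G/Q_I\twoheadrightarrow G/P_I$'', but $[v]$ is a well-defined point of $G/Q_I$ only when $v_k\neq 0$ for \emph{all} $k\in I$, not just for $k\in\underline I$. The paper handles this by choosing, for each $k\in I$, a vector $w_k\neq 0$ with $v_k\in\K w_k$ coming from a preimage of $v$ under $\K^{I}\times X_{\pi_{Q_I}(\theta)}\to\hat X_{(\theta,I)}$, so that $([w_k])_k$ genuinely lies in $X_{\pi_{Q_I}(\theta)}$ and hence in some Schubert cell $C_\sigma$ with $\sigma<\pi_{Q_I}(\theta)$. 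Moreover, the ``delicate step'' you flag is not Deodhar's Lemma: Deodhar produces extremal lifts, not coverings. What is needed is Lemma~\ref{lem:bruhat_interval}, which from $\pi_{Q_I}(\theta)>\sigma$ with $\pi_{P_I}(\theta)>\pi_{P_I}(\sigma)$ produces a covering $\pi_{Q_I}(\theta)>\psi\geq\sigma$ still satisfying $\pi_{P_I}(\theta)>\pi_{P_I}(\psi)$; then $(\min_Q(\psi),I)\in\DCP{}$ by Lemma~\ref{lem:dcp_elements} and $v\in\hat X_{(\min_Q(\psi),I)}$. Also, in the first branch you write $\hat X_{(\theta,I\setminus\{j\})}$; the correct element of $\DCP{}$ is $(\min_Q\circ\pi_{Q_{I\setminus\{j\}}}(\theta),\,I\setminus\{j\})$, since $\theta$ need not be $Q_{I\setminus\{j\}}$-minimal.

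\textbf{Minor slip in (S2).} From $Q_I\subseteq P_J$ you conclude that the $Q_I$-minimal element $\theta$ is $P_J$-minimal; this implication goes the wrong way (minimality for a subgroup does not imply minimality for a supergroup). Fortunately your chain only needs $\max_Q\pi_{P_I}(\theta)\geq\theta\geq\min_Q\pi_{P_J}(\theta)$, and the second inequality holds for every $\theta$; so the argument survives once you replace the equality by this inequality. The paper's (S2) argument is the same as yours after this fix, except that it finishes by picking a single index $j\in\underline J$ with $\pi_{P_j}(\phi)\nleq\pi_{P_j}(\theta)$ rather than passing through the Cartan component identification.
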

\begin{proof}
    It is well known, that Schubert-varieties are smooth in codimension one (see e.\,g. \cite[Corollary 3.5]{seshstratandschubvar}). Their multicones $\hat X_{(\theta, I)} \subseteq \prod_{i=1}^m V(\lambda_i)_{\tau_i}$ are closed, irreducible subvarieties of $\hat X_\tau$ and smooth in codimension one as well (\cite[Corollary A.11]{ownarticle}).
    
    The relation $(\theta, I) \succeq (\phi, J)$ in $\DCP{}$ is equivalent to the inclusion $\hat X_{(\theta, I)} \supseteq \hat X_{(\phi, J)}$ of their corresponding multicones. Clearly $\hat X_{(\theta, I)} \supseteq \hat X_{(\phi, J)}$ holds for every covering relation $(\theta, I) \succ (\phi, J)$ in $\DCP{}$. Conversely if $\hat X_{(\theta, I)} \supseteq \hat X_{(\phi, J)}$, then $\hat X_{(\min_Q \circ \pi_{Q_J}(\theta), J)}$ lies in between these two multicones. In particular, $\pi_{Q_J}(\theta) \geq \pi_{Q_J}(\phi)$. Because of the $Q_J$-minimality of $\phi$ we can lift this to $\theta \geq \operatorname{min}_Q \circ \pi_{Q_J}(\theta) \geq \operatorname{min}_Q \circ \pi_{Q_J}(\phi) = \phi$, which implies $\rho(\theta, I) \geq \rho(\phi, J)$. Thus $(\theta, I) \succeq (\phi, J)$ since $\rho$ is an isomorphism.
    
    We now check the requirements~\ref{itm:seshadri_strat_a}-\ref{itm:seshadri_strat_c} for a Seshadri stratification: The defining chain poset is a graded poset and the rank of an element $(\theta, I) \in \DCP{}$ is equal to the length of the subposet $\Wlambda_{\preceq (\theta, I)}$. By Theorem~\ref{thm:unique_def_chain} and Remark~\ref{rem:restriction_of_DCP} this length is given by $r(\theta) + |I| - 1$, which is the dimension of the multicone $\hat X_{(\theta, I)}$. Therefore \ref{itm:seshadri_strat_a} is fulfilled.

    Let $(\theta, I), (\phi, J)$ be two elements in $\DCP{}$ and $v = (v_1, \dots, v_m)$ be a point in the multicone $\hat X_{(\theta, I)}$. By its definition, the extremal function $f_{(\phi, J)}$ vanishes on the point $v$, if and only if $\ell_{\pi_{P_j}(\theta)}$ vanishes on $v_j$ for some $j \in \underline J$. The vanishing behaviour of $\ell_{\pi_{P_j}(\phi)}$ can be described via the Seshadri stratification on $X_{\tau_j} \subseteq \PP(V(\lambda_j)_{\tau_j})$ with underlying poset $D(\lambda_j, \tau_j) = \set{\sigma \in W/W_{P_j} \mid \sigma \leq \tau_j}$. Since \ref{itm:seshadri_strat_c} holds for this stratification and $v_j \in \hat X_{\pi_{P_j}(\theta)}$, it follows:
    \begin{align}
        \label{eq:vanishing_criterion}
        f_{(\phi, J)}(v) = 0 \quad \Longleftrightarrow \quad \text{$v_j \in \hat X_\sigma$ for some $j \in \underline J$ and $\sigma < \pi_{P_j}(\theta)$ in $W/W_{P_j}$}.
    \end{align}
    
    For condition~\ref{itm:seshadri_strat_b} we assume $(\phi, J) \npreceq (\theta, I)$. If $J \nsubseteq I$, then $f_{(\phi,J)}$ vanishes on $\hat X_{(\theta,I)}$ by definition of the strata and the requirement~(\ref{eq:need_for_s2}) on the poset $\mathcal I$. Now let $J \subseteq I$. We have $\pi_{P_J}(\phi) \nleq \pi_{P_J}(\theta)$, otherwise $\operatorname{min}_Q \circ \pi_{P_J}(\phi) \leq \operatorname{min}_Q \circ \pi_{P_J}(\theta) \leq \theta \leq \operatorname{max}_Q \circ \pi_{P_I}(\theta)$ would be a contradiction to $\rho(\phi, J) \nleq \rho(\theta, I)$. By the definition of $P_J$ there exists an index $j \in \underline J$, such that $\pi_{P_j}(\phi) \nleq \pi_{P_j}(\theta)$, where $P_j = B W_{\lambda_j} B$ is the parabolic subgroup associated to the dominant weight $\lambda_j$. Using the equivalence~(\ref{eq:vanishing_criterion}) we see that $f_{(\phi, J)}$ vanishes on $\hat X_{(\theta, I)}$.
    
    Lastly we show~\ref{itm:seshadri_strat_c}. The function $f_{(\theta, I)}$ vanishes on every point $v \in \hat X_{(\phi, J)}$ for $(\theta, I) \succ (\phi, J) \in \DCP{}$. This is clearly the case for $J \subsetneq I$. If $J = I$, there exists an index $i \in \underline I$, such that $\pi_{P_i}(\phi) < \pi_{P_i}(\theta)$, since $\pi_{P_I}(\phi) < \pi_{P_I}(\theta)$. Hence $f_{(\theta, I)}$ vanishes on $v$ by~(\ref{eq:vanishing_criterion}).
    
    Conversely we assume, that $f_{(\theta, I)}$ vanishes on $v$. Hence there exists an index $i \in \underline I$ and an element $\phi \in W/W_{P_i}$ with $v_i \in \hat X_\phi$ and $\phi < \pi_{P_i}(\theta)$. First we consider the case $v_i = 0$. If $I$ is minimal in $\mathcal I$, then $v = 0$ is automatically contained in the right hand side of~(\ref{eq:s3}). Else, by Corollary~\ref{cor:dcp_go_down} and Lemma~\ref{lem:prop_strata}, the point $v$ lies in the stratum to $(\min_Q \circ \pi_{Q_J}(\theta), J)$ for $J = I \setminus \set{i}$ and this element is strictly smaller than $(\theta, I)$ in $\DCP{}$. It remains the case $v_i \neq 0$. For each $k \in I$ we choose an element $w_k \in V(\lambda_k)_{\tau_k} \setminus \set{0}$, such that $v_k \in \K w_k$. The tuple $([w_k])_{k \in I}$ lies in a unique Schubert cell $C_\sigma$ for $\sigma \in W/W_{Q_I}$, viewed as a locally closed subvariety of $\prod_{k \in I} \PP(V(\lambda_k)_{\tau_k})$. This element $\sigma$ is strictly smaller than $\pi_{Q_I}(\theta)$, since $v_i \in \hat X_\phi$ and $\phi < \pi_{P_i}(\theta)$. By Lemma~\ref{lem:bruhat_interval} there now exists $\psi \in W/W_{Q_I}$ covered by $\pi_{Q_I}(\theta)$, such that $\pi_{Q_I}(\theta) > \psi \geq \sigma$ and $\pi_{P_I}(\theta) > \pi_{P_I}(\psi) \geq \pi_{P_I}(\sigma)$. Hence $(\psi, I)$ lies in the defining chain poset, is strictly smaller than $(\theta, I)$ and $v \in \hat X_{(\psi, I)}$ is contained in the right hand side of~(\ref{eq:s3}).
\end{proof}

\begin{corollary}
    For $\tau = w_0 W_Q$ the stratification from Theorem~\ref{thm:stratification} is a (multiprojective) Seshadri stratification on $X = G/Q$. One can determine combinatorially which posets $\mathcal I$ are \nice{\tau} via Theorem~\ref{thm:nice_I} and therefore give rise to such a stratification.
\end{corollary}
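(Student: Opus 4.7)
The plan is to observe that this corollary is essentially a direct combination of Theorem~\ref{thm:stratification} with Theorem~\ref{thm:nice_I}, so there is no new heavy lifting required. First I would verify that $X_\tau = G/Q$ when $\tau = w_0 W_Q$: the Schubert variety $X_{w_0 W_Q}$ is by definition the closure of the $B$-orbit through the point $w_0 Q \in G/Q$, which is the open (big) Schubert cell, so its closure equals $G/Q$. In the same vein, the projections $\tau_i = \pi_{P_i}(\tau)$ are the longest cosets in $W/W_{P_i}$, so each Demazure module $V(\lambda_i)_{\tau_i}$ coincides with the full irreducible module $V(\lambda_i)$, and the multiprojective embedding of $X_\tau$ from Section~\ref{sec:choices} is the Plücker-type embedding of $G/Q$ into $\prod_{i=1}^m \PP(V(\lambda_i))$.

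Second, under the standing hypothesis of the section that $\mathcal I$ is \nice{\tau}, Theorem~\ref{thm:stratification} gives us a multiprojective Seshadri stratification on $X_\tau = G/Q$ with underlying poset $\DCP{}$ and index poset $\mathcal I$. No additional verifications are necessary for this first half of the statement.

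For the second half, I would simply invoke Theorem~\ref{thm:nice_I}, whose hypothesis is precisely $\tau = w_0 W_Q$, the case at hand. That theorem provides four equivalent characterisations of \nice{\tau}-ness of $\mathcal I$, and condition~\ref{itm:nice_I_d} expresses the property in a purely combinatorial manner: namely, for every $I \in \mathcal I$ and every maximal chain $I = I_r \subsetneq \dots \subsetneq I_m = [m]$ of covering relations in $\mathcal I$, the parabolic subgroups $Q_I$ and $Q^r = \bigcap_{j=r}^m P_{I_j}$ must generate $P_I$ and the absence of certain paths in the Dynkin diagram of $G$ (connecting $\Delta_{Q_I}\setminus\Delta_{Q^r}$ to $\Delta_{Q^r}\setminus\Delta_{Q_I}$ without passing through $\Delta_{P_I}$) must be verified. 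Since $\mathcal I$ is finite and the Dynkin diagram is a finite graph, these conditions can be checked algorithmically, which gives the claimed combinatorial criterion.

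There is no real obstacle: everything substantive has been proved earlier. The only point worth emphasising in the write-up is the reduction $X_{w_0 W_Q} = G/Q$, which justifies why applying Theorem~\ref{thm:stratification} in this situation yields a stratification on the whole flag variety rather than only on a proper Schubert subvariety.
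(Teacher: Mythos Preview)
Your proposal is correct and matches the paper's approach: the corollary is stated without proof in the paper, as it follows immediately from Theorem~\ref{thm:stratification} together with Theorem~\ref{thm:nice_I}, and your write-up makes explicit precisely the two trivial observations needed (that $X_{w_0 W_Q} = G/Q$ and that the Demazure modules become the full irreducible representations).
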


Similar to the Seshadri stratification on $X_\tau \subseteq V(\lambda)_\tau$ constructed in~\cite{seshstratandschubvar}, the bonds of the multiprojective stratification can be described via the root system combinatorics of $G$. In order to prove the next lemma, we therefore need more notation for root subgroups. For every root $\alpha$ in the root system $\Phi$ let $U_\alpha$ denote the associated root subgroup of $G$ (see \cite[Section 26.3]{humphreys2012linear}). Let $\mathbb G_{\mathrm{a}}$ denote the additive algebraic group $(\K, +)$. Up to a scalar multiple, there exists an unique isomorphism $\varepsilon_\alpha: \mathbb G_{\mathrm{a}} \to U_\alpha$, such that $t \varepsilon_\alpha(x) t^{-1} = \varepsilon_\alpha(\alpha(t) x)$ holds for all $t \in T$ and $x \in \mathbb G_{\mathrm{a}}$. 

\begin{lemma}
    \label{lem:bonds}
    The bond to a covering relation $(\theta, I) \succ (\phi, J)$ in $\DCP{}$ is given by 
    \begin{align*}
    b_{(\theta, I), (\phi, J)} = \begin{cases} \vert \langle \phi(\lambda_I), \beta^\vee \rangle \vert, & \text{if} \ I = J, \\ 1, & \text{if} \ I \neq J, \end{cases}
    \end{align*}
    where $\beta$ is the unique positive root with $s_\beta \cdot \min_B(\phi) = \min_B(\theta) \in W$ in the case $I = J$.
\end{lemma}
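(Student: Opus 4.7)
The plan is to split the argument into the cases $I\neq J$ and $I=J$, with the former being immediate and the latter reducing, factor by factor, to the single-projective Seshadri stratification on Schubert varieties from~\cite{seshstratandschubvar}.

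For $I\neq J$, I would use the characterization of covering relations in $\DCP{}$ recorded just before Remark~\ref{rem:restriction_of_DCP}: such a cover forces $\theta=\phi$ in $W/W_Q$ and $J\subsetneq I$ to be a covering relation in $\mathcal I$. Lemma~\ref{lem:properties_A_mathcal_I}\,(a) then gives $|I\setminus J|=1$, say $I\setminus J=\{i\}$, and the definition of $\underline I$ places $i\in\underline I$. Applying Lemma~\ref{lem:projective_covering_relation} to the multihomogeneous function $f_{(\theta,I)}$ of degree $e_I=\sum_{j\in\underline I}e_j$ identifies the bond with the $i$-th coordinate of $e_I$, which equals $1$.

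For $I=J$ my plan is to exploit the product description $f_{(\theta,I)}=\prod_{i\in\underline I}\ell_{\pi_{P_i}(\theta)}$. Each factor restricts to a nonzero rational function on $\hat X_{\pi_{Q_I}(\theta)}$, so by additivity of discrete valuations it suffices to compute, for each $i\in\underline I$, the contribution of $\ell_{\pi_{P_i}(\theta)}$ pulled back along the $i$-th factor projection $\pi_i\colon \hat X_{\pi_{Q_I}(\theta)}\to \hat X_{\pi_{P_i}(\theta)}$. Since $\theta>\phi$ is a cover in $W/W_Q$, its projection $\pi_{P_i}(\theta)\geq \pi_{P_i}(\phi)$ is either an equality or itself a covering in $W/W_{P_i}$. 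In the equality case $\pi_i$ sends the divisor onto all of $\hat X_{\pi_{P_i}(\theta)}$ and the factor does not vanish, contributing $0$. In the covering case, the single-projective bond formula from~\cite{seshstratandschubvar} applied to the stratification on $X_{\pi_{P_i}(\theta)}\subseteq \PP(V(\lambda_i)_{\tau_i})$ gives the value $|\langle\phi(\lambda_i),\beta^\vee\rangle|$ for the vanishing of $\ell_{\pi_{P_i}(\theta)}$ along $\hat X_{\pi_{P_i}(\phi)}$, and I would transfer this back through $\pi_i$ using smoothness in codimension one of the Schubert varieties together with the explicit parametrization of the smooth locus of $\hat X_{\pi_{Q_I}(\phi)}$ by root subgroups $\varepsilon_\gamma$, exactly as in~\cite[Prop.~A.6]{seshstratandschubvar}.

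Summing over $i\in\underline I$, I would then observe that each pairing $\langle\phi(\lambda_i),\beta^\vee\rangle$ equals $\langle\lambda_i,\min_B(\phi)^{-1}(\beta)^\vee\rangle\geq 0$, because $\lambda_i$ is dominant and $\min_B(\phi)^{-1}(\beta)$ is a positive root (which is forced by the length condition $\ell(s_\beta\min_B(\phi))>\ell(\min_B(\phi))$ defining the cover). Hence the absolute values combine additively,
\[
\sum_{i\in\underline I}\bigl|\langle\phi(\lambda_i),\beta^\vee\rangle\bigr| \;=\; \Bigl\langle\phi\Bigl(\sum_{i\in\underline I}\lambda_i\Bigr),\beta^\vee\Bigr\rangle \;=\; \bigl|\langle\phi(\lambda_I),\beta^\vee\rangle\bigr|,
\]
which is the claimed formula. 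The main obstacle I anticipate is the ``transfer through $\pi_i$'' step when $\pi_i$ has positive-dimensional fibres; a related subtlety is to verify that the root appearing in the single-projective formula for $W/W_{P_i}$ is precisely the root $\beta$ of the original $W/W_Q$-cover, which I would do by writing $\min_B(\phi)=\min_B(\pi_{P_i}(\phi))\cdot u$ with $u\in W_{P_i}$ and checking that the identity $s_\beta\min_B(\pi_{P_i}(\phi))=\min_B(\pi_{P_i}(\theta))$ persists whenever the projection remains a strict cover.
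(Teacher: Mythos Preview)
Your proposal is correct. For $I\neq J$ it coincides with the paper's argument. For $I=J$ the routes differ: the paper parametrizes an open neighbourhood of the divisor $X_{\pi_{Q_I}(\phi)}$ in $X_{\pi_{Q_I}(\theta)}$ by $U_\phi\times U_{-\beta}$ (via \cite[Lemma~3.3]{seshstratandschubvar}), then computes the action of $U_{-\beta}$ on a weight vector of weight $\phi(\lambda_I)$ in $V(\lambda_I)_\tau$ directly; the extremal function, viewed as a single linear form of weight $-\theta(\lambda_I)$, picks out the coefficient of $t_\beta^{\,|\langle\phi(\lambda_I),\beta^\vee\rangle|}$. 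You instead decompose $f_{(\theta,I)}=\prod_{i\in\underline I}\ell_{\pi_{P_i}(\theta)}$ and reduce each factor to the single-projective bond. Your sign analysis (positivity of $\min_B(\phi)^{-1}(\beta)$) and the identification of the root governing the $W/W_{P_i}$-cover with the same $\beta$ both go through; the latter follows since $(\phi^{P_i})^{-1}(\beta)\notin\Phi_{P_i}$ forces $s_\beta\phi^{P_i}\in W^{P_i}$ with length increased by one. The remaining step---that the vanishing order of $\ell_{\pi_{P_i}(\theta)}$ along $\hat X_{\pi_{Q_I}(\phi)}$ equals its order along $\hat X_{\pi_{P_i}(\phi)}$---amounts to checking that $\pi_i$ is unramified at the generic point of the divisor, and your proposed verification by root-subgroup parametrization is essentially the paper's computation carried out factor by factor. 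So the approaches converge at the technical core; the paper's is slightly more economical because it treats the weight $\lambda_I$ as a whole rather than summing the contributions of the $\lambda_i$.
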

\begin{proof}
    The case $I \neq J$ is covered by Lemma~\ref{lem:projective_covering_relation}. Now let $I = J$ and $\underline d \in \N_0^m$ be the sum of all vectors $e_i$ for $i \in I$. We fix a weight vector $v_I$ in the Demazure module $V(\underline d \cdot \underline \lambda)_\tau$ of weight $\phi(\underline d \cdot \underline \lambda)$ and a weight vector $v_i \in V(\lambda_i)_{\tau_i}$ of weight $\phi(\lambda_i)$ for each $i \in I$. Our proof relies on \cite[Lemma 3.3]{seshstratandschubvar}: It states that the map
    \begin{align*}
        \overline f: U_\phi \times U_{-\beta} \to \PP(V(\underline d \cdot \underline \lambda)_\tau), \quad (u, v) \mapsto uv \cdot [v_I],
    \end{align*}
    is an isomorphism onto an open subvariety of $X_\theta \subseteq \PP(V(\underline d \cdot \underline \lambda)_\tau)$, where $U_\phi \subseteq G$ is a direct product of root subgroups. The explicit construction of $U_\phi$ does not matter for this proof. It only matters the fact that it is generated by root subgroups to positive roots. For each $i \in I$ we also have a well defined morphism $f_i: U_\phi \times U_{-\beta} \to \PP(V(\lambda_i)_{\tau_i})$, $(u, v) \mapsto uv \cdot [v_i]$. Together, they induce the morphism
    \begin{align*}
        f: U_\phi \times U_{-\beta} \to \prod_{i \in I} \PP(V(\lambda_i)_{\tau_i}).
    \end{align*}
    Since all maps in (\ref{eq:diagram_graded_iso}) are $B$-equivariant, we have the commuting diagram
    \begin{equation*}
        \begin{tikzcd}
        U_\phi \times U_{-\beta} \arrow[r, "f"] \arrow[d, "\overline{f}", swap, hook] & \prod_{i \in I} \PP(V(\lambda_i)_{\tau_i}) \arrow[d, hook] \\
        \PP(V(\underline d \cdot \underline \lambda)_\tau) \arrow[r, hook] & \PP(\bigotimes_{i \in I} \mkern1mu V(\lambda_i)_{\tau_i})
        \end{tikzcd}
    \end{equation*}
    It thus follows that $f$ is an isomorphism onto an open subvariety of $X_\theta \subseteq \prod_{i \in I} \PP(V(\lambda_i)_{\tau_i})$ as well. In this subvariety the divisor $\hat X_{(\phi, I)}$ of $\hat X_{(\theta, I)}$ is given by the subset $U_\phi \times \set{1}$.
    
    As the extremal function $f_{(\theta, I)}$ can be seen as a linear function on $\bigotimes_{i \in \underline I} V(\lambda_i)_{\tau_i}$, we have to project to the smaller index set $\underline I \subseteq I$. Each weight vector $v_{\underline I} \in V(\lambda_I)_\tau$ of weight $\phi(\lambda_I)$ defines a morphism
    \begin{align*}
        f_{\underline I}\mkern-3mu : \mkern2mu U_\phi \times U_{-\beta} \to \PP(V(\lambda_I)_\tau), \quad (u,v) \mapsto uv \cdot [v_{\underline I}]
    \end{align*}
    and the following diagram commutes:
    \begin{equation*}
        \begin{tikzcd}
        U_\phi \times U_{-\beta} \arrow[r, "f", hook] \arrow[dr, "f_{\underline I}", swap] & \prod_{i \in I} \PP(V(\lambda_i)_{\tau_i}) \arrow[r, two heads] & \prod_{i \in \underline I} \PP(V(\lambda_i)_{\tau_i}) \arrow[d, hook] \\
        & \PP(V(\lambda_I)_\tau) \arrow[r, hook] & \PP(\bigotimes_{i \in \underline I} \mkern1mu V(\lambda_i)_{\tau_i})
        \end{tikzcd}
    \end{equation*}
    We choose a parametrization of the affine spaces $U_\phi$ and $U_{-\beta}$ by parameters $t_\gamma \in \K$ for $\gamma \in \Phi_\phi^+$ and $t_\beta \in \K$ respectively. The action of the root subgroup $U_{-\beta} \cong \mathbb G_{\mathrm{a}}$ on the weight vector $v_{\underline I} \in V(e_I \cdot \underline \lambda)_\tau$ is given by the polynomial
    \begin{align*}
        t_\beta \cdot v_{\underline I} = v_{\underline I} + t_\beta w_1 + t_\beta^2 w_2 + \dots + t_\beta^b w_b,
    \end{align*}
    where $b = \vert \langle \phi(\lambda_I), \beta^\vee \rangle \vert$ and $w_i$ is a weight vector in $V(\lambda_I)_\tau$ of weight $\phi(\lambda_I) - i \beta$ for all $i = 1, \dots, s$ (this was discussed in the proof of Proposition 27.2 in Humphreys' book \cite{humphreys2012linear}). Using the coordinates $t_\gamma$ and $t_\beta$, the elements in the image of $f_{\underline I}$ are of the form
    \begin{align}
        \label{eq:parametrization_bond}
        [t_\beta^b w_b + \text{sum of weight vectors in $V(\lambda_I)_{\tau}$ of greater weights than $\theta(\lambda_I)$}],
    \end{align}
    because $U_\phi$ is unipotent and generated by positive root subgroups. By the construction of the extremal function $f_{(\theta, I)}$, it is a dual vector to the extremal weight space in $V(\lambda_I)_\tau$ of weight $\theta(\lambda_I) = s_\beta(\phi(\lambda_I)) = \phi(\lambda_I) - b \beta$. Applying this to the elements in~(\ref{eq:parametrization_bond}) implies that the vanishing multiplicity of $f_{(\theta, I)}$ at the divisor $\hat X_{(\phi, I)}$ is equal to $b$.
\end{proof}

\section{The LS-fan of monoids}
\label{sec:LS-fan}

From now on we fix a multiprojective Seshadri stratification on a Schubert variety $X_\tau$ as in Theorem~\ref{thm:stratification}. Recall that the defining chain poset $\DCP{}$ plays the role of the underlying poset $A$ of this stratification. In particular, we assume that the index poset $\mathcal I$ chosen in Section~\ref{sec:choices} is \nice{\tau}. We show that the stratification is balanced and of LS-type and the elements in the fan of monoids $\Gamma$ correspond bijectively to $\tau$-standard LS-tableaux of type $(\underline\lambda, \mathcal I)$.

Recall that a Seshadri stratification on a projective variety $X \subseteq \PP(V)$ is of \textit{LS-type} (first defined in~\cite{seshstratnormal}), if all extremal functions are linear and if each monoid $\Gamma_{\mathfrak C}$ to a maximal chain $\mathfrak C = \set{p_r > \dots > p_0}$ coincides with the \textit{LS-monoid} $\mathrm{LS}_{\mathfrak C} = \mathrm{LS}_{\mathfrak C} \cap \Q_{\geq 0}^{\mathfrak C}$ of $\mathfrak C$, \ie the intersection of the \textit{LS-lattice}
\begin{align*}
    \mathrm{LS}_{\mathfrak C} = \Set{ \sum_{i=0}^s a_i e_{p_i} \in \Q^{\mathfrak C} \ \middle\vert \ b_{p_i, p_{i-1}} (a_i + \dots + a_s) \in \Z \ \forall i \in [s], a_0 + \dots + a_s \in \Z}.
    \end{align*}
of $\mathfrak C$ and the positive orthant $\Q_{\geq 0}^{\mathfrak C}$. These stratifications are named after their connection to LS-paths and LS-algebras (cf. \cite{chirivi2000}, \cite{seshstratLS}) as explained in~\cite[Section 3.4]{seshstratnormal}.

Our multiprojective stratification on $X_\tau$ was built by gluing the Seshadri stratifications on the Schubert varieties $X_{\tau_I} \subseteq \PP(V(\lambda_I)_{\tau_I})$ for $I \in \mathcal I$. The disjoint union of their underlying posets
\begin{align*}
    D(\lambda_I, \tau_I) = \set{\theta \in W/W_{P_I} \mid \theta \leq \tau_I}
\end{align*}
forms the defining chain poset $\DCP{}$. It was shown in both~\cite{seshstratandschubvar} and \cite{seshstratgeometric} that the stratification on $X_{\tau_I}$ is of LS-type. We should therefore expect that the multiprojective stratification is of LS-type as well.

\begin{definition}[{\cite[Definition 10.2]{ownarticle}}]
    A Seshadri stratification on a multiprojective variety $X \subseteq \prod_{i=1}^m \PP(V_i)$ is called \textbf{of LS-type}, if the following conditions are fulfilled:
    \begin{enumerate}[label=(\alph{enumi})]
        \item \label{itm:LS_a} Each component of the multidegree $\deg f_p \in \N_0^m$ is at most $1$ for all $p \in A$;
        \item \label{itm:LS_b} if $I_p = I_q$ for any two elements $p, q \in A$, then $\deg f_p = \deg f_q$;
        \item \label{itm:LS_c} the fan of monoids $\Gamma$ is equal to the union $\bigcup_{\mathfrak C} \mathrm{LS}_{\mathfrak C}^+$ over all maximal chains $\mathfrak C$ in $A$.
    \end{enumerate}
\end{definition}

Clearly, the first two conditions are fulfilled for the multiprojective stratification on $X_\tau$. It remains to prove the third statement. For this reason we fix the following notation.
\begin{itemize}
    \item For each index set $I \in \mathcal I$ and maximal chain $\mathfrak C$ in $D(\lambda_I, \tau_I)$ we write $\mathrm{LS}_{\mathfrak C, \lambda_I}$ for the associated LS-lattice and $\mathrm{LS}_{\mathfrak C, \lambda_I}^+ = \mathrm{LS}_{\mathfrak C, \lambda_I} \cap \Q_{\geq 0}^{\mathfrak C}$ for the LS-monoid. The fan of monoids of the stratification on $X_{\tau_I} \subseteq \PP(V(\lambda_I)_{\tau_I})$ shall be denoted by $\mathrm{LS}_{\lambda_I}^+$.
    \item For every maximal chain $\mathfrak C$ in $\DCP{}$ let $\mathrm{LS}_{\mathfrak C, \underline\lambda}$ be the associated LS-lattice and let $\mathrm{LS}_{\mathfrak C, \underline\lambda}^+ = \mathrm{LS}_{\mathfrak C, \underline\lambda} \cap \Q^{\mathfrak C}_{\geq 0}$ denote its LS-monoid. The set-theoretic union
    \begin{align*}
        \mathrm{LS}_{\underline\lambda}^+ = \bigcup_{\mathfrak C} \mkern2mu \mathrm{LS}_{\mathfrak C, \underline\lambda}^+
    \end{align*}
    over all maximal chains $\mathfrak C$ in $\DCP{}$ is called the \textit{Lakshmibai-Seshadri-fan of monoids} corresponding to to $\underline\lambda$, $\tau$ and $\mathcal I$.
\end{itemize}

Even though the LS-fans $\mathrm{LS}_{\underline\lambda}^+$ does not only depend on the sequence $\underline\lambda$, but also on the coset $\tau \in W/W_Q$ and the index poset $\mathcal I$, we usually do not index the LS-fan by $\tau$ and $\mathcal I$ to simplify the notation. Similarly, $\mathrm{LS}_{\lambda_I}^+$ also depends on the coset $\tau_I \in W/W_{P_I}$.

By identifying each poset $D(\lambda_I, \tau_I)$ with $D_I(\underline\lambda, \tau) = \set{(\theta, J) \in \DCP{} \mid J = I}$, we can uniquely decompose the elements $\underline a \in \mathrm{LS}_{\underline\lambda}^+$ into a sum $\underline a = \sum_{I \in \mathcal I} \underline a^{(I)}$ of elements $\underline a^{(I)} \in \mathrm{LS}_{\underline\lambda}^+ \cap \Q^{D(\lambda_I, \tau_I)}$. The definition of the LS-fan $\mathrm{LS}_{\underline\lambda}^+$ suggests that these elements $\underline a^{(I)}$ lie in the LS-fan $\mathrm{LS}_{\lambda_I}^+$.

\begin{lemma}
    \label{lem:LS_fan_decomposition}
    For each $I \in \mathcal I$ the intersection $\mathrm{LS}_{\underline\lambda}^+ \cap \Q^{D(\lambda_I, \tau_I)}$ coincides with the LS-fan $\mathrm{LS}_{\lambda_I}^+$ of the stratification on $X_{\tau_I} \subseteq \PP(V(\lambda_I)_{\tau_I})$.
\end{lemma}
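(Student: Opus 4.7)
The plan is to prove both inclusions by combining three ingredients: the isomorphism $\rho: D_I(\underline\lambda, \tau) \xrightarrow{\sim} D(\lambda_I, \tau_I)$ coming from \nice{\tau}-ness of $\mathcal I$; Lemma~\ref{lem:bonds}, which shows that every type-(a) covering relation inside $D_I(\underline\lambda, \tau)$ carries the same bond as the corresponding covering relation in the stratification on $X_{\tau_I} \subseteq \PP(V(\lambda_I)_{\tau_I})$ while every type-(b) covering relation of $\DCP{}$ has bond $1$; and the LS-type property of the stratification on $X_{\tau_I}$ established in~\cite{seshstratandschubvar}, which gives $\mathrm{LS}_{\lambda_I}^+ = \bigcup_{\mathfrak C'} \mathrm{LS}_{\mathfrak C', \lambda_I}^+$ over all maximal chains $\mathfrak C'$ of $D(\lambda_I, \tau_I)$.

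For the inclusion $\mathrm{LS}_{\lambda_I}^+ \subseteq \mathrm{LS}_{\underline\lambda}^+ \cap \Q^{D(\lambda_I, \tau_I)}$, I would take $\underline a \in \mathrm{LS}_{\mathfrak C', \lambda_I}^+$ and first extend $\rho^{-1}(\mathfrak C')$ to a maximal chain $\mathfrak C$ in $\DCP{}$: prepend a $P_{\mathcal I}$-chain from $(\tau, [m])$ down to the top $\rho^{-1}(\tau_I)$ of $\rho^{-1}(\mathfrak C')$, which exists by Lemma~\ref{lem:dcp_elements}, and append a chain of type-(b) covering relations from the bottom $\rho^{-1}(\id W_{P_I}) = (\id W_Q, I)$ down to a minimal element of $\DCP{}$. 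Viewing $\underline a$ in $\Q^{\mathfrak C}$ by inserting zero coefficients on the added indices, the defining conditions of $\mathrm{LS}_{\mathfrak C, \underline\lambda}$ fall into three cases: partial sums at indices above the $\mathfrak C'$-part vanish; at indices within $\mathfrak C'$ both the partial sum and the bond agree with the corresponding $\mathrm{LS}_{\mathfrak C', \lambda_I}$-condition; and at the index where $\mathfrak C$ leaves $D_I(\underline\lambda, \tau)$ as well as at all lower indices the partial sum coincides with the total sum $\sum_j a_j$, which is already an integer by the total-sum axiom of $\mathrm{LS}_{\mathfrak C', \lambda_I}$.

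For the reverse inclusion, given $\underline a \in \mathrm{LS}_{\mathfrak C, \underline\lambda}^+$ with $\supp \underline a \subseteq D(\lambda_I, \tau_I)$, the $\mathcal I$-component is non-increasing along $\mathfrak C$, so the intersection $\mathfrak C \cap D_I(\underline\lambda, \tau)$ is automatically a contiguous sub-chain containing $\supp \underline a$. I would extend it to a maximal chain $\mathfrak C'$ in $D(\lambda_I, \tau_I)$ by inserting elements between the top of $\mathfrak C \cap D_I$ and the maximum $\rho^{-1}(\tau_I)$, and between the bottom of $\mathfrak C \cap D_I$ and the minimum $(\id W_Q, I)$; the $\mathrm{LS}_{\mathfrak C', \lambda_I}$-conditions for $\underline a$ then follow from those of $\mathrm{LS}_{\mathfrak C, \underline\lambda}$ by the same three-case analysis.

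The main subtle point is the behaviour at the two boundary indices where $\mathfrak C$ enters and leaves the subposet $D_I(\underline\lambda, \tau)$. Because these are type-(b) covering relations with bond $1$, the $\mathrm{LS}_{\mathfrak C, \underline\lambda}$-condition at those indices only forces the partial sum itself to be an integer, not any bond-multiplied version; the entire argument hinges on this weaker integrality being exactly what is needed to validate the $\mathrm{LS}_{\mathfrak C', \lambda_I}$-conditions at the indices added during the extension, where the partial sums coincide with either the total sum of $\underline a$ or zero.
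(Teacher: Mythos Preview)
Your proposal is correct and follows the same approach as the paper: both exploit that bonds at type-(b) covering relations are $1$ (Lemma~\ref{lem:bonds}) and that bonds at type-(a) covering relations inside $D_I(\underline\lambda,\tau)$ match the corresponding bonds in $D(\lambda_I,\tau_I)$. The paper packages this as the observation that the LS-lattice $\mathrm{LS}_{\mathfrak C,\underline\lambda}$ decomposes as a direct product along the partition of $\mathfrak C$ into pieces with constant index set (because the transition bonds are $1$), whereas your three-case analysis is the explicit unpacking of that decomposition.

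One small inaccuracy worth fixing: the bottom element $\rho^{-1}(\id W_{P_I},I)$ of $D_I(\underline\lambda,\tau)$ is in general not $(\id W_Q,I)$---for instance in Figure~\ref{fig:dcp_example} one finds $\rho^{-1}(\id W_{P_{[2]}},[2]) = (2134,[2])$---so the extension of $\rho^{-1}(\mathfrak C')$ down to a minimal element of $\DCP{}$ cannot be done purely by type-(b) steps. This does not affect your argument, however, since at every index below $\rho^{-1}(\mathfrak C')$ in $\mathfrak C$ the partial sum equals the total sum of $\underline a$, which is an integer, and the LS condition there is satisfied regardless of the bond.
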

\begin{proof}
    We fix a maximal chain $\mathfrak C$ in $\DCP{}$ and let $I_1 \subseteq \dots \subseteq I_m = [m]$ be the associated maximal chain in $\mathcal I$. This induces a partition of $\mathfrak C$ into the subchains
    \begin{align*}
        \mathfrak C_j = \set{(\theta, I) \in \mathfrak C \mid I = I_j}
    \end{align*}
    for $j = 1, \dots, m$. If $p > q$ is a covering relation in $\mathfrak C$, such that $\set{p, q} \nsubseteq \mathfrak C_j$ for all $j$, then the bond $b_{p,q}$ is equal to $1$ by Lemma~\ref{lem:projective_covering_relation}. It is easy to see from the definition of an LS-lattice, that these lattices decompose along bonds of $1$, \ie we have
    \begin{align*}
        \mathrm{LS}_{\mathfrak C, \underline\lambda} = \prod_{j=1}^m \mathrm{LS}_{\mathfrak C, \underline\lambda} \cap \Q^{\mathfrak C_j}.
    \end{align*}
    
    For fixed $j \in [m]$ let $J = I_j$, $(\theta, J) \succ (\phi, J)$ be a covering relation in $\mathfrak C_j$ and $\beta$ be the unique positive root with $s_\beta \cdot \min_B(\phi) = \min_B(\theta)$. By Lemma~\ref{lem:bonds}, the corresponding covering relation $\pi_{P_J}(\theta) > \pi_{P_J}(\phi)$ in the poset $D(\lambda_J, \tau_J)$ has the bond $\vert \langle \phi(\lambda_J), \beta^\vee \rangle \vert$. Hence the bonds inside $\mathfrak C_j$ agree with the bonds in the corresponding chain in $D(\lambda_J, \tau_J)$. Since LS-lattices only depend on the bonds, the sublattice $\mathrm{LS}_{\mathfrak C, \underline\lambda} \cap \Q^{\mathfrak C_j} \subseteq \mathrm{LS}_{\mathfrak C, \underline\lambda}$ coincides with the LS-lattice $\mathrm{LS}_{\mathfrak C_j, \lambda_J} \subseteq \mathrm{LS}_{\lambda_J}$ to the chain $\mathfrak C_j \subseteq D(\lambda_J, \tau_J)$.

    For each $I \in \mathcal I$ it now follows the equality $\mathrm{LS}_{\underline\lambda}^+ \cap \Q^{D(\lambda_I, \tau_I)} = \mathrm{LS}_{\lambda_I}^+$: For every maximal chain $\mathfrak C$ in $\DCP{}$ we have
    \begin{align*}
        \mathrm{LS}_{\mathfrak C, \underline\lambda}^+ \cap \Q^{D(\lambda_I, \tau_I)} = \mathrm{LS}_{\mathfrak C, \underline\lambda} \cap \Q^{\DCP{}}_{\geq 0} \cap \Q^{D(\lambda_I, \tau_I)} = \mathrm{LS}_{\mathfrak C, \lambda_I} \cap \Q^{\mathfrak C_I}_{\geq 0} \subseteq \mathrm{LS}_{\lambda_I}^+
    \end{align*}
    for the chain $\mathfrak C_I = \mathfrak C \cap D(\lambda_I, \tau_I)$. Conversely, each maximal chain $\mathfrak C$ in $D(\lambda_I, \tau_I)$ is contained in a maximal chain $\mathfrak D$ in $\DCP{}$. Hence $\mathrm{LS}_{\mathfrak C, \lambda_I}^+$ is a subset of $\mathrm{LS}_{\mathfrak D, \underline\lambda}^+ \subseteq \mathrm{LS}_{\underline\lambda}^+$.
\end{proof}

Recall from~\cite{ownarticle} that the \textit{degree map} of a Seshadri stratification with underlying poset $A$ is defined as the $\Q$-linear map
\begin{align*}
    \mathrm{deg}: \Q^A \to \Q^m, \quad e_p \mapsto \deg f_p.
\end{align*}
The multidegree $\deg g$ of a multihomogeneous regular function $g \in R \setminus \set{0}$ always coincides with the degree $\deg \mathcal V(g)$ of its quasi-valuation (cf. Lemma 4.3 in \loccit{}). In particular, the degree of every element in the fan of monoids is an element of $\N_0^m$.

We can conclude that the degree of every element $\underline a \in \mathrm{LS}_{\underline\lambda}^+$ is contained in $\N_0^m$: By the above lemma, we can decompose $\underline a$ into a sum of elements $\underline a^{(I)} \in \mathrm{LS}_{\lambda_I}^+$ over $I \in \mathcal I$. Since $\mathrm{LS}_{\lambda_I}^+$ is the fan of monoids of the Seshadri stratification on $X_{\tau_I} \subseteq \PP(V(\lambda_I)_{\tau_I})$, we also have a degree map $\deg_I: \Q^{D(\lambda_I, \tau_I)} \to \Q$ for each index set $I \in \mathcal I$. The degree $\deg_I \underline a^{(I)}$ is a non-negative integer and $\deg \underline a = \sum_{I \in \mathcal I} \deg_I(\underline a^{(I)}) \mkern2mu e_I$ thus is an element of $\N_0^m$. 

This provides two partitions of the LS-fan of monoids $\mathrm{LS}_{\underline\lambda}^+$ into the subsets
\begin{align*}
    \mathrm{LS}_{\underline \lambda}^+(\underline d) = \set{\underline a \in \mathrm{LS}_{\underline \lambda}^+ \mid \deg \underline a = \underline d} \quad \text{and} \quad \mathrm{LS}_{\underline \lambda}^+(d) = \set{\underline a \in \mathrm{LS}_{\underline \lambda}^+ \mid \vert \mkern-2mu \deg \underline a \mkern1mu \vert = d}
\end{align*}
for $\underline d \in \N_0^m$ or $d \in \N_0$ respectively. Here $\vert \mkern-2mu \deg \underline a \mkern1mu \vert$ denotes the \textit{total degree}, \ie the sum of all entries in the degree vector $\deg \underline a$. Similarly, for each $I \in \mathcal I$ the LS-fan $\mathrm{LS}_{\lambda_I}^+$ is the disjoint union of the subsets
\begin{align*}
     \mathrm{LS}_{\lambda_I}^+(d) = \set{\underline a \in \mathrm{LS}_{\lambda_I}^+ \mid \deg \underline a = d}.
\end{align*}

Lemma~\ref{lem:LS_fan_decomposition} also implies that the LS-fan of monoids $\mathrm{LS}_{\underline\lambda}^+$ is completely determined by the decomposition over the index poset $\mathcal I$ and the defining chain poset:
\begin{align}
    \label{eq:LS_fan_decomposition}
    \mathrm{LS}_{\underline\lambda}^+ = \Set{\underline a = (\underline a^{(I)})_{I \in \mathcal I} \in \prod_{I \in \mathcal I} \mathrm{LS}_{\lambda_I}^+ \ \middle\vert \ \exists \, \text{max.\,chain} \; \mathfrak C \subseteq \DCP{}\mkern-3mu : \supp \underline a \subseteq \mathfrak C} \mkern-2mu .
\end{align}
This decomposition is also compatible with the degrees: For every $d \in \N_0$ and $I \in \mathcal I$ we have the inclusion $\mathrm{LS}_{\lambda_I}^+(d) \subseteq \mathrm{LS}_{\underline\lambda}^+(d e_I)$.

To each element $\underline a$ in the LS-fan $\mathrm{LS}_{\underline\lambda}^+$ one can associate a dominant weight. If $a_{(\theta, I)} \in \K$ is the coefficient in $\underline a$ of the basis vector $e_{(\theta, I)}$, then we define
\begin{align*}
    \wt \underline a = \sum_{(\theta, I) \in \DCP{}} a_{(\theta, I)} \cdot \theta(\lambda_I) \in \Lambda \otimes_\Z \Q.
\end{align*}
By its definition, this element only lies in the rational span of the weight lattice $\Lambda$, but it follows from the bijections in Proposition~\ref{prop:bijection_YT_fan} that $\wt \underline a$ is actually contained in $\Lambda^+$: Let $\underline d$ be the degree of $\underline a$ and $\underline\pi = (\pi_1, \dots, \pi_s) \in \LST$ be the unique LS-tableau with $\Theta_{\underline d}(\underline\pi) = \underline a$. By the construction of the maps $\Theta_{\underline d}$ and $\Theta_d^{(I)}$, the element $\wt \underline a$ is equal to the end point $(\pi_1 \ast \dots \ast \pi_s)(1)$, which is a dominant weight.

\begin{remark}
    \label{rem:LS-fan_induced_strat}
    The LS-fan of monoids is compatible with the induced stratification on $X_{(\theta, I)}$ for a fixed $(\theta, I) \in \DCP{}$ in the following sense: For each $J \in \mathcal I$ with $J \subseteq I$ we have an induced stratification on the Schubert variety $X_{\theta_J} \subseteq \PP(V(\lambda_J)_{\theta_J})$ to $\theta_J = \pi_{P_J}(\theta)$ with the underlying poset 
    \begin{align*}
        D(\lambda_J, \theta_J) = \set{\phi \in W/W_{P_J} \mid \phi \leq \theta_J}.
    \end{align*}
    We include additional indices to differentiate between the LS-fan $\mathrm{LS}_{\lambda_J, \tau_J}^+ \subseteq \Q^{D(\lambda_J, \tau_J)}$ associated to the stratification on $X_{\tau_J}$ and the LS-fan $\mathrm{LS}_{\lambda_J, \theta_J}^+$ to the stratification on $X_{\theta_J}$. Then $\mathrm{LS}_{\lambda_J, \theta_J}^+$ can naturally be seen as a subset of $\mathrm{LS}_{\lambda_J, \tau_J}^+$ via the linear map $\Q^{D(\lambda_J, \theta_J)} \hookrightarrow \Q^{D(\lambda_J, \tau_J)}$. An element $\underline a \in \mathrm{LS}_{\lambda_J, \tau_J}^+$ is contained in this subset, if and only if it is zero or the maximal element in its support is less or equal to $\theta_I$ in $D(\lambda_I, \tau_I)$. Analogously, the LS-fan
    \begin{align*}
        \mathrm{LS}_{\underline\lambda, \theta}^+ = \Set{\underline a \in \prod_{J \in \mathcal I \atop J \subseteq I} \mathrm{LS}_{\lambda_J, \theta_J}^+ \ \middle\vert \ \exists \, \text{max.\,chain} \; \mathfrak C \subseteq \DCP{}: (\theta, I) \in \mathfrak C, \mkern3mu \supp \underline a \subseteq \mathfrak C}
    \end{align*}
    to the stratification on $X_{(\theta, I)}$ can be seen as a subset of the LS-fan $\mathrm{LS}_{\underline\lambda, \tau}^+$ to the stratification on $X_\tau$ (see Remark~\ref{rem:restriction_of_DCP}). This subset contains exactly those elements, which are either zero or the maximal element in their support is less or equal to $(\theta, I) \in \DCP{}$.
\end{remark}

Analogous to the stratification from Example~\ref{ex:strat_type_A}, the fan of monoids $\mathrm{LS}_{\lambda_I}^+$ also has an interpretation in terms of LS-tableaux. For every $d \in \N_0$ let
\begin{align*}
    \B(d \lambda_I)_{\tau_I} = \set{(\sigma_p, \dots, \sigma_1; 0, a_p, \dots, a_1 = 1) \in \B(d \lambda_I) \mid \sigma_p \leq \tau_I}.
\end{align*}
be the set of all LS-paths in $\B(d \lambda)$, such that their initial direction is bounded by $\tau_I$. Using the language of LS-tableaux: This can be viewed as the set of all $\tau_I$-standard LS-tableaux of shape $(d \lambda_I)$. It was proved in~\cite[Proposition A.6]{seshstratandschubvar} that the map
\begin{align}
    \label{eq:bij_LS_paths_fan}
    \Theta^{(I)}_d: \mathbb{B}(d \lambda_I)_{\tau_I} \to \mathrm{LS}_{\lambda_I}^+(d), \quad (\sigma_p, \dots, \sigma_1; 0, a_p, \dots, a_1 = 1) \mapsto \sum_{j=1}^p (a_j - a_{j+1}) d e_{\sigma_j}
\end{align}
is a bijection, where $a_{p+1} \coloneqq 0$. 

It is known that every LS-path $\pi \in \mathbb B(d\lambda_I)_{\tau_I}$ can be uniquely decomposed (up to a reparametrization) into a concatenation $\pi = \pi_1 \ast \dots \ast \pi_d$ of LS-paths $\pi_k \in \mathbb B(\lambda_I)_{\tau_I}$ with $\min \supp \pi_k \geq \max \supp \pi_{k+1}$ for all $k = 1, \dots, d-1$. The support $\supp \pi$ of an LS-path $\pi = (\sigma_p, \dots, \sigma_1; 0, a_p, \dots, a_1 = 1)$ is the set $\supp \pi = \set{\sigma_p, \dots, \sigma_0}$. The bijections $\Theta^{(I)}_d$ translate this decomposition to the fan of monoids (see \cite[Proposition A.5, Lemma A.8]{seshstratandschubvar}): Every element $\underline a \in \mathrm{LS}_{\lambda_I}^+(d)$ can be uniquely decomposed into a sum $\underline a = \underline a^1 + \dots + \underline a^d$ of elements $\underline a^k \in \mathrm{LS}_{\lambda_I}^+(1)$, such that $\min \supp \underline a^{k} \geq \max \supp \underline a^{k+1}$ holds in $D(\lambda_I, \tau_I)$ for all $k = 1, \dots, d-1$. This property is passed to the LS-fan $\mathrm{LS}_{\underline \lambda}^+$ as well.

\begin{lemma}
    \label{lem:unique_decomposition}
    Every element $\underline a \in \mathrm{LS}_{\underline \lambda}^+$ has a unique decomposition $\underline a = \underline a^1 + \dots + \underline a^s$ into elements $\underline a^k \in \bigcup_{I \in \mathcal I} \mathrm{LS}_{\lambda_I}^+(1) \subseteq \mathrm{LS}_{\underline\lambda}^+$, such that $\min \supp \underline a^{k} \geq \max \supp \underline a^{k+1}$ holds for all $j = 1, \dots, s-1$.
\end{lemma}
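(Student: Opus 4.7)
The plan is to reduce the statement to the already established decomposition inside each LS-fan $\mathrm{LS}_{\lambda_I}^+$ for a single dominant weight, by exploiting the decomposition (\ref{eq:LS_fan_decomposition}) of $\mathrm{LS}_{\underline\lambda}^+$ along the index poset $\mathcal I$.

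First I would fix $\underline a \in \mathrm{LS}_{\underline\lambda}^+$ and choose a maximal chain $\mathfrak C \subseteq \DCP{}$ containing $\supp \underline a$. Let $I_1 \subsetneq \dots \subsetneq I_m = [m]$ be the induced maximal chain in $\mathcal I$; then the chain $\mathfrak C$ visits the subposets $D_{I_m}, D_{I_{m-1}}, \dots, D_{I_1}$ in this order from top to bottom. By Lemma~\ref{lem:LS_fan_decomposition} and (\ref{eq:LS_fan_decomposition}), I can uniquely split $\underline a = \sum_{j=1}^m \underline a^{(I_j)}$ with $\underline a^{(I_j)} \in \mathrm{LS}_{\lambda_{I_j}}^+$ supported in $\mathfrak C \cap D_{I_j}(\underline\lambda, \tau)$. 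By the known bijection (\ref{eq:bij_LS_paths_fan}) and the subsequent remark about concatenations of LS-paths, each non-zero $\underline a^{(I_j)}$ has a unique decomposition $\underline a^{(I_j)} = \underline a^{(I_j),1} + \dots + \underline a^{(I_j),d_{I_j}}$ into elements of $\mathrm{LS}_{\lambda_{I_j}}^+(1)$ with $\min \supp \underline a^{(I_j),k} \geq \max \supp \underline a^{(I_j),k+1}$ in $D(\lambda_{I_j}, \tau_{I_j})$, and hence in $\DCP{}$ as well.

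Next I concatenate these blocks in order of decreasing $I_j$: first the pieces from $I_m$, then those from $I_{m-1}$, and so on down to $I_1$. This produces the desired $\underline a = \underline a^1 + \dots + \underline a^s$. The decreasing support condition needs to be checked only at the interfaces $\underline a^{(I_j),d_{I_j}}$ to $\underline a^{(I_{j-1}),1}$; here it is automatic, because every element of $\mathfrak C \cap D_{I_j}$ is strictly greater in $\DCP{}$ than every element of $\mathfrak C \cap D_{I_{j-1}}$ (as $I_j \supsetneq I_{j-1}$ and $\mathfrak C$ is a chain).

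For uniqueness, suppose $\underline a = \underline b^1 + \dots + \underline b^t$ is any decomposition of the required form, with $\underline b^k \in \mathrm{LS}_{\lambda_{J_k}}^+(1)$. Since all supports are totally ordered by the decreasing condition, they jointly lie in some maximal chain; this chain can be taken to be $\mathfrak C$, so each $J_k$ belongs to $\{I_1, \dots, I_m\}$. The decreasing support condition combined with the fact that elements of $D_{I_j}$ in $\mathfrak C$ dominate those of $D_{I_{j'}}$ for $j > j'$ forces the $J_k$'s to appear in weakly decreasing order $I_m, I_m, \dots, I_{m-1}, \dots, I_1$. Grouping the $\underline b^k$ by their index set $J_k = I_j$ yields a decomposition of $\underline a^{(I_j)}$ into elements of $\mathrm{LS}_{\lambda_{I_j}}^+(1)$ satisfying the decreasing support condition inside $D(\lambda_{I_j}, \tau_{I_j})$, so the uniqueness inside each fan $\mathrm{LS}_{\lambda_{I_j}}^+$ finishes the argument.

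The only subtlety I expect is the cross-block verification of the support condition, which hinges on the order-theoretic structure of $\DCP{}$ described in Section~\ref{sec:dcp}; once one observes that a maximal chain of $\DCP{}$ projects to a maximal chain of $\mathcal I$ and separates the different slices $D_{I_j}(\underline\lambda, \tau)$ cleanly, everything else is a direct reduction to the single-weight case already handled in \cite{seshstratandschubvar}.
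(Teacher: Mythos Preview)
Your proof is correct and follows essentially the same approach as the paper: decompose $\underline a$ along the index poset $\mathcal I$ into components $\underline a^{(I)} \in \mathrm{LS}_{\lambda_I}^+$, observe that consecutive non-zero components satisfy the support condition because their supports lie in a common chain of $\DCP{}$, and then invoke the known single-weight decomposition from \cite{seshstratandschubvar}. The paper's version is terser (three sentences) and phrases it in terms of the chain of non-zero components rather than a fixed maximal chain $\mathfrak C$, but the content is the same; your explicit treatment of uniqueness is a bit more detailed than what the paper writes out.
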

\begin{proof}
    Let $I_1 \supsetneq \dots \supsetneq I_j$ be the (unique) chain in $\mathcal I$ containing exactly those $I \in \mathcal I$ where the component $\underline a^{(I)}$ of $\underline a$ is non-zero. Then we have $\min \supp \underline a_{I_k} \geq \max \supp \underline a_{I_{k+1}}$ for all $k = 1, \dots, r-1$. Therefore the existence and uniqueness of the claimed decomposition of $\underline a$ follows from the decomposition in each fan $\mathrm{LS}_{\lambda_I}^+$.
\end{proof}

\begin{proposition}
    \label{prop:bijection_YT_fan}
    Let $\LST$ be the set of all $\tau$-standard LS-tableaux of type $(\underline\lambda, \mathcal I)$ and degree $\underline d \in \N_0^m$. Then the map
    \begin{align*}
        \Theta_{\underline d}: \LST \to \mathrm{LS}_{\underline \lambda}^+(\underline d), \quad (\pi_1, \dots, \pi_s) \mapsto \Theta^{(I_1)}_{1}(\pi_1) + \dots + \Theta_1^{(I_s)}(\pi_s)
    \end{align*}
    is a bijection, where $(\pi_1, \dots, \pi_s)$ is of shape $(\lambda_{I_1}, \dots, \lambda_{I_s})$ for a weakly decreasing sequence $I_1 \supseteq \dots \supseteq I_s$ in $\mathcal I$.
\end{proposition}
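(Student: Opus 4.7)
The plan is to assemble $\Theta_{\underline d}$ from the single-shape bijections $\Theta_1^{(I)}$ of~(\ref{eq:bij_LS_paths_fan}) and to read the summability constraints for the LS-fan from~(\ref{eq:LS_fan_decomposition}). The \nice{\tau} assumption on $\mathcal I$, which by Corollary~\ref{cor:nice_I} makes $\rho: \DCP{} \to \ulWlambda$ an isomorphism of posets, will serve as a dictionary between chains in $\ulWlambda$ arising from defining chains of an LS-tableau and maximal chains in $\DCP{}$ containing the supports of the corresponding elements of the LS-fan.

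For well-definedness, fix a $\tau$-standard $\underline\pi = (\pi_1, \dots, \pi_s) \in \LST$ with columns $\pi_k$ having sequence $\sigma_{p_k}^{(k)} > \dots > \sigma_1^{(k)}$ in $W/W_{P_{I_k}}$. Projecting any defining chain to $W/W_{P_{I_k}}$ forces $\sigma_{p_k}^{(k)} \leq \tau_{I_k}$, so each $\pi_k$ lies in $\mathbb B(\lambda_{I_k})_{\tau_{I_k}}$ and the summand $\Theta_1^{(I_k)}(\pi_k) \in \mathrm{LS}_{\lambda_{I_k}}^+(1)$ is defined. The $\tau$-standardness chain~(\ref{eq:tableau_chain}) in $\ulWlambda$ lifts under $\rho^{-1}$ to a chain in $\DCP{}$, which extends to some maximal chain $\mathfrak C$; since all summand supports then lie in $\mathfrak C$, the sum belongs to $\mathrm{LS}_{\mathfrak C, \underline\lambda}^+$ by~(\ref{eq:LS_fan_decomposition}), and the total degree $\sum_k e_{I_k}$ equals $\underline d$.

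For injectivity and surjectivity I will argue jointly via Lemma~\ref{lem:unique_decomposition}. The summands $\underline a^k := \Theta_1^{(I_k)}(\pi_k)$ satisfy $\min \supp \underline a^k \geq \max \supp \underline a^{k+1}$ in $\DCP{}$, because their $\rho$-images $(\sigma_1^{(k)}, I_k)$ and $(\sigma_{p_{k+1}}^{(k+1)}, I_{k+1})$ are comparable in $\ulWlambda$ via~(\ref{eq:tableau_chain}) and $\rho$ is an isomorphism. Hence $\Theta_{\underline d}(\underline\pi)$ is already in the normal form of Lemma~\ref{lem:unique_decomposition}, and the $\pi_k$ can be recovered uniquely from $\Theta_{\underline d}(\underline\pi)$ by bijectivity of each $\Theta_1^{(I_k)}$. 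Conversely, given $\underline a \in \mathrm{LS}_{\underline\lambda}^+(\underline d)$, Lemma~\ref{lem:unique_decomposition} yields a unique ordered decomposition $\underline a = \underline a^1 + \dots + \underline a^s$ with $\underline a^k \in \mathrm{LS}_{\lambda_{I_k}}^+(1)$; setting $\pi_k := (\Theta_1^{(I_k)})^{-1}(\underline a^k)$ produces a candidate LS-tableau $\underline\pi$ of degree $\underline d$, with $I_1 \supseteq \dots \supseteq I_s$ forced by the ordering of supports in $\DCP{}$.

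The main obstacle will be to show $\tau$-standardness of the resulting $\underline\pi$ in the surjectivity step. The joint support of $\underline a$ lies in a single maximal chain of $\DCP{}$; its image under $\rho$ is a chain in $\ulWlambda$ realizing exactly the inequalities in~(\ref{eq:tableau_chain}), so $\underline\pi$ is at least weakly $\tau$-standard. Here the \nice{\tau} hypothesis on $\mathcal I$ becomes essential: by Proposition~\ref{prop:nice_I_vs_weakly_standard} weakly $\tau$-standard coincides with $\tau$-standard, which closes the argument.
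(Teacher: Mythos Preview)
Your proof is correct and follows the same route as the paper: verify well-definedness, then build the inverse via the unique decomposition of Lemma~\ref{lem:unique_decomposition}. Two small points are worth noting. First, that $\rho$ is a poset isomorphism is the \emph{definition} of a \nice{\tau} index poset, not a consequence of Corollary~\ref{cor:nice_I}, which is stated only for $\tau = w_0 W_Q$. Second, your detour through weak $\tau$-standardness and Proposition~\ref{prop:nice_I_vs_weakly_standard} in the surjectivity step is unnecessary: the maximal chain in $\DCP{} \subseteq \Wlambda$ containing $\supp \underline a$ already furnishes a defining chain directly, since its first components are lifts in $W/W_Q$ bounded by $\tau$ that project to the cosets $\sigma_j^{(k)}$ of the reconstructed LS-tableau. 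This is how the paper dispatches $\tau$-standardness in a single clause.
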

\begin{proof}
    By the bijections in~(\ref{eq:bij_LS_paths_fan}) and the $\tau$-standardness of the tableaux, the image of $\Theta_{\underline d}$ is indeed contained in $\mathrm{LS}_{\underline \lambda}^+(\underline d)$, so the map $\Theta_{\underline d}$ is well-defined.

    Now let $\underline a = (\underline a_I)_{I \in \mathcal I} \in \mathrm{LS}_{\underline \lambda}^+(\underline d)$ with the unique decomposition $\underline a = \underline a^1 + \dots + \underline a^s$ from Lemma~\ref{lem:unique_decomposition}. Every element $\underline a^k$ corresponds to an LS-path $\pi_k \in \B(\lambda_{I_k})$ for some $I_k \in \mathcal I$. The associated LS-tableau $\underline \pi^{\underline a} = (\pi_1, \dots, \pi_s)$ has degree $\sum_{k=1}^s e_{I_k} = \deg \underline a = \underline d$ and is $\tau$-standard, since the support of $\underline a$ lies in a maximal chain of $\DCP{}$. The resulting map
    \begin{align*}
        \Theta_{\underline d}^{-1}: \mathrm{LS}_{\underline \lambda}^+(\underline d) \to \LST, \quad \underline a \mapsto \underline \pi^{\underline a}
    \end{align*}
    is inverse to $\Theta_{\underline d}$: By construction, $\Theta_{\underline d}^{-1} \circ \Theta_{\underline d}$ is the identity, so $\Theta_{\underline d}$ is injective. Furthermore, every element $\underline a \in \mathrm{LS}_{\underline \lambda}^+(\underline d)$ is contained in its image, as $\Theta_{\underline d}(\underline \pi^{\underline a}) = \underline a$.
\end{proof}

\section{Filtrations of Demazure modules}

In order to show that the fan of monoids of the multiprojective stratification on $X_\tau$ coincides with the LS-fan $\mathrm{LS}_{\underline\lambda}^+$, we use a special set of functions called \textit{path vectors}, which forms a basis of the leaves $R_{\geq \underline a} / R_{> \underline a}$ associated to the quasi-valuation $\mathcal V$. In this section we summarize the definition of path vectors and some of their important properties (see the appendix in~\cite{seshstratandschubvar}). We adapt the notation to our specific situation and only consider dominant weights of the form $d \lambda_I$ for a fixed degree $d \in \N_0$ and index set $I \in \mathcal I$. There exists a canonical filtration on the Demazure module $V(d\lambda_I)_{\tau_I}$ and its dual space $V(d\lambda_I)_{\tau_I}^*$, both indexed by the set $\mathrm{LS}_{\lambda_I}^+(d)$. We refer to \loccit{} for an explicit construction of the vectors $v_{\underline b, \underline \sigma}$ we mention below and the existence of path vectors.

We define a relation $\rhd$ on $\mathrm{LS}_{\lambda_I}^+(d)$ in the following way: Let $\underline a, \underline b$ be two elements in $\mathrm{LS}_{\lambda_I}^+(d)$, $\sigma_1 > \dots > \sigma_p$ be the elements in $\supp \underline a$ and $\kappa_1 > \dots > \kappa_q$ be the elements in $\supp \underline b$. The relation $\rhd$ then is defined by
\begin{align*}
    \underline a \rhd \underline b \quad \Longleftrightarrow \quad &\sigma_1 > \kappa_1 \ \text{or} \ (\sigma_1 = \kappa_1 \ \text{and} \ a_{\sigma_1} > b_{\kappa_1}) \ \text{or} \\
    &(\sigma_1 = \kappa_1 \ \text{and} \ a_{\sigma_1} = b_{\kappa_1} \ \text{and} \ \sigma_2 > \kappa_2) \ \text{or} \\
    &(\sigma_1 = \kappa_1 \ \text{and} \ a_{\sigma_1} = b_{\kappa_1} \ \text{and} \ \sigma_2 > \kappa_2 \ \text{and} \ a_{\sigma_2} = b_{\kappa_2}) \ \text{or} \ \dots.
\end{align*}
We write $\underline a \unrhd \underline b$ if $\underline a = \underline b$ or $\underline a \rhd \underline b$. This relation coincides with the definition from~\cite[Definition 6.1]{seshstratandschubvar}.

Recall that the quasi-valuation $\mathcal V$ of the stratification on $X_\tau$ depends on the choice of a total order $\geq^t$ on $\DCP{}$ linearizing the partial order $\succeq$. We also denote the associated lexicographic order on $\Q^{D(\underline\lambda, \tau)}$ by $\geq^t$. Note that the relation $\unrhd$ has the following property for all $\underline a, \underline b \in \mathrm{LS}_{\lambda_I}^+(d)$: If $\underline a \unrhd \underline b$, then we have $\underline a \geq^t \underline b$ for every possible choice of the total order $\geq^t$ on $\DCP{}$.

To each element $\underline a \in \mathrm{LS}_{\lambda_I}^+(d)$ and a reduced decomposition $\underline\sigma$ of the unique maximal element $\sigma$ in the support of $\underline a$ one can associate a vector $v_{\underline a, \underline \sigma} \in V(d\lambda_I)_{\tau_I}$ of weight $\wt \underline a$. As $\sigma$ is not an element of the Weyl group itself, a reduced decomposition of $\sigma$ is a decomposition $\min_B(\sigma) = s_{\alpha_{i_1}} \cdots s_{\alpha_{i_\ell}}$ into simple reflections with $\ell$ minimal. When fixing a reduced decomposition $\underline\sigma^{\underline a}$ of $\max \supp \underline a$ for each element $\underline a \in \mathrm{LS}_{\lambda_I}^+(d)$, then the set $\set{v_{\underline a, \underline \sigma^{\underline a}} \mid \underline a \in \mathrm{LS}_{\lambda_I}^+(d)}$ is a basis of the Demazure module $V(d\lambda_I)_{\tau_I}$. This basis does depend on the chosen reduced decompositions, but there is a canonical filtration on $V(d\lambda_I)_{\tau_I}$ via the subspaces
\begin{align*}
    V(d\lambda_I)_{\tau_I, \unlhd \underline a} &= \left\langle v_{\underline b, \underline \sigma} \ \middle\vert \ \text{$\underline b \in \mathrm{LS}_{\lambda_I}^+(d)$, $\underline a \unrhd \underline b$, $\underline\sigma$ reduced decomposition of $\operatorname{max} \supp \underline b$} \right\rangle_\K, \\
    V(d\lambda_I)_{\tau_I, \lhd \underline a} &= \left\langle v_{\underline b, \underline \sigma} \ \middle\vert \ \text{$\underline b \in \mathrm{LS}_{\lambda_I}^+(d)$, $\underline a \rhd \underline b$, $\underline\sigma$ reduced decomposition of $\operatorname{max} \supp \underline b$}  \right\rangle_\K.
\end{align*}
For each $\underline a \in \mathrm{LS}_{\lambda_I}^+(d)$ the subquotient $V(d\lambda_I)_{\tau_I, \unlhd \underline a} / V(d\lambda_I)_{\tau_I, \lhd \underline a}$ is one-dimensional.

The language of path vectors gives rise to a similar filtration on the dual space $V(d \lambda_I)_{\tau_I}^*$. 

\begin{definition}[{\cite[Definition 6.4]{seshstratandschubvar}}]
    A \textbf{path vector} to an element $\underline a \in \mathrm{LS}_{\lambda_I}^+(d)$ is a weight vector $p_{\underline a} \in V(d \lambda_I)_{\tau_I}^*$ of weight ($-\wt \underline a$), such that
    \begin{enumerate}[label=(\alph{enumi})]
        \item there exists a reduced decomposition $\underline \sigma$ of $\sigma = \max \supp \underline a$ with $p_{\underline a}(v_{\underline a, \underline \sigma}) = 1$;
        \item for each $\underline a' \in \mathrm{LS}_{\lambda_I}^+(d)$ and all reduced decompositions $\underline \sigma'$ of $\sigma' = \max \supp \underline a'$, $p_{\underline a}(v_{\underline a', \underline \sigma'}) \neq 0$ implies $\underline a' \unrhd \underline a$.
    \end{enumerate}
\end{definition}

In \cite{littelmann1998contracting}, Littelmann used quantum Frobenius splitting to associate a canonical function to every element $\underline a \in \mathrm{LS}_{\lambda_I}^+(d)$, which he called path vector. It satisfies the above conditions, hence the definition we use here is more general and there exists a path vector to each element in $\mathrm{LS}_{\lambda_I}^+(d)$.

Again, one obtains a basis of the dual module $V(d\lambda_I)_{\tau_I}^*$ by fixing a path vector to each element $\underline a \in \mathrm{LS}_{\lambda_I}^+(d)$ and the subspaces
\begin{align*}
    V(d\lambda_I)_{\tau_I, \unrhd \underline a}^* &= \left\langle p_{\underline b} \ \middle\vert \ \text{$p_{\underline b}$ path vector to some $\underline b \in \mathrm{LS}_{\lambda_I}^+(d)$ with $\underline b \unrhd \underline a$} \right\rangle_\K, \\
    V(d\lambda_I)_{\tau_I, \rhd \underline a}^* &= \left\langle p_{\underline b} \ \middle\vert \ \text{$p_{\underline b}$ path vector to some $\underline b \in \mathrm{LS}_{\lambda_I}^+(d)$ with $\underline b \rhd \underline a$} \right\rangle_\K.
\end{align*}
define a filtration on $V(d\lambda_I)_{\tau_I}^*$, such that the subquotient $V(d\lambda_I)_{\tau_I, \unrhd \underline a}^* / V(d\lambda_I)_{\tau_I, \rhd \underline a}^*$ is one-dimensional for each $\underline a \in \mathrm{LS}_{\lambda_I}^+(d)$. Any path vector $p_{\underline a}$ to $\underline a$ defines a non-zero element of this subquotient.

\section{The quasi-valuation of a path vector}

Throughout this section we use the notation from Remark~\ref{rem:LS-fan_induced_strat} for the LS-fans $\mathrm{LS}_{\lambda_I, \theta_I}^+ \subseteq \mathrm{LS}_{\lambda_I, \tau_I}^+$ of an induced stratification. By Lemma~\ref{lem:graded_components}, every path vector to an element $\underline a \in \mathrm{LS}_{\lambda_I}^+(d)$ can be seen as a multihomogeneous function in $R = \K[X_\tau]$ of degree $d e_I$. Their vanishing behaviour can be described combinatorially in the following way.

\begin{lemma}
    \label{lem:path_vectors_prop}
    A path vector $p_{\underline a}$ to an element $\underline a \in \mathrm{LS}_{\lambda_I}^+(1)$ vanishes identically on the multicone $\hat X_{(\phi, I)}$ to $(\phi, I) \in \DCP{}$, if and only if the unique maximal element $\theta$ in $\supp \underline a \subseteq D(\lambda_I, \tau_I)$ is not less or equal to $\phi_I = \pi_{P_I}(\phi)$.
\end{lemma}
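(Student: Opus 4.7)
The plan is to reduce the vanishing question to a linear-algebra statement about the Demazure module $V(\lambda_I)_{\phi_I} \subseteq V(\lambda_I)_{\tau_I}$, and then use the two defining properties of a path vector together with the fact, proved in \cite{seshstratandschubvar}, that the vectors $v_{\underline b, \underline\sigma^{\underline b}}$ form a basis of $V(\lambda_I)_{\tau_I}$ adapted to its Demazure sub-modules.

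First I would perform the reduction. The path vector $p_{\underline a}$ is multihomogeneous of degree $e_I$ and, under the identification of Lemma~\ref{lem:graded_components}, corresponds to an element of $V(\lambda_I)_{\tau_I}^*$. Since $\hat X_{(\phi,I)} \subseteq \prod_{i\in I} V(\lambda_i)_{\tau_i}$ is the multicone of the Schubert variety $X_{\pi_{Q_I}(\phi)}$ and its projection to the factor indexed by $\underline I$ has multihomogeneous coordinate ring $V(\lambda_I)_{\phi_I}^*$ in degree $e_I$, the function $p_{\underline a}$ vanishes identically on $\hat X_{(\phi,I)}$ if and only if the restriction $p_{\underline a}|_{V(\lambda_I)_{\phi_I}}$ is zero. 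In other words, I only need to understand the pairing of $p_{\underline a}$ with vectors inside the sub-Demazure module $V(\lambda_I)_{\phi_I}$.

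For the direction $\theta \not\le \phi_I \Rightarrow p_{\underline a}|_{\hat X_{(\phi,I)}} = 0$, I would invoke the fact that the subset of basis vectors
\[
    \set{ v_{\underline b, \underline\sigma^{\underline b}} \mid \underline b \in \mathrm{LS}_{\lambda_I, \phi_I}^+(1) } = \set{ v_{\underline b, \underline\sigma^{\underline b}} \mid \max \supp \underline b \leq \phi_I }
\]
is a basis of $V(\lambda_I)_{\phi_I}$ (compare the character formula for Demazure modules with the chosen bases of $V(\lambda_I)_{\tau_I}$ and $V(\lambda_I)_{\phi_I}$). The defining property of a path vector gives $p_{\underline a}(v_{\underline b, \underline\sigma}) \neq 0 \Rightarrow \underline b \unrhd \underline a$, and the relation $\unrhd$ immediately forces $\max \supp \underline b \geq \max \supp \underline a = \theta$. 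If $\theta \not\leq \phi_I$, no $\underline b$ in the index set above satisfies $\max \supp \underline b \geq \theta$, so $p_{\underline a}$ annihilates every vector of the chosen basis of $V(\lambda_I)_{\phi_I}$ and vanishes on the restriction.

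For the converse direction, assume $\theta \leq \phi_I$. I choose a reduced decomposition $\underline\sigma$ of $\theta$ realizing the normalization $p_{\underline a}(v_{\underline a, \underline\sigma}) = 1$. The vector $v_{\underline a, \underline\sigma}$ is obtained (as in \cite[Appendix]{seshstratandschubvar}) by applying root operators associated with the reduced decomposition $\underline\sigma$ to a highest weight vector, so it lies in the Demazure module $V(\lambda_I)_\theta$; since $\theta \leq \phi_I$ we have $V(\lambda_I)_\theta \subseteq V(\lambda_I)_{\phi_I}$. Hence the restriction $p_{\underline a}|_{V(\lambda_I)_{\phi_I}}$ is nonzero, and $p_{\underline a}$ does not vanish identically on $\hat X_{(\phi,I)}$. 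The main technical point to keep clean is the identification of the right slice of the basis with a basis of $V(\lambda_I)_{\phi_I}$; everything else is a bookkeeping of the path vector properties already recalled in the previous section.
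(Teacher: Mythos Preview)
Your proposal is correct and follows essentially the same route as the paper: reduce via Lemma~\ref{lem:graded_components} to the linear pairing of $p_{\underline a}$ with the Demazure submodule $V(\lambda_I)_{\phi_I}$, then use the defining property $p_{\underline a}(v_{\underline b,\underline\sigma})\neq 0 \Rightarrow \underline b \unrhd \underline a \Rightarrow \max\supp\underline b \geq \theta$ for one direction and the normalization $p_{\underline a}(v_{\underline a,\underline\sigma})=1$ together with $v_{\underline a,\underline\sigma}\in V(\lambda_I)_\theta\subseteq V(\lambda_I)_{\phi_I}$ for the other. The only cosmetic difference is that you invoke a basis of $V(\lambda_I)_{\phi_I}$ while the paper just uses that the $v_{\underline a',\underline\sigma'}$ span it (via the linear span of $\hat X_{\phi}$); either suffices.
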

\begin{proof}
    It suffices to prove this statement for the affine cone $\hat X_\phi \subseteq V(\lambda_I)_{\tau_I}$ instead of the multicone $\hat X_{(\phi, I)} \subseteq \prod_{i=1}^m V(\lambda_i)_{\tau_i}$. The same equivalence then also follows for the multicone via the diagram~(\ref{eq:diagram_graded_iso_2}) for $\underline d = e_I$.

    The Demazure module $V(\lambda_I)_\phi$ is equal to the linear span of the affine cone $\hat X_\phi$. Since the path vector $p_{\underline a}$ is linear, it vanishes identically on $\hat X_\phi$, if and only if it vanishes on every vector of the form $v_{\underline a', \underline\sigma'} \in V(\lambda_I)_\phi$ for $\underline a' \in \mathrm{LS}_{\lambda_I, \phi_I}^+(1)$ and a reduced decomposition $\underline\sigma'$ of $\sigma' = \max \supp \underline a'$.

    If $\theta \leq \phi_I$, then the vector $v_{\underline a, \underline\theta}$ is contained in $\mathrm{LS}_{\lambda_I, \phi_I}^+(1)$ for every reduced decomposition $\underline \theta$ of $\theta$. Hence $p_{\underline a}$ does not vanish on $\hat X_\phi$. Conversely, if $p_{\underline a}(v_{\underline a', \underline\sigma'}) \neq 0$ for some element $\underline a' \in \mathrm{LS}_{\lambda_I, \phi_I}^+(1)$, then we have $\underline a' \unrhd \underline a$ by the definition of a path vector, \ie the maximal element $\sigma'$ in $\supp \underline a'$ is larger or equal to $\theta$. But as $\underline a' \in \mathrm{LS}_{\lambda_I, \phi_I}^+$, it follows $\phi_I \geq \sigma' \geq \theta$.
\end{proof}

For each total order $\geq^t$ on $\DCP{}$ and every fixed element $(\theta, I) \in \DCP{}$ we have an induced total order on the underlying poset $\DCP{}_{\preceq (\theta, I)}$ of the stratification on $\hat X_{(\theta, I)}$. Let $\mathcal V_{(\theta, I)}$ denote the associated quasi-valuation on $\K[X_{(\theta, I)}]$. Path vectors are compatible with the induced stratification, which allows the use of inductive arguments.

\begin{lemma}
    \label{lem:path_vector_induced_strat}
    Let $p_{\underline a}$ be a path vector to an element $\underline a \in \mathrm{LS}_{\lambda_I}^+(1) \subseteq \mathrm{LS}_{\underline\lambda}^+(e_I)$ and $(\theta, I) = \max \supp \underline a \in \DCP{}$. Then the restriction of $p_{\underline a}$ to the multicone $\hat X_{(\theta, I)}$ is a path vector to the element $\underline a \in \mathrm{LS}_{\lambda_I, \theta_I}^+(1)$ and it holds 
    \begin{align*}
        \mathcal V(p_{\underline a}) = \mathcal V_{(\theta, I)}(p_{\underline a}\big|_{\hat X_{(\theta, I)}}).
    \end{align*}
\end{lemma}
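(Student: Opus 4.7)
The proof splits naturally into two parts: identifying the restriction as a path vector for the induced stratification, and then establishing the equality of quasi-valuations.

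For the first part, I would use that $\max \supp \underline a = \theta_I$ in $D(\lambda_I, \tau_I)$, so Lemma~\ref{lem:path_vectors_prop} immediately yields that $p_{\underline a}$ does not vanish on $\hat X_{(\theta, I)}$. Invoking Lemma~\ref{lem:graded_components} for the Schubert variety $X_{(\theta, I)}$, the restriction $p_{\underline a}|_{\hat X_{(\theta, I)}}$ is naturally a weight vector in $V(\lambda_I)^*_{\theta_I}$, a quotient of $V(\lambda_I)^*_{\tau_I}$, of the correct weight $-\wt\underline a$. The two defining properties of a path vector pass to the quotient: condition (a) persists because the witness vector $v_{\underline a, \underline\sigma}$ for a reduced decomposition $\underline\sigma$ of $\sigma = \theta_I$ already lies in the submodule $V(\lambda_I)_{\theta_I} \subseteq V(\lambda_I)_{\tau_I}$, and condition (b) is inherited because $\mathrm{LS}_{\lambda_I, \theta_I}^+(1) \subseteq \mathrm{LS}_{\lambda_I, \tau_I}^+(1)$, so the pairing constraints are strictly weaker after restriction.

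For the second part, the key reduction is Remark~\ref{rem:induced_strat}: for any maximal chain $\mathfrak C$ in $\DCP{}$ passing through $(\theta, I)$, the non-vanishing of $p_{\underline a}$ on $\hat X_{(\theta, I)}$ guarantees that $\mathcal V_{\mathfrak C}(p_{\underline a})$ has support inside $\mathfrak C \cap \DCP{}_{\preceq (\theta, I)}$ and coincides, after extension by zeros, with $\mathcal V_{\mathfrak C \cap \DCP{}_{\preceq (\theta, I)}}(p_{\underline a}|_{\hat X_{(\theta, I)}})$. Taking the minimum over all maximal chains through $(\theta, I)$ produces precisely $\mathcal V_{(\theta, I)}(p_{\underline a}|_{\hat X_{(\theta, I)}})$. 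This immediately gives the inequality $\mathcal V(p_{\underline a}) \leq^t \mathcal V_{(\theta, I)}(p_{\underline a}|_{\hat X_{(\theta, I)}})$, since the global minimum is bounded above by the minimum over a sub-family of chains.

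The hard part is proving the reverse inequality, which amounts to showing $\supp \mathcal V(p_{\underline a}) \subseteq \DCP{}_{\preceq (\theta, I)}$. My plan is to combine two ingredients. First, observe that $p_{\underline a}$ lies in $R_{e_I}$ and, by the same projection/factorization argument underlying Lemma~\ref{lem:path_vectors_prop}, vanishes on $\hat X_{(\phi, J)}$ unless $\underline I \subseteq J$ and $\pi_{P_I}(\phi) \geq \theta_I$; in particular the non-vanishing locus is compatible with descending into $\DCP{}_{\preceq (\theta, I)}$. Second, apply the inductive formula~(\ref{eq:inductive_def_quasi_val}) starting from $(\tau, [m])$: as long as the current element $p$ lies in the non-vanishing locus, every covered $q$ with $q \succeq (\theta, I)$ has $\nu_{p,q}(g) = 0$, so the induction can be guided through elements above $(\theta, I)$ down to $(\theta, I)$ itself. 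The main obstacle will be handling the tie-breaking rule: when several covered elements simultaneously achieve $\nu/b = 0$, the total order $\geq^t$ may select one outside $\DCP{}_{\succeq (\theta, I)}$. Here I expect to use the multidegree constraint $\deg \mathcal V(p_{\underline a}) = e_I$ together with Lemma~\ref{lem:projective_covering_relation} on the multidegrees of bonds across $\mathcal I$-covering relations to force the support of $\mathcal V(p_{\underline a})$ to eventually re-enter $\DCP{}_{\preceq (\theta, I)}$ and ultimately match the induced quasi-valuation position-by-position.
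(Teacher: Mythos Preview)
Your first part is fine and essentially unpacks what the paper cites from \cite{seshstratandschubvar}; nothing to add there.

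The second part has a genuine gap. You correctly identify the obstacle in the inductive-formula approach: when several covered elements $q$ simultaneously achieve $\nu_{p,q}(g)/b_{p,q}=0$, the tie-breaking rule picks the $\geq^t$-minimal one, and nothing forces this to lie in $\DCP{}_{\succeq(\theta,I)}$. Your proposed fix, that multidegree constraints will force the support to ``eventually re-enter $\DCP{}_{\preceq(\theta,I)}$'', does not work as stated. Once the inductive descent has stepped to an element $p$ incomparable to $(\theta,I)$, every subsequent element is $\prec p$ and need never become $\preceq(\theta,I)$; even if it did, any positive coefficient acquired along the way at a position outside $\DCP{}_{\preceq(\theta,I)}$ already destroys the containment $\supp\mathcal V(p_{\underline a})\subseteq\DCP{}_{\preceq(\theta,I)}$ you are trying to prove. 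The multidegree constraint $\deg\mathcal V(p_{\underline a})=e_I$ only restricts the maximal chain \emph{globally}, not the local tie-breaking choices.

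The paper avoids the inductive formula entirely. Instead of analysing the particular chain on which the minimum is attained, it proves directly that $\mathcal V_{\mathfrak C}(p_{\underline a})\geq^t\mathcal V_{(\theta,I)}(p_{\underline a}|_{\hat X_{(\theta,I)}})$ for \emph{every} maximal chain $\mathfrak C$. In a given $\mathfrak C$, let $(\phi,J)$ be the minimal element on which $p_{\underline a}$ does not vanish; then the coefficient of $e_{(\phi,J)}$ in $\mathcal V_{\mathfrak C}(p_{\underline a})$ is positive, and since $\mathcal V_{(\theta,I)}(\dots)$ has support $\preceq(\theta,I)$, it suffices to show $(\phi,J)\succeq(\theta,I)$. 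This is done in three strokes: the degree $e_I$ forces $\underline I\subseteq J$, hence $I\subseteq J$ by~(\ref{eq:need_for_s2}); Corollary~\ref{cor:dcp_go_down} produces $(\phi^\triangledown,I)\preceq(\phi,J)$ with $\phi^\triangledown$ having the same $P_I$-projection as $\phi$; the projection to $\prod_{i\in\underline I}V(\lambda_i)_{\tau_i}$ then shows $p_{\underline a}$ does not vanish on $\hat X_{(\phi^\triangledown,I)}$, so Lemma~\ref{lem:path_vectors_prop} gives $(\theta,I)\preceq(\phi^\triangledown,I)\preceq(\phi,J)$. This argument never touches the tie-breaking rule and works uniformly in $\geq^t$.
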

\begin{proof}
    By Lemma 6.6 in \cite{seshstratandschubvar}, the restriction of $p_{\underline a} \in V(\lambda_I)^*_{\tau}$ to $V(\lambda_I)_\theta$ is a path vector associated to $\underline a \in \mathrm{LS}_{\lambda_I, \theta_I}^+(1)$. The corresponding function in $\K[\hat X_{(\theta, I)}]$ coincides with the restriction of the function $p_{\underline a} \in \K[\hat X_\tau]$ to the subvariety $\hat X_{(\theta, I)} \subseteq \hat X_\tau$. This can be shown via the diagram~(\ref{eq:diagram_graded_iso_2}). 

    If $\underline \theta_I$ is any reduced decomposition of $\theta_I \in W/W_{P_I}$, then $v_{\underline a, \underline\theta_I}$ lies in the Dema\-zure module $V(\lambda_I)_{\theta_I}$. Hence $p_{\underline a}$ restricts to a non-zero element in $\K[\hat X_{(\theta, I)}]$ and we have  
    \begin{align*}
        \mathcal V_{(\theta, I)}(p_{\underline a}\big|_{\hat X_{(\theta, I)}}) = \operatorname{min} \set{\mathcal V_{\mathfrak C}(p_{\underline a}) \mid \text{$\mathfrak C$ max.\;chain in $\DCP{}$}, (\theta, I) \in \mathfrak C}
    \end{align*}
    by the definition of the quasi-valuation. 

    Let $\mathfrak C$ be any maximal chain in $\DCP{}$ and $(\phi, J)$ be the minimal element in $\mathfrak C$, such that the path vector $p_{\underline a}$ does not vanish identically on $X_{(\phi, J)}$. This means that the coefficient of the basis vector $e_{(\phi, J)}$ in $\mathcal V_{\mathfrak C}(p_{\underline a})$ is positive. We now show $(\phi, J) \succeq (\theta, I)$, since this implies $\mathcal V_{\mathfrak C}(p_{\underline a}) \geq^t \mathcal V_{(\theta, I)}(p_{\underline a}\big|_{\hat X_{(\theta, I)}})$ for every choice of the total order $\geq^t$. Therefore the quasi-valuation is given by $\mathcal V(p_{\underline a}) = \mathcal V_{(\theta, I)}(p_{\underline a}\big|_{\hat X_{(\theta, I)}})$.
    
    As $p_{\underline a}$ has degree $e_I$ in $\K[\hat X_\tau]$, this implies $\underline I \subseteq J$, hence we have $I \subseteq J$ by the requirement~(\ref{eq:need_for_s2}) on the poset $\mathcal I$. By Corollary~\ref{cor:dcp_go_down}, the element $(\phi^\triangledown, I)$ with $\phi^\triangledown = \min_Q \circ \pi_{Q_I}(\phi)$ is less or equal to $(\phi, J)$ in $\DCP{}$. The elements $\phi$ and $\phi^\triangledown$ are equal in $W/W_{P_I}$, hence the images of the multicones $\hat X_{(\phi, J)}$ and $\hat X_{(\phi^\triangledown, I)}$ under the projection map
    \begin{align*}
        \prod_{i \in J} V(\lambda_i)_{\tau_i} \twoheadrightarrow \prod_{i \in \underline I} V(\lambda_i)_{\tau_i}
    \end{align*}
    coincide. We assumed that $p_{\underline a}$ does not vanish identically on $\hat X_{(\phi, J)}$, so it does not vanish identically on $\hat X_{(\phi^\triangledown, I)}$ as well. It now follows $(\theta, I) \preceq (\phi^\triangledown, I)$ from Lemma~\ref{lem:path_vectors_prop}. This completes the proof.
\end{proof}

\begin{proposition}
    \label{prop:quasi_valuation_path_vector}
    Let $p_{\underline a} \in R$ be a path vector to an element $\underline a \in \mathrm{LS}_{\lambda_I}^+(1)$ for some $I \in \mathcal I$. Then $\mathcal V(p_{\underline a}) = \underline a$ holds independent of the chosen total order $\geq^t$ on $\DCP{}$.
\end{proposition}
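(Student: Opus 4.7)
Since $\mathcal I$ is \nice{\tau}, the bijection $\rho: \DCP{} \to \ulWlambda$ lifts $\max \supp \underline a \in D(\lambda_I, \tau_I)$ uniquely to an element $(\theta, I) \in \DCP{}$. By Lemma~\ref{lem:path_vector_induced_strat}, one has $\mathcal V(p_{\underline a}) = \mathcal V_{(\theta, I)}(p_{\underline a}|_{\hat X_{(\theta, I)}})$, and by Remark~\ref{rem:restriction_of_DCP} the induced stratification on $\hat X_{(\theta, I)}$ is itself associated to a defining chain poset. Hence I may assume throughout that $(\theta, I)$ is the top element of the stratification.

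\textbf{Strategy.} Write $\underline a = \Theta^{(I)}_1(\sigma_p > \dots > \sigma_1;\,0,a_p,\dots,a_1 = 1)$ with $\sigma_p = \theta_I$, so that $a_p$ is the coefficient of $e_{(\theta, I)}$ in $\underline a$. I argue by induction on $p$ and apply the inductive formula~(\ref{eq:inductive_def_quasi_val}). The covers of $(\theta, I)$ in $\DCP{}$ split into two families:
\begin{enumerate}[label=(\alph{enumi})]
\item $(\theta, I) \succ (\theta, K)$ with $K \subsetneq I$ a covering in $\mathcal I$;
\item $(\theta, I) \succ (\phi, I)$ with $\phi_I \coloneqq \pi_{P_I}(\phi) < \theta_I$ a covering in $W/W_{P_I}$.
\end{enumerate}
For covers of type (a), Lemma~\ref{lem:projective_covering_relation} yields bond $1$ and vanishing multiplicity equal to the $i$-th coordinate of $\deg p_{\underline a} = e_I$ for $\set{i} = I \setminus K$; since $i \in \underline I$ by definition of $\underline I$, this coordinate equals $1$, so the ratio is $1$.

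\textbf{Matching the projective case.} For covers of type (b), the bond in Lemma~\ref{lem:bonds} agrees with the bond of the corresponding cover $\theta_I \succ \phi_I$ in the projective Seshadri stratification of $X_{\theta_I} \subseteq \PP(V(\lambda_I)_{\theta_I})$ constructed in~\cite{seshstratandschubvar}. An adaptation of the open-parametrisation argument from the proof of Lemma~\ref{lem:bonds} (using the diagram~(\ref{eq:diagram_graded_iso}) to identify degree-$e_I$ regular functions on $\hat X_{(\theta, I)}$ with linear forms on $\hat X_{\theta_I}$) shows that the vanishing multiplicity of $p_{\underline a}$ at the prime divisor $\hat X_{(\phi, I)}$ coincides with its vanishing multiplicity at $\hat X_{\phi_I}$ in the projective stratification. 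The path-vector theorem of~\cite{seshstratandschubvar} asserts that the quasi-valuation of $p_{\underline a}$ in that projective stratification equals $\underline a$, and its proof shows that the minimum ratio $\nu/b$ over covers of $\theta_I$ equals $a_p$ and that for any minimising cover the residual function $p_{\underline a}^{b}/f_{(\theta, I)}^{b a_p}\big|_{\hat X_{(\phi, I)}}$ has quasi-valuation $b(\underline a - a_p e_{\sigma_p})$ in the induced projective stratification on $\hat X_{\phi_I}$.

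\textbf{Conclusion.} Since $a_p \leq 1$, the overall minimum ratio in the multiprojective setting is $a_p$, realised by a type (b) cover, and this value is intrinsic (no tie-breaking by $\geq^t$ enters). The induction hypothesis applied to the residual function on the induced stratification on $\hat X_{(\phi, I)}$, combined with the identification above, yields that its quasi-valuation equals $b(\underline a - a_p e_{(\theta, I)})$, and substituting into~(\ref{eq:inductive_def_quasi_val}) gives $\mathcal V(p_{\underline a}) = \underline a$ independently of $\geq^t$. The main work is thus entirely concentrated in the verification that type (b) covers inherit both their bonds and their vanishing multiplicities from the projective case; everything else is either bookkeeping (type (a) covers) or a direct citation of~\cite{seshstratandschubvar}.
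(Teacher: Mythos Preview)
Your reduction via Lemma~\ref{lem:path_vector_induced_strat} is the same first move the paper makes, and your structural observation is correct: for type (a) covers the ratio $\nu/b$ equals the $i$-th degree component (which is $1$ for $p_{\underline a}$), while for type (b) covers the bonds and vanishing multiplicities match the projective stratification on $X_{\theta_I}\subseteq\PP(V(\lambda_I)_{\theta_I})$ via the diagram~(\ref{eq:diagram_graded_iso}). This is the right geometric input.

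The gap is in the inductive step. You run the recursion~(\ref{eq:inductive_def_quasi_val}) once, obtain the residual $h=p_{\underline a}^{\,b}/f_{(\theta,I)}^{\,ba_p}\big|_{\hat X_{(\phi,I)}}$, and then invoke ``the induction hypothesis applied to the residual''. But your induction is on $p$, the number of segments of a degree-one LS-path, and $h$ is \emph{not} a path vector to any element of $\mathrm{LS}^+_{\lambda_I}(1)$: it is a function of multidegree $b(1-a_p)\,e_I$, typically with $b(1-a_p)>1$. The cited projective statement only tells you that the \emph{projective} quasi-valuation of $h$ on $\hat X_{\phi_I}$ equals $b(\underline a-a_pe_{\sigma_p})$; it says nothing about the \emph{multiprojective} quasi-valuation $\mathcal V_{(\phi,I)}(h)$, and that is precisely what you need. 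At the next step of the recursion, type (a) covers of $(\phi,I)$ now have ratio $b(1-a_p)$ rather than $1$, so the comparison with type (b) is no longer the simple ``$a_p\le 1$'' you used at the top; ties can occur (e.g.\ when $p=2$ and $\phi_I=\sigma_1$), and you have not argued why the recursion still tracks the projective one through all subsequent steps and all possible tie-breaks by $\geq^t$.

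The paper closes this gap differently: instead of iterating~(\ref{eq:inductive_def_quasi_val}), it invokes Corollary~C.13 of~\cite{seshstratandschubvar} to write $p_{\underline a}^{\,N}$ (for suitable $N$) as a scalar multiple of $f_{(\tau,[m])}^{\,Nu}\cdot p_{\underline b}$, where $p_{\underline b}$ is a genuine path vector of higher degree. It then expresses $p_{\underline b}$ as a linear combination of products of degree-one path vectors whose supports lie strictly below $(\tau,[m])$, applies the induction hypothesis (on the rank of the top element, not on $p$) to each factor, and uses additivity of $\mathcal V$ along a common chain. Your approach could be salvaged by strengthening the statement to: for every function $g$ of multidegree $d\,e_I$ on $\hat X_{(\theta,I)}$, the multiprojective quasi-valuation equals the projective one under the identification $D_I(\underline\lambda,\tau)\cong D(\lambda_I,\tau_I)$. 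This follows from your type (a)/type (b) analysis applied to each $\mathcal V_{\mathfrak C}$ separately (the chain-valuation along $\mathfrak C$ is the ``truncation'' of a projective chain-valuation, hence $\geq^t$ it, with equality when $\mathfrak C_I$ contains the projective support), but it needs to be stated and proved as such, not smuggled in via an induction on $p$ that does not actually cover the residual.
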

\begin{proof}
    We prove the statement by induction over the rank $r$ of $(\tau, [m])$ in the defining chain poset. The case $r = 0$ is trivial. If $r \geq 1$, we consider two cases. First, when the maximal element $(\theta, I)$ in the support of $\underline a$ is strictly smaller than $(\tau, [m])$, we use the induced stratification on $\hat X_{(\theta, I)}$ and Lemma~\ref{lem:path_vector_induced_strat} to conclude
    \begin{align*}
        \mathcal V(p_{\underline a}) = \mathcal V_{(\theta, I)}(p_{\underline a}\big|_{\hat X_{(\theta, I)}}) = \underline a
    \end{align*}
    by induction. Notice, that this holds independent of the chosen total order $\geq^t$.

    Now suppose that $(\tau, [m])$ is the maximal element in $\supp \underline a$. Here we have $I = [m]$. Fix a positive integer $N \in \N$ with $Nu \in \N$, where $u \in \Q_{\geq 0}$ is the coefficient of the basis vector $e_{(\tau, [m])}$ in $\underline a$. We use Corollary C.13 in \cite{seshstratandschubvar} on the element $\underline a \in \mathrm{LS}_{\lambda_I}^+$: It states that $p_{\underline a}^N$ is -- up to multiplying by a non-zero scalar -- equal to $p_\tau^{Nu} p_{\underline b} \in V(N \lambda_I)_{\tau_I}^*$, where $p_\tau$ is a weight vector in $V(\lambda_I)^*$ of weight $-\tau(\lambda_I)$ and $p_{\underline b}$ is a path vector associated to the element
    \begin{align*}
        \underline b = N \underline a - N u \, e_{\tau_I} \in \mathrm{LS}_{\lambda_I}^+(s)
    \end{align*}
    of degree $s = Nd - Ndu$. The pullback of $p_\tau$ to the multicone $\hat X_\tau$ is the extremal function $f_{(\tau, [m])}$ (up to a non-zero scalar), so we have $\mathcal V(f_{(\tau, [m])}^{Nu}) = Nu \, e_{(\tau, [m])}$, which holds independent of the choice of $\geq^t$. 

    For $s = 0$ the path vector $p_{\underline b}$ is constant and it follows
    \begin{align*}
        \mathcal V(p_{\underline a}) &= \frac{1}{N} \mathcal V(p_{\underline a}^N) = \frac{1}{N} \mathcal V(f_{(\tau, [m])}^{Nu}) = u \, e_{(\tau, [m])} = \underline a.
    \end{align*}
    Now we assume $s \geq 1$. As the element $\underline b$ might not be of degree one, we have to write it in terms of path vectors of degree one to use the induction. Therefore we fix a path vector $\overline p_{\underline c}$ to each $\underline c \in \mathrm{LS}_{\lambda_I}^+(1)$. This defines a function $g_{\underline c}$ for every element $\underline c \in \mathrm{LS}_{\lambda_I}^+(s)$: To each $\underline c^k$ in the unique decomposition $\underline c = \underline c^1 + \dots + \underline c^s$ from Lemma~\ref{lem:unique_decomposition} we have the path vector $\overline p_{\underline c^k}$. 
    It was shown in~\cite[Proposition C.10]{seshstratandschubvar} that the product $g_{\underline c} \coloneqq \overline p_{\underline c^1} \cdots \overline p_{\underline c^s}$ in $\bigotimes_{i=1}^s V(\lambda_I)_{\tau_I}$ restricts to a path vector associated to $\underline c$, up to multiplying by a root of unity.
    
    Using the filtration of $V(s\lambda_I)_{\tau_I}^*$ via the subspaces $V(s\lambda_I)_{\tau_I, \unrhd \underline c}^*$, we can write the path vector $p_{\underline b}$ as a linear combination $p_{\underline b} = g_{\underline b} + \sum_{\underline c \rhd \underline b} d_{\underline c} \, g_{\underline c}$ over elements $\underline c \in \mathrm{LS}_{\lambda_I}^+(s)$ with $\underline c \unrhd \underline b$. We now show
    \begin{equation}
        \label{eq:quasi_val_strictly_greater}
        \mathcal V(g_{\underline c}) > \mathcal V(g_{\underline b}) \quad \text{for all} \ \underline c \rhd \underline b.
    \end{equation}
    Let $\sigma' = \max \supp \underline c$ and $\sigma = \max \supp \underline b$. We need to distinguish between two cases. 

    If $\sigma' = \sigma$ we have $\max \supp \underline c^k \preceq (\sigma, I) \prec (\tau, [m])$ for each $k \in [s]$ and thus $\mathcal V(\overline p_{\underline c^k}) = \underline c^k$ by induction. Because the union of all $\supp \underline c^k$ for $k \in [s]$ lies in a maximal chain of $\DCP{}$, the quasi-valuation is additive:
    \begin{align*}
        \mathcal V(g_{\underline c}) = \mathcal V(\overline p_{\underline c^1} \cdots \overline p_{\underline c^s}) = \underline c^1 + \dots + \underline c^s = \underline c.
    \end{align*}
    This is independent of the choice of $\geq^t$. Analogously, we see $\mathcal V(g_{\underline b}) = \underline b$. Since $\underline c \rhd \underline b$, it holds $\mathcal V(g_{\underline c}) = \underline c \geq^t \underline b = \mathcal V(g_{\underline b})$ for every choice of the total order $\geq^t$ on $\DCP{}$.
    
    In the remaining case $\sigma' \neq \sigma$ we have $\sigma' > \sigma$, as $\underline c \rhd \underline b$. It follows from Lemma~\ref{lem:path_vectors_prop} that $g_{\underline c}$ does not vanish identically on $\hat X_{(\sigma', I)}$, but it restricts to the zero function on the multicone $\hat X_{(\phi, I)}$ for each $\phi < \sigma'$ in $D(\lambda_I, \tau_I)$. The function $g_{\underline c}$ also vanishes identically on every multicone $\hat X_{(\phi, J)}$ for $(\phi, J) \prec (\sigma', I)$ in $\DCP{}$ and $J \subsetneq I$, because $g_{\underline c}$ is homogeneous of degree $e_I$ and $\underline I \nsubseteq J$. Hence $(\sigma', I)$ lies in the support of $\mathcal V(g_{\underline c})$. Analogously, one can show that $\sigma$ is the maximal element in $\supp \mathcal V(g_{\underline b})$. Therefore $\mathcal V(g_{\underline c}) > \mathcal V(g_{\underline b})$.
    
    Using the inequality (\ref{eq:quasi_val_strictly_greater}) we can now conclude $\mathcal V(p_{\underline b}) = \mathcal V(g_{\underline b}) = \underline b$. Hence the set $\set{(\tau, [m])} \cup \supp \mathcal V(p_{\underline b})$ lies in a maximal chain of $\DCP{}$ and it follows
    \begin{align*}
        \mathcal V(p_{\underline a}) &= \frac{1}{N} \mathcal V(p_{\underline a}^N) = \frac{1}{N} \mathcal V(f_{(\tau, [m])}^{Nu} p_{\underline b}) = \frac{1}{N} \big( \mathcal V(f_{(\tau, [m])}^{Nu}) + \mathcal V(p_{\underline b}) \big) \\
        &= u \, e_{(\tau, [m])} + (\underline a - u \, e_{(\tau, [m])}) = \underline a.
    \end{align*}
    This is independent of the choice of the total order $\geq^t$.
\end{proof}

\section{Standard monomial theory}
\label{sec:smt}

Recall that we still consider a fixed multiprojective Seshadri stratification on a Schubert variety $X_\tau$ as in Theorem~\ref{thm:stratification}. For every $I \in \mathcal I$ the indecomposable elements in the fan $\mathrm{LS}_{\lambda_I}^+$ are exactly the elements of degree one. In case of our generalized stratification, however, not every indecomposable element in $\Gamma$ needs to be of total degree one. Instead, it follows from Lemma~\ref{lem:unique_decomposition} that the set of indecomposables in the LS-fan $\mathrm{LS}_{\underline\lambda}^+$ is equal to
\begin{align*}
    \mathbb G = \bigcup_{I \in \mathcal I} \mathrm{LS}_{\lambda_I}^+(1).
\end{align*}
We therefore fix a path vector $p_{\underline a} \in R$ to each element $\underline a \in \mathbb G$ and let $\mathbb G_R = \set{p_{\underline a} \mid \underline a \in \mathbb G}$ be the set of all these functions.

\begin{definition}
    \label{def:standard_monomial}
    A monomial $p_{\underline a^1} \cdots p_{\underline a^s} \in R$ of path vectors in $\mathbb G_R$ is called \textbf{standard}, if $\min \supp \underline a^{k} \geq \max \supp \underline a^{k+1}$ holds for all $k = 1, \dots, s-1$.
\end{definition}

Another way to characterize standardness comes from the language of LS-tableaux: Let $p_{\underline a^1} \cdots p_{\underline a^s}$ be a monomial of path vectors in $\mathbb G_R$ with $\underline a^k \in \mathrm{LS}_{\lambda_{I_k}}^+(1)$. Each element $\underline a^k$ corresponds to an LS-path $\pi_k$ of shape $\lambda_{I_k}$. The monomial $p_{\underline a^1} \cdots p_{\underline a^s}$ is standard, if and only if the LS-tableau $\underline\pi = (\pi_1, \dots, \pi_s)$ is of type $(\underline\lambda, \mathcal I)$ and $\tau$-standard.

By Lemma~\ref{lem:unique_decomposition} we have an associated standard monomial
\begin{align*}
    p_{\underline a} \coloneqq p_{\underline a^1} \cdots p_{\underline a^s}
\end{align*}
to every element $\underline a \in \mathrm{LS}_{\underline\lambda}^+$, where $\underline a = \underline a^1 + \dots + \underline a^s$ is the unique decomposition into elements $\underline a \in \mathbb G$ with $\min \supp \underline a^{k} \geq \max \supp \underline a^{k+1}$ holds for all $k = 1, \dots, s-1$. Conversely, every standard monomial in $\mathbb G_R$ is of the form $p_{\underline a}$ for some $\underline a \in \mathrm{LS}_{\underline\lambda}^+$. The monomial $p_{\underline a}$ is a multihomogeneous function in $R$ of degree $\sum_{k=1}^s \deg \underline a^k = \deg \underline a$.

\begin{theorem}
    \label{thm:standard_monomial_basis}
    $ $
    \begin{enumerate}[label=(\alph{enumi})]
        \item For each $\underline a \in \mathrm{LS}_{\underline\lambda}^+$ holds $\mathcal V(p_{\underline a}) = \underline a$. Additionally, the set of all standard monomials in $\mathbb G_R$ forms a vector space basis of $R = \K[\hat X_\tau]$.
        \item The fan of monoids to this Seshadri stratification coincides with the LS-fan $\mathrm{LS}_{\underline\lambda}^+$. The stratification is balanced and of LS-type. In particular, it is normal.
    \end{enumerate}
\end{theorem}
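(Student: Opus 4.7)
The plan for part (a) is as follows. Given $\underline a \in \mathrm{LS}_{\underline\lambda}^+$, I take its unique decomposition $\underline a = \underline a^1 + \dots + \underline a^s$ from Lemma~\ref{lem:unique_decomposition} into indecomposables $\underline a^k \in \mathbb G = \bigcup_{I \in \mathcal I} \mathrm{LS}_{\lambda_I}^+(1)$. For each summand, Proposition~\ref{prop:quasi_valuation_path_vector} already gives $\mathcal V(p_{\underline a^k}) = \underline a^k$ independently of the total order $\geq^t$. Since all supports $\supp \underline a^k$ lie together in a single maximal chain of $\DCP{}$ (this is precisely the defining property of the decomposition), the additivity criterion recalled in Section~\ref{sec:multiproj_strat} applies and yields
\begin{align*}
    \mathcal V(p_{\underline a}) = \mathcal V(p_{\underline a^1}) + \dots + \mathcal V(p_{\underline a^s}) = \underline a^1 + \dots + \underline a^s = \underline a,
\end{align*}
again independently of $\geq^t$. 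Linear independence of the family $\{p_{\underline a} \mid \underline a \in \mathrm{LS}_{\underline\lambda}^+\}$ is then automatic from the fact that distinct quasi-valuations force linear independence.

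To promote linear independence to a basis I would compare graded dimensions in each multidegree $\underline d \in \N_0^m$. By Lemma~\ref{lem:graded_components}, $\dim R_{\underline d} = \dim V(\underline d \cdot \underline \lambda)_\tau$. On the other side, the standard monomials of multidegree $\underline d$ are in bijection with $\mathrm{LS}_{\underline\lambda}^+(\underline d)$, which by Proposition~\ref{prop:bijection_YT_fan} is in bijection with the set $\LST$ of $\tau$-standard LS-tableaux of type $(\underline\lambda, \mathcal I)$ and degree $\underline d$. Combining the uniqueness of the shape of such tableaux (Corollary~\ref{cor:unique_shape}) with Littelmann's character formula~(\ref{eq:comb_demazure_character_formula_multi}) evaluated at the identity of the torus yields $|\LST| = \dim V(\underline d \cdot \underline\lambda)_\tau$. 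Hence the standard monomials span $R_{\underline d}$ and form a basis of $R$.

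For part (b), the equality $\Gamma = \mathrm{LS}_{\underline\lambda}^+$ follows immediately from part (a): the inclusion $\mathrm{LS}_{\underline\lambda}^+ \subseteq \Gamma$ is the quasi-valuation identity above, and because the leaves of $\mathcal V$ are at most one-dimensional, the basis of $R$ produced in (a) exhausts all possible quasi-valuation values, forcing the reverse inclusion. Since the identity $\mathcal V(p_{\underline a}) = \underline a$ holds for every choice of $\geq^t$, the set $\Gamma$ itself is independent of the linearization, so the stratification is balanced. For LS-type, conditions (a) and (b) of the definition are transparent from the construction: each $f_{(\theta, I)}$ is of multidegree $e_I$ (with all entries $0$ or $1$) and depends only on $I$ in the sense required. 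Condition (c) is exactly $\Gamma = \bigcup_{\mathfrak C} \mathrm{LS}_{\mathfrak C, \underline\lambda}^+$, which we have just established. Finally, normality: each $\Gamma_{\mathfrak C}$ coincides with $\mathrm{LS}_{\mathfrak C, \underline\lambda} \cap \Q_{\geq 0}^{\mathfrak C}$, and checking that the lattice $\mathcal L^{\mathfrak C}$ generated by $\Gamma_{\mathfrak C}$ equals the full LS-lattice $\mathrm{LS}_{\mathfrak C, \underline\lambda}$ is a standard computation using the bonds along $\mathfrak C$ and the monoid generators coming from indecomposables in $\mathrm{LS}_{\lambda_I}^+(1)$ for each $I$ appearing in $\mathfrak C$.

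The main obstacle will be the dimension count $|\LST| = \dim V(\underline d \cdot \underline\lambda)_\tau$. Littelmann's formula yields the character of $V(\underline d \cdot \underline\lambda)_\tau$ as a sum over tableaux of one fixed shape $(\lambda_{I_1}, \dots, \lambda_{I_s})$; if several distinct weakly decreasing sequences in $\mathcal I$ produced the same degree $\underline d$, we would overcount. This is why the uniqueness-of-shape statement (Corollary~\ref{cor:unique_shape}) is genuinely needed at this point, and its forward-referenced role in Remark~\ref{rem:shape_to_degree} is essential to the argument rather than merely cosmetic. Every other step is a direct application of earlier results: Proposition~\ref{prop:quasi_valuation_path_vector}, Lemma~\ref{lem:unique_decomposition}, Proposition~\ref{prop:bijection_YT_fan} and Lemma~\ref{lem:graded_components}.
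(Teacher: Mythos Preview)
Your argument for $\mathcal V(p_{\underline a}) = \underline a$, for linear independence, and for part (b) follows the paper's reasoning almost verbatim. The gap is in the dimension count, and you have actually put your finger on it yourself: you invoke Corollary~\ref{cor:unique_shape} to conclude $|\LST| = \dim V(\underline d \cdot \underline\lambda)_\tau$, but that corollary is \emph{deduced from} the proof of Theorem~\ref{thm:standard_monomial_basis} (its proof cites the inequality~(\ref{eq:surprise}) established inside the theorem). Using it here is circular. Remark~\ref{rem:shape_to_degree} merely announces that uniqueness will be proved later; it does not make it available in advance.

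The paper avoids this by running a squeeze argument instead of assuming uniqueness of shape. One knows a priori only that at least one weakly decreasing sequence $I_1 \supseteq \dots \supseteq I_s$ with $\sum_k e_{I_k} = \underline d$ exists (Remark~\ref{rem:shape_to_degree}). For each such fixed shape the character formula~(\ref{eq:comb_demazure_character_formula_multi}) gives exactly $\dim V(\underline d \cdot \underline\lambda)_\tau$ many $\tau$-standard tableaux, so $|\LST| \geq \dim V(\underline d \cdot \underline\lambda)_\tau$. On the other hand, the linear independence of $\{p_{\underline a} \mid \underline a \in \mathrm{LS}_{\underline\lambda}^+(\underline d)\}$ inside $R_{\underline d}$ together with Lemma~\ref{lem:graded_components} forces $|\mathrm{LS}_{\underline\lambda}^+(\underline d)| \leq \dim V(\underline d \cdot \underline\lambda)_\tau$. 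Since $|\LST| = |\mathrm{LS}_{\underline\lambda}^+(\underline d)|$ by Proposition~\ref{prop:bijection_YT_fan}, equality holds throughout; uniqueness of the shape is then a \emph{consequence}, not an input. Replacing your forward reference with this two-sided inequality fixes the proof.
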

\begin{proof}
    $ $
    \begin{enumerate}[label=(\alph{enumi})]
        \item For each standard monomial $p_{\underline a} = p_{\underline a^1} \cdots p_{\underline a^s}$ the set $\supp \underline a^1 \cup \dots \cup \supp \underline a^s$ is contained in a maximal chain of $\DCP{}$, so the quasi-valuation is additive and Proposition~\ref{prop:quasi_valuation_path_vector} implies 
        \begin{align*}
            \mathcal V(p_{\underline a}) = \mathcal V(p_{\underline a^1} \cdots p_{\underline a^s}) = \underline a^1 + \dots + \underline a^s = \underline a.
        \end{align*}
        Now fix a tuple $\underline d \in \N_0^m$. The set $\set{p_{\underline a} \mid \underline a \in \mathrm{LS}_{\underline \lambda}^+(\underline d)}$ is linearly independent in the graded component $\K[\hat X_\tau]_{\underline d}$, since the quasi-valuation $\mathcal V$ is injective on it. By Lemma~\ref{lem:graded_components}, the cardinality of this set is therefore bounded by the dimension of the Demazure module $V(\underline d \cdot \underline \lambda)_\tau$. On the other hand, we have seen in Proposition~\ref{prop:bijection_YT_fan}, that there is a bijection between $\mathrm{LS}_{\underline \lambda}^+(\underline d)$ and the set $\LST$ of all $\tau$-standard LS-tableaux of type $(\underline\lambda, \mathcal I)$ with degree $\underline d$. The degree of an LS-tableau is determined by its shape and there always exists at least one shape to each degree (see Remark~\ref{rem:shape_to_degree}). For each subset of LS-tableaux in $\LST$ of a fixed shape $(\lambda_{I_1}, \dots, \lambda_{I_s})$, we have the Demazure-type character formula from equation~(\ref{eq:comb_demazure_character_formula_multi}), so the size of this subset is equal to the dimension of $V(\underline d \cdot \underline \lambda)_\tau$. In total, we get the following inequalities:
        \begin{align}
            \label{eq:surprise}
            \dim V(\underline d \cdot \underline \lambda)_\tau \leq \vert \mkern1mu \LST \vert = \vert \mkern1mu \mathrm{LS}_{\underline \lambda}^+(\underline d) \vert \leq \dim V(\underline d \cdot \underline \lambda)_\tau.
        \end{align}
        Therefore the standard monomials of degree $\underline d$ form a basis of $\K[\hat X_\tau]_{\underline d}$.
        \item As the standard monomials in $\mathbb G_R$ form a basis of $R$, their image under $\mathcal V$ agrees with the fan of monoids $\Gamma$, hence $\Gamma = \mathrm{LS}_{\underline\lambda}^+$. This also implies that the stratification is of LS-type, since the other two requirements are fulfilled by construction. Furthermore, we have seen that the quasi-valuation $\mathcal V(p_{\underline a}) = \underline a$ of each standard monomial $p_{\underline a}$ does not depend on the choice of the total order $\geq^t$ on $\DCP{}$, so the stratification is also balanced.\hfill\qedhere
    \end{enumerate}
\end{proof}

\begin{corollary}
    \label{cor:unique_shape}
    $ $
    \begin{enumerate}[label=(\alph{enumi})]
        \item \label{itm:unique_shape_a} To each $\underline d \in \N_0^m$ there exists exactly one weakly decreasing sequence $I_1 \supseteq \dots \supseteq I_s$ in $\mathcal I$ with $\sum_{k=1}^s e_{I_k} = \underline d$.
        \item For every $I \in \mathcal I$ and $d \in \N_0$ it holds $\mathrm{LS}_{\lambda_I}^+(d) = \mathrm{LS}_{\underline\lambda}^+(d e_I)$.
    \end{enumerate}
\end{corollary}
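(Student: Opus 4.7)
The plan is to deduce both parts from the tight equality chain~(\ref{eq:surprise}) established in the proof of Theorem~\ref{thm:standard_monomial_basis}, combined with the decomposition of Lemma~\ref{lem:LS_fan_decomposition} and the bijection of Proposition~\ref{prop:bijection_YT_fan}. Existence in~(a) has already been handled in Remark~\ref{rem:shape_to_degree}, so only uniqueness is genuinely new.

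For~(a), I would fix $\underline d \in \N_0^m$ and let $r$ denote the number of weakly decreasing sequences $I_1 \supseteq \dots \supseteq I_s$ in $\mathcal I$ with $\sum_k e_{I_k} = \underline d$. The sum of weights of any such shape equals $\underline d \cdot \underline\lambda$, so applying the Demazure-type character formula~(\ref{eq:comb_demazure_character_formula_multi}) shape by shape and specializing to dimensions (setting $e^\nu = 1$) gives $\dim V(\underline d \cdot \underline\lambda)_\tau$ many $\tau$-standard tableaux for each fixed admissible shape. Since tableaux of distinct shapes are disjoint subsets of $\LST$, it follows that $|\LST| = r \cdot \dim V(\underline d \cdot \underline\lambda)_\tau$. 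Comparing with the equality $|\LST| = \dim V(\underline d \cdot \underline\lambda)_\tau$ obtained in~(\ref{eq:surprise}) forces $r = 1$.

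For~(b), the inclusion $\mathrm{LS}_{\lambda_I}^+(d) \subseteq \mathrm{LS}_{\underline\lambda}^+(d e_I)$ has already been noted in the paragraph after~(\ref{eq:LS_fan_decomposition}). For the converse, take $\underline a \in \mathrm{LS}_{\underline\lambda}^+(d e_I)$ and decompose $\underline a = \sum_{J \in \mathcal I} \underline a^{(J)}$ with $\underline a^{(J)} \in \mathrm{LS}_{\lambda_J}^+$ via Lemma~\ref{lem:LS_fan_decomposition}. The corresponding LS-tableau $\Theta_{d e_I}^{-1}(\underline a)$ has a shape which is a weakly decreasing sequence in $\mathcal I$ summing to $d e_I$; by part~(a) this shape is uniquely determined, and since the constant sequence $(I, \ldots, I)$ of length $d$ is clearly one valid choice, it must be the one. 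Consequently every column of the tableau has shape $\lambda_I$, forcing $\underline a^{(J)} = 0$ for all $J \neq I$, and hence $\underline a = \underline a^{(I)} \in \mathrm{LS}_{\lambda_I}^+(d)$.

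I do not expect any serious obstacle, since once~(\ref{eq:surprise}) is available both parts reduce to straightforward counting. The only subtle point is recognizing that the middle equality in~(\ref{eq:surprise}) is strong enough to pin down both the cardinality of $\LST$ and the uniqueness of the admissible shape at the same time, which is precisely what makes invoking the already-proved Theorem~\ref{thm:standard_monomial_basis} here non-circular.
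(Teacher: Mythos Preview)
Your proposal is correct and follows essentially the same route as the paper. For~(a) the paper simply says the statement ``follows immediately from the inequalities in~(\ref{eq:surprise}),'' which is exactly your counting argument spelled out; for~(b) the paper also reduces to part~(a) via the degree decomposition along a maximal chain in $\mathcal I$, which is the same mechanism as your use of $\Theta_{d e_I}^{-1}$ phrased in the tableau language rather than directly in terms of the components $\underline a^{(J)}$.
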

\begin{proof}
    Statement~\ref{itm:unique_shape_a} follows immediately from the inequalities in (\ref{eq:surprise}) and we have already seen the inclusion $\mathrm{LS}_{\lambda_I}^+(d) \subseteq \mathrm{LS}_{\underline\lambda}^+(d e_I)$. For the reverse inclusion let $\underline a$ be an element in $\mathrm{LS}_{\underline\lambda}^+(d e_I)$ and write it in the form $\underline a = \underline a^{(I_1)} + \dots + \underline a^{(I_m)}$, where $I_1 \subsetneq \dots \subsetneq I_m$ is a maximal chain in $\mathcal I$ and $\underline a^{(I_j)} \in \mathrm{LS}_{\lambda_{I_j}}^+$ for all $j \in [m]$. There exist non-negative integers $k_1, \dots, k_m$ such that $\underline a^{(I_j)}$ is of degree $k_j$ in $\mathrm{LS}_{\lambda_{I_j}}^+$. Since $\sum_{j=1}^m k_j e_{I_j} = d e_I$ it now follows from part~\ref{itm:unique_shape_a} that $I = I_j$ for some $j \in [m]$ and $k_j = d$. Hence $\underline a = \underline a^{(I_j)} \in \mathrm{LS}_{\lambda_{I}}^+(d)$.
\end{proof}

This completes the goal of this \whatisthis{}. We have seen that there exists a normal and balanced Seshadri stratification on each multiprojective Schubert variety $X_\tau$. The elements in its fan of monoids correspond to $\tau$-standard LS-tableaux of type $(\underline\lambda, \mathcal I)$. All tableaux of a fixed degree $\underline d \in \N_0^m$ have the same shape $(\lambda_{I_1}, \dots, \lambda_{I_s})$. The decomposition of $\underline a \in \mathrm{LS}_{\underline\lambda}^+(\underline d)$ into indecomposable elements corresponds exactly to the columns $\pi_k \in \mathbb B(\lambda_{I_k})$, $k \in [s]$, in the corresponding LS-tableau to $\underline a$. We have a standard monomial theory on $\K[X_\tau]$ determined by the indecomposable elements $\mathbb G$. Each non-standard monomial in $\mathbb G_R$ can be written as a linear combination of standard monomials via a straightening relation as in Proposition~\ref{prop:smt}. This SMT is also compatible with all Schubert varieties, which appear as strata (cf. Corollary~\ref{cor:smt_induced_strat}).

\begin{corollary}
    \label{cor:smt_LS}
    Let $\LST$ be the set of all $\tau$-standard LS-tableaux $\underline\pi$ of type $(\underline\lambda, \mathcal I)$ and degree $\underline d \in \N_0^m$.
    \begin{enumerate}[label=(\alph{enumi})]
        \item \label{itm:smt_LS_b} The standard monomials in $\mathbb G_R$ of degree $\underline d \in \N_0^m$ form a basis of the module $V(\underline d \cdot \underline\lambda)^*$, indexed by $\LST$.
        \item Suppose that the fixed path vectors $p_{\underline a}$ for $\underline a \in \mathbb G$ are constructed as in~\cite{littelmann1998contracting} via quantum Frobenius splitting. Then the standard monomial basis from part~\ref{itm:smt_LS_b} coincides with the standard monomial basis from Section~6 in \loccit{}.
    \end{enumerate}
\end{corollary}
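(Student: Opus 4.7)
Proof plan. Part (a) is essentially a repackaging of what has already been established. By Theorem~\ref{thm:standard_monomial_basis}(a), the set of all standard monomials in $\mathbb G_R$ is a vector space basis of $R = \K[\hat X_\tau]$, and $\mathcal V(p_{\underline a}) = \underline a$ preserves degrees: $\deg p_{\underline a} = \deg \underline a$. Since the decomposition of an element $\underline a \in \mathrm{LS}_{\underline\lambda}^+$ into indecomposables preserves total degree, the standard monomials of degree $\underline d$ are precisely the $p_{\underline a}$ for $\underline a \in \mathrm{LS}_{\underline\lambda}^+(\underline d)$; they thus form a basis of the graded component $R_{\underline d}$. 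Invoking Lemma~\ref{lem:graded_components}, this component is $B$-equivariantly identified with $V(\underline d \cdot \underline\lambda)_\tau^*$, which gives the basis of $V(\underline d \cdot \underline\lambda)_\tau^*$ claimed. The indexing by $\LST$ comes from the bijection $\Theta_{\underline d}\colon \LST \to \mathrm{LS}_{\underline\lambda}^+(\underline d)$ of Proposition~\ref{prop:bijection_YT_fan}.

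For part (b), the plan is to match the bases term-by-term. Unpacking Littelmann's construction in~\cite[Section~6]{littelmann1998contracting}: to each LS-path $\pi \in \mathbb B(\lambda_I)$ (equivalently, each $\underline a \in \mathrm{LS}_{\lambda_I}^+(1)$ via $\Theta^{(I)}_1$) his quantum-Frobenius splitting procedure produces a path vector $p_\pi \in V(\lambda_I)_{\tau_I}^*$, which under the inclusion of Lemma~\ref{lem:graded_components} we view as a homogeneous element of $R$ of degree $e_I$. Littelmann's standard monomial basis consists of products $p_{\pi_1} \cdots p_{\pi_s}$ indexed by $\tau$-standard LS-tableaux $(\pi_1, \dots, \pi_s)$ of type $(\underline\lambda, \mathcal I)$. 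Translating along the bijection from Proposition~\ref{prop:bijection_YT_fan}, a $\tau$-standard LS-tableau corresponds precisely to an element $\underline a = \underline a^1 + \dots + \underline a^s \in \mathrm{LS}_{\underline\lambda}^+$ with $\underline a^k \in \mathbb G$ and $\min\supp \underline a^k \geq \max\supp \underline a^{k+1}$, which is exactly the combinatorial shape of a standard monomial $p_{\underline a^1}\cdots p_{\underline a^s}$ in our sense (Definition~\ref{def:standard_monomial}). Thus on both sides the indexing set and the factorization into degree-$e_I$ pieces agree, and one only has to identify the factors.

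To compare the factors I would argue that when $p_{\underline a}$ is the specific path vector produced by quantum Frobenius splitting, the product $p_{\underline a^1}\cdots p_{\underline a^s}$ appearing in our standard monomial equals, up to a non-zero scalar, Littelmann's path vector $p_{\pi_1 \ast \dots \ast \pi_s}$ attached to the concatenation. This is precisely the content of the compatibility result~\cite[Proposition~C.10]{seshstratandschubvar} (already invoked in the proof of Proposition~\ref{prop:quasi_valuation_path_vector}): the product of Frobenius-split path vectors to the summands of an indecomposable decomposition in $\mathrm{LS}_{\underline\lambda}^+$ is, up to a root of unity, the path vector to their sum. Since both bases consist of vectors that agree up to such scalars and their indexing bijections are the identity on $\LST$, they coincide as bases (i.e. agree up to rescaling of the individual basis vectors, which is the standard ambiguity inherent in the definition of a path vector).

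The main obstacle is entirely bookkeeping in part~(b): making sure the normalization conventions in~\cite{littelmann1998contracting} -- in particular the choice of highest weight vectors entering the quantum Frobenius construction and the order in which tensor factors appear in the concatenation of paths -- are aligned with the ones used in the embedding of Lemma~\ref{lem:graded_components} and in the degree-one path vectors fixed above. Once this matching of conventions is stated explicitly, the proof reduces to quoting Proposition~\ref{prop:bijection_YT_fan}, Lemma~\ref{lem:graded_components}, Theorem~\ref{thm:standard_monomial_basis}, and~\cite[Proposition~C.10]{seshstratandschubvar}.
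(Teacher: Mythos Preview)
Your treatment of part~(a) matches the paper's: it is a direct consequence of Theorem~\ref{thm:standard_monomial_basis} together with Proposition~\ref{prop:bijection_YT_fan} (and Lemma~\ref{lem:graded_components} for the identification with $V(\underline d \cdot \underline\lambda)_\tau^*$).

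For part~(b) your argument is largely correct but takes an unnecessary detour, and it glosses over the one step the paper actually isolates. Littelmann's standard monomial basis in~\cite[Section~6]{littelmann1998contracting} already consists of \emph{products} $p_{\pi_1}\cdots p_{\pi_s}$, not single path vectors $p_{\pi_1\ast\cdots\ast\pi_s}$ attached to concatenations. Since by hypothesis the degree-one factors $p_{\underline a}$ are Littelmann's quantum-Frobenius path vectors, the factors on both sides are literally identical; there is nothing to compare factor-by-factor, and the appeal to~\cite[Proposition~C.10]{seshstratandschubvar} is beside the point. The only genuine content is to check that the same products are called \emph{standard} in both settings. Littelmann's standardness criterion is phrased in terms of the concatenated path lying in the path model $\mathbb B(\pi_{\lambda_{I_1}}\ast\cdots\ast\pi_{\lambda_{I_s}})$ with bounded initial direction, whereas Definition~\ref{def:standard_monomial} is phrased via defining chains. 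You assert these coincide without justification; the paper invokes Proposition~\ref{prop:tau_standard_vs_path_model} for exactly this equivalence. Once that proposition is cited, the proof of~(b) is a one-liner, and the bookkeeping about normalizations and roots of unity you flag as an obstacle does not arise.
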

\begin{proof}
    The first statement is a consequence of Theorem~\ref{thm:standard_monomial_basis} and Proposition~\ref{prop:bijection_YT_fan}. To show the second statement, we fix the unique weakly decreasing sequence $I_1 \supseteq \dots \supseteq I_s$ in $\mathcal I$ with $e_{I_1} + \dots + e_{I_s} = \underline d$. Then every LS-tableau in $\LST$ is of shape $(\lambda_{I_1}, \dots, \lambda_{I_s})$. We abbreviate these dominant weights by $\lambda_k \coloneqq \lambda_{I_k}$. For each LS-path $\pi \in \mathbb B(\lambda_k)$, $k \in [s]$, let $p_{\pi} \in V(\lambda_k)_{\tau}^*$ denote the path vector constructed via quantum Frobenius splitting (see \cite[Section 3]{littelmann1998contracting}). By Proposition~\ref{prop:tau_standard_vs_path_model}, a monomial $p_{\pi_1} \cdots p_{\pi_s}$ in these path vectors is standard in the sense of \loccit{}, if and only if it is standard in the sense of Definition~\ref{def:standard_monomial}.
\end{proof}

\vskip 25pt

\appendix

\section{Weyl groups}
\label{sec:weyl_groups}

This section contains a brief summary of the notation in this~\whatisthis{} and a loose collection of lemmata. All statements which we do not prove here are well known and can be found in any classical text book about Coxeter groups, for example in~\cite{bjorner2006combinatorics}.

We fix a semisimple algebraic group $G$ with Weyl group $W$, a maximal torus $T$ of $G$ and a Borel subgroup $B$ containing $T$. For each parabolic subgroup $Q \subseteq G$ -- by which we mean a closed subgroup containing $B$ -- with Weyl subgroup $W_Q \subseteq W$ and $\sigma \in W$, there is a (unique) smallest element $\sigma^Q \in W$ in the coset $\sigma W_Q$. It has the property $\ell(\sigma^Q \tau) = \ell(\sigma^Q) + \ell(\tau)$ for all $\tau \in W_Q$. We denote the set of these smallest elements by
\begin{align*}
    W^Q = \set{\sigma^Q \in W \mid \sigma \in W}.
\end{align*}
Thus the product map $W^Q \times W_Q \to W$ is a length-preserving bijection. More general: For any two parabolic subgroups $Q \subseteq Q'$ the product map 
\begin{align}
    \label{eq:product_map_weyl}
    W^{Q'} \times (W_{Q'} \cap W^Q) \to W^Q, \quad (\sigma, \tau) \mapsto \sigma \tau
\end{align}
is a length-preserving bijection. The quotient $W/W_Q$ is a graded poset, \ie all maximal chains have the same length. The rank function $r: W/W_Q \to \N_0$ maps a coset $\theta \in W/W_Q$ to the length $\ell(\theta^Q)$ of its unique representative $\theta^Q \in W^Q$, \ie the smallest number $\ell \in \N_0$ such that there exists a decomposition $\theta^Q = s_1 \cdots s_\ell$ into simple reflections. Such a minimal decomposition is usually called a \textit{reduced decomposition}.

To every inclusion $Q \subseteq Q'$ of two parabolic subgroups there is the monotone surjection\label{eq:def_pi_Q}
\begin{align*}
    \pi_{Q,Q'} \!: \: W/W_Q \twoheadrightarrow W/W_{Q'}, \ \sigma W_Q \mapsto \sigma W_{Q'},
\end{align*}
where we typically write $\pi_{Q'}$ instead, if the source is clear. Every element $\theta \in W/W_{Q'}$ has a unique minimal preimage $\min_Q(\theta)$ and a unique maximal preimage $\max_Q(\theta)$ in $W/W_Q$ under $\pi_{Q'}$. The corresponding two maps\label{eq:def_min_Q_max_Q}
\begin{align*}
    \mathrm{min}_Q \!: W/W_{Q'} &\hookrightarrow W/W_{Q}, \ \theta \mapsto \mathrm{min}_Q(\theta) \quad \text{and}\\
    \mathrm{max}_Q \!: W/W_{Q'} &\hookrightarrow W/W_{Q}, \ \theta \mapsto \mathrm{max}_Q(\theta),
\end{align*}
are isomorphisms of posets onto their image. 

\begin{definition}
    We say an element $\sigma W_Q \in W/W_Q$ is \textbf{\boldmath{}$Q'$-minimal}/\textbf{\boldmath{}$Q'$-maximal}, if it lies in the image of $\min_Q$ or $\max_Q$ respectively. 
\end{definition}

In general, it is enough to understand the Weyl group quotients $W/W_P$ for every maximal parabolic subgroup $P$: If $(Q_i)_{i \in I}$ is a finite family of parabolic subgroups containing $B$ and $Q = \bigcap_{i \in I} Q_i$ is their intersection, then the following map is an isomorphism of posets onto its image:
\begin{align*}
    W/W_Q \hookrightarrow \prod_{i \in I} W/W_{Q_i}, \quad \sigma \mapsto (\pi_{Q_i}(\sigma))_{i \in I}.
\end{align*}

\subsection{Dynkin type $\texttt{A}$}
\label{subsec:weyl_type_A}

We briefly fix the notation we use for the simple algebraic group $G = \mathrm{SL}_n(\K)$. Let $T$ and $B$ be the conventional choices for a maximal torus and Borel subgroup in $G$, \ie the torus $T$ of diagonal matrices in $G$ and the Borel subgroup $B$ of all upper triangular matrices with determinant $1$. The Weyl group $W = N_G(T)/C_G(T)$ can be identified with the symmetric group $S_n$. We usually denote a permutation $\sigma: [n] \to [n]$ in the one-line notation $\sigma = \sigma(1) \cdots \sigma(n)$. 

Let $\varepsilon_i: T \to \K^\times$ be the character of $T$, where $\varepsilon_i(t)$ is equal to the $i$-th entry on the diagonal of $t \in T$. To each $i \in [n-1]$ there is the associated fundamental weight $\omega_i = \varepsilon_1 + \dots + \varepsilon_i$ and the maximal parabolic subgroup $P_i = B W_{P_i} B$, where $W_{P_i} \subseteq W$ is the stabilizer of $\omega_i$. It is generated by the simple reflections $s_j = (j, j+1 \in S_n$ with $j \neq i$. As the elements of $W/W_{P_i} \cong S_n / (S_i \times S_{n-i})$ correspond to $i$-element subsets of $[n]$, we also use the one-line notation for the cosets $\sigma W_{P_i} \in W/W_{P_i}$ and write them in the form $\sigma(1) \cdots \sigma(i)$.

The Bruhat order on $W/W_{P_i}$ can be characterized via the one-line notation: For a tuple $\underline j = (j_1, \dots, j_i)$ of natural numbers, let $\underline j^\leq$ be the permuted tuple with weakly increasing entries (from left to right). For all $\phi = \phi(1) \cdots \phi(i), \theta = \theta(1) \cdots \theta(i) \in W/W_{P_i}$ we then have
\begin{align*}
    \phi \leq \theta \quad \Longleftrightarrow \quad (\phi(1), \dots, \phi(i))^\leq \leq (\theta(1), \dots, \theta(i))^\leq,
\end{align*}
where the tuples on the right hand side are compared component-wise. The one-line notation can also be used to describe the Bruhat order on $W$, as $\sigma \leq \tau \in W$ is equivalent to $\pi_{P_i}(\sigma) \leq \pi_{P_i}(\tau)$ for all $i = 1, \dots, n-1$.

The one-line notation is very advantageous in conjunction with the lifting maps $\min_Q$ and $\max_Q$. For example, let $Q = P_1 \cap P_3$ for $n = 4$ and $\theta = 134 \in W/W_{P_3}$ (we omit the brackets of the tuple notation), then the unique maximal lift $\max_Q(\theta)$ in $W/W_Q$ is given by the projection of $\max_B(\theta) \in W$ to $W/W_Q$ and the element $\max_B(\theta)$ is equal to $4312$. More precisely, the image of all $P_3$-maximal elements in $W/W_Q$ under the map $\min_B: W/W_Q \to W$ are all permutations $\sigma = \sigma(1) \cdots \sigma(4)$ with $\sigma(1) > \sigma(2)$, $\sigma(1) > \sigma(3)$ and $\sigma(2) < \sigma(3)$.

\subsection{Some useful lemmas}

The following lemma by Deodhar is used many times throughout this \whatisthis{}. Our version of this statement follows directly from {\cite[Lemma 11.1, Lemma 11.1']{lakshmibaiGP4}} or \cite[Lemma 12.8.9]{lakshmibai2001flag} by projecting the unique minimal/maximal lift in $W$ to $W/W_Q$.

\begin{lemma}[Deodhar's Lemma]
    \label{lem:deodhar}
    Let $Q \subseteq Q'$ be two parabolic subgroups containing $B$ and $\theta \geq \phi$ be two elements of $W/W_{Q'}$.
    \begin{enumerate}[label=(\alph{enumi})]
        \item If $\overline\theta \in W/W_Q$ is a lift of $\theta$, then there is a unique maximal lift $\overline\phi \in W/W_Q$ of $\phi$ such that $\overline\phi \geq \overline\theta$.
        \item If $\overline\phi \in W/W_Q$ is a lift of $\phi$, then there is a unique minimal lift $\overline\theta \in W/W_Q$ of $\theta$ such that $\overline\theta \geq \overline\phi$.
    \end{enumerate}
\end{lemma}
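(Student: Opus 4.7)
The plan is to reduce both statements to the classical Deodhar Lemma (the case $Q=B$, stated in \cite[Lemma 12.8.9]{lakshmibai2001flag}) by exploiting the interplay between the projection $\pi_{Q'}: W/W_Q \twoheadrightarrow W/W_{Q'}$ and the further projection $W \twoheadrightarrow W/W_Q$, together with the fact that the Bruhat order on a parabolic quotient can be read off from unique minimal representatives. I would handle part~(a) in detail; part~(b) is entirely dual, with $\min_B$, $\max_B$ and the inequalities swapped.

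First I would lift $\overline\theta$ to its unique $B$-minimal representative $\widetilde\theta := \min_B(\overline\theta) \in W$. Since $\min_B$ is injective and order preserving, and $\widetilde\theta$ still projects to $\theta$ in $W/W_{Q'}$, the classical Deodhar Lemma applied to the inclusion $B \subseteq Q'$ produces a unique maximal lift $\widetilde\phi \in W$ of $\phi$ satisfying $\widetilde\phi \geq \widetilde\theta$. I would then set $\overline\phi := \widetilde\phi W_Q \in W/W_Q$. That $\overline\phi$ is a lift of $\phi$ under $\pi_{Q'}$, and that $\overline\phi \geq \overline\theta$ in $W/W_Q$, follows immediately from the monotonicity of the natural projection $W \to W/W_Q$.

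For maximality and uniqueness, let $\overline\phi'$ be any lift of $\phi$ in $W/W_Q$ with $\overline\phi' \geq \overline\theta$, and set $\widetilde\phi' := \min_B(\overline\phi')$. The key observation is that $\overline\phi' \geq \overline\theta$ in $W/W_Q$ is equivalent to $\widetilde\phi' \geq \widetilde\theta$ in $W$. Combined with $\pi_{Q'}(\widetilde\phi') = \phi$ and the uniqueness part of the $Q=B$ case, this forces $\widetilde\phi' \leq \widetilde\phi$ in $W$; projecting back to $W/W_Q$ then yields $\overline\phi' \leq \overline\phi$, delivering both maximality and the uniqueness.

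The only real subtlety — and the step doing most of the work behind the scenes — is the standard characterization $\sigma W_Q \leq \tau W_Q \iff \min_B(\sigma W_Q) \leq \min_B(\tau W_Q)$ in $W$. This is precisely what makes the lifting-to-$W$-and-projecting-back argument transport the Bruhat order faithfully; I would invoke it as a well known property of parabolic quotients of Coxeter groups rather than reprove it.
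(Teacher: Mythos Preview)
Your approach---reduce to the classical $Q=B$ case and then project to $W/W_Q$---is exactly what the paper does; its entire proof is the remark that the statement ``follows directly from \cite[Lemma 11.1, Lemma 11.1']{lakshmibaiGP4} or \cite[Lemma 12.8.9]{lakshmibai2001flag} by projecting the unique minimal/maximal lift in $W$ to $W/W_Q$.''

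That said, part~(a) of the stated lemma carries a typo (the inequality should read $\overline\phi \leq \overline\theta$, as is clear from every application of the lemma in the body of the paper, e.g.\ in the construction of the unique maximal defining chain), and you have propagated it into your argument. As written, your appeal to the classical Deodhar Lemma is therefore incorrect: with $\theta \geq \phi$ in $W/W_{Q'}$ and a lift $\widetilde\theta$ of $\theta$, the classical result yields a unique maximal lift $\widetilde\phi$ of $\phi$ satisfying $\widetilde\phi \leq \widetilde\theta$, not $\widetilde\phi \geq \widetilde\theta$; indeed, lifts of $\phi$ dominating $\widetilde\theta$ need not exist at all. Once the inequality is corrected throughout, your proof goes through verbatim and coincides with the paper's.
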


\begin{lemma}
    \label{lem:red_decomp}
    If $s_1, \dots, s_r \in W$ are pairwise distinct simple reflections, then $s_1 \cdots s_r$ is in reduced decomposition.
\end{lemma}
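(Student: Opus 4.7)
The plan is to proceed by induction on $r$, using the standard length criterion: for any $w \in W$ and any simple reflection $s_\alpha$, one has $\ell(w s_\alpha) = \ell(w) + 1$ if and only if $w(\alpha) > 0$. The base case $r = 1$ is immediate. For the inductive step, assume that $s_1 \cdots s_{r-1}$ is already a reduced expression. Writing $s_r = s_{\alpha_r}$, it then suffices to show that $(s_1 \cdots s_{r-1})(\alpha_r)$ is a positive root, as this gives $\ell(s_1 \cdots s_r) = \ell(s_1 \cdots s_{r-1}) + 1 = r$.

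The key observation is that a simple reflection $s_{\alpha_j}$ acts on a vector $\sum_i c_i \alpha_i$ expressed in the basis of simple roots only by modifying the coefficient of $\alpha_j$, since $s_{\alpha_j}(v) = v - \langle v, \alpha_j^\vee\rangle \alpha_j$; all other coefficients are left unchanged. By assumption the reflections $s_1, \ldots, s_{r-1}$ are pairwise distinct and distinct from $s_r$, so none of them equals $s_{\alpha_r}$. Starting from $\alpha_r$, which has coefficient $1$ at $\alpha_r$ and $0$ at every other simple root, the successive applications of $s_{r-1}, \ldots, s_1$ all preserve this $\alpha_r$-coefficient, so $(s_1 \cdots s_{r-1})(\alpha_r)$ still has coefficient $1$ at $\alpha_r$.

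To finish, the Weyl group permutes the root system, so $(s_1 \cdots s_{r-1})(\alpha_r)$ is itself a root. By the fundamental property of root systems that every root written in the simple root basis has coefficients either all nonnegative or all nonpositive, and since its $\alpha_r$-coefficient equals $1 > 0$, this root is positive. Applying the length criterion closes the induction. The argument is completely elementary; the only subtle point is arranging the induction so that the length criterion is applied on the right with the factor $s_r$, which is precisely what lets pairwise distinctness pin down the $\alpha_r$-coefficient.
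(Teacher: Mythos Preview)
Your proof is correct, but it takes a different route from the paper's. The paper argues by contradiction via the deletion condition: if $s_1 \cdots s_r$ were not reduced, one could delete two factors $s_i, s_j$ with $i < j$; since $s_1 \cdots s_{r-1}$ is reduced by induction, necessarily $j = r$, and then $s_1 \cdots s_{r-1} = s_1 \cdots \hat s_i \cdots s_r$ are two reduced expressions of the same element, forcing $s_i = s_r$ by the invariance of the support of a reduced word. Your argument is more geometric, tracking the $\alpha_r$-coefficient through the action on the root lattice and invoking the sign-coherence of root coefficients. Both are short and self-contained; the paper's version stays purely in the Coxeter-combinatorial language (deletion and Matsumoto-type invariance), while yours trades the support-invariance fact for the equally standard length criterion $\ell(ws_\alpha) = \ell(w)+1 \Leftrightarrow w(\alpha) > 0$, which some readers may find more transparent.
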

\begin{proof}
    If $s_1 \cdots s_r$ was not reduced, then there are indices $1 \leq i < j \leq r$ with $s_1 \cdots s_r = s_1 \cdots \hat s_i \cdots \hat s_j \cdots s_r$ (where the hat indicates, that $s_i$ and $s_j$ are omitted). By induction over $r$, the prefix $s_1 \cdots s_{r-1}$ is reduced, hence $j = r$. It follows $s_1 \cdots s_{r-1} = s_1 \cdots \hat s_i \cdots s_r$. Both sides of this equation are in reduced decomposition. As $s_1, \dots, s_r$ are pairwise distinct and the set of simple reflections appearing in a reduced decomposition is unique, we conclude $s_i = s_r$, which is impossible.
\end{proof}

\begin{lemma}
    \label{lem:bruhat_interval}
    Let $Q \subseteq P$ be two parabolic subgroups of $G$ and $\theta > \phi \in W/W_Q$, such that $\pi_P(\theta) > \pi_P(\phi)$. Then there exists a covering relation $\theta > \psi$ in $W/W_Q$, such that $\psi \geq \phi$ and $\pi_P(\theta) > \pi_P(\psi)$.
\end{lemma}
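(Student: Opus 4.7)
The plan is to proceed by induction on $\ell := r(\theta) - r(\phi) \geq 1$. The base case $\ell = 1$ is immediate: the relation $\theta > \phi$ is itself a covering relation, so I would take $\psi = \phi$, and the hypothesis $\pi_P(\theta) > \pi_P(\phi)$ gives what is required.

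For the inductive step with $\ell \geq 2$, I would first pick any covering relation $\theta \succ \sigma$ in $W/W_Q$ with $\sigma \geq \phi$, which exists via any saturated chain from $\phi$ to $\theta$ in the graded poset $W/W_Q$. If it already happens that $\pi_P(\sigma) < \pi_P(\theta)$, then $\psi = \sigma$ works. Otherwise $\pi_P(\sigma) = \pi_P(\theta) > \pi_P(\phi)$, so in particular $\sigma > \phi$ strictly, and the inductive hypothesis applied to the pair $\sigma > \phi$ (for which the rank difference is $\ell - 1$) produces a cover $\sigma \succ \sigma'$ in $W/W_Q$ with $\sigma' \geq \phi$ and $\pi_P(\sigma') < \pi_P(\sigma) = \pi_P(\theta)$.

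At this point I have a length-two chain $\theta \succ \sigma \succ \sigma'$ in $W/W_Q$ in which the first step preserves the $P$-projection and the second step strictly decreases it. The remaining task is to exchange these two steps. Here I would invoke the diamond property of rank-two Bruhat intervals: the interval $[\sigma', \theta]$ in $W/W_Q$ contains a second intermediate element $\psi \neq \sigma$ with $\theta \succ \psi \succ \sigma'$. Passing to minimal coset representatives in $W$, the four elements of the diamond are related by four reflections $t, t', t'', t'''$ (applied on the right) satisfying $t t' = t'' t'''$, where $t$ is the reflection connecting $\theta$ to $\sigma$ and so on. Since $\pi_P(\sigma) = \pi_P(\theta)$ forces $t \in W_P$, while $\pi_P(\sigma') < \pi_P(\theta)$ forces $t t' \notin W_P$, a careful case analysis of the identity $t t' = t'' t'''$ together with the distinctness of the two intermediates shows that the reflection $t''$ connecting $\theta$ to $\psi$ must lie outside $W_P$. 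Thus $\pi_P(\psi) < \pi_P(\theta)$, and $\psi \geq \sigma' \geq \phi$, completing the inductive step.

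The main obstacle will be the last step: justifying the claim that $\psi$ necessarily has strictly smaller $\pi_P$. The naive parity-style argument on reflections in $W_P$ is not quite sufficient, since in principle a diamond could have both intermediate elements lying in the fiber $\pi_P^{-1}(\pi_P(\theta))$. In that degenerate situation I would instead dig deeper into the chain, applying the inductive hypothesis again to a smaller sub-interval to iterate the process and move the $\pi_P$-descent further up; termination is guaranteed because the rank difference strictly decreases at each such iteration. This interplay between the diamond exchange and a secondary induction is the delicate part of the proof.
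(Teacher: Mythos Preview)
Your induction on $r(\theta)-r(\phi)$ matches the paper's, but the inductive step is organised differently and the diamond exchange is indeed where it breaks. Your fallback (``dig deeper into the chain'') can be read as: abandon the diamond and apply the inductive hypothesis directly to the pair $(\theta,\sigma')$, which has rank difference~$2$ and still satisfies $\pi_P(\theta)>\pi_P(\sigma')$ and $\sigma'\geq\phi$. That works for $\ell\geq 3$, but collapses when $\ell=2$: then $\sigma'=\phi$, the rank difference is still~$2$, and there is no smaller interval to recurse into. You are forced back onto the bare diamond claim, and the reflection identity $tt'=t''t'''$ with $t\in W_P$, $tt'\notin W_P$ only yields $t''\in W_P\Leftrightarrow t'''\notin W_P$; it does not by itself exclude $t''\in W_P$. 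The ``careful case analysis'' you gesture at is neither supplied nor routine.

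The paper avoids this by a different organisation of the same induction. It takes the Deodhar-maximal lift $\overline\phi\in W/W_Q$ of $\pi_P(\phi)$ with $\overline\phi\leq\theta$ and works in the full interval $[\overline\phi,\theta]$. If every element other than $\overline\phi$ projected to $\pi_P(\theta)$, then every atom of this interval would be a lift of $\pi_P(\theta)$ above $\overline\phi$, hence would dominate the unique Deodhar-\emph{minimal} such lift $\overline\theta$; since atoms have rank $r(\overline\phi)+1$, this would force $\overline\theta$ to be the sole atom, contradicting the fact that Bruhat intervals of length at least~$2$ have at least two atoms (Bj\"orner--Brenti, Lemma~2.7.3). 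Hence some $\phi'\in(\overline\phi,\theta)$ has $\pi_P(\phi')<\pi_P(\theta)$, and induction applied to $(\theta,\phi')$ finishes. Specialised to the length-$2$ case, this Deodhar argument is precisely the missing justification for your diamond: it shows that at least one of the two intermediates drops in $\pi_P$, so if $\sigma$ does not, $\psi$ must.
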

\begin{proof} 
    We show the statement by induction over difference $d = r(\theta) - r(\phi)$ of ranks in $W/W_Q$. For $d = 1$ there is nothing to prove. Now let $d \geq 2$ and $\overline\phi$ be the unique maximal lift of $\pi_{P}(\phi)$ in $W/W_{Q}$, that is less or equal to $\pi_{Q}(\theta)$. If $\pi_{Q}(\theta) > \overline\phi$ is already a covering relation, then we can take $\psi = \overline\phi$. Otherwise we look at the Bruhat interval $[\theta, \overline\phi] = \set{\sigma \in W/W_Q \mid \theta \geq \sigma \geq \overline\phi}$. Suppose that every element in this interval except $\overline\phi$ projects to $\pi_P(\theta)$. By Deodhar's Lemma (\ref{lem:deodhar}) there exists a unique minimal lift $\overline\theta \in W/W_Q$ of $\pi_P(\theta)$ with $\overline\theta \geq \overline\phi$. Hence there is exactly one element covering $\overline\phi$ in $[\theta, \overline\phi]$. But this is false for Bruhat intervals of two elements, which lengths differ by more than $1$. A proof of this statement can be found in~\cite[Lemma 2.7.3]{bjorner2006combinatorics}. 
    
    Therefore there exists an element $\phi' \in W/W_Q$, such that $\pi_{Q}(\theta) > \phi' > \overline\phi$ and $\pi_{P}(\theta) > \pi_{P}(\phi') > \pi_{P}(\overline\phi)$. Using the induction on $\phi'$ instead of $\phi$ we get an element $\psi \in W/W_Q$ covered by $\theta$ with $\pi_P(\theta) > \pi_P(\psi)$ and $\psi \geq \phi' \geq \overline\phi \geq \phi$.
\end{proof}

\section{Young-tableaux and other tableau models}
\label{sec:tableaux}

The LS-tableaux from Section~\ref{sec:LS-tableaux} are a generalization of more well known tableau models, like the ones of Hodge-Young in type \texttt{A} and of Lakshmibai-Musili-Seshadri in the types \texttt{B}, \texttt{C} and \texttt{D}. We fix a connected, simply-connected, simple algebraic group $G$, a maximal torus $T$ and a Borel subgroup $B$ containing $T$. Let $\Delta$ be the set of all simple roots corresponding to the choice of $B$. For each Dynkin type we order the fundamental weights $\omega_1, \dots, \omega_n$ of $G$ (equivalently, the simple roots) in the same way as in \cite[Plates I to IX]{bourbaki1968groupes}. Each fundamental weight $\omega_i$ corresponds to the maximal parabolic subgroup $P_i$ stabilizing the highest weight space in $V(\omega_i)$. Furthermore, we fix a dominant weight $\mu = a_1 \omega_1 + \dots + a_{n} \omega_{n}$ for $a_1, \dots, a_n \in \N_0$. There exists a unique sequence $\underline\mu = (\omega_{i_1}, \dots, \omega_{i_s})$ of fundamental weights, such that $\omega_{i_1} + \dots + \omega_{i_s} = \mu$ and $1 \leq i_1 \leq \dots \leq i_s \leq n$. 

\vskip 10pt
{\noindent\textbf{Type \boldmath{}$\texttt{A}_{n}$:}} Let $\omega_i$ and $s_i$ be defined as in Appendix~\ref{subsec:weyl_type_A}. Since the fundamental representations in type \texttt{A} are minuscule, each LS-path model $\mathbb{B}(\omega_i)$, for $i \in [n]$, can be set-theoretically identified with $W/W_{P_i}$ and thus with the set $\mathrm{SSYT}(\omega_i)$ of all semistandard Young-tableaux consisting of one column with exactly $i$ boxes. Therefore, the set of all LS-tableaux of shape $\underline\mu$ can be interpreted as the set $\mathrm{YT}(\mu)$ of all Young-tableaux of shape $\mu$, \ie for all $i \in [n]$ they contain exactly $a_i$ many columns of length $i$. Notice that the order of the columns is reverted under this bijection. For example, the LS-tableau $(\pi_1, \pi_2, \pi_3, \pi_4)$ with the columns
\begin{align*}
    \pi_1 = (s_2 s_1 W_{P_1} \mkern1mu ; \mkern1mu 0, 1), \ \pi_2 = (s_3 s_2 W_{P_2} \mkern1mu ; \mkern1mu 0, 1), \ \pi_3 = (s_1 s_2 W_{P_2} \mkern1mu ;\mkern1mu  0, 1), \ \pi_4 = (s_3 W_{P_3} \mkern1mu ; \mkern1mu 0, 1)
\end{align*}
corresponds to the Young-tableau
\begin{align*}
    \vcenter{\hbox{\begin{ytableau} 1 & 2 & 1 & 3 \\ 2 & 3 & 4 \\ 4 \end{ytableau}}}
\end{align*}
A Young-tableau is called \textit{semistandard}, if its entries are weakly increasing along each row (from left to right) and strictly increasing along each column (from top to bottom). 

\begin{proposition}
    Under the above bijection, a Young-tableau $T \in \mathrm{YT}(\mu)$ is semistandard, if and only if its corresponding LS-tableau is standard.
\end{proposition}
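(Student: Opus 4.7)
The plan is to reduce both notions to the same pairwise combinatorial condition on adjacent columns and then match them via a Bruhat-order computation on small Weyl quotients.

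Since every type $\texttt{A}$ fundamental representation $V(\omega_i)$ is minuscule, every LS-path $\pi_k \in \mathbb B(\omega_{i_k})$ collapses to a single coset $\sigma^{(k)} \in W/W_{P_{i_k}}$, corresponding under $W/W_{P_i} \cong \binom{[n]}{i}$ (via $\sigma \mapsto \{\sigma(1), \ldots, \sigma(i)\}$) to a subset $S^{(k)} = \{s^{(k)}_1 < \cdots < s^{(k)}_{i_k}\}$. The bijection to Young-tableaux of shape $\mu$ lists $S^{(s)}, S^{(s-1)}, \ldots, S^{(1)}$ as columns from left to right, producing a tableau whose column lengths $i_s \ge \cdots \ge i_1$ match the shape of $\mu$.

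For the Young-tableau side, strict increase down columns is automatic (each column is a sorted subset), so semistandardness is equivalent to weak increase along rows, which is transitive and hence equivalent to its restriction to each adjacent pair of columns. Unpacking this for the pair $\pi_{k+1}$ (longer, on the left) and $\pi_k$ (shorter, on the right) yields the inequalities $s^{(k+1)}_r \le s^{(k)}_r$ for $r = 1, \ldots, i_k$; call this pairwise condition $(\ast_k)$.

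For the LS side, I would invoke Proposition~\ref{prop:nice_I_vs_weakly_standard} applied to $\underline\lambda = (\omega_1, \ldots, \omega_{n-1})$ and the totally ordered index poset $\mathcal I = \{\{i, i+1, \ldots, n-1\} : i \in [n-1]\}$, which is \nice{w_0} by Corollary~\ref{cor:totally_ordered_nice_I} since the Dynkin diagram of type $\texttt{A}$ is a single path visiting every vertex. The proposition reduces full $w_0$-standardness of the LS-tableau to $w_0$-standardness of each adjacent pair $(\pi_k, \pi_{k+1})$. For such a two-column tableau, standardness amounts to the existence of lifts $\bar\sigma^{(k)} \ge \bar\sigma^{(k+1)}$ in $W/W_{P_{i_k} \cap P_{i_{k+1}}}$. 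I would parametrize this quotient by pairs of nested subsets $(R, T)$ with $R \subseteq T$, $|R| = i_k$, $|T| = i_{k+1}$, use the componentwise Bruhat characterization, and show (by taking $R = \{t_1, \ldots, t_{i_k}\} \subseteq T = S^{(k+1)}$ as the smallest Bruhat sublift inside $T$ and a suitably large $T' \supseteq S^{(k)}$) that comparable lifts exist if and only if $(\ast_k)$ holds.

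The main obstacle is the reduction from global LS-standardness to pairwise LS-standardness: a naive attempt to build a defining chain by uniformly minimal or maximal lifts fails, as already seen for the pair $S^{(k)} = \{2\}$, $S^{(k+1)} = \{1,3\}$ in $\mathrm{SL}_4(\K)$, where the $\min_B$-lifts $2134$ and $1324$ are Bruhat-incomparable. I circumvent this by appealing to Proposition~\ref{prop:nice_I_vs_weakly_standard}, whose proof encapsulates the required inductive Deodhar-type argument inside the machinery of \nice{\tau} posets; the residual combinatorics matching $(\ast_k)$ with the componentwise Bruhat order on the Grassmannian-like quotient $W/W_{P_{i_k} \cap P_{i_{k+1}}}$ is then a routine manipulation of sorted subsets.
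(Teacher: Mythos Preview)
Your proposal is correct and takes a genuinely different route from the paper's proof.

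The paper argues globally: it identifies the columns of $T$ with elements $(\theta_r, j_r)$ of the poset $\underline W$ from Example~\ref{ex:strat_type_A}, observes that semistandardness is equivalent to $(\theta_1, j_1) \ge \cdots \ge (\theta_s, j_s)$ in $\underline W$, and then cites \cite[Lemma 11.8]{ownarticle} to the effect that the specific lift $\widetilde\theta_r = \min_Q \circ \max_{Q_{j_r}}(\theta_r)$ turns this chain in $\underline W$ into an honest chain in $W/W_Q$. This furnishes a defining chain directly, without any reduction to adjacent pairs.

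Your approach instead leverages the structural machinery of Section~\ref{sec:dcp}: you invoke Corollary~\ref{cor:totally_ordered_nice_I} to certify that the totally ordered index poset is \nice{w_0}, then Proposition~\ref{prop:nice_I_vs_weakly_standard} to collapse $\tau$-standardness to weak $\tau$-standardness, and finally settle the two-column case by hand on $W/W_{P_{i_k} \cap P_{i_{k+1}}}$. What this buys you is that the argument stays entirely inside the present paper and does not require the external lemma from \cite{ownarticle}; the Deodhar-type gluing that worries you is absorbed into the proof of Proposition~\ref{prop:nice_I_vs_weakly_standard}. What the paper's approach buys is an explicit, uniform formula for the defining chain, which is conceptually cleaner and does not pass through the comparatively heavy notion of a \nice{\tau} poset. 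Your residual two-column computation is indeed routine (Lemma~\ref{lem:relation_ulWlambda}\,\ref{itm:relation_ulWlambda_b} gives exactly the characterization you use), though you should note that the same componentwise Bruhat check underlies both arguments; the paper simply packages it into the cited lemma.
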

\begin{proof}
    Let $1 \leq k_1 < \dots < k_m \leq n$ be the unique integers with $\set{k_1, \dots, k_m} = \set{i_1, \dots, i_s}$ and fix integers $1 \leq j_1 \leq \dots \leq j_s \leq m$ with $k_{j_r} = i_r$ for all $r \in [s]$. Let $Q$ be the intersection of the maximal parabolic subgroups $P_{k_1}, \dots, P_{k_m}$. Consider the underlying poset $\underline W = \coprod_{i=1}^m W/W_{P_{k_i}} \times \set{i}$ of the Seshadri stratification from Example~\ref{ex:strat_type_A}. From right to left the columns of $T$ can be seen as elements $(\theta_1, j_1), \dots, (\theta_s, j_s) \in \underline W$. The partial order $\geq$ on $\underline W$ is completely determined by the following property: The tableau $T$ is semistandard, if and only if $(\theta_1, j_1) \geq \dots \geq (\theta_s, j_s)$.

    In \cite[Lemma 11.8]{ownarticle} the author proved how the partial order on $\underline W$ can be lifted to $W/W_Q$: To each element $(\theta_r, j_r)$, $r \in [s]$, we can associate a lift $\widetilde\theta_r \coloneqq \min_Q \circ \max_{Q_{j_r}}(\theta_j)$ in $W/W_Q$. Then we have $(\theta_1, j_1) \geq \dots \geq (\theta_s, j_s)$, if and only if $\widetilde\theta_1 \geq \dots \geq \widetilde\theta_s$ and $j_1 \leq \dots \leq j_s$. The semistandardness of $T$ thus is equivalent to the existence of a defining chain for the corresponding LS-tableau.
\end{proof}

We remark that by reversing the order of the weights in the sequence $\underline\mu$ one obtains similar tableaux, which we call \textit{Anti-Young-tableaux}. The only difference to ordinary Young-tableaux is that the boxes align at the bottom and right instead of top and left. Again, an LS-tableau is standard, if and only if the corresponding Anti-Young-tableaux is semistandard, \ie its entries increase weakly along the rows from left to right and strictly along the columns from top to bottom (see. Figure~\ref{fig:anti-tableaux}).

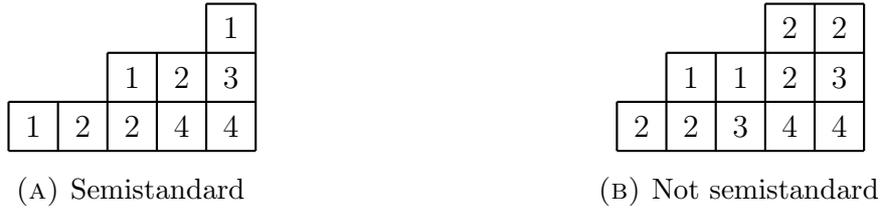
\begin{figure}
\centering
\begin{subfigure}{.5\textwidth}
    \begin{center}
        \begin{tikzpicture}[scale=0.65]
            \node at (-4.5,0.5) {$1$};
            \node at (-3.5,0.5) {$2$};
            \node at (-2.5,0.5) {$2$};
            \node at (-1.5,0.5) {$4$};
            \node at (-0.5,0.5) {$4$};
            \node at (-2.5,1.5) {$1$};
            \node at (-1.5,1.5) {$2$};
            \node at (-0.5,1.5) {$3$};
            \node at (-0.5,2.5) {$1$};
    
            \draw [thick] (0,0) -- (-5,0);
            \draw [thick] (0,0) -- (0,3);
            \draw [thick] (0,1) -- (-5,1);
            \draw [thick] (0,2) -- (-3,2);
            \draw [thick] (0,3) -- (-1,3);
            \draw [thick] (-1,0) -- (-1,3);
            \draw [thick] (-2,0) -- (-2,2);
            \draw [thick] (-3,0) -- (-3,2);
            \draw [thick] (-4,0) -- (-4,1);
            \draw [thick] (-5,0) -- (-5,1);
        \end{tikzpicture}
    \end{center}
    \caption{Semistandard}
    \label{fig:anti-ss}
\end{subfigure}%
\begin{subfigure}{.5\textwidth}
    \begin{center}
        \begin{tikzpicture}[scale=0.65]
            \node at (-4.5,0.5) {$2$};
            \node at (-3.5,0.5) {$2$};
            \node at (-2.5,0.5) {$3$};
            \node at (-1.5,0.5) {$4$};
            \node at (-0.5,0.5) {$4$};
            \node at (-3.5,1.5) {$1$};
            \node at (-2.5,1.5) {$1$};
            \node at (-1.5,1.5) {$2$};
            \node at (-0.5,1.5) {$3$};
            \node at (-1.5,2.5) {$2$};
            \node at (-0.5,2.5) {$2$};
    
            \draw [thick] (0,0) -- (-5,0);
            \draw [thick] (0,0) -- (0,3);
            \draw [thick] (0,1) -- (-5,1);
            \draw [thick] (0,2) -- (-4,2);
            \draw [thick] (0,3) -- (-2,3);
            \draw [thick] (-1,0) -- (-1,3);
            \draw [thick] (-2,0) -- (-2,3);
            \draw [thick] (-3,0) -- (-3,2);
            \draw [thick] (-4,0) -- (-4,2);
            \draw [thick] (-5,0) -- (-5,1);
        \end{tikzpicture}
    \end{center}
    \caption{Not semistandard}
    \label{fig:anti-no-ss}
\end{subfigure}
\caption{Anti-Young-tableaux for $n = 4$}
\label{fig:anti-tableaux}
\end{figure}

\vskip 10pt
{\noindent\textbf{Types \boldmath{}$\texttt{B}_{n}$ and \boldmath{}$\texttt{C}_{n}$:}} Instead of Young-tableaux, we obtain the tableau model developed by Lakshmibai, Musili and Seshadri (see~\cite{lakshmibaiGP4} and \cite{lakshmibaiGP5}). For a maximal parabolic subgroup $P_i$ they defined certain pairs of elements in $W/W_{P_i}$ called \textit{admissable pairs}.

For each $\theta \in W/W_{P_i}$ let $[X_\theta]$ denote the element in the Chow ring of $G/P_i$ induced by the Schubert variety $X_\theta$. Let $H$ be the unique Schubert variety of codimension one in $G/P_i$. By a formula of Chevalley from~\cite{demazure1974desingularisation} (see also \cite[Section 4.5]{seshadri2016introduction}) it holds
\begin{align*}
    [X_{\theta}] \cdot [H] = \sum_{\phi} d_\phi [X_\phi]
\end{align*}
in the Chow ring, where the sum is taken over all elements $\phi \in W/W_{P_i}$ covered by $\theta$. The number $d_\phi$ is given by $\vert \langle \phi(\omega_i), \beta^\vee \rangle \vert$, where $\beta$ is the unique positive root with $s_\beta \phi^{P_i} = \theta^{P_i}$ and $\phi^{P_i}$ (respectively $\theta^{P_i}$) is the unique minimal representative of $\phi$ (respectively $\theta$) in the Weyl group $W$. This number $d_\phi$ is called the \textit{(intersection) multiplicity} of $X_\phi$ in $X_\theta$ (sometimes also Chevalley multiplicity).

A pair $(\theta, \phi)$ of cosets $\theta, \phi \in W/W_{P_i}$ is called an \textit{admissable pair}, if either $\theta = \phi$ or there exists a chain $\theta = \phi_1 > \dots > \phi_k = \phi$ covering relations in $W/W_{P_i}$, such that for every $j = 2, \dots, k$ the Schubert variety $X_{\phi_j} \subseteq G/P_i$ is a divisor of $X_{\phi_{j-1}}$ with intersection multiplicity $2$. Note that these chains are a special case (for $a = \tfrac12)$ of $a$-chains defined by Littelmann in \cite[Section 2.2]{littelmann1994littlewood}, which play an important role in the definition of LS-paths. An admissable pair $(\theta, \phi)$ with $\theta > \phi$ thus corresponds to the LS-path $(\theta > \phi \mkern1mu ; \mkern1mu 0, \tfrac12, 1) \in \mathbb B(\omega_i)$ and an admissable pair $(\theta, \theta)$ corresponds to the LS-path $(\theta \mkern1mu ; \mkern1mu 0, 1) \in \mathbb B(\omega_i)$. Every LS-path in $\mathbb B(\omega_i)$ is of one of these two forms, since the fundamental weights $\omega_i$ in the types \texttt{B} and \texttt{C} are \textit{classical}, \ie $\vert \langle \omega_i, \beta^\vee \rangle \vert \leq 2$ holds for all roots $\beta$ in the root system of $G$. Equivalently, the intersection multiplicity of $X_\phi \subseteq X_\theta$ is at least $2$ for each covering relation $\theta > \phi$ in $W/W_{P_i}$.

A \textit{Young diagram} of type $(a_1, \dots, a_n)$ in the sense of~\cite{lakshmibaiGP5} can be seen as a sequence of admissable pairs $(\theta_j, \phi_j)$ with $j = 1, \dots, s$ and $\theta_j, \phi_j \in W/W_{P_{i_j}}$. Hence these Young diagrams correspond to LS-tableaux of shape $\underline\mu$. Under this correspondence the notions of standard Young diagrams and standard LS-tableaux agree, as both are given via the existence of a defining chain. Lakshmibai and Seshadri even allowed other orderings of the fundamental weights, but for the explicit ordering we defined above, Littelmann showed in the Appendix of~\cite{littelmann1990generalization} that one can interpret their Young diagrams via certain classical Young-tableaux with entries in $\set{1, \dots, 2n}$.

In type $\texttt{C}_n$ the Young diagrams from~\cite{lakshmibaiGP5} can also be interpreted as the (right canonical) symplectic tableaux introduced by De Concini~\cite{deconcini}. A symplectic tableau is essentially a Young-tableau of the group $\texttt{A}_{2n-1}$, so it consists of at most $2n-1$ rows and its entries lie in $[2n]$. Such a Young-tableau is symplectic, if every column $C$ is \textit{admissable}, \ie it can be split into a Young-tableau $(C_L, C_R)$ of two columns of the same length via the combinatorial method from \cite[Definition 2.1]{deconcini}. This ensures that $C_L$ and $C_R$ can be viewed as an admissable pair (as of~\cite{lakshmibaiGP5}) using the natural embedding $\texttt{C}_n \hookrightarrow \texttt{A}_{2n-1}$ from Dynkin diagram folding. Every symplectic tableau $(C_1, \dots, C_r)$ corresponds to a Young diagram of Lakshmibai, Musili and Seshadri via its \textit{split column form} $(C_{1, L}, C_{1, R} \dots, C_{r, L}, C_{r, R})$. In addition, a symplectic tableau is called \textit{standard}, if its split column form is semistandard, which is equivalent to the existence of a defining chain. We refer to~\cite{sheats} for a more formal and detailed explanation on the connection between these two tableau models.

\vskip 10pt
{\noindent\textbf{Type \boldmath{}$\texttt{D}_{n}$:}} Since the fundamental weights in type \texttt{D} are classical as well (see above), the Young diagrams from the types \texttt{B} and \texttt{C} can also be used in type \texttt{D} and these diagrams still correspond to LS-tableaux. For the ordering of the fundamental weights we chose above, these tableaux can again be identified with certain Young-tableaux, but their explicit combinatorial description is noticeably more difficult than in the types \texttt{B} and \texttt{C}. It can be found in~\cite[Appendix A.3]{littelmann1990generalization}. 

The main difference in type \texttt{D} is the fact that there exists no ordering of the fundamental weights, such that the notions of weakly standard LS-tableaux and standard LS-tableaux coincide (see Proposition~\ref{prop:nice_I_vs_weakly_standard} and Corollary~\ref{cor:totally_ordered_nice_I}). Therefore standardness cannot be verified locally by just considering consecutive columns.

\vskip 10pt
{\noindent\textbf{Other Types:}} In the exceptional types not every fundamental weight is classical. There is a list of all classical fundamental weights in \cite[Section A.2.3]{lakshmibai2007standard}. Since higher intersection multiplicities can occur, one needs to replace admissable pairs by admissable quadruples for fundamental weights $\omega_i$ with $\vert \langle \omega_i, \beta^\vee \rangle \vert \leq 3$ for all roots $\beta$. The resulting tableau model was described in \cite[Section 3]{littelmann1990generalization}. Again, the admissable quadruples correspond to LS-paths $(\sigma_p, \dots,  \sigma_1; 0, d_p, \dots, d_1 = 1)$ with $p \leq 4$ different directions. This shows the power of LS-paths, as they provide a language suited for all intersection multiplicities.

\clearpage

\section*{List of notations}
\pagestyle{empty}

\vskip 4pt

\begin{abbrv}
\item[{$\B(\lambda)$}] path model of LS-paths to $\lambda \in \Lambda^+$
\item[{$b_{p,q}$}] bond of a covering relation $p > q$
\item[{$\Delta$}] set of simple roots of $G$
\item[{$\hat X$}] multicone of an embedded projective variety $X$
\item[{$e_I$}] sum of all unit vectors $e_i$ for $i \in \underline I$
\item[{$\Gamma$}] fan of monoids
\item[{$\Gamma_C$}] monoid associated to a chain $C \subseteq A$
\item[{$\underline I$}] subset of $I$ defined by covering relations (p. \pageref{txt:def_underline_I})
\item[{$\Lambda^+$}] monoid of dominant weights of $G$
\item[{$\Lambda$}] weight lattice of $G$
\item[{$\lambda_I$}] sum of all weights $\lambda_i$ for $i \in \underline I$
\item[{$[k]$}] set $\set{1, \dots, k}$ for $k \in \N$
\item[{$\max_Q$}] maximal lift (p. \pageref{eq:def_min_Q_max_Q})
\item[{$\min_Q$}] minimal lift (p. \pageref{eq:def_min_Q_max_Q})
\item[{$\pi_Q$}] projection map (p. \pageref{eq:def_pi_Q})
\item[{$P_i$}] parabolic subgroup to the weight $\lambda_i$
\item[{$P_I$}] parabolic subgroup to $I \in \mathcal I$
\item[{$Q_I$}] lower parabolic subgroup to $I$ (p. \pageref{eq:def_Q_I_and_Q^I})
\item[{$Q^I$}] upper parabolic subgroup to $I$ (p. \pageref{eq:def_Q_I_and_Q^I})
\item[{$Q_\tau$}] largest parabolic over $Q$, where $\tau$ is $Q_\tau$-maximal (p. \pageref{txt:def_Q_tau})
\item[{$r(-)$}] rank function in a graded poset
\item[$\supp \underline a$] support of an element in some $\Q^A$ (p. \pageref{txt:def_support})
\item[{$\tau_i$}] projection of $\tau \in W/W_Q$ to $W/W_{P_i}$
\item[{$\tau_I$}] projection of $\tau \in W/W_Q$ to $W/W_{P_I}$
\item[{$W_\lambda$}] stabilizer of $\lambda \in \Lambda$ in the Weyl group
\item[{$W^Q$}] set of all minimal representatives of $W/W_Q$ in $W$
\item[{$W_Q$}] Weyl subgroup of $Q$
\item[{$X_\theta$}] Schubert variety in $G/Q$ to $\theta \in W/W_Q$
\end{abbrv}

\vskip 25pt

\printbibliography


\end{document}